\documentclass[11pt]{article}
\usepackage[dvipsnames, svgnames, x11names]{xcolor}
\usepackage{changes}
\usepackage[UTF8]{ctex}
\usepackage[utf8]{inputenc}
\usepackage{graphicx} 

\usepackage{tikz}
\usepackage{tikz-imagelabels}
\usetikzlibrary{decorations.pathreplacing, arrows.meta}

\usepackage[dvipsnames]{xcolor}
\usepackage{tcolorbox}
\usepackage{geometry}
\usepackage{amsthm,amssymb,amsmath}
\usepackage{amsmath}
\usepackage{amsthm}
\usepackage{float}
\usepackage{color}
\usepackage{xcolor}
\usepackage[dvipsnames, svgnames, x11names]{xcolor}
\usepackage[mathscr]{euscript}
\usepackage{graphicx}
\usepackage{ulem}
\usepackage{setspace}
\usepackage{enumitem}
\usepackage{hyperref}  
\hypersetup{
    colorlinks=true,   
    linkcolor=blue,    
    citecolor=green,   
    urlcolor=magenta   
}

\usepackage{soul}
\setstcolor{red}
\soulregister\ref7
\soulregister\cite7
\soulregister\citeyear7

\usepackage{overpic}

\newtheorem{theorem}[equation]{Theorem}
\newtheorem{definition}[equation]{Definition}
\newtheorem{lemma}[equation]{Lemma}
\newtheorem{proposition}[equation]{Proposition}
\newtheorem{corollary}[equation]{Corollary}

\newtheorem{remark}[equation]{Remark}

\usepackage{bm}
\usepackage{forest}

\usepackage{enumitem}

\numberwithin{equation}{section}

\newcommand{\RR}{\mathbb{R}}

\newcommand{\LL}{L}

\newcommand{\dd}{\operatorname{dist}}
\newcommand{\ccup}{\cup}

\newcommand{\A}{\mathscr{A}}
\newcommand{\B}{\mathscr{B}}
\newcommand{\ppi}{\overline{\pi}}

\newcommand{\ma}{\marginpar}

\newcommand\blfootnote[1]{%
  \begingroup
  \renewcommand\thefootnote{}\footnote{#1}%
  \addtocounter{footnote}{-1}%
  \endgroup
}

\title{A generalization of Reifenberg's theorem in $\RR^N$ for flat cones}
\author{Xiangyu\ Liang,\and Sicheng\ Zhang}
\date{}
\begin{document}

\maketitle

\blfootnote{
    \textbf{Xiangyu Liang} \\
    School of Mathematical Sciences, Beihang University, Beijing, China. 
    {E-mail}: \href{mailto:maizeliang@gmail.com}{\texttt{maizeliang@gmail.com}}
}

\blfootnote{
    \textbf{Sicheng Zhang} \\
    School of Mathematical Sciences, Beihang University, Beijing, China. 
    {E-mail}: \href{mailto:sichengzhang@buaa.edu.cn}{\texttt{sichengzhang@buaa.edu.cn}}
}
\blfootnote{{MSC (2020)}: 28A75, 49Q20, 49Q99.}

\begin{abstract}
In this paper we prove that if a closed set in $\RR^N$ is close to a cone over a simplicial complex at each point and at each scale, then it is locally bi-H$\ddot{\text{o}}$lder equivalent to such a cone. This generalizes Reifenberg's Topological Disk Theorem in 1960 and  G. David, T. De Pauw and T. Toro's result in 2008.
\end{abstract}

\section{Introduction}
In 1960, Reifenberg  established the remarkable Topological Disk Theorem in \cite{R1}. It shows that if a closed set $E$ in $\RR^N$ is sufficiently close to an $n$-dimensional plane in the Hausdorff distance sense at each point and at each scale, then it is locally bi-H$\ddot{\text{o}}$lder equivalent to a ball of dimension $n$. 


Since then, generalizations of Reifenberg's theorem have broadly followed two main directions: quantitative characterizations and singular approximating models.
The quantitative direction focuses on quantitative versions of the theorem, notably developed by Naber and Valtorta in \cite{NV17}. In contrast to the classical topological results in \cite{R1} which rely on pointwise flatness to obtain bi-H$\ddot{\text{o}}$lder parameterizations, \cite{NV17} uses an integral condition on flatness defined by Jones' $\beta$-numbers. Under this weaker assumption, they established the rectifiability of the set and derived uniform measure estimates. These quantitative results were subsequently extended to Hilbert and Banach spaces in \cite{ENV19} and to general measures without density assumptions in \cite{ENV25}.

The other direction involves generalizations to singular sets, where the approximating models are no longer topological disks. David and Toro \cite{DT12} extended Reifenberg's construction to sets with holes, locally parameterizing sets that are close to planes with holes.
In a broader context, Badger and Lewis \cite{BL15} developed a comprehensive framework investigating the interplay between local approximation and asymptotic geometry (tangent sets), with applications to zero sets of harmonic polynomials discussed in \cite{BET17}. However, their focus was primarily on structure theorems and dimension estimates for the singular sets, rather than constructing explicit parameterizations.
In contrast, the work of David, De Pauw, and Toro \cite{DDT} takes a constructive approach. They established the existence of a bi-H$\ddot{\text{o}}$lder parameterization for sets approximated by minimal cones in $\RR^3$. Our work is a direct generalization of this result.


Specifically, in 2008, they generalized Reifenberg's Topological Disk Theorem to the case when the set is close to a minimal cone of dimension 2 in $\RR^3$, see \cite{DDT}.  It shows that if a closed set $E$ in $\RR^3$ is sufficiently close to a minimal cone of dimension 2 in the Hausdorff distance sense at each point and at each scale, then it is locally bi-H$\ddot{\text{o}}$lder equivalent to a minimal cone of dimension 2. 
By using the hierarchical structure of $E$, they split 
$E$ into disjoint subsets and constructed the parameterization of each subset respectively. Here the minimal cones of dimension 2 in $\RR^3$ are, modulo isometry, a plane, a $\mathbb{Y}$ set (the product of a $Y$ and a line, here $Y$ is the union of three half lines which meet at a common point and make angles of 120$^{\circ}$ in a plane), and 
a $\mathbb{T}$ set (the cone over the six segments whose endpoints are exactly two of the vertices of a regular tetrahedron centered at the origin), classified by J. E. Taylor (see \cite{T}). See Figure \ref{minimal cone} below.

\begin{figure}[h]
    \centering
    \includegraphics[width=0.5\linewidth]{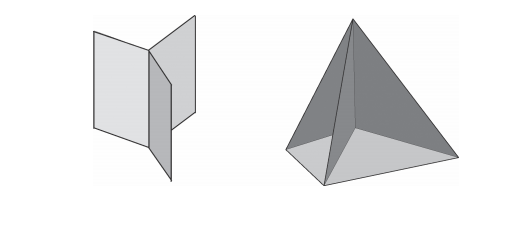}
    \caption{The $\mathbb{Y}$ set and $\mathbb{T}$ set}
    \label{minimal cone}
\end{figure}

G. David, T. De Pauw and T. Toro also gave a flexible version (without proof) of this theorem, where the three types of minimal cones in $\RR^3$ are replaced by sets of type G1, G2 and G3 in $\RR^n$.  Modulo isometry, sets of type G1 are $d$-planes in $\RR^n$, where $d\geq 2$. Sets of type G2 are $Y\times \RR^{d-1}$. Finally, a 2-dimensional set of type G3 is a cone over a subset $\Gamma\subset \partial B(0,1)$, where $\Gamma$ is composed of not too small arcs of circles that only meet at their endpoints, with angles greater than $\pi/2$, and each endpoint is a common endpoint of exactly three arcs. A $d$-dimensional set of type G3 is the product of  $\RR^{d-2}$ and a 2-dimensional set of type G3.


{
In this paper, we generalize the theorem of G. David, T. De Pauw, and T. Toro \cite{DDT} to a general geometric framework. While \cite{DDT} focuses on 2-dimensional minimal cones in $\mathbb{R}^3$ and cones of type G1, G2, G3 in $\mathbb{R}^n$, we here consider a more general class of cones in $\mathbb{R}^N$, which we classify into sets of type $m$ for each integer $0 \leq m \leq n$.

Specifically, a set of type $m$ is isometrically equivalent to the product $C \times \mathbb{R}^m$, where $C$ is an $(n-m)$-dimensional cone over a simplicial complex (see Definition \ref{复合锥}); the detailed definition of sets of type $m$ is given in Definition \ref{type m}. Modulo isometries, we define the spines of such a set as follows: the $m$-dimensional spine is $\{0\} \times \mathbb{R}^m$. For any integer $t$ with $m < t \leq n$, the $t$-dimensional spine is defined as the product of the sub-cone generated by the $(t-m-1)$-dimensional faces of the underlying complex and $\mathbb{R}^m$. Under this classification, the $\mathbb{Y}$-set corresponds to type 1 (product of a 1-dimensional cone and $\mathbb{R}^1$), and the $\mathbb{T}$-set corresponds to type 0 (product of a 2-dimensional cone and $\mathbb{R}^0$).

Let us introduce our main theorem. Before doing so, we explicitly fix some notations. Precise definitions and properties will be provided in Section 2.
We denote by $\mathscr{TA}$ the collection of all sets of type $m$ for every $0 \le m \le n$. Let $\B$ be a finite subset of $\mathscr{TA}$ modulo isometries (that is, we consider $E$ and $F$ equivalent if $E=R(F)$ for some isometry $R$). Furthermore, we denote by $\A(\B)$ the finite set of all blow-up limits derived from $\B$, which are also contained in $\mathscr{TA}$. Throughout the paper, $n_0, \delta_0, \alpha$ represent constants depending only on $\B$ and $n$.

}


\begin{theorem}\label{主定理1}
    For each $\B\subset\mathscr{TA}$ such that $ \B/\sim$ is finite, there exist $C=C(\B)$, $\varepsilon_0=\varepsilon(\B)$ that depend only on $\B$ s.t. the following holds. Let $E\subset \RR^N$ be a closed set that contains the origin and $\varepsilon<\varepsilon_0$. If for each $x\in E$ and radius $r>0$, there is a set $Z(x,r)\in \A(\B)$ that contains $x$, such that
    \begin{equation}
        d_{x,r}(E,Z(x,r))<\varepsilon,
    \end{equation}
    then there is a set $Z\in\A(\B)$ through the origin and an injective mapping $f: B(0,1.95)\to B(0,2)$, with the following properties:
    \begin{equation}\label{f的包含与被包含}
        B(0,1.9)\subset f(B(0,1.95))\subset B(0,2),
    \end{equation}
    \begin{equation}\label{f作用于Z(0,2)}
        E\cap B(0,1.9)\subset f(Z\cap B(0,1.95))\subset E\cap B(0,2),
    \end{equation}
    \begin{equation}\label{f Holder}
        (1+C\varepsilon)^{-1} |x-y|^{1+C\varepsilon}< |f(x)-f(y)|<(1+C\varepsilon)|x-y|^{1/(1+C\varepsilon)}\text{ for }x,y\in B(0,1.95), 
    \end{equation}
    \begin{equation}\label{f移动一点距离}
        |f(x)-x|<C\varepsilon\text{ for }x\in B(0,1.95).
    \end{equation}
    Here 
    \begin{equation}
    \begin{aligned}
        d_{x,r}(F_1,F_2)=\frac{1}{r}\max \{&\sup\{\dd(z,F_1):z\in F_2\cap B(x,r)\},\\&\sup\{\dd(z,F_2):z\in F_1\cap B(x,r)\}
        \} 
        \end{aligned}
    \end{equation}
    whenever $F_1,F_2$ are closed sets in $\RR^N$ that meet $B(x,r)$.
\end{theorem}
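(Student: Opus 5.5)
The plan follows the scheme of \cite{DDT}, with the hierarchy of spines of sets of type $m$ playing the role of the stratification plane/$\mathbb{Y}$/$\mathbb{T}$. The map $f$ is built as a limit $f=\lim_k f_k$, where $f_k=\sigma_k\circ\cdots\circ\sigma_1$ and each $\sigma_k$ moves points by at most $C\varepsilon r_k$ at scale $r_k=10^{-k}$. Throughout, the finiteness of $\B/\sim$, and hence of $\A(\B)$, gives compactness, so all constants are uniform.

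\textbf{Step 1: coherent choice of models.} Fix $r_k=10^{-k}$ and a maximal $r_k$-separated net $\{x_{j,k}\}\subset E\cap B(0,2)$. For each net point take $Z_{j,k}=Z(x_{j,k},100r_k)$. The first geometric lemma says that two elements of $\A(\B)$ which are $\varepsilon$-close to the same set $E$ in a ball $B(x,r)$ are $C\varepsilon$-close to each other in $B(x,r/2)$, after an isometry close to the identity and modulo blow-ups. This uses a quantitative separation between non-isometric cones in $\A(\B)$ and their blow-ups, which finiteness provides.

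This yields a stratification of $E$. Let $E_m$ be the set of points whose models at every small scale near the point have an $m$-dimensional spine through a nearby point. Near such a point the models at consecutive scales $r_k,r_{k+1}$ differ by an isometry $C\varepsilon$-close to the identity. Near a point of a higher stratum, the model at scale $r_{k+1}$ is instead $C\varepsilon$-close to a blow-up of $Z_{j,k}$ at a point of one of its $t$-dimensional spines. Here the structure of cones over simplicial complexes (Definition \ref{复合锥}) enters: every blow-up at a point of the $t$-spine minus the lower spines is again of type $t$.

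\textbf{Step 2: parametrize the strata inductively, lowest dimension first.} The $m$-spine part is handled by the classical Reifenberg construction. Take a partition of unity $\{\theta_{j,k}\}$ subordinate to $B(x_{j,k},10r_k)$, let $\pi_{j,k}$ be the nearest-point projection onto the $m$-spine of $Z_{j,k}$, and set $\sigma_k(y)=y+\sum_j\theta_{j,k}(y)(\pi_{j,k}(y)-y)$ on that region. For higher strata, replace $\pi_{j,k}$ by a Lipschitz retraction $\ppi_{j,k}$ onto $Z_{j,k}$. This retraction is defined on a neighborhood of $Z_{j,k}$, is compatible with the face structure, and sends each open face to itself. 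Compatibility with the lower spines is forced by first applying the already-built maps along the lower spines and then using the conical structure to interpolate face by face, as in the gluing of the $\mathbb{Y}$ and $\mathbb{T}$ pieces in \cite{DDT}.

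Closeness of consecutive models gives $|D\sigma_k-I|\le C\varepsilon$ and $|\sigma_k-I|\le C\varepsilon r_k$. The standard Reifenberg estimates then follow. First, $f_k$ converges uniformly, which gives \eqref{f移动一点距离}. Second, $|f(x)-f(y)|$ is comparable to $|x-y|$ up to factors $(1+C\varepsilon)^{k}$ with $r_k\approx|x-y|$, which gives \eqref{f Holder} and injectivity. Third, $f_k(Z\cap B)$ stays within $C\varepsilon r_k$ of $E$ and vice versa, which gives \eqref{f作用于Z(0,2)}. The inclusions \eqref{f的包含与被包含} come from a degree argument, since $f$ is a small perturbation of the identity. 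Here $Z=Z(0,2)$ after normalization.

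\textbf{Main obstacle.} The hard step is constructing the retractions $\ppi_{j,k}$ so that maps from neighboring net points and different strata glue consistently. Concretely, $\sum_j\theta_{j,k}\ppi_{j,k}$ must still map a $C\varepsilon r_k$-neighborhood of the current image into a set close to both $E$ and the next model, without folding faces together where the dihedral angles are small. My first attempt is to rely on a uniform lower bound on the angles between faces of complexes in $\B$, which follows from finiteness. With it I would build the retractions by induction on face dimension, using local bi-Lipschitz charts of each cone near its $t$-spine as a product $\RR^t\times(\text{lower-dimensional cone})$. The convex-combination defect would then be controlled by $C\varepsilon$ times the Lipschitz constants in these charts.
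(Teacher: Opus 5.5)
Your architecture matches the paper's: you stratify $E$ into the $E_m$, parametrize spine by spine from the lowest stratum, take each step as an average of local maps subordinate to a partition of unity at geometrically decreasing scales, and finish with a $(1\pm C\varepsilon)^k$ count and a degree argument. But two steps fail as written.

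First, $\sigma_k(y)=y+\sum_j\theta_{j,k}(y)(\pi_{j,k}(y)-y)$ is built from nearest-point projections or retractions onto the models. Wherever a single $\theta_{j,k}$ equals $1$, it collapses a whole neighborhood onto a lower-dimensional set. So $|\sigma_k-I|\le C\varepsilon r_k$ and $|D\sigma_k-I|\le C\varepsilon$ are false on the ball. On the current image only the tangential estimate $|D\sigma_k-D\ppi|\le C\varepsilon$ holds, with $\ppi$ the orthogonal projection onto the relevant face plane. Hence your $f$ is not injective on $B(0,1.95)$, and neither the bi-H\"older bound there nor the degree argument for $B(0,1.9)\subset f(B(0,1.95))$ follows. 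The paper keeps the two tasks apart. Projections (the maps $h_i$) are used only to build $f^m$ on $L^m$. The ambient map is the limit built from $g_k=\sum_i\theta_i\eta^n_i$, where $\eta^n_i$ is composed from \emph{flattening} maps $x\mapsto x-\varphi(\pi(x))$, with $\varphi$ the graph function of the current image over the local model. These carry the current image onto the model yet satisfy $|D\eta^n_i-I|\le C\varepsilon$, and on $Z$ the resulting $f$ agrees with $f^n$.

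Second, the step you flag as the main obstacle does not close with the estimate you propose. A chart $\RR^t\times(\text{cone})$ near the $t$-spine is valid only on balls of radius comparable to $\dd(x,L^{t-1})$. Any face-respecting retraction or cutoff built from it therefore has derivatives of size $1/\dd(x,L^{t-1})$. Meanwhile a retraction onto a fixed model moves points of the current image by about $\varepsilon r_k$. The defect $C\varepsilon r_k/\dd(x,L^{t-1})$ is unbounded near the lower spines. That is exactly where the face to project onto is ambiguous, so the surface can be folded onto the wrong branch.

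The missing idea is an alignment step at every scale and in every ball. Before projecting $\Gamma^m_k$, push the current images of the lower strata \emph{exactly} onto the lower spines of that ball's model. This is the paper's $\eta^{m-1}_i$; it depends on the current image, and it is possible because by (M2)--(M3) those images are small Lipschitz graphs over the corresponding branches. Afterwards the image of the $m$-stratum lies in thin cones $\dd(z,L^{m,l}_i)\le C\varepsilon\,\dd(z,L^{m-1}_i)$, so the correct branch is unambiguous. Displacements are then at most $C\varepsilon\,\dd(x,L^{m-1}_i)$, and cutoffs with gradient at most $C/\dd(x,L^{m-1}_i)$ cost only $C\varepsilon$.

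Doing this for all lower spines at once is what the word-indexed conical regions $O(w)$ accomplish, with the angle hierarchy $\theta_{s,1}>\beta_s>\theta_{s,2}>\theta_{s+1,1}$. This must be combined with the branch-correspondence lemma for nearby balls, so that overlapping balls send a given point to corresponding branches. A uniform lower bound on the angles is needed for all of this, but it does not replace it.
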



By the bi-H$\ddot{\text{o}}$lder condition (\ref{f Holder}), we see that $f$ is a homeomorphism between $B(0,1.95)$ and its image. And (\ref{f作用于Z(0,2)}) implies that $f$ gives a parameterization of the set $E$ from a set of type $m$ for some integer $m$.


{

For the proof, we follow the general strategy of \cite{DDT}, but the construction of the parameterization is quite different. In \cite{DDT}, the authors relied on the classification of minimal cones in $\mathbb{R}^3$. Since there are only 3 geometric types (planes, $\mathbb{Y}$-sets, and $\mathbb{T}$-sets), they could use a case-by-case analysis. In our setting, however, we deal with cones over simplicial complexes in $\mathbb{R}^N$. The structures are too numerous to list, making a case-by-case analysis impossible. Instead, we develop a systematic induction on the dimension of the spines. This allows us to handle all geometric types in a unified way, without checking specific shapes one by one. This generalized result provides a tool for studying the regularity of sets whose blow-up limits are cones over simplicial complexes.

}


{  

The strategy of the proof relies on the hierarchical structure of the set $E$. Specifically, if the cone $Z$ is of type $m$, the set $E$ admits a stratification into disjoint subsets $E_i$ for $i=m, \dots, n$. For each $i$, $E_i$ consists of points on $E$ at which the blow-up limit is of type $i$. Geometrically, $E_i$ behaves like the $i$-dimensional spine of $Z$ at appropriate scales. These strata satisfy a closure condition analogous to the skeletal structure of a polyhedron: $E_{i-1}$ is contained in the closure of $E_i$. Crucially, away from the lower-dimensional strata $E_{i-1}$, the set $E_i$ locally satisfies the assumptions of Reifenberg’s Topological Disk Theorem of dimension $i$. See Section 3 for details.


We construct the parameterization $f: Z \cap B(0, 1.95) \to E$ by induction on the dimension of the spines of $Z$. As an illustrative example, assume $Z$ is of type 0, so that it possesses $m$-dimensional spines denoted by $L^m$ for all $0 \le m \le n$. We first define the parameterization $f^0$ from the 0-dimensional spine $L^0$ to $E_0$. We then define $f^1$ on the 1-dimensional spine $L^1$ as an extension of $f^0$ satisfying $f^1|_{L^0} = f^0$. We proceed with this inductive construction until we obtain $f^n$ on the entire cone $L^n = Z$. Finally, $f$ is extended to the ambient neighborhood $B(0, 1.95)$.

For each dimension $m$, the map $f^m$ is obtained as the limit of a sequence of homeomorphisms $\{f^m_k\}_{k=0}^\infty$. Setting $f^m_0 = id$, we define $f^m_{k+1} = g^m_k \circ f^m_k$, where each $g^m_k$ moves points by a distance comparable to $2^{-k}$ with constants independent of $k$. Geometrically, each $f^m_k$ maps the spine $L^m$ into a $C 2^{-k}$-neighborhood of the set $E_m$, so that the limit map $f^m$ sends $L^m$ precisely onto $E_m$.

}

{The plan for the rest of the article is the following. In Section 2, we introduce some basic notations and the definitions of sets of different types. Then we discuss the geometric facts of the sets we defined, which imply that sets of different types have different topological structures, so $d_{x,r}$ between different sets controls the distance of their spines.  In Section 3, we define the different subsets $E_m$ of $E$ and show the properties of $E_m$ inductively. In general, they satisfy the condition of Reifenberg's Topological Disk Theorem of dimension $m$ locally. We also discuss the relationship between $E$ and $E_m$. In Section 4, we define a partition of unity and then 
show the similarity of cones corresponding to close balls as a preparation for the construction of $f$. In Section 5, we construct the map $f$ inductively and prove that it satisfies all properties in the main theorem.
}


\section{Notations and geometric facts}
\subsection{Basic notations}

Unless otherwise specified, a ball $B$ in $\RR^N$ denotes an open ball and $\overline{B}$ denotes the closure of $B$.

Let $x\in \RR^N$ and $r>0$. We denote by $B(x,r)$ the open ball centered at $x$ with radius $r$. For $k>0$, $k B(x,r)$ denotes $B(x,kr)$.

Let $x,y,z$ be three different points in $\RR^N$, $\angle xyz$ denotes the angle between the vectors $x-y$ and $z-y$.

For a vector $v$ in $\RR^N$, $|v|$ denotes the Euclidean norm of $v$.

Let $F$ be a set in $\RR^N$, the affine hull of $F$ is $\text{aff}(F)=\{\sum_{i=1}^k \lambda_i x_i :x_i\in F,\lambda_i\in \mathbb{R},\sum_{i=1}^k \lambda_i=1,k\in \mathbb{N}_{+}\}.$

Let $F$ be a convex set in $\RR^N$, the relative interior of $F$ is $F^{\circ}=\{x\in F:$ there is $r>0$    
 such that aff$(F)\cap B(x,r)\subset F\}.$ 
And the relative boundary of $F$ is $\overline{F}\backslash F^{\circ}$, where $\overline{F}$ is the closure of $F$.

We say a plane of dimension $m$ is an $m$-plane.

Let $F$ be a set. We denote by $\#  F$  the number of elements in $F$.

We say a set $W$ coincides with another set $Y$ in a ball $B$ if and only if $W\cap B=Y\cap B$.

\subsection{Definitions and properties of cones}

{  
Throughout this article, we fix the integers $N > 0$ and $n > 0$. We assume that the ambient dimension $N$ is sufficiently large relative to $n$ ($N \gg n$) to ensure that the ambient space $\mathbb{R}^N$ admits an isometric embedding of the complex cones defined below. Specifically, since a complex cone is formed by the union of multiple simple cones (as in Definition \ref{简单锥} and Definition \ref{复合锥}), a sufficiently large ambient dimension is required to ensure that intersections only occur at common faces. This assumption is made without loss of generality, as our main results hold for any ambient dimension $N$ capable of embedding these structures. We also fix an integer $N'$ such that $n < N' < N$. In the discussion that follows, $m$ will denote an integer variable such that $0 \le m \le n$.

In this subsection, we first establish the definitions of simple cones and general complex cones of dimension $m$. These general complex cones form the broad collection of geometric objects modeled on simplicial complexes. From this general collection, we identify a specific subset of interest: sets of type $m$ for all integers $m\in \{0,1,\dots ,n\}$. These are products of $\RR^{m}$ and $(n-m)$-dimensional complex cones that satisfy an additional requirement, which we call the non-flat condition (see Definition \ref{非平坦条件}). We will refer to the cones in this subset as non-flat cones. The main idea of this condition is to make sure that boundaries do not disappear when we take the union of cones. For example, if two half-planes meet at an angle of 180 degrees, their common boundary becomes an interior line and is no longer a boundary. Our condition avoids such cases, ensuring that the boundaries do not degenerate into interior points.

Our main goal is to classify these non-flat cones into disjoint categories, which we call sets of type $m$ ($0 \le m \le n$) in Definition \ref{type m}. To validate this classification, we first study the structure of non-flat cones. In Lemma \ref{complex cone的内部是局部和plane重合的点}, we show that the spine (or boundary) of a non-flat cone of dimension $m$ is itself a non-flat complex cone of dimension $m-1$. This implies that the class of non-flat cones is closed under taking spines. Using this result, we prove in Proposition \ref{每个set对应的type唯一} that our definition of `type' is well-posed, meaning that these types form a disjoint partition (i.e., a set cannot be of both type $m$ and type $m'$ for $m\neq m'$). Finally, in Proposition \ref{A(W)的完备性}, we show that the class of non-flat cones is stable when taking blow-up limits.
}

\begin{definition}\label{简单锥} {\rm {(simple cone of dimension $m$).}}
    {
     Let $X=\{x_1, \dots, x_m\} \subset \mathbb{S}^{N'-1}$ be a set so that  $\{0,x_1, \dots, x_m\}$ is affinely independent. Then we define the simple cone of dimension $m$, $C(X)$, as the cone over the convex hull of $X$. That is, let $conv(X)=\{ \sum_{i=1}^{m} \lambda_i x_i, \text{ }\lambda_i\geq 0,\enspace \sum_{i=1}^{m} \lambda_i=1\}$ and define
    \begin{equation}
        C(X)=\{ tx: t\geq 0\text{ and }x\in conv(X) \}.
    \end{equation}
    For a general closed set $E \subset \mathbb{R}^N$, we say $E$ is a simple cone of dimension $m$ if it can be written as $E=C(X)$ for some such set $X$.
    }
    
    In addition, we say  $\emptyset$ is of dimension -1 and $C(\emptyset)=\{0\}$ is a simple cone of dimension 0. 
    For each $Y\subset X$ such that { $\#  Y=t\in\{0,\dots m\}$}, we say $C(Y)$ is a $t$-boundary of $C(X)$. 

\end{definition}

\begin{definition}{\rm {(complex cone of dimension $m$).}}\label{复合锥}
    {Let $T$ be a set of the form $T=\cup_{i=1}^k C(X_i)$, where $\{C(X_i)\}_{i=1}^k$ is a collection of distinct simple cones of dimension $m$ in $\RR^{N'}$ and $k \in \mathbb{N}_+$. Then we say that $T$ is a complex cone of dimension $m$ if the following condition holds: defining $\Omega_T$ by}
      \begin{equation}\label{Omega T}
        \Omega_T=\{Y\subset X_i\text{ for some }1\leq i\leq k\},
    \end{equation} 
     then for every $X, Y\in \Omega_T$,  $C(X)\cap C(Y)=C(X\cap Y)$.

\end{definition}

\begin{definition}\label{角的定义}{\rm {($\angle(F_1,F_2)$).}} Let $T=\ccup_{i=1}^k C(X_i)$ be a complex cone of dimension $m$ and let $F_1,F_2$ be two different non-empty elements in $ \Omega_T$, where $F_1\not\subset F_2$ and $F_2\not\subset F_1$. Suppose $F_1\cap F_2=Z$ and $\#  Z=t$.  Let
\begin{equation}\label{角F1F2的定义}
    \angle(F_1,F_2)=\{\angle f_1 z f_2: z\in C(Z),\text{ } f_j\in C(F_j)\backslash C(Z),\text{ }\dd(f_j, C(Z))=|f_j- z|,\text{ }j=1,2\}.
\end{equation}
Fix $F_1,F_2$, then $\inf \angle(F_1,F_2)>0$. Let 
\begin{equation}\label{角T}
\angle(T)=\min\{\inf\angle(F_1,F_2):F_i\in \Omega_T,i=1,2\}, 
\end{equation}
then we have $\angle(T)>0$.
\end{definition}

{
\begin{remark}
In the case where $F_1 \cap F_2 = \emptyset$, we have $Z = \emptyset$ and by Definition \ref{简单锥}, $C(Z) = \{0\}$. Consequently, the point $z$ in \eqref{角F1F2的定义} must be the origin $0$. In this case, the definition of $\angle(F_1, F_2)$ simplifies to the set of standard Euclidean angles $\angle f_1 0 f_2$ between any non-zero vectors $f_1 \in C(F_1)$ and $f_2 \in C(F_2)$ at the origin.
\end{remark}
}

\begin{definition}\label{非平坦条件}{\rm {(non-flat condition)}.} Let $T=\ccup_{i=1}^k C(X_i)$ be a complex cone of dimension $m$. We say $T$ satisfies the non-flat condition if the following holds. Let $Y\in \Omega_T$ and suppose $\#  Y=t (0\leq t<m)$. Then for every $Z_1,Z_2\in \Omega_T$ such that $Y=Z_1\cap Z_2$ and $\# Z_1=\#  Z_2=t+1$,  $\sup\angle(Z_1, Z_2)<\pi$.
\end{definition}

\begin{definition}{\rm {(type $m$).}}\label{type m}
    A set of type $m$ in $\RR^N$ is a set 
    $W=R(T\times \RR^{m}\times\{0\})$, 
    where $T$ is a complex cone of dimension $n-m$ that satisfies non-flat condition, R is an isometry in $\RR^N$ and $0\in \RR^{N-N'-m}$. 
\end{definition}

Note that if we move $W$  along the direction parallel to $R(\{0\}\times \RR^m\times\{0\})$,  the set { obtained coincides with} $W$.  It is clear that a set of type $n$ is an $n$-plane in $\RR^N$.  And we say $W_1\sim W_2$ if and only if there exists an isometry $R$ in $\RR^N$ such that $W_1=R(W_2)$.

\begin{definition}{\rm {(spine).}}\label{spine}
    Suppose $W=R (T \times \RR^{m}\times \{0\})$ is a set of type $m$, where $T=\ccup_{i=1}^k C(X_i)$ and $k\in \mathbb{N}_{+}$. We can define the $(m+t)$-spine $L^{m+t}(W)$ of $W$ for $ 0\leq t\leq n-m$:
    \begin{equation}
        L^{m+t}(W)=R( \ccup_{Y\in \Omega_T,\#  Y=t} (C(Y)  \times \RR^m \times \{0\}) ). 
    \end{equation}
    As a result, $L^n(W)=W$ and $L^m(W)=R(\{0\}\times \RR^m\times \{0\})$, where the first zero is in $\RR^{N'}$ and the second is in $ \RR^{N-N'-m}$. In addition, $L^{m+t}(W)$ is of dimension $m+t$ and $L^{m_1}(W)\subset L^{m_2}(W)$ for all $m\leq m_1<m_2\leq n $. For each $Y\in \Omega_T$ such that $\#  Y=t$, we say $R(C(Y)  \times \RR^m \times \{0\} )$ is a branch of $L^{m+t}(W)$. $L^{m+t}(W)$ is composed by finitely many such branches, that is, $L^{m+t}(W)=\cup_{l} L^{m+t,l}(W)$. For each $0\leq s<m$, set $ L^s(W)=\emptyset$. And we say $L^0(W)$ is the center of $W$ when $W$ is of type 0.
\end{definition}

{ 
By Definition  \ref{简单锥}, a simple cone $C(X)$ is a closed convex set. It is obvious that the relative interior $C(X)^{\circ}$ satisfies $C(X)^{\circ} = C(X) \setminus \bigcup \{C(Y) : Y \subset X, \#Y = m-1\}$. Furthermore, each $s$-boundary of $C(X)$ is itself a simple cone of dimension $s$. Consequently, if $T$ is a complex cone as in Definition \ref{复合锥}, then for any $t\in \{0,\dots ,m\}$, the set $\cup_{Y\in \Omega_T,\# Y=t} C(Y)$ is a complex cone of dimension $t$. This follows directly from the property that $C(X) \cap C(Y) = C(X \cap Y)$ for all $X, Y \in \Omega_T$.
}

{ 
In order to prove that the definition of type in Definition \ref{type m} is well-posed (which will be established in Proposition \ref{每个set对应的type唯一}), we first need to examine the hierarchical structure of non-flat cones. Specifically, in the following Lemma \ref{complex cone的内部是局部和plane重合的点}, we show that the class of non-flat cones is closed under taking spines. This structural result will then be used in the proof of Proposition \ref{每个set对应的type唯一} to demonstrate the uniqueness of the type.
}


\begin{lemma}\label{complex cone的内部是局部和plane重合的点}
    Let $T=\ccup_{i=1}^k C(X_i)$ be a complex cone of dimension $m\in\{1,\dots,n\}$ that satisfies the non-flat condition. Let 
    \begin{equation}
        RI_m(T)=\{x\in T: T\cap B(x,r)=P\cap B(x,r) \text{ for some }r>0 \text{ and $m$-plane }P\}
    \end{equation}
     be the ``relative interior" of $T$. Then we have
    \begin{equation}\label{式子：complex cone的边界}
        RI_m(T)=\cup_{i} C(X_i)^{\circ}.
    \end{equation}
    In addition, $T\backslash RI_m(T)= \cup_{Y\in \Omega_T,\#  Y=m-1} C(Y)$ is a complex cone of dimension $m-1$ that satisfies the non-flat condition.
\end{lemma}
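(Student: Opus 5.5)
The plan is to prove the two inclusions in \eqref{式子：complex cone的边界} separately. From them I will read off that $T\backslash RI_m(T)$ is the union of the $(m-1)$-boundaries, and then check that this union inherits the complex-cone and non-flat properties from $T$.

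\emph{Inclusion $\cup_i C(X_i)^{\circ}\subset RI_m(T)$.} Take $x\in C(X_i)^{\circ}$, let $P=\mathrm{aff}(C(X_i))$ (an $m$-plane through $0$, since $\{0\}\cup X_i$ is affinely independent), and pick $r$ with $P\cap B(x,r)\subset C(X_i)$. I need $C(X_j)\cap B(x,r')=\emptyset$ for all $j\ne i$ and some smaller $r'$. Since the $C(X_j)$ are closed, it suffices to show $x\notin C(X_j)$. If $x\in C(X_j)$, then $x\in C(X_i)\cap C(X_j)=C(X_i\cap X_j)$. Because the simple cones are distinct, $X_i\cap X_j\subsetneq X_i$, so $C(X_i\cap X_j)$ lies in an $(m-1)$-boundary of $C(X_i)$. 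This contradicts $x$ being a relative interior point. Hence $T\cap B(x,r')=P\cap B(x,r')$.

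\emph{Inclusion $RI_m(T)\subset\cup_i C(X_i)^{\circ}$.} This is the main obstacle, and it is the only place the non-flat condition enters. Suppose $x\in RI_m(T)$ with $T\cap B(x,r)=P\cap B(x,r)$, but $x$ lies in no open face. Then $x\in C(Y)^{\circ}$ for some $Y\in\Omega_T$ with $\#Y=t<m$, choosing $Y$ minimal so that $x$ lies in the relative interior of $C(Y)$; if $t=0$, then $x=0$. Near $x$, $T$ is the union of the cones $C(X_i)$ with $X_i\supset Y$, and the $(t+1)$-faces $C(Z)$ with $Z\supset Y$, $\#Z=t+1$, stick out of $C(Y)$ as half-spaces of dimension $t+1$ modulo $\mathrm{aff}(C(Y))$.

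I will work in the normal slice. Intersecting $T\cap B(x,r)$ with the affine plane through $x$ orthogonal to $\mathrm{aff}(C(Y))$ gives, near $x$, a cone in the $(m-t)$-plane $P\cap \mathrm{aff}(C(Y))^{\perp}+x$. This cone is the union of the normal pieces of the $C(X_i)$, and it must equal the whole $(m-t)$-plane. The $(t+1)$-faces contribute rays $\rho_Z$ in this slice, and $\sup\angle(Z_1,Z_2)$ is (at least) the angle between $\rho_{Z_1}$ and $\rho_{Z_2}$.

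If $t=m-1$, the slice is a line covered by finitely many rays, so two distinct faces $Z_1,Z_2$ must give opposite rays. Their angle is then $\pi$, contradicting $\sup\angle(Z_1,Z_2)<\pi$. For general $t$, the slice is a full plane $\RR^{m-t}$ tiled by finitely many simplicial cones, which is a complete fan. I would then pick a ray $\rho_{Z_1}$ and show that $-\rho_{Z_1}$ lies in some cone of the fan. I would try reducing to the previous case by slicing again around $\rho_{Z_1}$, i.e.\ inducting on $m-t$. Alternatively, I would argue that two adjacent rays whose span contains a $2$-dimensional sector of the fan cannot all subtend angles $<\pi$ around the whole circle in a $2$-plane through them. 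Some care is needed to pass from the direction angles to the exact set in \eqref{角F1F2的定义}, using nearest points $f_j$ to $C(Y)$ at the foot $z$.

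\emph{Structure of the complement.} The complement $T\backslash RI_m(T)$ is then $T\backslash\cup_i C(X_i)^{\circ}=\cup_{\#Y=m-1}C(Y)$, by the description of $C(X)^{\circ}$ given before the lemma. It is a complex cone of dimension $m-1$ by the remark preceding the lemma. For non-flatness, note that $\Omega$ of the new cone is $\{Y\in\Omega_T:\#Y\le m-1\}$, and the angle sets $\angle(Z_1,Z_2)$ are defined purely from $C(Z_1),C(Z_2),C(Z_1\cap Z_2)$. The required inequalities for $\#Y=t<m-1$ are therefore already instances of the non-flat condition for $T$.
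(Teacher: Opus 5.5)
Your first inclusion, your codimension-one case $t=m-1$, and your final paragraph on the complement are correct and agree with the paper. The gap is in the case $t<m-1$, which you flag as the main obstacle but never settle.

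The ``fan'' route through the level-$t$ angle condition cannot work, because non-flatness at level $t<m-1$ does not detect local flatness. The paper's own Remark gives an example: three $120^\circ$ sectors in $\RR^2$ fill the plane. Their three $1$-faces make pairwise angles $2\pi/3<\pi$, so the condition at $t=0$ holds. No two face rays are opposite, and ``angles $<\pi$ around the circle'' is perfectly possible. The flatness only shows up at level $1=m-1$, at points of the rays away from $0$, where adjacent sectors meet at angle $\pi$. So every argument must end in the codimension-one condition, never the level-$t$ one.

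The missing idea, which is exactly how the paper argues, is that $RI_m(T)$ is relatively open in $T$. If $T\cap B(x,r)=P\cap B(x,r)$, then for every $y\in T\cap B(x,r)$ the same plane $P$ works on $B(y,r-|x-y|)$. Now every $C(Y)$ with $\#Y<m-1$ lies in some $C(Y')$ with $\#Y'=m-1$, and $C(Y')^{\circ}$ is dense in $C(Y')$. Hence a point $x\in RI_m(T)\cap C(Y)$ would produce a point of $RI_m(T)\cap C(Y')^{\circ}$, contradicting the case $t=m-1$ that you already proved.

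This single observation replaces the whole fan analysis. It also removes the need for ``care'' about feet $z$ on the relative boundary of $C(Y)$. Your option of ``slicing again around $\rho_{Z_1}$'' is a stepwise version of this reduction. It becomes a proof once you state that local flatness at $x$ passes to the nearby points of $C(Z_1)^{\circ}$, and let the induction bottom out at $\#Y=m-1$.
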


\begin{proof}
     We first show (\ref{式子：complex cone的边界}). It is obvious that  $C(X_i)^{\circ}\subset RI_m(T)$ for each $i$. 
    For the converse, we claim that
    \begin{equation}\label{C(Y)内部和T的内部不交}
        C(Y)^{\circ}\cap RI_m(T)=\emptyset \text{ for each $Y\in \Omega_T$ such that $\#  Y=m-1$.}
    \end{equation} 
    Otherwise, there is $x\in C(Y)^{\circ}\cap RI_m(T)$ for some $Y\subset X_i$ such that $\#  Y=m-1$. 
    
    Since $x\in RI_m(T)$, there is $r>0$ and an $m$-plane $P$ such that $T\cap B(x,r)=P\cap B(x,r)$. It indicates that we can find $j\neq i$ such that $C(Y)\subset C(X_j)$. Otherwise, $Y\not\subset X_i\cap X_j$ for any other $j\neq i$, then $C(Y)^{\circ} \cap C(X_j)=\emptyset$. Since $C(X_j)$ is closed and $x\in C(Y)^{\circ}$, there exists $\rho_0>0$ such that $ B(x,\rho)\cap C(Y)\subset C(Y)^{\circ}$ and $ B(x,\rho)\cap C(X_j)=\emptyset$ for each $\rho<\rho_0$. Without loss of generality, let $\rho<r$, then $T\cap B(x,\rho)= C(X_i)\cap B(x,\rho)$. The intersection is a half $m$-plane, which contradicts $T\cap B(x,r)=P\cap B(x,r)$. So we can find $j\neq i$ such that $C(Y)=C(X_i)\cap C(X_j)$.
    Denote by $P'$ the $(m-1)$-plane that contains $C(Y)$. Then we have $P'=$aff$(C(Y))\cap B(x,r)\subset$aff$(T\cap B(x,r))=$aff$(P\cap B(x,r))\subset P$. Therefore, $P'\subset P$. Denote by $\pi$ the orthogonal
    projection onto the subspace that orthogonal to $P'$. Then $\pi(T\cap B(x,r))=\pi(P)\cap \pi(B(x,r))$, where $\pi(P)$ is a line passing through 0. At the same time, assume that $X_i\backslash Y=\{x_i\}$. Then $ \pi(C(X_i)\cap B(x,r))=\pi(C(X_i))\cap \pi(B(x,r))=C(\{\pi(x_i)\})\cap \pi(B(x,r))$, where $C(\{(x_i)\})$ is a ray with 0 as its beginning. And the condition for $j$ is the same. 
    Thus we can find $p_i\in C(X_i)$, $p_j\in C(X_j)$ such that $\dd(p_i, C(Y))=|p_i-x|$, $\dd(p_j,C(Y))=|p_j-x|$
    and $\angle p_i x p_j=\pi$, which contradicts the non-flat condition. Thus (\ref{C(Y)内部和T的内部不交}) follows. 
    
    Now we consider points in $C(Y)\backslash C(Y)^{\circ}$ for each $i$ and each $Y\subset X_i$ such that $\#  Y=m-1$. If there exists $x\in C(Y)\backslash C(Y)^{\circ}$ such that $x\in RI_m(T)$, then we can find $r>0$ such that $B(x,r)\cap T\subset RI_m(T)$. But $x$ is a limit point of $C(Y)^{\circ}$, thus $B(x,r)\cap C(Y)^{\circ}\neq \emptyset$, which contradicts (\ref{C(Y)内部和T的内部不交}). So we have
    \begin{equation}
        (\cup_{Y\in \Omega_T,\#  Y=m-1} C(Y)) \cap RI_m(T)=\emptyset.
    \end{equation}
    Therefore, (\ref{式子：complex cone的边界}) follows.

    Note that $T\backslash RI_m(T)=\cup_{Y\in \Omega_T,\#  Y=m-1} C(Y)$ is a direct result of (\ref{式子：complex cone的边界}). 
    {  To verify that this set is non-flat, recall that  Definition \ref{非平坦条件}  imposes conditions on the angles at the $t$-boundaries for every dimension $0 \le t < m$.  Since $T$ satisfies these conditions for the full range up to $m$, it automatically satisfies the requirements for the restricted range $0 \le t < m-1$. Thus, $T \setminus RI_m(T)$ satisfies the non-flat condition.
    
    Consequently, $T \setminus RI_m(T)$ is a complex cone of dimension $m-1$ that satisfies the non-flat condition, and Lemma \ref{complex cone的内部是局部和plane重合的点} follows.} 
\end{proof}

{ 
Using the hierarchical structure established in Lemma \ref{complex cone的内部是局部和plane重合的点}, we now prove that the definition of type is well-posed; that is, the type of a set is unique.
}


\begin{proposition}\label{每个set对应的type唯一}
    Let $W=R(T\times \RR^m\times\{0\})$ be a set of type $m(0\leq m\leq n)$, then for any $m'\neq m$, $W$ is not of type $m'$.
\end{proposition}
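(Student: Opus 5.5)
Since isometries preserve every notion involved, I will assume $R$ is the identity. The plan is to find an intrinsic invariant of $W$ that recovers $m$. The natural candidate is the largest dimension of a linear subspace $V$ such that $W+v=W$ for all $v\in V$ (the translation-invariance subspace), or equivalently the dimension of the set of points at which $W$ is locally a translate of a cone invariant in the most directions. I would instead use the stratification given by Lemma \ref{complex cone的内部是局部和plane重合的点}, which is better adapted to the paper. Set
\[
S_0(W)=W,\qquad S_{j+1}(W)=S_j(W)\setminus RI_{n-j}(S_j(W)),
\]
where $RI_d(F)$ denotes the set of points of $F$ at which $F$ locally coincides with a $d$-plane. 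I would show that $S_j(W)=L^{n-j}(W)$ for $0\le j\le n-m$, and that $S_{n-m}(W)=\{0\}\times\RR^m\times\{0\}$ is an $m$-plane, so that $RI_m$ of it is everything and $S_{n-m+1}(W)=\emptyset$. This characterizes $m$ intrinsically: it is the smallest index $d$ such that the iteration, started from $W$ and descending through dimensions $n,n-1,\dots$, reaches the empty set right after dimension $d$. Equivalently, $m=\dim$ of the last nonempty stratum.

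The key step is the identity $RI_{m+t}(L^{m+t}(W))=\bigcup_{\#Y=t}C(Y)^\circ\times\RR^m$ for $1\le t\le n-m$. I would deduce it from Lemma \ref{complex cone的内部是局部和plane重合的点} applied to the complex cone $T_t=\bigcup_{\#Y=t}C(Y)$, which has dimension $t$ and is non-flat by the remark before the lemma together with the lemma's last assertion applied iteratively. The additional ingredient is a product fact: near a point $(a,b)$, the set $T_t\times\RR^m$ coincides with an $(m+t)$-plane if and only if $T_t$ coincides with a $t$-plane near $a$. The direction ``if'' is trivial. For ``only if'', I would intersect with the slice $\RR^{N'}\times\{b\}$. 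The plane $P$ must contain the $\RR^m$ directions, since the set is invariant under them and is locally equal to $P$. Hence $P=P'\times\RR^m$, and the slice gives $T_t\cap B=P'\cap B$ locally, with $P'$ a $t$-plane. By the lemma, the complement of $RI$ is then $T_{t-1}\times\RR^m=L^{m+t-1}(W)$, which yields the induction step.

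The case $t=0$ is the final stage: $L^m(W)$ is an $m$-plane, so $RI_m(L^m(W))=L^m(W)$ and the next stratum is empty. Also, for $t\ge1$ the set $L^{m+t-1}(W)$ is nonempty, since it contains $L^m(W)\neq\emptyset$. This is why the process does not terminate before reaching dimension $m$.

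Now suppose $W$ is of type $m'$ as well, say $W=R'(T'\times\RR^{m'}\times\{0\})$. Because the strata $S_j(W)$ are defined from $W$ alone, the same computation applied to this representation shows that the last nonempty stratum is an $m'$-plane, occurring at index $j=n-m'$. Therefore $n-m=n-m'$, so $m=m'$. The main obstacle I expect is the product fact and the bookkeeping that $RI_{n-j}$ is applied with the correct dimension at each step. In particular, one must check that the dimension index of the stratum, $n-j$, does not depend on the representation. This holds because $S_j$ is defined using $RI_{n-j}$ with $n$ fixed, so both representations produce the same sequence of sets and simply read off where the sequence ends.
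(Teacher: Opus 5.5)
Your proposal is correct and is essentially the paper's argument. The paper also removes $RI_n(W)$, then $RI_{n-1}$ of the remainder, and so on, using the relative-interior lemma to identify each stratum with the next lower spine. It concludes because these strata depend only on $W$ while the last nonempty one is in dimension exactly $m$. Your slicing argument, which shows that an $(m+t)$-plane locally equal to $T_t\times\RR^m$ must split as $P'\times\RR^m$, justifies the product identity $W\setminus RI_n(W)=R\bigl((T\setminus RI_{n-m}(T))\times\RR^m\times\{0\}\bigr)$ that the paper only states as an observation.
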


\begin{proof}
Without loss of generality, let $m'<m$. Consider the subset {  of points in $W$ where the set is locally flat of dimension $n$:} $RI_n(W)=\{x\in W: W\cap B(x,r)=P\cap B(x,r)$ for some $r>0$ and $n$-plane $P\}$. For a given $W$, $RI_n(W)$ 
{  is well-defined}. 
If $m=n$, then $W_{n-1}=\emptyset$.

 {  
 If $m<n$, recall that $W = R(T \times \mathbb{R}^m \times \{0\})$, where $T$ is a complex cone of dimension $n-m$. Observe that a point $x \in W$ is locally flat of dimension $n$ (i.e., $x \in RI_n(W)$) if and only if its projection onto $T$ lies in the relative interior of $T$. Therefore, we have the explicit identification:
 \begin{equation}
     W \setminus RI_n(W) = R\left( (T \setminus RI_{n-m}(T)) \times \mathbb{R}^m \times \{0\} \right).
 \end{equation}
 By applying Lemma \ref{complex cone的内部是局部和plane重合的点} to $T$, we know that $ T \setminus RI_{n-m}(T)= \cup_{Y\in \Omega_T,\#  Y=n-m-1} C(Y)$ is a non-flat complex cone of dimension $n-m-1$. Consequently, $W_{n-1}=R(( \cup_{Y\in \Omega_T,\#  Y=n-m-1} C(Y))\times \RR^{m}\times\{0\})$. 
 }
 
 When $m<n$, define $RI_{n-1}(W_{n-1})$ and let $W_{n-2}=W_{n-1}\backslash RI_{n-1}(W_{n-1})$ similarly. By repeating this process, we get an ascending collection $\{W_{k}\}_{k=m-1}^{n-1}$ such that $W_m\neq\emptyset$ and $W_{m-1}=\emptyset$. For each $k$, $W_k$ is unique for $W$. But if $W$ is of type $m'$, then $W_{m'}\neq \emptyset$ and therefore $W_{m-1}\neq\emptyset$, which leads to a contradiction. 
 
\end{proof}

{ 
\begin{remark}
    The uniqueness of the type of a set reflects the geometric rigidity enforced by the non-flat condition. Without the non-flat condition, the representation might be ambiguous.
    For instance, let $x_1=(1,0),\ x_2=(-1/2,\sqrt{3}/2)$ and $\ x_3=(-1/2,-\sqrt{3}/2)$ in $\RR^2$. Let $ X_1=\{x_1,x_2\},\ X_2=\{x_1,x_3\}$ and $X_3=\{x_2,x_3\}$. The union $T=\cup_{i=1}^3 C(X_i)$ (which does not satisfy the non-flat condition) in $\mathbb{R}^2$ forms the entire plane $\mathbb{R}^2$. This set $T$ could be viewed as a complex cone of dimension 2. However, it is also isometric to $\mathbb{R} \times \mathbb{R}$, which is a set  of type 1 (as in Definition \ref{type m}), making its type ill-defined. 
    The non-flat condition precludes such ambiguity and ensures that the type of a set is uniquely determined. Consequently, a non-flat cone cannot be represented as the product of a lower-dimensional complex cone and $\mathbb{R}^t$ ($t>0$).
\end{remark}
}

 Despite Proposition \ref{每个set对应的type唯一},  a set of type $m$ may coincide with a set of higher type locally.  Precisely speaking, let $W=R(T\times \RR^m\times\{0\})$ be a set of type $m$ and let $x\in W$. Suppose that $x\in L^{t,l}(W)\backslash L^{t-1}(W)$ for some $t\in\{m,...,n\}$. Then we can find a constant $\lambda:=\lambda(W,t,l)>1$, depending only on $W,t,l$, such that for each ball $B(x,r)$ satisfying $\lambda B(x,r)\cap L^{t-1}(W)=\emptyset$, the blow-up limit of $W$ at the point $x$
\begin{equation}\label{局部锥的定义式}
    C( W\cap B(x,r)-x)=\{ cy: y\in W\cap B(x,r)-x\text{ and }c\geq 0\}
\end{equation}
is a set of type $t$. We denote it by $W(t,l)$. Let 
\begin{equation}
    \A (W)=\{R( W(t,l)): m\leq t\leq n, l\in\mathbb{N}, R \text{ is an isometry in $\RR^N$}\}.
\end{equation} 
It is obvious that $\A(W)/\sim$ is finite.

Let $\mathscr{TA}=\{W: W\text{ is of type }m,\text{ }0\leq m\leq n\}$. For each subset $\B\subset \mathscr{TA}$, let $\A(\B)=\cup_{W\in \B}\A(W)$.
Then we have the following proposition which says that $\A(W)$ is complete for each $W$.

\begin{proposition}\label{A(W)的完备性}
For each $0\leq m\leq n$ and each set $W$ of type $m$,
    \begin{equation}
        \A(\A(W))=\A(W).
    \end{equation}
\end{proposition}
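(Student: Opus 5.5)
Both inclusions reduce to understanding the blow-ups of the sets $W(t,l)$. Write $\A(\A(W))=\bigcup_{V\in\A(W)}\A(V)$. The inclusion $\A(W)\subset\A(\A(W))$ is immediate. Each $V=R(W(t,l))\in\A(W)$ is a set of type $t$, and its top-dimensional spine $L^t(V)$ with no lower strata removed gives $V(t,\cdot)=V$ itself, since $V$ is a cone invariant under translations along $L^t(V)$ and $L^{t-1}(V)=\emptyset$. In particular $W(m,l)=W$ (taking $x\in L^m(W)$), so $W\in\A(W)$, and every isometric image of $V$ lies in $\A(V)$. So $\A(W)\subset\A(\A(W))$.

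For the reverse inclusion I would show that, for fixed $V=R(W(t,l))$, every blow-up of $V$ at a point of $V$ is (up to isometry) a blow-up of $W$ at some point of $W$. Since isometries commute with blow-ups and $\A$ is closed under isometries, I may assume $R=\mathrm{id}$. Fix $x\in L^{t,l}(W)\setminus L^{t-1}(W)$ and $r$ with $\lambda B(x,r)\cap L^{t-1}(W)=\emptyset$. Then $V=C(W\cap B(x,r)-x)$, and since $W$ is locally conical near $x$, we have $V\cap B(0,r)=(W-x)\cap B(0,r)$.

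Now take $y\in L^{s,l'}(V)\setminus L^{s-1}(V)$ with $s\ge t$. Because $V$ is a cone, $V$ is invariant under dilations centered at $0$. Hence its blow-up at $y$ equals its blow-up at $\delta y$ for every $\delta>0$, and $\delta y$ lies in the same stratum. Choosing $\delta$ small, the point $\delta y$ and a small ball around it lie in $B(0,r)$, where $V$ coincides with $W-x$. So $V(s,l')$ is the blow-up of $W$ at $z=x+\delta y$ with a small enough radius. The local structure also has to match. The strata of $V$ near $\delta y$ are the translates of the strata of $W$ near $z$, because both are determined intrinsically by the iterated "relative interior" construction in Lemma \ref{complex cone的内部是局部和plane重合的点} and the proof of Proposition \ref{每个set对应的type唯一}, and those are local notions. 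Hence $z\in L^{s,l''}(W)\setminus L^{s-1}(W)$ for some branch $l''$. Since blow-ups at any point of a given branch away from lower strata agree up to translation, and $\lambda$ is finite, $V(s,l')=W(s,l'')$ up to isometry. Therefore $\A(V)\subset\A(W)$.

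\textbf{Main obstacle.} The delicate step is matching the spine stratification of $V$ with that of $W$ locally. This requires showing that $L^s(W)$ can be characterized intrinsically: $L^{s}$ is the set of points where the set is not locally an $(s+1)$-dimensional flat piece of the higher stratum. This follows by iterating Lemma \ref{complex cone的内部是局部和plane重合的点} on $T$ and taking products with $\RR^m$. I would first prove this characterization as a small claim, namely $L^{s-1}(W)=L^{s}(W)\setminus RI_{s}(L^s(W))$. Then locality of $RI$ gives the matching, and the rest is the scaling argument above.
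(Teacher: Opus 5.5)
Your proposal is correct and is essentially the paper's argument: dilation invariance of the cone $V$ lets you move the base point $y$ toward $0$, where $V$ coincides with $W-x$, so the blow-up of $V$ at $\delta y$ is a blow-up of $W$ at $x+\delta y$. The stratum-matching step you flag as the main obstacle is more than is needed, and the paper omits it: $z=x+\delta y$ lies in the relative interior of some branch $L^{s'',l''}(W)$ anyway, and shrinking the radius (which does not change $V(s,l')$) shows the cone equals $W(s'',l'')$. One wording slip: $L^t(V)$ is the lowest-dimensional spine of $V$, not the top one.
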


\begin{proof}
    It is clear that $\A(W)\subset \A(\A(W))$, so we only need to prove the converse. If $T\in\A(\A(W))$, then $T\in \A(X)$ for some $X\in \A(W)$. Without loss of generality, suppose $ X=C(W\cap B(x,r) -x)$. Then we can find $ y\in X$ and $\rho>0$ such that $T=C(X\cap B(y,\rho)-y)=C(C(W\cap B(x,r)-x)\cap B(y,\rho)-y)$. We can choose $y\in W-x$ close enough to $0$ and $\rho>0$ small enough such that $B(y,\rho)\subset B(0,r)$ and does not meet $X\backslash (W\cap B(x,r)-x)$. Then $T=C( (W\cap B(x,r)-x) \cap B(y,\rho)-y)=C(W\cap B(x,r)\cap B(x+y,\rho)-(x+y))=C(W\cap B(x+y,\rho)-(x+y))$, thus $T\in \A(W)$.
\end{proof}

By similar argument in Proposition \ref{A(W)的完备性}, we can show in Proposition \ref{A(t)的包含性} that for a given $W$, a set in $\A(W)$ of higher type is the blow-up limit of a set in $\A(W)$ of lower type.

\begin{proposition}\label{A(t)的包含性}
    Let $0\leq m<n$ and let $W$ be a set of type $m$. Then for each $m\leq t\leq n-1$, if $W_{t+1}\in \A(W)$ is a set of type $t+1$, then we can find $W_t\in \A(W)$ be of type $t$, such that $W_{t+1}\in \A(W_t)$.
\end{proposition}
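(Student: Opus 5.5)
Since $W_{t+1}\in\A(W)$, by definition $W_{t+1}=R(W(t+1,l))$ for some isometry $R$ and some branch $L^{t+1,l}(W)$. Concretely, $W_{t+1}$ is (up to $R$) the blow-up $C(W\cap B(x,r)-x)$ at a point $x\in L^{t+1,l}(W)\backslash L^{t}(W)$. The idea is to find a point $x'$ on the $t$-spine, adjacent to the branch $L^{t+1,l}(W)$, so that the blow-up $X$ of $W$ at $x'$ is of type $t$ and itself contains an $(t+1)$-branch whose blow-up is $W_{t+1}$. We may assume $R=\mathrm{id}$ throughout.

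Write $W=T\times\RR^m\times\{0\}$. The branch is $C(Y)\times\RR^m\times\{0\}$ with $\#Y=t+1-m$. Since $t\ge m$, we have $\#Y\ge1$, so we can choose $Y'\subset Y$ with $\#Y'=t-m$. Then $C(Y')\times\RR^m\times\{0\}$ is a branch $L^{t,l'}(W)$ of the $t$-spine contained in the closure of $L^{t+1,l}(W)$. If $t=m$ we take $Y'=\emptyset$ and the point $x'=0$; otherwise we pick $x'$ in the relative interior of this $t$-branch, away from $L^{t-1}(W)$. By the discussion preceding Proposition \ref{A(W)的完备性}, for small $r'$ with $\lambda B(x',r')\cap L^{t-1}(W)=\emptyset$, the blow-up $W_t:=C(W\cap B(x',r')-x')$ is of type $t$. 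In particular $W_t\in\A(W)$.

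Next we must show $W_{t+1}\in\A(W_t)$. Choose a point $y\in W_t$ with $y$ small and $x'+y\in L^{t+1,l}(W)\backslash L^{t}(W)$, which is possible because $x'$ lies in the closure of that branch. As in the proof of Proposition \ref{A(W)的完备性}, take $\rho$ small so that $B(y,\rho)\subset B(0,r')$ and so that near $y$ the cone $W_t$ coincides with the translate $W-x'$. Then
\[
C(W_t\cap B(y,\rho)-y)=C(W\cap B(x'+y,\rho)-(x'+y)).
\]
Since $x'+y$ lies in the same branch $L^{t+1,l}(W)$ off $L^t(W)$, this blow-up coincides with $W(t+1,l)$. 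The reason is that blow-ups of $W$ at points in the relative interior of a fixed branch, with small enough radius, all agree: $W$ is invariant along the $(t+1)$-plane spanned by the branch locally, so the local picture is a translate. Hence $W_{t+1}\in\A(W_t)$, since $y$ lies in $L^{t+1}(W_t)\backslash L^t(W_t)$.

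The main obstacle is the last identification. We must check that the blow-up at $x'+y$ equals the blow-up at the original $x$, that is, that $W(t+1,l)$ does not depend on the point chosen in the relatively open branch. I would argue this using the structure of $T$ near the relative interior of $C(Y)$: the simplices containing $Y$ meet a small ball around any such point in the same pattern, translated within $\mathrm{aff}(C(Y))\times\RR^m$. A secondary point is choosing $\rho$ small enough that $B(y,\rho)$ avoids the part of $W_t$ that does not come from $W\cap B(x',r')-x'$. This works because $W_t$ agrees with $W-x'$ inside $B(0,r')$.
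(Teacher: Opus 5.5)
Your proposal is correct and follows essentially the same route as the paper: blow up $W$ at a relative-interior point of a $t$-face of the given $(t+1)$-branch to get $W_t$, then blow up $W_t$ at a nearby point of that $(t+1)$-branch, using that near $0$ the cone $W_t$ agrees with $W$ minus that point. The independence of the blow-up from the chosen relative-interior point of the branch, which you flag as the main obstacle, is simply assumed in the paper (it is built into its choice of $\lambda_1$), so your tangent-cone justification adds detail without changing the argument.
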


\begin{proof}
    Without loss of generality, assume that $m=0$. If $t=0$, let $W_t=W$ and the proposition follows. Now suppose $t>0$. Since $W_{t+1}$ is of type $t+1$, we can find $Y\in \Omega_T$ with $\#  Y=t+1$ and $\lambda_1>0$ such that $W_{t+1}=C(W\cap B(y,\rho)-y)$ for each $y\in C(Y)^{\circ}$ and each $\rho>0$ such that $\lambda_1 B(y,\rho) \cap L^{t}(W)=\emptyset$. (In order to be precise, we should write  $y\in C(Y)^{\circ}\times \{0\}$, where $0\in \RR^{N-N'}$, but for convenience, $\{0\}$ can be omitted.) 
    
    Let us construct $W_t\in \A(W)$ now. 
    Choose $X\in \Omega_T$ such that $X\subset Y$ and $\#  X=t$, then $C(X)$ is a $t$-boundary of $C(Y)$. Let $W_t$ be a blow-up limit of $W$ at $C(X)$. That is, choose $x\in C(X)^{\circ}$ and $r>0$ small, then let
    \begin{equation}
        W_t:= C(W\cap B(x,r)-x).
    \end{equation} 
    Also, there is $\lambda_2>0$ such that for each $z\in C( C(Y)^{\circ}-x)$ and $\tau>0$ satisfying $\lambda_2 B(z,\tau)\cap L^{t}(W_t)=\emptyset$, $W_t$ coincides with a set of type $t+1$ in $B(z,\tau)$. 
    Choose $z\in C(Y)^{\circ}-x$ close to 0 and $ \tau>0$ small enough such that $B(z,\tau)\subset B(0,r)$, $B(z,\tau)\cap W_t\backslash (W\cap B(x,r)-x)=\emptyset$, $\lambda_1 B(z+x,\tau)\cap L^t(W)=\emptyset$ and $\lambda_2 B(z,\tau)\cap L^t(W_t)=\emptyset$. Then $ C(W_t\cap B(z,\tau)-z)=C(C(W\cap B(x,r)-x)\cap B(z,\tau)-z)=C(W\cap B(x+z,\tau)-(x+z))$. Since $x+z\in C(Y)^{\circ}$ and $\lambda_1 B(z+x,\tau)\cap L^t(W)=\emptyset$, $C(W_t\cap B(z,\tau)-z)=W_{t+1}$. Thus $W_{t+1}\in \A(W_t)$.
\end{proof}

\subsection{Geometrical facts of $\B$}

In this subsection, we only consider the case when $\B\subset \mathscr{TA}$ is such that $\B/\sim$ is finite. Fix $\B\subset \mathscr{TA}$ such that  $\mathscr{B}/\sim$ is finite and $\mathscr{B}$ contains at least a set of type 0. Let $\A=\A(\B)$ and $\A=\cup_{m=0}^n \A(m)$, where $\A(m)$ is the set of sets of type $m$ in $\A$. Then $\A/\sim$ is a finite set and for each $0\leq m\leq n$, $\A(m)\neq\emptyset$.
Let 
\begin{equation}\label{alpha定义}
    \alpha(\B)=\min\{ \angle (T): W=R(T\times \RR^m\times\{0\} )\in \A(m), 0\leq m\leq n\},
\end{equation}
where $\angle(T)$ is defined in (\ref{角T}), then $\alpha (\B)>0$. According to the construction, sets of different types in $\A$ have different topological structures. 

{  
\begin{lemma}\label{分支分离性}
    Let $W \in \A(\B)$ and let $\alpha = \alpha(\B)$ be the constant defined in \eqref{alpha定义}. Let $L$ and $L'$ be two distinct non-empty branches of the spines of $W$ (possibly of different dimensions).  Suppose that $L \not\subset L'$, and $L' \not\subset L$. Then for any $x \in L$,
    \begin{equation}\label{分支分离性ineq}
        \dd(x, L') \ge \dd(x,  L\cap L') \cdot \sin \alpha.
    \end{equation}
\end{lemma}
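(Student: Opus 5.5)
The plan is to first strip off the Euclidean factor and then reduce to a two-dimensional triangle estimate governed by the angle constant $\alpha$. After applying $R^{-1}$, write $L=C(F_1)\times\RR^m\times\{0\}$ and $L'=C(F_2)\times\RR^m\times\{0\}$ with $F_1,F_2\in\Omega_T$, where $T$ is the complex cone of $W$. The hypotheses $L\not\subset L'$ and $L'\not\subset L$ give $F_1\not\subset F_2$ and $F_2\not\subset F_1$. By Definition \ref{复合锥},
\begin{equation}
L\cap L'=C(Z)\times\RR^m\times\{0\},\qquad Z=F_1\cap F_2 .
\end{equation}
All three sets are products with the same factor $\RR^m$. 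Hence for $x=(u,v,0)$ the distances $\dd(x,L')$ and $\dd(x,L\cap L')$ equal $\dd(u,C(F_2))$ and $\dd(u,C(Z))$ computed in $\RR^{N'}$. It therefore suffices to prove, for $u\in C(F_1)$,
\begin{equation}
\dd(u,C(F_2))\ge \dd(u,C(Z))\sin\alpha .
\end{equation}
Here $\alpha\le\inf\angle(F_1,F_2)$ by \eqref{alpha定义} and \eqref{角T}. The case $u\in C(Z)$ is trivial.

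Fix $u\in C(F_1)\setminus C(Z)$ and let $y$ be a nearest point of the closed convex set $C(F_2)$ to $u$. If $y\in C(Z)$, then $|u-y|\ge\dd(u,C(Z))$ and we are done, since $\sin\alpha\le1$. Otherwise $y\in C(F_2)\setminus C(Z)$.

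The intended picture is the following. Let $z$ be a common foot point in $C(Z)$ for $u$ and for a suitable point $y'\in C(F_2)$. Then $u-z$ and $y'-z$ are both "orthogonal to $C(Z)$ at $z$" in the sense of \eqref{角F1F2的定义}, and $\angle uzy'\ge\alpha$. An elementary planar estimate then gives $\dd(u,\text{ray }zy')\ge|u-z|\sin\alpha=\dd(u,C(Z))\sin\alpha$ when the angle is at most $\pi/2$. When the angle exceeds $\pi/2$, the same estimate holds with $\sin\alpha$ replaced by $1$. Since the ray from $z$ through $y'$ lies in $C(F_2)$ by convexity, minimizing over all such rays should recover $\dd(u,C(F_2))$.

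The main obstacle is that the feet of $u$ and $y$ on $C(Z)$ need not coincide, while Definition \ref{角的定义} only controls angles with a \emph{common} foot $z$. I would handle this as follows. Let $z=\pi_{C(Z)}(u)$ be the foot of $u$. Decompose $C(F_2)$ as the join of $C(Z)$ and $C(F_2\setminus Z)$, so that every $y\in C(F_2)$ is $y=a+b$ with $a\in C(Z)$ and $b\in C(F_2\setminus Z)$. Then minimize $|u-a-b|$ over these two variables and use the first-order (variational-inequality) conditions at the minimizer. These show that one may move $y$ along $C(Z)$-directions to a point $y'$ whose foot on $C(Z)$ is $z$, and that the distance from $u$ is not decreased by more than the orthogonal component. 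In effect, the distance from $u$ to $C(F_2)$ is bounded below by the distance from $u-z$ to the set $\{y'-z: y'\in C(F_2),\ \pi_{C(Z)}(y')=z\}$, which is a cone. Once this is established, the angle bound $\angle uzy'\ge\alpha$ for all such $y'$, combined with the planar estimate above, finishes the proof.

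If the projection argument turns out to be delicate, for instance because $C(Z)$ is not a linear subspace and $z$ may lie on its relative boundary, the fallback is an induction on $\#Z$. Compare faces of $C(Z)$ through $z$, and use the fact that the faces of $C(Z)$ are again in $\Omega_T$ with angles at least $\alpha$. This localizes the argument near $z$, where $C(Z)$ coincides with its tangent cone.
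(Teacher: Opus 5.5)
You reduce correctly to the complex cone $T$ by stripping off the $\RR^m$ factor, and your final planar estimate is fine. The step meant to overcome the ``different feet'' obstacle, however, is false, so the proof has a genuine gap. You claim $\dd(u,C(F_2))\ge \dd(u-z,K_z)$, where $z=\pi_{C(Z)}(u)$ and $K_z=\{y'-z:\ y'\in C(F_2),\ \pi_{C(Z)}(y')=z\}$. Since $z+K_z\subset C(F_2)$, this amounts to saying that the (unique) nearest point of $C(F_2)$ to $u$ has foot $z$ on $C(Z)$. That fails in general.

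Here is a counterexample in $\RR^3$. Take $b=e_1$, $a=(e_2+e_3)/\sqrt2$, $c=(e_1+e_2)/\sqrt2$, and set $F_1=\{b,a\}$, $F_2=\{b,c\}$, $Z=\{b\}$. Then $C(F_1)\cup C(F_2)$ is a non-flat complex cone: the two sectors meet only along $C(Z)$, with dihedral angle $45^\circ$, and $\angle a0c=60^\circ$. For $u=a$ you have $\langle a,b\rangle=0$, so $z=0$ and $\dd(a,C(Z))=1$. Moreover $C(F_2)\cap\{x_1\le 0\}=\{0\}$, so $K_0=\{0\}$ and your bound would give $\dd(a,C(F_2))\ge 1$. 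In fact the nearest point is $y=\tfrac12 c$, whose foot on $C(Z)$ is $\tfrac{1}{2\sqrt2}e_1\neq 0$, and $\dd(a,C(F_2))=\sqrt3/2$.

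Since the claimed inequality is false, no first-order or variational argument can establish it; passing to the conical hull of $K_z$ changes nothing here. The point is that $y$ lies in the relative interior of the face $C(\{c\})$ of $C(F_2)$. What controls $|a-y|$ is the angle between $C(F_1)$ and $C(\{c\})$ at their common foot $0$, a pair of faces your argument never examines. Your fallback only compares faces of $C(Z)$, so it does not see this face either.

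The missing idea is to change the pair of faces instead of moving the points. This is the paper's argument.
\begin{itemize}
\item Let $y$ be the nearest point of $L'$ to $x$, and let $L''\subset L'$ be the face (itself a branch of a spine of $W$) containing $y$ in its relative interior. Then $x-y\perp \mathrm{aff}(L'')$.
\item If $L''\subset L$, then $y\in L\cap L'$ and $\dd(x,L')\ge \dd(x,L\cap L')$ directly.
\item Otherwise $L\not\subset L''$ as well (else $L\subset L'$). Put $S=L\cap L''\subset L\cap L'$. Because $x-y$ is orthogonal to $\mathrm{aff}(L'')\supset S$, the points $x$ and $y$ have the same nearest point $z$ in $S$.
\item The definition of $\angle(\cdot,\cdot)$, applied to the pair $(L,L'')$, then gives $\angle xzy\ge\alpha$. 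This is exactly where you need $\alpha$ to be a minimum over all pairs of faces, not just $\angle(F_1,F_2)$.
\item The right angle at $y$ gives $|x-y|=|x-z|\sin\angle xzy\ge \dd(x,S)\sin\alpha\ge \dd(x,L\cap L')\sin\alpha$.
\end{itemize}
In the example above, $L''=C(\{c\})$, $S=\{0\}$, and $|a-y|=\sin 60^\circ$.
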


\begin{proof}
    If $x\in L\cap L'$, then $\dd(x, L\cap L') = 0$ and the inequality \eqref{分支分离性ineq} holds trivially. Assume $x\not\in L\cap L'$. Then $\dd(x, L\cap L') > 0$. Let $y \in L'$ be a point such that $|x - y| = \dd(x, L')$. Then $y$ lies in the relative interior of a unique branch $L''$ of the spine of $W$ such that $L'' \subset L'$.
    We distinguish two cases $ L''\subset L$ and $L'' \not\subset L $.

    If $L'' \subset L$, then $y\in L'' \subset L \cap L' $. This implies $\dd(x, L') = |x-y| \ge \dd(x, L\cap L')$. Since $0<\sin \alpha<1$, \eqref{分支分离性ineq} holds.

    If $L'' \not\subset L$, let $P$ be the affine plane spanned by $L''$. Since $y$ lies in the relative interior of $L''$, the vector $x - y$ is orthogonal to  $P$. Let $z$ be the closest point to $x$ in $S = L \cap L''\subset L\cap L'$. Then $|x-z|=\dd(x,S)$. Since $S \subset L'' \subset P$ and $(x-y) \perp P$, the closest point to $x$ in $S$ coincides with the closest point to $y$ in $S$. Thus, $|y-z|=\dd(y,S)$. By Definition \ref{角的定义}  and \eqref{alpha定义}, we have $\angle xzy \ge \alpha$. Therefore, we have $\dd(x, L') = |x - y| = |x - z|\cdot \sin(\angle xzy) \ge \dd(x, S)\cdot \sin \alpha \ge \dd(x,  L\cap L')\cdot \sin \alpha$.
    This completes the proof.

\end{proof}

}

\begin{proposition}\label{锥的dx,r设定}
There is $\delta_0>0$ depending only on $\B$ such that for each $0\leq m\leq n-1$,
\begin{equation}
    d_{x,r}(W_m,W)>\delta_0, \forall r>0
\end{equation}
where $W_m$ is an arbitrary set in $\A(m)$, whose $t$-spine $\LL^{t}(W_m)$ ($m\leq t\leq n-1$) passes through  $x$. $W$ is an arbitrary set in $\ccup_{k=t+1}^{n} \A(k)$.
\end{proposition}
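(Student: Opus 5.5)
By scaling and translation, it suffices to treat $x=0$ and $r=1$. Set $W_m'=(W_m-x)/r$ and $W'=(W-x)/r$. Then $d_{x,r}(W_m,W)=d_{0,1}(W_m',W')$. The set $W_m'$ is again a set of type $m$ in $\A(m)$: it is an isometric copy of a dilate of $W_m$, and $W_m$ is a cone, so a dilate of it is an isometric copy of it. Its $t$-spine passes through $0$. Likewise $W'$ is an isometric copy of an element of $\cup_{k\ge t+1}\A(k)$. So I will prove the statement only for $x=0$, $r=1$, and I will argue by contradiction and compactness.

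Since $\A/\sim$ is finite, a contradicting sequence can be taken with $W_m^j=R_j(Z_0)$ and $W^j=R'_j(Y_0)$ for fixed representatives $Z_0\in\A(m)$ and $Y_0\in\A(k)$ with $k\ge t+1$. Here $0\in L^t(W_m^j)$ and $d_{0,1}(W_m^j,W^j)\to 0$. Passing to a subsequence, I use Blaschke compactness: $W_m^j\cap \overline B(0,2)$ and $W^j\cap\overline B(0,2)$ converge in Hausdorff distance to closed sets $Z$ and $Y$. The smallness of $d_{0,1}$ then gives $Z\cap B(0,1/2)=Y\cap B(0,1/2)$. This is justified by taking the closed limits and using that each point of either set in $B(0,1/2)$ is approximated by points of the other set at vanishing distance.

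The key step is to identify these limits locally. I claim that near $0$, $Z$ coincides with a set $Z'\in\A$ satisfying $0\in L^t(Z')$, and $Y$ coincides with a set $Y'\in\cup_{s\ge t+1}\A(s)$. For $Z$, I split the isometries $R_j$ according to the center (the point $R_j(0)$, or the projection onto the spine $L^m$). If the distance from $0$ to the lowest spine stays bounded, I extract convergent isometries and the limit is an isometric copy of $Z_0$. If it goes to infinity, the local picture near $0$ is eventually that of $Z_0$ near a point of some branch $L^{s,l}(Z_0)\setminus L^{s-1}(Z_0)$ at a scale small relative to the distance to $L^{s-1}$. By the definition of $\lambda(W,s,l)$, this coincides with the blow-up $Z_0(s,l)\in\A$, and there are only finitely many of these. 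Iterating this dichotomy over the finitely many strata gives a limit $Z'\in\A$. Because the spines are closed and $0\in L^t(W_m^j)$ for every $j$, I also get $0\in L^t(Z')$; here Lemma \ref{分支分离性} keeps the branches quantitatively separated, so spines do not collapse into one another in the limit. The same argument applied to $Y_0$ gives $Y'$ of some type $s\ge k\ge t+1$. The inequality $s\ge k$ holds because blow-ups never lower the type, so $L^t(Y')=\emptyset$.

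To conclude, I blow up both sets at $0$. Since $0\in L^t(Z')$, the tangent cone $C(Z'\cap B(0,\rho))$ for small $\rho$ is a set of type at most $t$. Since $Y'$ is invariant under translations in an $(t+1)$-dimensional space, its tangent cone at $0$ is of type at least $t+1$. The two cones coincide because $Z'$ and $Y'$ agree near $0$, and this contradicts Proposition \ref{每个set对应的type唯一}. Hence some $\delta_0>0$ works, and it depends only on $\B$ because only finitely many representatives are involved.

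I expect the main obstacle to be the identification step, namely controlling limits of isometric copies of a fixed cone when the center escapes to infinity and showing that the limit is a genuine element of $\A$ with $0$ still on its $t$-spine. If the iteration over strata becomes messy, I would first try a cruder invariant instead: compare how many local $n$-dimensional sheets meet near $0$, or use the positive angle $\alpha(\B)$ directly, to separate spines of dimension at most $t$ from sets that are locally a product with $\RR^{t+1}$.
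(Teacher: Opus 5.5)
Your proposal is correct and follows essentially the paper's route. Both arguments are a compactness argument by contradiction: escaping translations are absorbed by passing to the blow-ups $W(s,l)$ along successively higher spines until the isometries converge, and the resulting local coincidence then contradicts Proposition~\ref{每个set对应的type唯一}. Your version is slightly more careful than the paper's sketch in two places. You run the stratum iteration on the $W_m$ side as well, since the local limit of $R_i(W_m)$ is only of type at most $t$, not necessarily of type $m$. You also pass to tangent cones at $0$, turning local coincidence into coincidence of two cones, before invoking type uniqueness. The appeal to Lemma~\ref{分支分离性} is unnecessary: $0\in L^t(Z')$ follows directly because the base points lie on $t$-spines at every stage of the iteration.
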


\begin{proof}
    We may assume that $x=0$ and $r=1$. We aim to show that when the two constants $m\in \{0,...,n-1\}$ and $t\in \{m,...,n-1\}$ are fixed, 
    \begin{equation}\label{固定Wm的d0,1}
        \inf\{ d_{0,1}(W_m, W): W_m\in \A(m),\text{ }0\in L^t(W_m),\text{ } W\in \ccup_{k=t+1}^n \A(k)\}>0.
    \end{equation}
    Let us prove by compactness. If (\ref{固定Wm的d0,1}) does not hold, by the finiteness of $\B$, there is $W_m\in \A(m)$ such that $0\in L^t(W_m)$ and a series of isometries $\{R_i\}_{i=1}^{\infty}$ in $\RR^N$ such that $0\in R_i( L^t(W_m))$, satisfying $\lim_{i\to\infty} \inf \{ d_{0,1}(R_i(W_m), W):W\in\cup_{k=t+1}^n \A(k)\}=0$. Similarly, there is a subsequence $\{i_j\}_{j=1}^{\infty} \subset \{i\}_{i=1}^{\infty}$ (for convenience, let $\{i_j\}_{j=1}^{\infty} = \{i\}_{i=1}^{\infty} $), a set $Y\in \A(s)$ $(t+1\leq s\leq n)$ and a series of isometries 
    $\{R'_{i}\}_{i=1}^{\infty}$ in $\RR^N$ such that $\lim_{i\to\infty} d_{0,1}(R_{i}(W_m), R'_{i}(Y))=0$.
    Since $0\in R_i( L^t(W_m))$ for each $i$, we can always find a subsequence of $\{ R_i(W_m)\}_{i=1}^{\infty}$ that converges to a set of type $m$, whose $t$-spine contains 0. Denote by $T$ this set and assume that $\lim_{i\to\infty}R_i(W_m)=T$.

    Now we have  $\lim_{i\to\infty} d_{0,1}(T, R_i'(Y))=0$. Without loss of generality, assume that $0\in L^s(Y)$. For each $i$, suppose  $R_i'(x)=A_i x+b_i$, where $A_i\in O(N)$ and $b_i\in \RR^N$. Then $\{A_i\}_{i=1}^{\infty}$ has a subsequence that converges. For convenience, we assume that $\{A_i\}_{i=1}^{\infty}$ converges. If $\{b_i\}_{i=1}^{\infty}$ has a subsequence that converges, then there is an isometry $R$ such that $d_{0,1}(W_m, R(Y))=0$. Otherwise, $\lim_{i\to\infty} b_i=\infty$. If we have
    \begin{equation}\label{L^s(Y)不远}
        \sup_j \dd(0, R_{i_j}'(L^s(Y)))<M \text{ for some }\{i_j\}_{j=1}^{\infty}\subset \{i\}_{i=1}^{\infty} \text{ and }0<M<\infty ,
    \end{equation}
    then there is $w_j\in R_{i_j}'(L^s(Y))$ such that $|w_j|<M$ for each $j$. Let $R_{i_j}''(x)=A_{i_j}x+w_j$. Then $w_j-b_{i_j}\in L^s(Y)$ and therefore, $ R_{i_j}''(Y)\cap B(0,1)=R_{i_j}'(Y)\cap B(0,1)$ when $j$ is large enough. So we can replace $R_{i_j}'$ by $R_{i_j}''$, where $\{w_j\}_{j=1}^{\infty}$ is bounded, thus converges. So we can find an isometry $R$ such that $d_{0,1}(T, R(Y))=0$.
    If (\ref{L^s(Y)不远}) does not hold, but we have
    \begin{equation}\label{L^{s+1}(Y)不远}
        \sup_j \dd(0, R_{i_j}'(L^{s+1,l}(Y)))<M \text{ for some }l,\{i_j\}_{j=1}^{\infty}\subset \{i\}_{i=1}^{\infty} \text{ and }M<\infty ,
    \end{equation}
    then we can find $w_j\in R_{i_j}'(L^{s+1,l}(Y))$ such that $|w_j|<M$. When $j$ is large enough, $R_{i_j}(Y)$ coincides with $R_{i_j}(Y_{s+1})$ in $B(w_j,M+2)$ for some $Y_{s+1}\in \A(Y)\cap \A(s+1)$. Since $B(0,1)\subset B(w_j,M+2)$,  (\ref{L^{s+1}(Y)不远}) turns to the case that $ \sup_j \dd(0, R_{i_j}'(Y_{s+1}))<M$, which is similar to (\ref{L^s(Y)不远}) and we can also find a subsequence that converges. Recall that $B(0,1)\cap R_{i_j}'(Y)\neq \emptyset$, we always have $B(0,1)\cap L^n(Y)\neq\emptyset$. That is, 
    { by repeating the argument inductively for higher dimensional spines as in \eqref{L^{s+1}(Y)不远}, we} obtain the fact that 
    \begin{equation}
        d_{0,1}(T,W)=0
    \end{equation}
    for some $W\in \A(s), t+1\leq s\leq n $. That is, $T\cap B(0,1)=W\cap B(0,1)$ and therefore, $ L^s(T)\cap B(0,1)=L^{s}(W)\cap B(0,1)$. Since $0\in L^t(T)$ and $t<s$, $L^s(T)\cap B(0,1)$ is not empty. It indicates that a set whose type is at most $t$ coincides with a set of type $s$, which contradicts Proposition \ref{每个set对应的type唯一}. Then we have (\ref{固定Wm的d0,1}). Let 
    \begin{equation}
        \begin{split}
        \delta_0= \min \{\inf\{ d_{0,1}(W_m, W): W\in \ccup_{k=t+1}^n \A(k), W_m\in \A(m),0\in L^{t}(W_m)\},\\
        0\leq m\leq n,m\leq t\leq n-1\},
        \end{split}
    \end{equation}
  then $\delta_0>0$ and Proposition \ref{锥的dx,r设定} follows.
  
\end{proof}

\begin{proposition}\label{用次一级的锥替代}
    There is $n_0>0$ depending only on $\B$ satisfying the following. 
    For each $m\in\{0,...,n-1\}$ and $ t\in\{m,...,n-1\}$, let $W$ be an arbitrary set in $\A(m)$. Suppose B is an open ball such that $B\cap W \neq \emptyset$ and $n_0 B$ does not meet $L^t(W)$.  Then there is a set $Y\in \cup_{u=t+1}^n \A(u)$ such that 
    \begin{equation}\label{用次一级的锥替代式子}
        W\cap B=Y\cap B.
    \end{equation}
    Furthermore, if $W$ does not coincide with any set in $\A(t+1)$ in an open ball $B'$, then $n_0 B'\cap L^t(W)\neq\emptyset$.
\end{proposition}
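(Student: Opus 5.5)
Since $\A/\sim$ is finite and the statement is invariant under isometries, I will fix one representative $W=T\times\RR^m\times\{0\}$ of type $m$ and one $t\in\{m,\dots,n-1\}$, find a constant $n_0(W,t)$, and take $n_0$ to be the maximum over the finitely many pairs. The first claim is local, so the plan is to show that near any point at distance $\gtrsim r$ from $L^t(W)$, the set $W$ coincides with one of its blow-up limits of type at least $t+1$. The second claim is just the contrapositive of the first with $u=t+1$, once the first is sharpened to produce a set of type exactly $t+1$ or a set that coincides with one.

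\textbf{Step 1 (scale invariance and the choice of branch).} Let $B=B(x,r)$ with $B\cap W\neq\emptyset$ and $n_0B\cap L^t(W)=\emptyset$, so that $\dd(x,L^t(W))\ge n_0r$. Pick $y\in W\cap B$ and let $L^{s,l}(W)$ be the branch of lowest dimension containing $y$, so that $y$ lies in its relative interior and $s\ge t+1$. The aim is to show that $B$ stays away from every branch $L'$ of the spines with $L'\not\supset L^{s,l}(W)$. Then $W\cap B$ coincides with the blow-up $y+W(s,l)$, and since blow-up sets are translation invariant along their $s$-spine, this coincides with a rigid copy $Y$ of $W(s,l)\in\A(s)$. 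By Proposition \ref{A(W)的完备性}, $Y\in\A$.

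\textbf{Step 2 (quantitative separation).} This is the main obstacle: the constant $\lambda(W,t,l)$ from the paragraph before \eqref{局部锥的定义式} must be made uniform. Because $W$ is a cone, it suffices to prove the following scale-invariant statement: for any point $z$ in the relative interior of a branch $L$ of dimension $s$, $W$ coincides with $z+W(s,l)$ on $B(z,c\,\dd(z,L^{s-1}(W)))$, where $c=c(W)>0$. To get this, apply Lemma \ref{分支分离性}. For every branch $L'\not\supset L$, we have
$$\dd(z,L')\ge\dd(z,L\cap L')\sin\alpha\ge \dd(z,L^{s-1}(W))\sin\alpha,$$
because $L\cap L'$ is a union of lower-dimensional faces of $L$ and therefore lies in $L^{s-1}(W)$. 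So the ball of radius $c\,\dd(z,L^{s-1})$ with $c=\tfrac12\sin\alpha$ meets only branches containing $L$, and inside it $W$ is exactly the product structure of $W(s,l)$.

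\textbf{Step 3 (descending induction on the spine dimension).} The ball $B$ may lie near $L^{s-1}$ while still being far from $L^t$. In that case, replace $y$ by a point and a branch of lower dimension: if $\dd(y,L^{s-1})<Kr$, choose $y'\in L^{s-1}$ within $Kr$ and repeat with $s-1$. Each step costs a factor in the radius, so after at most $n$ steps one either gets a branch of dimension $s'\ge t+1$ with $\dd(y',L^{s'-1})\ge K r$ and $B\subset B(y',c\,\dd(y',L^{s'-1}))$, or one reaches $L^t$ within distance $\sim K^n r$. Choosing $n_0$ larger than that accumulated constant (something like $(2K/c)^{n+1}$) rules out the second outcome. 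This gives \eqref{用次一级的锥替代式子} with $Y\in\A(s')$, $s'\ge t+1$. Since a set of type $s'>t+1$ coincides on $B$ with a set of type $t+1$, taken as a blow-up of the type-$(t+1)$ set from Proposition \ref{A(t)的包含性} in the sense of Proposition \ref{A(W)的完备性}, one may take $u=t+1$ when needed.

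For the second claim, suppose $n_0B'\cap L^t(W)=\emptyset$. If $B'\cap W=\emptyset$, then $W\cap B'=\emptyset=Y\cap B'$ for a type-$(t+1)$ plane-like set translated far away, which is in $\A(t+1)$ up to isometry. Otherwise the first claim gives coincidence. Either case contradicts the hypothesis, which proves the second claim.
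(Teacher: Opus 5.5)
Your proposal is correct and follows essentially the paper's route. Both arguments combine a uniform radius on which $W$ agrees with its blow-up at a relative-interior point of an $s$-branch (the paper's $\lambda_W$, which it only asserts and you derive explicitly from Lemma \ref{分支分离性} with $c=\tfrac12\sin\alpha$), a finite search over spine dimensions (the paper pigeonholes on the distances $\dd(x,L^s(W))$, you descend along nearest points), and the same appeal to Proposition \ref{A(t)的包含性} for the second claim. As in the paper, upgrading $Y\in\A(Z)$ to an isometric copy of $Z$ that agrees with $Y$ on the prescribed ball tacitly uses that $Z$ is invariant under dilations about points of $L^{t+1}(Z)$, and your separate handling of $B'\cap W=\emptyset$ fills a trivial case the paper omits.
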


\begin{proof}
Let $W\in \A(m)$ be fixed. There is $\lambda_{W}>0$ only depends on $W$ such that for each $s\in\{m,...,n-1\}$, if $\tilde{B}$ is an open ball centered on $L^{s+1}(W)$ and $ \lambda_{W} \tilde{B}\cap L^s(W)=\emptyset$, then $W\cap \tilde{B}=Y\cap \tilde{B}$ for some $Y\in\A(s+1)\cap \A(W)$.

Now fix a constant $ \tau_{W}>3\cdot (10 \lambda_W)^n$, we are going to show that for each $t \in \{m,... ,n-1\}$, if $B=B(x,r)$ is an open ball such that $B\cap W\neq\emptyset\text{ and }\tau_{W} B\cap L^t=\emptyset,$ then 
\begin{equation}\label{用t+1到nset替代}
    W\cap B=Y\cap B \text{ for some }Y\in (\cup_{u=t+1}^n \A(u))\cap \A(W). 
\end{equation}
Let $d_s=\dd(x, L^s(W))$ for each $s\in \{t,...,n-1\}$. Then we have $d_n<r$ and $d_t \geq \tau_{W} r$. So there exists $s\in \{t,...,n-1\}$ such that 
\begin{equation}\label{式子：离L^s远L^s+1近}
d_s\geq 10 \lambda_{W}(d_{s+1}+r).
\end{equation} Otherwise, we have
    $d_t<10 \lambda_{W}(d_{t+1}+r)<...<(10 \lambda_{W})^{n-t}(d_n+r)+\frac{(10 \lambda_W)^{n-t}-10 \lambda_{W}}{10 \lambda_{W}-1}r <\tau_{W} r$, 
which leads to a contradiction. Thus, there is $y\in L^{s+1}(W)$ such that $|x-y|=d_{s+1}$ and $ \lambda_W B(y, 2d_{s+1}+r)\cap L^s(W)=\emptyset$. And we can find $Y\in \A(s+1)\cap \A(W)$ such that $Y\cap B(y,2 d_{s+1}+r)=W\cap B(y,2 d_{s+1}+r)$. Since $B=B(x,r)\subset B(y,2d_{s+1}+r)$ and $t+1\leq s+1\leq n$, (\ref{用t+1到nset替代}) follows. Let $n_0=\max\{\tau_W,W\in \A\}$, then we have (\ref{用次一级的锥替代式子}). 

Let $B'$ be as in the statement of Proposition \ref{用次一级的锥替代}, if $n_0 B'\cap L^t(W)=\emptyset$, then there is $Y\in \A(u)\cap \A(W)$ such that $W\cap B'=Y\cap B'$, where $t+1\leq u\leq n$.  By Proposition \ref{A(t)的包含性}, there is $Z\in \A(t+1)\cap\A(W)$ such that $Y\in \A(Z)$, so $W\cap B'=Z\cap B'$, which leads to a contradiction. And Proposition \ref{用次一级的锥替代} follows.

\end{proof}

\begin{lemma}\label{3个集合的dx,r}
    Let $x,y\in \RR^N$. Let $F,G,H$ be three closed sets in $\RR^N$ such that $d_{x,r_1}(F,G)<\varepsilon_1$ and $d_{y,r_2}(G,H)<\varepsilon_2,$
    where $r_i>0, \varepsilon_i>0$ for $i=1,2$. Then for every $z\in \RR^N$ and $\rho>0$ such that 
    $B(z,\rho+\varepsilon_1 r_1)\subset B(y,r_2)$ and $B(z,\rho+\varepsilon_2 r_2)\subset B(x,r_1)$,
    we have $d_{z,\rho}(F,H)<(\varepsilon_1 r_1+\varepsilon_2 r_2)/{\rho}$.
\end{lemma}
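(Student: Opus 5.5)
The plan is to verify the two one-sided estimates in the definition of $d_{z,\rho}(F,H)$ directly, using the triangle inequality through the intermediate set $G$. Throughout I will use the elementary reformulation of the hypothesis: $d_{x,r_1}(F,G)<\varepsilon_1$ means that every point of $F\cap B(x,r_1)$ lies within distance $<\varepsilon_1 r_1$ of $G$, and every point of $G\cap B(x,r_1)$ lies within distance $<\varepsilon_1 r_1$ of $F$. Similarly, $d_{y,r_2}(G,H)<\varepsilon_2$ means that points of $G\cap B(y,r_2)$ lie within distance $<\varepsilon_2 r_2$ of $H$, and points of $H\cap B(y,r_2)$ lie within distance $<\varepsilon_2 r_2$ of $G$. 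Since $G$ and $H$ are closed, these distances are attained, so we may choose nearest points.

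First take $p\in F\cap B(z,\rho)$. Because $B(z,\rho)\subset B(z,\rho+\varepsilon_2 r_2)\subset B(x,r_1)$, we have $p\in F\cap B(x,r_1)$. Hence there is $q\in G$ with $|p-q|<\varepsilon_1 r_1$. Then $|q-z|<\rho+\varepsilon_1 r_1$, so $q\in B(z,\rho+\varepsilon_1 r_1)\subset B(y,r_2)$. The second hypothesis then gives $w\in H$ with $|q-w|<\varepsilon_2 r_2$. It follows that $\dd(p,H)\le |p-w|<\varepsilon_1 r_1+\varepsilon_2 r_2$.

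Symmetrically, take $p\in H\cap B(z,\rho)$. Since $B(z,\rho)\subset B(y,r_2)$, there is $q\in G$ with $|p-q|<\varepsilon_2 r_2$. Then $q\in B(z,\rho+\varepsilon_2 r_2)\subset B(x,r_1)$, so there is $w\in F$ with $|q-w|<\varepsilon_1 r_1$. Again $\dd(p,F)<\varepsilon_1 r_1+\varepsilon_2 r_2$.

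The only delicate point is that the supremum in $d_{z,\rho}$ must stay strictly below $\varepsilon_1 r_1+\varepsilon_2 r_2$, whereas each pointwise bound is strict only individually. To handle this, I will note that the hypotheses are strict: the suprema defining $d_{x,r_1}(F,G)$ and $d_{y,r_2}(G,H)$ are at most some $\varepsilon_1'<\varepsilon_1$ and $\varepsilon_2'<\varepsilon_2$. Rerunning the argument with $\varepsilon_i'$ in the distance bounds, while keeping the ball inclusions, which only become easier, gives a uniform bound $\varepsilon_1' r_1+\varepsilon_2' r_2<\varepsilon_1 r_1+\varepsilon_2 r_2$. Dividing by $\rho$ yields the claim. (If $F$ or $H$ fails to meet $B(z,\rho)$, the convention of the definition applies, and the nonempty side is handled as above.) This step is routine, and there is no real obstacle.
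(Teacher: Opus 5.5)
Your proof is correct and follows the same route as the paper: chain each point of $F\cap B(z,\rho)$ (resp.\ $H\cap B(z,\rho)$) through a nearby point of $G$, using the two ball inclusions to stay inside $B(y,r_2)$ and $B(x,r_1)$. Your extra step is a small genuine improvement. You replace $\varepsilon_1,\varepsilon_2$ by the actual values $d_{x,r_1}(F,G)<\varepsilon_1$ and $d_{y,r_2}(G,H)<\varepsilon_2$, which makes the bound on the supremum strict; the paper passes from strict pointwise bounds to a strict bound on the supremum without comment.
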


\begin{proof}
    For each $w\in F\cap B(z,\rho)$, we have $w\in B(x,r_1)$. Since $d_{x,r_1}(E, G)<\varepsilon_1$, there exists $w'\in G$ such that $|w-w'|<\varepsilon_1 r_1$. Then $w'\in G\cap B(z,\rho+\varepsilon_1 r_1)\subset G\cap B(y,r_2)$. Since $d_{y,r_2}(G,H)<\varepsilon_2$, there exists $w''\in H$ such that $|w'-w''|<\varepsilon_2 r_2$. Therefore, $\dd(w, H)<\varepsilon_1 r_1+\varepsilon_2 r_2$ for each $w\in F\cap B(z,\rho)$. By the same argument, we have $\dd(w,F)<\varepsilon_1 r_1+\varepsilon_2 r_2$ for each $w\in H\cap B(z,\rho)$. Hence, $d_{z,\rho}(F,H)<(\varepsilon_1 r_1+\varepsilon_2 r_2)/\rho$.
\end{proof}





\begin{lemma}\label{type m集合若dxr距离小于delta，则最高级spine距离受控}
    Let $W\in \A(m)(0\leq m< n)$ and let its $m$-spine pass through x. If
    $d_{x,r}(W,Z)<\tau$
    for some $Z\in \A$ and $\tau\leq\delta_0$, then $Z\in \ccup_{k=0}^m \A(k)$ and
    \begin{equation}\label{距离受控的式子}
        \dd (x,L^{m}(Z))<n_0(1+\delta_0^{-1})\tau r.
    \end{equation}
\end{lemma}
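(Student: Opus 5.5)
The plan has two parts. First we show that $Z$ has type at most $m$, using Proposition \ref{锥的dx,r设定}. Then we bound $\dd(x,L^m(Z))$, using Proposition \ref{用次一级的锥替代} applied to $Z$ at a suitable smaller scale, together with Proposition \ref{锥的dx,r设定} once more.

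\emph{Step 1 (type of $Z$).} Suppose $Z\in\A(k)$ with $k\ge m+1$. Since $x\in L^m(W)$, Proposition \ref{锥的dx,r设定} applies with $t=m$ and gives $d_{x,r}(W,Z)>\delta_0\ge\tau$. This contradicts the hypothesis, so $Z\in\cup_{k=0}^m\A(k)$.

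\emph{Step 2 (distance to the spine).} Write $Z\in\A(k)$ with $k\le m$. If $k<m$, we apply the argument below with $t=m-1$ inside $Z$; if $k=m$, we use $L^{m-1}(Z)=\emptyset$ and aim directly at $L^m(Z)$. In either case the key step is a contrapositive use of Proposition \ref{用次一级的锥替代}.

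Suppose $\dd(x,L^m(Z))\ge n_0(1+\delta_0^{-1})\tau r$. Set $\rho=(1+\delta_0^{-1})\tau r$, and note $\rho\le 2r$ since $\tau\le\delta_0$ (shrink $\rho$ slightly if needed so that $B(x,\rho)$ is well inside the region controlled at scale $r$). Then $n_0B(x,\rho)$ does not meet $L^m(Z)$.

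We also need $B(x,\rho)\cap Z\neq\emptyset$. This holds because $x\in W\cap B(x,r)$ and $d_{x,r}(W,Z)<\tau$ give a point of $Z$ within $\tau r<\rho$ of $x$.

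With these two facts, Proposition \ref{用次一级的锥替代}, applied to $Z$ with $t=m$ (legitimate when $k\le m\le n-1$), yields $Y\in\cup_{u\ge m+1}\A(u)$ with $Z\cap B(x,\rho)=Y\cap B(x,\rho)$. Then Lemma \ref{3个集合的dx,r}, or a direct comparison since $Z$ and $Y$ agree in the ball, gives
\begin{equation*}
d_{x,\rho'}(W,Y)\lesssim \frac{\tau r}{\rho'}
\end{equation*}
at a scale $\rho'$ slightly smaller than $\rho$. Here one has to handle points of $W$ near $\partial B(x,\rho)$, where $Z$ and $Y$ need not agree.

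Because $\rho=(1+\delta_0^{-1})\tau r$, the ratio $\tau r/\rho=\delta_0/(1+\delta_0)<\delta_0$. This contradicts Proposition \ref{锥的dx,r设定} for $W$ (whose $m$-spine passes through $x$) and $Y$ of type $\ge m+1$. Hence $\dd(x,L^m(Z))<n_0(1+\delta_0^{-1})\tau r$.

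\emph{Main obstacle.} The delicate point is the bookkeeping in that last comparison. Approximating points must stay inside the ball where $Z=Y$, which explains the extra $+1$ in the factor $(1+\delta_0^{-1})$: the ball must be larger than the $\tau r$ error by a margin. I would handle this by taking the ball $B(x,\rho)$ with $\rho=\tau r+\delta_0^{-1}\tau r$ and comparing at radius $\delta_0^{-1}\tau r$. At that radius, every point of $W\cap B(x,\delta_0^{-1}\tau r)$ has a $Z$-point within $\tau r$, and that point lies in $B(x,\rho)$ where $Z=Y$. The normalized error is then $\tau r/(\delta_0^{-1}\tau r)=\delta_0$, which is borderline, and strict inequalities in the hypotheses make it $<\delta_0$. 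The reverse inclusion, from $Y$ to $W$, works the same way since $Y\cap B(x,\delta_0^{-1}\tau r)\subset Z$. The case $k<m$ is then identical, with Proposition \ref{用次一级的锥替代} applied at $t=m$ to $Z$.
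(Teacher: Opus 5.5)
Your proposal is correct and follows the paper's proof essentially verbatim. You use Proposition \ref{锥的dx,r设定} with $t=m$ for the type of $Z$, then the contrapositive of Proposition \ref{用次一级的锥替代} at $t=m$ on $B(x,(1+\delta_0^{-1})\tau r)$, and finally compare $W$ with $Y$ at radius $\delta_0^{-1}\tau r$, exactly as in your ``main obstacle'' paragraph. The interim remarks about $t=m-1$ and the ratio $\delta_0/(1+\delta_0)$ at radius $\rho$ are superseded by that final bookkeeping and can be dropped; your explicit check that $Z$ meets $B(x,\rho)$ is a point the paper leaves implicit.
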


\begin{proof}
    By Proposition \ref{锥的dx,r设定}, $Z\notin \ccup_{k=m+1}^{n} \A(k)$. Hence $ Z\in \ccup_{k=0}^m \A(k)$.
    So let us prove (\ref{距离受控的式子}).
    If $\dd(x,\LL^{m}(Z))\geq n_0(1+\delta_0^{-1})\tau r$, then $n_0 B(x,(1+\delta_0^{-1})\tau r)$ does not meet $\LL^{m}(Z)$.
    By Proposition \ref{用次一级的锥替代}, there exists a set $Y\in \ccup_{k=m+1}^{n} \A(k)$ such that $Y\cap B(x,(1+\delta_0^{-1})\tau r)=Z\cap B(x,(1+\delta_0^{-1})\tau r)$.
    
    Now, consider $d_{x,\tau r/\delta_0}(W,Y)$, where the radius $\tau r/\delta_0\leq \min\{(1+\delta_0^{-1})\tau r,r\}$. For each $y\in W\cap B(x,\tau r/\delta_0 )$, there is $y'\in Z$  such that $|y-y'|<\tau r$ because $d_{x,r}(W,Z)<\tau$. Then $|y'-x|<(1+\delta_0^{-1})\tau r$, which indicates that $y'\in Y$ and  $\dd(y,Y)<\tau r$. For each $y\in Y\cap B(x,\tau r/\delta_0)$, it is evident that $y\in Z$ and $\dd(y,W)<\tau r$. Thus $d_{x,\tau r/\delta_0}(W,Y)<\delta_0$. This contradicts Proposition \ref{锥的dx,r设定} because $W\in \A(m)$ and $x$ is contained in its $m$-spine, while $Y\in\ccup_{k=m+1}^n \A(k)$. Lemma \ref{type m集合若dxr距离小于delta，则最高级spine距离受控} follows.
\end{proof}


{ 
Proposition \ref{锥的dx,r设定} shows the stability of sets of type 0: specifically, if $d_{x,r}(W, Z) < \delta_0$ for some $W \in \mathcal{A}(0)$ centered at $x$, then $Z$ must also be of type 0. In the following lemma, we extend this stability result to sets of type $m > 0$.
}

\begin{lemma}\label{引理：两个锥dxr距离小，则另一个也是一样的type}
     Fix an integer $m$ such that $0<m\leq n$. Let $W\in \A(m)$ and let its $m$-spine $L^{m}(W)$ pass through $x$. If 
    \begin{equation}
        d_{x,r}(W,Z)<20^{-1}n_0^{-n}\delta_0
    \end{equation}
    for some $Z\in \A$, then we can find $Y_m\in \A(m)$ such that
    \begin{equation}\label{引理式子：Z局部是typem}
        Y_m\cap B(x,10^{-1}n_0^{-n}r)=Z\cap B(x,10^{-1}n_0^{-n}r).
    \end{equation}
\end{lemma}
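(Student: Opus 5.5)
Set $\tau=20^{-1}n_0^{-n}\delta_0\le\delta_0$. By Lemma \ref{type m集合若dxr距离小于delta，则最高级spine距离受控} (for $m<n$; the case $m=n$ is handled the same way, using that $W$ is a plane), $Z\in\cup_{k=0}^m\A(k)$ and
\[
\dd(x,L^m(Z))<n_0(1+\delta_0^{-1})\tau r\le 2n_0\delta_0^{-1}\tau r=10^{-1}n_0^{1-n}r .
\]
Thus $Z$ has type at most $m$, and its $m$-spine passes close to $x$. The remaining task is to show that $Z$ coincides with a set of type exactly $m$ in $B:=B(x,10^{-1}n_0^{-n}r)$. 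The plan is to show that the lower spines $L^{s}(Z)$, $s<m$, stay far from $x$ at the scale of $r$. Proposition \ref{用次一级的锥替代} with $t=m-1$ then gives that $Z\cap B$ equals $Y\cap B$ for some $Y\in\cup_{u\ge m}\A(u)$. Finally we will exclude $u>m$.

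\textbf{Step 1 (lower spines are far).} Suppose $Z\in\A(k)$ with $k<m$ (otherwise go to Step 3). We want
\[
\dd(x,L^{m-1}(Z))\ge n_0\cdot 10^{-1}n_0^{-n}r\cdot(\text{some margin}).
\]
Argue by contradiction. Assume some point $z\in L^{s}(Z)$, $s\le m-1$, with $|z-x|\lesssim n_0^{1-n}r$. Choose such a $z$ in the lowest spine possible at an appropriate scale, so that near $z$ the set $Z$ coincides with a set $Z'\in\A(s')$, $s'\le m-1$, whose $s'$-spine passes through $z$. For this use Proposition \ref{用次一级的锥替代} iteratively, descending through the spines. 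Each step costs a factor $n_0$, which is why $n_0^{-n}$ appears. Then near $z$, the set $W$ coincides with a set of type $\ge m$: since $W\in\A(m)$, it is translation-invariant along $L^m(W)$, and its lower spines are empty. Translating so that $z$ is the center, apply Proposition \ref{锥的dx,r设定} at $z$, with $Z'$ playing the role of $W_m$ and $W$ the higher-type set. Using Lemma \ref{3个集合的dx,r} to pass from $d_{x,r}$ to $d_{z,\rho}$ with $\rho\approx n_0^{-j}r$, we get $d_{z,\rho}(Z',W)\le \tau r/\rho\cdot O(1)\le \delta_0$, a contradiction. The choice $\tau=20^{-1}n_0^{-n}\delta_0$ is exactly what makes $\tau r/\rho<\delta_0$ for $\rho\ge 10^{-1}n_0^{-n}r$ after the $n$ descents.

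\textbf{Step 2 (apply the replacement).} With $n_0B\cap L^{m-1}(Z)=\emptyset$ (and $B\cap Z\ne\emptyset$, since $x$ is within $\tau r$ of $Z$), Proposition \ref{用次一级的锥替代} gives $Y\in\cup_{u=m}^n\A(u)$ with $Y\cap B=Z\cap B$.

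\textbf{Step 3 (exclude $u>m$).} If $u>m$, then $Y\cap B=Z\cap B$ and $d_{x,r}(W,Z)<\tau$ together give, via the argument in the proof of Lemma \ref{type m集合若dxr距离小于delta，则最高级spine距离受控}, $d_{x,\rho}(W,Y)<\delta_0$ for a suitable $\rho\sim\tau r/\delta_0$ inside $B$. This contradicts Proposition \ref{锥的dx,r设定} with $t=m$, since $x\in L^m(W)$. Hence $u=m$, and $Y_m:=Y$ satisfies \eqref{引理式子：Z局部是typem}.

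The main obstacle is Step 1. One has to organize the descent through the spines of $Z$ carefully so that each application of Proposition \ref{用次一级的锥替代} loses only a factor $n_0$, and so that at the final scale the comparison with $W$ is still within $\delta_0$. I would run this as a finite induction on $s$ from $m-1$ down to $k$, tracking the radii explicitly.
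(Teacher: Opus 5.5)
Your argument is correct, but it runs in the opposite direction from the paper's proof.

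\textbf{The paper's route.} The paper uses an \emph{ascending} induction in nested balls. Set $\theta_s=10^{-1}n_0^{-m}\sum_{k=0}^{m-s}n_0^k$, so that $\theta_s=n_0\theta_{s+1}+\theta_m$. The paper shows that $Z$ agrees on $B(x,\theta_s r)$ with some $Y_s\in\A(s)$ for $s=t,\dots,m$, where $t$ is the type of $Z$. At the first failure $q$, the ``furthermore'' clause of Proposition~\ref{用次一级的锥替代} gives $l\in L^{q-1}(Z)$ with $|l-x|<n_0\theta_q r$. Since $Z=Y_{q-1}$ on $B(x,\theta_{q-1}r)\supset B(l,\theta_m r)$, the point $l$ automatically lies on the $(q-1)$-spine of a type-$(q-1)$ model. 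Comparing $Y_{q-1}$ with $W$ on $B(l,\theta_m r/2)$ then contradicts Proposition~\ref{锥的dx,r设定}.

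In that scheme the clean local model at the bad point is inherited from the previous stage. ``Type exactly $m$'' comes out of the induction itself, so nothing like your Step 3 is needed. The cost is a factor $n_0$ per level, which is the source of the $n_0^{-n}$.

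\textbf{Your route.} You prove the separation $\dd(x,L^{m-1}(Z))\ge 10^{-1}n_0^{1-n}r$ by a \emph{descending} search for a clean local model. You then use only the main clause of Proposition~\ref{用次一级的锥替代}, plus a second application of Proposition~\ref{锥的dx,r设定} (with $t=m$) to exclude types $>m$. This bypasses the ``furthermore'' clause, which in the paper needs Proposition~\ref{A(t)的包含性}. It also gives the extra information that the lower spines of $Z$ stay away from $x$.

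\textbf{Two points to fix when you write Step 1.}

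First, the radius should not shrink along the descent. Keep $\rho=10^{-1}n_0^{-n}r$ fixed. Then the points $z_0\in L^{m-1}(Z)$, $z_1\in L^{m-2}(Z),\dots$ each move by less than $n_0\rho=10^{-1}n_0^{1-n}r$, at most $m-1$ times. Everything therefore stays in $B(x,r/2)$. Comparing $Z'$ with $W$ on $B(z_j,\rho/2)$ gives $d_{z_j,\rho/2}(Z',W)<\tau r/(\rho/2)=\delta_0$, with the strict inequality supplied by the hypothesis. By contrast, if you start from a radius comparable to $r$ and divide by $n_0$ at each step, the first displacement $n_0\rho_0$ can already exceed $r$.

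Second, state explicitly the following fact. Suppose $Z$ agrees on $B(z_j,\rho)$ with some $Y\in\cup_{u\ge s}\A(u)$, and $z_j\in L^{s}(Z)\setminus L^{s-1}(Z)$. Then $Y\in\A(s)$ and $z_j\in L^{s}(Y)$. The reason is that the blow-ups of $Z$ and $Y$ at $z_j$ coincide, and the type of a set is unique by Proposition~\ref{每个set对应的type唯一}. The paper uses the same fact implicitly when it asserts $l\in L^{q-1}(Y_{q-1})$.
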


\begin{proof}

    It is evident that $Z \in \ccup_{k=0}^m \A(k)$ based on Proposition \ref{锥的dx,r设定}. If $Z \in \A(m)$, then (\ref{引理式子：Z局部是typem}) follows. Otherwise, if $Z \in \A(t)$ for some $0 \leq t < m$, we aim to demonstrate that $Z$ coincides with sets of increasing type within a sequence of progressively smaller balls, which are centered at $x$.
    
    For each $s\in\{t,..., m\}$, set
     \begin{equation}
         \theta_{s}=10^{-1}n_0^{-m}\cdot \sum_{k=0}^{m-s} n_0^k.
     \end{equation}
     Then $\{\theta_{s}\}_{s=t}^{m}$ is a decreasing sequence with $\theta_{t}<1/2$ and $\theta_m=10^{-1}n_0^{-m}$.
     In addition, $\theta_{s}=n_0\theta_{s+1}+ \theta_m$. 
     

    Now we want to show that:
    \begin{center}
        $Z\cap B(x,\theta_{t} r)=Y_{t}\cap B(x,\theta_{t} r)$ for some $Y_{t}\in \A(t)$,
        
        $Z\cap B(x,\theta_{t+1} r)=Y_{t+1}\cap B(x,\theta_{t+1} r)$ for some $Y_{t+1}\in \A(t+1)$,

        ...

        $Z\cap B(x,\theta_m r)=Y_m\cap B(x,\theta_m r)$ for some $Y_m\in \A(m)$.
    \end{center}

    Otherwise, assume $q$ is the first index for which the statement fails. We have $q>t$ since the first statement is always true. That is, for all $Y\in \A(q)$, we have $Z\cap B(x,\theta_q r)\neq Y\cap B(x,\theta_q r)$. Additionally, for all  $p\in\{t,...,q-1\}$, $Z\cap B(x, \theta_p r)=Y_p\cap B(x,\theta_p r) $ for some $Y_p\in \A(p)$.
    By Proposition \ref{用次一级的锥替代}, $n_0 B(x,\theta_q r)\cap L^{q-1}(Z)\neq\emptyset$. Hence, there exists $l\in L^{q-1}(Z)$ such that $|x-l|<n_0\theta_q r$. At the same time, $Z$ coincides with $Y_{q-1}$ in $B(x,\theta_{q-1} r)$, where $\theta_{q-1}=n_0\theta_q+\theta_m$, thus $l\in L^{q-1}(Y_{q-1})$. The ball $B(l,\theta_m r) $ is contained in $B(x,\theta_{q-1}r)$, thus we can consider $d_{l,\theta_m r/2}(Y_{q-1}, W)$. For every $y\in W\cap B(l,\theta_m r/2)$, there is $y'\in Z$ such that $|y-y'|<20^{-1}n_0^{-n}\delta_0 r$. So $y'\in Y_{q-1}$, implying $\dd(y,Y_{q-1})<20^{-1}n_0^{-n}\delta_0 r$. For every $y\in Y_{q-1}\cap B(l,\theta_m r/2)$, $y\in Z\cap B(x,r)$ so we can find $y'\in W$ such that $|y-y'|<20^{-1}n_0^{-n}\delta_0 r$. Therefore, $\dd(y,W)<20^{-1}n_0^{-n}\delta_0 r$ and $d_{l,\theta_m r/2}(Y_{q-1}, W)<\delta_0$. It contradicts Proposition \ref{锥的dx,r设定} since $Y_{q-1}$ is a set in $\A(q-1)$ with $l\in L^{q-1}(Y_{q-1})$ and  $W$ is of type greater than $q-1$. Then (\ref{引理式子：Z局部是typem}) holds because $m\leq n $ and Lemma \ref{引理：两个锥dxr距离小，则另一个也是一样的type} follows.
    
\end{proof}

\section{Main theorem and the decomposition of E}
For each $\B\subset \mathscr{TA}$ such that $\B/\sim$ is finite, we can get the constants $\alpha:=\alpha(\B)${ (defined in \eqref{alpha定义}),} $\delta_0${ (from Proposition \ref{锥的dx,r设定})}, $n_0${ (from Proposition \ref{用次一级的锥替代}) } that only depend on $\B$. Now we state the main theorem again.

\noindent
\textbf{Theorem 1.1} \emph{For each $\B\subset\mathscr{TA}$ such that $ \B/\sim$ is finite, there exist $C=C(\B)$, $\varepsilon_0=\varepsilon(\B)$ that depend only on $\B$ s.t. the following holds. Let $E\subset \RR^N$ be a closed set that contains the origin and $\varepsilon<\varepsilon_0$. If for each $x\in E$ and radius $r>0$, there is a set $Z(x,r)\in \A(\B)$ that contains $x$, such that
    \begin{equation}
        d_{x,r}(E,Z(x,r))<\varepsilon,
    \end{equation}
    then there is a set $Z\in\A(\B)$ through the origin and an injective mapping $f: B(0,1.95)\to B(0,2)$, with the following properties:
    \begin{equation}\label{f的包含与被包含2}
        B(0,1.9)\subset f(B(0,1.95))\subset B(0,2),
    \end{equation}
    \begin{equation}\label{f作用于Z(0,2)2}
        E\cap B(0,1.9)\subset f(Z\cap B(0,1.95))\subset E\cap B(0,2),
    \end{equation}
    \begin{equation}\label{f Holder2}
        (1+C\varepsilon)^{-1} |x-y|^{1+C\varepsilon}< |f(x)-f(y)|<(1+C\varepsilon)|x-y|^{1/(1+C\varepsilon)}\text{ for }x,y\in B(0,1.95), 
    \end{equation}
    \begin{equation}\label{f移动一点距离2}
        |f(x)-x|<C\varepsilon\text{ for }x\in B(0,1.95).
    \end{equation} }

Without loss of generality, we can let $\delta_0<1/10$ and $n_0>10$. Set $ N_0=n_0/ \delta_0$, thus $N_0>100$.
Let $E$ be as in the main theorem. Now we need to classify points of $E$ into different types. For $x\in E\cap B(0,2)$ and $r>0$ such that $B(x,r)\subset B(0,2)$, set
\begin{equation}
    a_m(x,r)=\inf\{d_{x,r}(E,W):W\in \A(m)\enspace and\enspace x\in\LL^{m}(W)\}
\end{equation}
for each $0\leq m\leq n$.

\begin{definition}
    Let $x\in E\cap B(0,2)$. We say that $x\in E_m$ if there exists $r_x>0$ such that $a_m(x,r)< C_0\varepsilon$ for all $0<r<r_x$, where $C_0=10^8(n_0+n)^{3n} N_0$.
\end{definition}

\begin{lemma}\label{Em不交}
    $\{E_m\}_{m=0}^n$ are disjoint.
\end{lemma}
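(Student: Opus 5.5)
Suppose $x\in E_m\cap E_{m'}$ with $m<m'$; I will derive a contradiction. By definition there is $r_0=\min\{r_x,r'_x\}>0$ such that for every $0<r<r_0$ both $a_m(x,r)<C_0\varepsilon$ and $a_{m'}(x,r)<C_0\varepsilon$. Fix one such $r$ with $B(x,r)\subset B(0,2)$. By the definition of $a_m$ and $a_{m'}$ as infima, we can choose $W\in\A(m)$ with $x\in L^m(W)$ and $d_{x,r}(E,W)<C_0\varepsilon$, and $W'\in\A(m')$ with $x\in L^{m'}(W')$ and $d_{x,r}(E,W')<C_0\varepsilon$.

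The key step is to compare $W$ and $W'$ directly through $E$ using Lemma \ref{3个集合的dx,r}. Take $F=W$, $G=E$, $H=W'$, with $x=y=z$, $r_1=r_2=r$, $\varepsilon_1=\varepsilon_2=C_0\varepsilon$, and $\rho=r/2$. The containment hypotheses of that lemma then reduce to $r/2+C_0\varepsilon r\le r$. This holds once $C_0\varepsilon_0\le 1/2$, which we may assume by shrinking $\varepsilon_0$. The lemma gives $d_{x,r/2}(W,W')<4C_0\varepsilon$. Also note that $W$ and $W'$ both meet $B(x,r/2)$, since each contains $x$, so $d_{x,r/2}$ is defined.

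Now apply Proposition \ref{锥的dx,r设定} to $W_m:=W\in\A(m)$ and to its $t$-spine with $t=m$. This is legitimate because $m<m'\le n$ gives $m\le n-1$, and $x\in L^m(W)$. The set $W'$ lies in $\A(m')\subset\cup_{k=m+1}^n\A(k)$, so the proposition yields $d_{x,r/2}(W,W')>\delta_0$. If we require $\varepsilon_0\le \delta_0/(4C_0)$, then $4C_0\varepsilon<\delta_0$, which contradicts the previous paragraph. Hence $E_m\cap E_{m'}=\emptyset$.

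Lemma \ref{type m集合若dxr距离小于delta，则最高级spine距离受控} gives the same conclusion: it forces $W'\in\cup_{k\le m}\A(k)$, which contradicts $m'>m$. The only real point of care is fixing the smallness condition $\varepsilon_0<\delta_0/(4C_0)$, together with $C_0\varepsilon_0\le1/2$, as part of the choice of $\varepsilon_0(\B)$. Since $C_0$ and $\delta_0$ depend only on $\B$ and $n$, this choice is consistent with the main theorem. The hypothesis $x\in L^{m'}(W')$ is not even needed.
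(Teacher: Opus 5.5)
Your proof is correct and is essentially the paper's argument. The paper also takes approximating cones $W_i\in\A(i)$ and $W_j\in\A(j)$ at one common small scale, compares them through $E$ to get a $d_{x,\cdot}$-distance below $4C_0\varepsilon<\delta_0$, and contradicts Proposition \ref{锥的dx,r设定}. Your version just makes explicit the use of Lemma \ref{3个集合的dx,r} and the smallness conditions on $\varepsilon_0$, which the paper leaves implicit.
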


\begin{proof}
    Suppose that we can find $x \in E_i \cap E_j$ where $i \neq j$. For sufficiently small $r > 0$, we have $a_i(x,2r) < C_0\varepsilon$, and there exists a set $W_i \in \A(i)$ whose $i$-spine passes through $x$ and satisfies $d_{x,2r}(E,W_i) < C_0\varepsilon$. The same holds for $j$. Therefore, we can conclude that $d_{x,r}(W_i,W_j) < 4C_0\varepsilon < \delta_0$. This contradicts Proposition \ref{锥的dx,r设定}. Hence, we can conclude that the sets $\{E_m\}_{m=0}^n$ are disjoint.
\end{proof}

\subsection{Properties of $E_m$}

In this subsection, we aim to investigate the following properties of $E_m$ ($0\leq m\leq n$).

First, in Lemma \ref{lem:小影响大}, we will show how being close to a set of type $m$ at a small scale determines the behavior of the set at larger scales.
Also, we will show that $E_m$ is close to an $m$-plane locally in Lemma \ref{Em的m-flatness}. Let us prove these by employing an induction approach from $m=0$ to $m=n$.

\begin{lemma}\label{lem:小影响大}
 Let $x\in E$, $r>0$ such that $B(x,20 C_1 r)\subset B(0,2)$ and $a_m(x,r)\leq C_2$, where $C_1=n_0^n  N_0$ and $C_2=1/(200 N_0 )$. For $m>0$, we additionally require that $B(x,20C_1 r)\cap (\ccup_{k=0}^{m-1} E_k)=\emptyset$. Let $Z=Z(x,20C_1 r)$. Then there exists $Z_m\in \A(m)$ such that $Z\cap B(x, 9 N_0r )=Z_m\cap B(x,9 N_0r)$. Denote by $\LL^{m}$ the $m\mbox{-}$spine of $Z_m$. Then $\dd(x,\LL^{m})<4N_0 a_m(x,r)r+2N_0 (20C_1+1)\varepsilon r$. 
 Moreover, we have
\begin{equation}
    a_m(x,8 N_0r)<\frac{1}{2} a_m(x,r)+ 7C_1\varepsilon.
\end{equation}
\end{lemma}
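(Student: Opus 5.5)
We compare three sets: $E$, a near-optimal cone $W\in\A(m)$ realizing $a_m(x,r)$ (so $x\in L^m(W)$ and $d_{x,r}(E,W)<a_m(x,r)+\eta$ for small $\eta$), and $Z=Z(x,20C_1r)$. Set $a=a_m(x,r)$ and use Lemma \ref{3个集合的dx,r} repeatedly to pass between them.

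\textbf{Step 1: $Z$ is close to $W$ near $x$.} Since $d_{x,20C_1r}(E,Z)<\varepsilon$, Lemma \ref{3个集合的dx,r} with $F=W$, $G=E$, $H=Z$ gives $d_{x,r/2}(W,Z)\lesssim (a+40C_1\varepsilon)$. Because $a\le C_2=1/(200N_0)$ and $\varepsilon$ is tiny, this is below $\delta_0$. Lemma \ref{type m集合若dxr距离小于delta，则最高级spine距离受控} then gives $Z\in\cup_{k\le m}\A(k)$ and $\dd(x,L^m(Z))\lesssim n_0\delta_0^{-1}(a+C_1\varepsilon)r=N_0(a+C_1\varepsilon)r$. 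This is the source of the spine-distance bound; the stated constants $4N_0$ and $2N_0(20C_1+1)$ should follow from careful bookkeeping of the radius $r/2$ versus $r$.

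\textbf{Step 2: $Z$ coincides with a type-$m$ set on $B(x,9N_0r)$.} For $m=0$ this is immediate, since $Z\in\A(0)$. For $m>0$, suppose $Z$ does not coincide with any $Y\in\A(m)$ on $B(x,9N_0r)$. Run the ascending argument of Lemma \ref{引理：两个锥dxr距离小，则另一个也是一样的type}: following Proposition \ref{用次一级的锥替代}, there is a point $l\in L^{q-1}(Z)$ for some $q-1<m$ with $|l-x|\lesssim n_0^{m}\cdot 9N_0 r\le 10 C_1 r$. The ball $B(x,20C_1r)$ was chosen so that this point lies well inside. Near $l$, $Z$ agrees with a type-$(q-1)$ cone whose $(q-1)$-spine passes through $l$. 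Take the point $e\in E$ closest to $l$ (within $20C_1\varepsilon r$). At every scale $\rho$ in the range $[C\varepsilon r, C_1 r]$, $E$ is $\lesssim C_1\varepsilon r/\rho$-close to a type-$(q-1)$ cone whose spine passes near $e$, so $a_{q-1}(e,\rho)$ is small on that range. Repeating with the hypothesis cones $Z(e,\rho)$ at all smaller scales shows $a_{q-1}(e,\rho)<C_0\varepsilon$ for all small $\rho$, so $e\in E_{q-1}$. This contradicts $B(x,20C_1r)\cap\cup_{k<m}E_k=\emptyset$.

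\textbf{Main obstacle.} The delicate point of Step 2 is this downward propagation: the statement is being proved inside the induction on $m$, so we would invoke the already-established case $m'=q-1<m$ of this very lemma. Iterating its conclusion from scale $\rho$ down to $\rho/(8N_0)$ keeps $a_{q-1}$ bounded by $14C_1\varepsilon<C_0\varepsilon$ at small scales. Since the estimate only controls larger scales from smaller ones, we would apply it in contrapositive form, or run a separate direct argument using the cones $Z(e,\rho)$ together with Proposition \ref{锥的dx,r设定} to rule out the type jumping back up. This step is where I expect the real work.

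\textbf{Step 3: the contraction estimate.} Let $Z_m\in\A(m)$ be the set from Step 2, and translate it along its own directions to a cone $Z'_m$ whose $m$-spine passes through $x$. Since $\dd(x,L^m)\le 4N_0ar+2N_0(20C_1+1)\varepsilon r$, this translation moves points in $B(x,8N_0r)$ by at most that amount. Then
\[
d_{x,8N_0r}(E,Z'_m)\le d_{x,8N_0r}(E,Z)+\frac{\dd(x,L^m)}{8N_0r}<\frac{20C_1}{8N_0}\varepsilon+\frac{a}{2}+\frac{(20C_1+1)\varepsilon}{4},
\]
which is below $\tfrac12 a+7C_1\varepsilon$ because $C_1=n_0^nN_0\ge N_0$. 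This gives $a_m(x,8N_0r)<\tfrac12 a_m(x,r)+7C_1\varepsilon$.
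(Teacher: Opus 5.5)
Your Steps 1 and 3 are essentially the paper's argument. The paper works at radius $\rho=[2a_m(x,r)+(20C_1+1)\varepsilon]r/\delta_0$ rather than $r/2$, but your radius gives a bound at least as good as the stated one. The first half of Step 2 also matches the paper: you locate $l\in L^{q-1}(Z)$ near $x$ via Proposition \ref{用次一级的锥替代}, then take a point $e\in E$ within $20C_1\varepsilon r$ of $l$.

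The gap is in the second half of Step 2. You try to prove that $e$ itself lies in $E_{q-1}$, and this is false in general. All you know is that $e$ lies within $O(C_1\varepsilon r)$ of a $(q-1)$-spine. At scales $\rho\ll\varepsilon r$, the set $E$ near $e$ can perfectly well look like a set of higher type; think of a regular point of $E$ at distance $\sim\varepsilon r$ from the singular stratum. The cones $Z(e,\rho)$ contain $e$, but nothing forces their $(q-1)$-spines to pass within $\varepsilon\rho$ of $e$. So $a_{q-1}(e,\rho)$ need not be small for small $\rho$, and Proposition \ref{锥的dx,r设定} cannot rule out the jump in type at $e$. As you observe yourself, the lower-level case of Lemma \ref{lem:小影响大} only propagates smallness from small scales to large ones, so it does not help either.

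The missing idea is that you only need \emph{some} point of $E_{q-1}$ within $O(\varepsilon r)$ of $e$. This is exactly what Lemma \ref{Em的m-flatness} gives at level $q-1\le m-1$. The paper proves Lemma \ref{lem:小影响大}, Corollary \ref{推论：Em中的点满足am(x,r)很小，即使r很大} and Lemma \ref{Em的m-flatness} in one simultaneous induction on $m$ so that this lemma is available here.

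Concretely, translate $Z_{q-1}$ by $e-l$ to get $Z'\in\A(q-1)$ with $e\in L^{q-1}(Z')$. By Lemma \ref{3个集合的dx,r}, $a_{q-1}(e,\lambda r)\le d_{e,\lambda r}(Z',E)\le 40C_1\varepsilon/\lambda$, where $\lambda=n_0^{-n}/2$. This is an $O(\varepsilon)$ bound of the kind the flatness lemma requires. That lemma then gives $\dd(e,E_{q-1})\lesssim\varepsilon\lambda r$, hence $\dd(x,E_{q-1})<20C_1r$, contradicting the hypothesis.

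The downward propagation you were looking for lives inside the proof of the flatness lemma, and it works only because the base point moves. One builds points $e_1,e_2,\dots\in E$ and geometrically decreasing radii $r_k$ with $|e_{k+1}-e_k|\lesssim\varepsilon r_k$ and $a_{q-1}(e_k,r_k)<C_3\varepsilon$. At each scale one re-centers on the $(q-1)$-spine of $Z(e_k,\cdot)$, using Lemmas \ref{引理：两个锥dxr距离小，则另一个也是一样的type} and \ref{type m集合若dxr距离小于delta，则最高级spine距离受控}. The limit $\xi$ lies in $E_{q-1}$ and is $O(\varepsilon r)$-close to $e$, not equal to it.
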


\begin{corollary}\label{推论：Em中的点满足am(x,r)很小，即使r很大}
 Let $x\in E_m,r>0$ such that $B(x,5 n_0^n r/2)\subset B(0,2)$. For $m>0$, we additionally require that $B(x,5 n_0^n r/2)\cap (\ccup_{k=0}^{m-1} E_k)=\emptyset$. Then $a_m(x,r)<14C_1\varepsilon$.
\end{corollary}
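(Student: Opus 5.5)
The corollary will follow by iterating Lemma \ref{lem:小影响大} upward from a tiny scale, where $x\in E_m$ gives $a_m<C_0\varepsilon$, to the scale $r$. Set $K=8N_0$. Since $x\in E_m$, there is $r_x>0$ with $a_m(x,\rho)<C_0\varepsilon$ for all $0<\rho<r_x$. Choose an integer $j\geq 1$ large enough that $\rho_0:=K^{-j}r<r_x$; then $a_m(x,\rho_0)<C_0\varepsilon$. Define $\rho_i=K^i\rho_0$ for $i=0,\dots,j$, so that $\rho_j=r$. The aim is to apply Lemma \ref{lem:小影响大} at each scale $\rho_i$ with $i<j$, obtaining
\begin{equation*}
a_m(x,\rho_{i+1})<\tfrac12 a_m(x,\rho_i)+7C_1\varepsilon .
\end{equation*}

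The recursion gives $a_m(x,\rho_i)<2^{-i}C_0\varepsilon+14C_1\varepsilon(1-2^{-i})$. This bound does not yet give the strict estimate $14C_1\varepsilon$ at $i=j$, because of the term $2^{-j}C_0\varepsilon$. I will handle this by choosing $j$ larger, which is allowed because $\rho_0$ may be taken as small as we like. Alternatively, I can argue by induction that once $a_m\le 14C_1\varepsilon$ at some scale, it stays strictly below $14C_1\varepsilon$, since $\tfrac12\cdot 14C_1\varepsilon+7C_1\varepsilon=14C_1\varepsilon$ with strict inequality. For the first approach, it suffices that $2^{-j}(C_0-14C_1)\varepsilon$ is beaten by the slack. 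More cleanly, I unroll the recursion as $a_m(x,\rho_j)<2^{-j}a_m(x,\rho_0)+7C_1\varepsilon\sum_{k<j}2^{-k}$. Because $\sum_{k<j}2^{-k}=2-2^{1-j}$, this bound is below $14C_1\varepsilon$ as soon as $2^{-j}C_0<2^{1-j}\cdot 7C_1$, i.e.\ $C_0<14C_1$. That fails, since $C_0$ is much larger than $C_1$. So I fall back on the first approach: at the finite stage $j$ the bound is $14C_1\varepsilon+2^{-j}(C_0-14C_1)\varepsilon$, which exceeds $14C_1\varepsilon$. To fix this, I will use that $r_x$ can be arbitrary. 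Letting $\rho_0\to0$ along the fixed geometric grid ending at $r$, I obtain $a_m(x,r)\le 14C_1\varepsilon$ in the limit. The strict inequality then follows from one more application: $a_m(x,r)<\tfrac12 a_m(x,r/K)+7C_1\varepsilon\le 7C_1\varepsilon+7C_1\varepsilon$, using the non-strict bound at $r/K$.

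The remaining work is checking the hypotheses of Lemma \ref{lem:小影响大} at each scale $\rho_i$, $i<j$. First, $a_m(x,\rho_i)\le C_2=1/(200N_0)$ must hold. This follows inductively, since every bound above is at most $C_0\varepsilon$ plus $14C_1\varepsilon$, which is small once $\varepsilon_0$ is chosen small in terms of $C_0$. Second, $B(x,20C_1\rho_i)\subset B(0,2)$ is needed. Since $\rho_i\le r/K$, we have $20C_1\rho_i\le 20n_0^nN_0 r/(8N_0)=\tfrac52 n_0^n r$, so this ball lies inside $B(x,5n_0^nr/2)\subset B(0,2)$. For $m>0$, the same containment gives the disjointness from $\cup_{k<m}E_k$. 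This scale bookkeeping is exactly why the hypothesis is stated with radius $5n_0^nr/2$.

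I expect the main obstacle to be the strictness and limit bookkeeping just described. The constant $C_0$ is huge compared with $C_1$, so a single finite iteration does not land below $14C_1\varepsilon$. One must therefore exploit the freedom to start the iteration arbitrarily deep and pass to the limit, or equivalently take $\limsup$ over starting scales, before the final application restores the strict inequality.
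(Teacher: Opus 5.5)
Your proposal is correct and follows the paper's argument: iterate Lemma \ref{lem:小影响大} along the grid $(8N_0)^{-j}r$ starting below $r_x$, verify its hypotheses through $20C_1\rho\le \tfrac52 n_0^n r$ for $\rho\le r/(8N_0)$, and let $j\to\infty$. Your extra application of the lemma at scale $r/(8N_0)$ is a worthwhile addition. The paper's limit argument only yields $a_m(x,r)\le 14C_1\varepsilon$, so by itself it does not justify the strict inequality claimed in the statement.
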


\begin{lemma}\label{Em的m-flatness}
  Let $x\in E$, $r>0$ such that $B(x,r)\subset B(0,2)$ and $a_m(x,r)<C_3\varepsilon$, where $C_3=100(n+n_0)^{2n}$. That is, we can find $W_m\in \A(m)$ whose $m\mbox{-}$spine $\LL^{m}$ passes through $x$ and such that $d_{x,r}(W_m, E)<C_3\varepsilon$.  Then we have
    \begin{equation}\label{lem,flatness: Em is m-rei}
    \begin{aligned}
         &\dd(x, E_0)<C_4\varepsilon r,&\text{ when } m=0\\
         &d_{x,0.99r}(E_m,\LL^{m})<(0.99)^{-1}C_4\varepsilon,&\text{ when } m>0
    \end{aligned}
    \end{equation}
    where $C_4=10^3 C_3 N_0 n_0^{n^2}$. When $m>0$, $B(x,0.99r)$ does not meet $\ccup_{k=0}^{m-1} E_k$.
\end{lemma}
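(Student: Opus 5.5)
We argue by induction on $m$, assuming the statement for all $k<m$. We prove three things in this order: first that $B(x,0.99r)$ does not meet $\ccup_{k=0}^{m-1}E_k$; then that every point of $E_m\cap B(x,0.99r)$ lies within $C\varepsilon r$ of $\LL^m$; finally that every point of $\LL^m\cap B(x,0.99r)$ lies within $C\varepsilon r$ of $E_m$. For $m=0$ only the last part is needed, applied with $p=x$, and it gives $\dd(x,E_0)<C_4\varepsilon r$.

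\emph{Step 1: lower strata are absent.} Suppose $z\in E_k\cap B(x,0.99r)$ for some $k<m$, and take the smallest such $k$. Apply Corollary \ref{推论：Em中的点满足am(x,r)很小，即使r很大} at $z$ with a radius $\rho\approx 10^{-3}n_0^{-n}r$. Its hypothesis that no $E_j$ with $j<k$ lies nearby comes from the minimality of $k$, or from the induction hypothesis applied at $x$. We get $W_k\in\A(k)$ whose $k$-spine passes through $z$, with $d_{z,\rho}(E,W_k)<14C_1\varepsilon$. Since also $d_{x,r}(E,W_m)<C_3\varepsilon$, Lemma \ref{3个集合的dx,r} gives $d_{z,\rho/2}(W_k,W_m)\lesssim (C_1+C_3n_0^n)\varepsilon<\delta_0$. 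Proposition \ref{锥的dx,r设定} forbids this, because $W_m$ has type $m>k$ and $z\in L^k(W_k)$.

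\emph{Step 2: $E_m$ stays near $\LL^m$.} Take $y\in E_m\cap B(x,0.99r)$. By Step 1 the hypotheses of Corollary \ref{推论：Em中的点满足am(x,r)很小，即使r很大} hold at $y$ for $\rho\approx 10^{-3}n_0^{-n}r$. This gives $W_y\in\A(m)$ whose $m$-spine passes through $y$, with $d_{y,\rho}(E,W_y)<14C_1\varepsilon$. Lemma \ref{3个集合的dx,r} then gives $d_{y,\rho/2}(W_y,W_m)\le C'\varepsilon$. Now apply Lemma \ref{type m集合若dxr距离小于delta，则最高级spine距离受控} with $W=W_y$ and $Z=W_m$. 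It yields $\dd(y,L^m(W_m))<n_0(1+\delta_0^{-1})C'\varepsilon\rho$, which is $\le C_4\varepsilon r/2$.

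\emph{Step 3: $\LL^m$ stays near $E_m$.} This is the main obstacle. Fix $p\in\LL^m\cap B(x,0.99r)$. We build points $q_j\in E$ and radii $s_j=2^{-j}s_0$, with $s_0\approx 10^{-3}n_0^{-n}r$, such that:
\begin{itemize}
\item $a_m(q_j,s_j)\le C_5\varepsilon$ for a fixed constant $C_5\ll C_0$;
\item $|q_{j+1}-q_j|\lesssim\varepsilon s_j$.
\end{itemize}

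To start, choose $q_0\in E$ with $|q_0-p|<C_3\varepsilon r$. Near $q_0$, the cone $W_m$ coincides with a type $m$ cone whose spine passes through $p$; moving that spine by $q_0-p$ shows $a_m(q_0,s_0)\lesssim\varepsilon$.

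For the inductive step, put $Z=Z(q_j,s_j)$. Lemma \ref{引理：两个锥dxr距离小，则另一个也是一样的type} shows that $Z$ locally coincides with some $Y\in\A(m)$. Lemma \ref{type m集合若dxr距离小于delta，则最高级spine距离受控} places $L^m(Y)$ within $C\varepsilon s_j$ of $q_j$. Take $q_{j+1}\in E$ close to a point of $L^m(Y)$ near $q_j$.

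The difficulty is to stop the bound $a_m(q_{j+1},s_{j+1})$ from growing with $j$. The error must be re-based at each scale on $Z(q_{j+1},\cdot)$, which has error exactly $\varepsilon$, rather than inherited from the previous scale. The factor-$2$ gain in Lemma \ref{lem:小影响大}, namely $a_m(x,8N_0r)<\tfrac12a_m(x,r)+7C_1\varepsilon$, indicates the bound closes at a level $\approx 14C_1\varepsilon<C_0\varepsilon$, and we will use it in the same way here.

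Granting this, $(q_j)$ is Cauchy. Its limit $q$ lies in $E$ because $E$ is closed, and $|q-p|\lesssim\varepsilon s_0\le C_4\varepsilon r/2$. Passing to the limit in the bound on $a_m$, after slightly enlarging the radii, gives $a_m(q,\rho)<C_0\varepsilon$ for all small $\rho$. Hence $q\in E_m$.

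Steps 2 and 3 together give $d_{x,0.99r}(E_m,\LL^m)<(0.99)^{-1}C_4\varepsilon$.
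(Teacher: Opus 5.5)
\textbf{The gap is in Step 3,} which you leave at ``Granting this''. The tool you point to cannot supply the missing bound. Lemma \ref{lem:小影响大} controls $a_m$ at the \emph{larger} radius $8N_0r$ in terms of $a_m$ at radius $r$, and it needs $a_m(x,r)\le C_2$ at the small radius as a hypothesis. You need to go down in scale, from $(q_j,s_j)$ to $(q_{j+1},s_{j+1})$ with $s_{j+1}<s_j$.

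The dyadic ratio $s_{j+1}=s_j/2$ also defeats the re-basing you describe. Lemma \ref{引理：两个锥dxr距离小，则另一个也是一样的type} makes $Z(q_j,\cdot)$ coincide with a type-$m$ cone $Y$ only on a ball of radius about $10^{-1}n_0^{-n}$ times the scale, so $B(q_{j+1},s_j/2)$ leaves that ball. Falling back on the cone that realizes $a_m(q_j,s_j)$ only gives $a_m(q_{j+1},s_j/2)\le 2a_m(q_j,s_j)+O(\varepsilon)$, which doubles at each step.

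\textbf{The fix.} The missing observation, which is what the paper uses, is that no contraction is needed once the ratio is small. Take $s_{j+1}=10^{-4}n_0^{-n}s_j$ and $Z=Z(q_j,s_j/200)$. The bound $a_m(q_j,s_j)<C_3\varepsilon$ is used only qualitatively:
\begin{itemize}
\item Lemma \ref{引理：两个锥dxr距离小，则另一个也是一样的type} gives $Y\in\A(m)$ with $Y=Z$ on $B(q_j,s_j/(4\cdot 10^3n_0^n))$.
\item Lemma \ref{type m集合若dxr距离小于delta，则最高级spine距离受控}, applied at scale $\sim C_3\varepsilon s_j/\delta_0$, gives $l\in L^m(Y)$ with $|l-q_j|<M\varepsilon s_j$, where $M\approx 5C_3N_0$.
\end{itemize}
Choose $q_{j+1}\in E$ with $|q_{j+1}-l|<\varepsilon s_j/200$ and put $Y'=Y+(q_{j+1}-l)$. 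Since $B(q_{j+1},s_{j+1})$ lies inside the coincidence ball, the error of $Y'$ comes from $Z$ alone:
$a_m(q_{j+1},s_{j+1})\le d_{q_{j+1},s_{j+1}}(Y',E)\le(\varepsilon s_j/100)/s_{j+1}=100n_0^n\varepsilon<C_3\varepsilon$.
This holds independently of $j$. The previous bound enters only through $|q_{j+1}-q_j|\le(M+1)\varepsilon s_j$, which is summable. For $j=0$, use $W_m+(q_0-p)$ in place of the cone realizing $a_m(q_j,s_j)$; its larger but fixed error only changes $M$. The limit $q\in E$ then has $a_m(q,t)$ bounded by a fixed multiple of $\varepsilon$ below $C_0\varepsilon$ for all small $t$ (the paper gets $10^5n_0^n(3M+C_3)\varepsilon$), so $q\in E_m$.

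\textbf{Steps 1 and 2} are essentially fine.
\begin{itemize}
\item In Step 1, minimality of $k$ only clears $B(x,0.99r)$. The ball $B(z,\frac52 n_0^n\rho)$ required by Corollary \ref{推论：Em中的点满足am(x,r)很小，即使r很大} can protrude from it. There is also no induction hypothesis ``at $x$'' to call on, since nothing is known about $a_k(x,r)$ for $k<m$. Use nested radii as the paper does (for instance $\sigma_k=(1-\frac{k+1}{200n})r$), clearing $E_0,E_1,\dots$ in turn. Then the final radius exceeds $0.99r$ by a margin, and Step 2 can apply the Corollary at every $y\in B(x,0.99r)$.
\item Step 2 is a more direct route than the paper's. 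The paper argues by contradiction at a scale comparable to $\dd(z,L^m)$: it uses Proposition \ref{用次一级的锥替代} to see $W_m$ there as a cone of type $>m$, contradicts Proposition \ref{锥的dx,r设定}, and re-proves locally that no lower strata lie near $z$. Your application of Lemma \ref{type m集合若dxr距离小于delta，则最高级spine距离受控} with $W=W_y$, $Z=W_m$ packages the same mechanism. For $m=n$ that lemma does not apply, but the claim is immediate there since $L^n=W_m$.
\end{itemize}
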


First we want to prove that Lemma \ref{lem:小影响大}, Corollary \ref{推论：Em中的点满足am(x,r)很小，即使r很大} and Lemma \ref{Em的m-flatness} are true for $m=0$.

\begin{proof}[\rm \textbf{Proof of Lemma \ref{lem:小影响大} for the base case \boldmath$m=0$.}]
    Since $a_0(x,r)\leq C_2=1/(200 N_0)$, we can find a set $W_0\in \A(0)$ whose center is $x$ and $d_{x,r}(W_0,E)<2a_0(x,r)\leq 2 C_2$. Let $\rho=[2a_0(x,r)+(20 C_1+1)\varepsilon]r/ \delta_0$. {  Recall that $N_0=n_0/\delta_0$, as defined in the text following \eqref{f移动一点距离2}.} It follows that $\rho<r/(10n_0)$ for $\varepsilon$ small ($\varepsilon$ depends only on $\delta_0, n_0$) and $d_{x,\rho}(Z,W_0)<\delta_0$ since $d_{x,20C_1 r}(Z,E)<\varepsilon$. By Lemma \ref{type m集合若dxr距离小于delta，则最高级spine距离受控}, we have $Z\in \A(0)$ and $|x-z_0|<n_0(1+\delta_0)\rho$, where $z_0$ denotes the center of $Z$. It is clear that $Z$ coincides with  itself in $B(x,9N_0 r)$. 
    Next, we estimate $a_0(x,8N_0 r)$. Let $Z'=Z+(x-z_0)$, then $Z'\in\A(0)$ and is centered at $x$. 
      By Lemma \ref{3个集合的dx,r}, $d_{x,8N_0 r}(Z', E)<\frac{n_0(1+\delta_0)\rho+ 20 C_1 \varepsilon r }{8N_0 r}<\frac{1}{2}a_0(x,r)+7 C_1\varepsilon$. 
     Therefore, $a_0(x,8N_0 r)\leq d_{x,8N_0r}(Z',E)<\frac{1}{2}a_0(x,r)+7 C_1\varepsilon$. Thus, Lemma \ref{lem:小影响大} holds for $m=0$.
\end{proof}

    Then we are ready to prove Corollary \ref{推论：Em中的点满足am(x,r)很小，即使r很大} for $m=0$.
    
\begin{proof}[\rm \textbf{Proof of Corollary \ref{推论：Em中的点满足am(x,r)很小，即使r很大} for the base case  \emph{m} = 0}]

    Since $x\in E_0$, we have $a_0(x,(8N_0)^{-k}r)<C_0\varepsilon$ for $k>0$ large enough. By applying Lemma \ref{lem:小影响大}, we obtain 
     $a_0(x,(8N_0)^{-k+1}r)< \frac{1}{2} a_0(x,(8N_0)^{-k} r)+7C_1\varepsilon<C_0\varepsilon/2+ 7C_1\varepsilon<C_0\varepsilon$.
    By repeating this argument, we have $a_0(x,r)< 2^{-k}C_0\varepsilon +  7C_1\varepsilon \cdot(\sum_{l=0}^{k-1}(1/2)^l)$. This inequality holds for all sufficiently large $k$. Thus, we have $a_0(x,r)\leq \lim_{k\to\infty} (C_0\varepsilon/ 2^k+ 7C_1\varepsilon\cdot(\sum_{l=0}^{k-1}(1/2)^l) )\leq 14C_1\varepsilon$. Therefore, Corollary \ref{推论：Em中的点满足am(x,r)很小，即使r很大} holds for $m=0$.
\end{proof}

\begin{proof}[\rm \textbf{Proof of Lemma \ref{Em的m-flatness} for the base case  \emph{m} = 0}]
        
   Now we are ready to prove Lemma \ref{Em的m-flatness} for $m=0$. Let $W_0\in \A(0)$ be centered at $x$ such that $d_{x,r}(W_0,E)<C_3\varepsilon$. Set $Z=Z(x,r)$, then $x\in Z$ and $d_{x,r}(Z,E)<\varepsilon$. Set $\rho=(C_3+2)\varepsilon r/\delta_0$. Then $d_{x,\rho}(W_0,Z)<(C_3\varepsilon r+\varepsilon r)/\rho<\delta_0$ by Lemma \ref{3个集合的dx,r}.
   And by Lemma \ref{type m集合若dxr距离小于delta，则最高级spine距离受控}, $Z\in \A(0)$. Let $z_0$ be the center of $Z$, then $|z_0-x|<n_0(1+\delta_0)\rho<2N_0(C_3+2)\varepsilon r $. Since $d_{x,r}(Z,E)<\varepsilon$, we can find $x_1\in E$ such that $|x_1-z_0|<\varepsilon r$. Consequently, $|x_1-x|<2N_0(C_3+2)\varepsilon r+\varepsilon r$. Set $Z'=Z+(x_1-z_0)$, then $Z'\in \A(0)$ and it is centered at $x_1$, at the same time, $a_0(x_1,r/2) \leq d_{x_1,r/2}(Z',E)<4\varepsilon$. The pair $(x_1,r/2)$ satisfies the condition of this lemma, namely, $B(x_1,r/2)\subset B(0,2)$ and $a_0(x_1,r/2)<4 \varepsilon$. By repeating the discussion above, we obtain a series of points $\{x_k\}_{k=1}^{\infty}\subset E$. These points satisfy $a_0(x_k,r/2^k)<4\varepsilon$ and $|x_{k+1}-x_k|<(12 N_0 +1)2^{-k}\varepsilon r$. Let $\xi=\lim_{k\to\infty} x_k$. Since $E$ is closed, $\xi\in E$. Moreover, we have $|\xi-x|\leq \sum_{k=1}^{\infty}|x_{k+1}-x_k|+|x_1-x|< (2C_3 N_0+16 N_0+2)\varepsilon r$.

    To prove that $\xi\in E_0$, we consider the value of $a_0(\xi,t)$ for $0<t<r/10$. Fix $t$, and choose $k\geq 1$ such that $\frac{9}{10}\frac{r}{2^{k+1}}<t\leq \frac{9}{10}\frac{r}{2^k}$. We already know that $a_0(x_k,{r}/{2^k})<4\varepsilon$. Hence, there exists $Y\in \A(0)$ centered at $x_k$ such that $d_{x_k,r/2^k}(Y,E)<4 \varepsilon$. Moreover, we have $|x_k-\xi|\leq \sum_{l=k}^{\infty}|x_{l+1}-x_l|=(12N_0+1)\varepsilon r\cdot 2^{-k+1}$. Consider the set $Y'=Y+(\xi-x_k)$, which belongs to $\A(0)$ and is centered at $\xi$. By using $Y'$ to estimate $a_0(\xi,t)$, we have $a_0(\xi,t)\leq d_{\xi,t}(Y',E)< 55N_0\varepsilon$. This implies that $\xi\in E_0$. Therefore, $\dd(x,E_0)\leq  |x-\xi|< (2C_3 N_0+16N_0+2)\varepsilon r< C_4\varepsilon r$. Thus, Lemma \ref{Em的m-flatness} holds for $m=0$.

\end{proof}

Before passing to $m>0$, we need to state another conclusion which indicates that $E_0$ is a single point in a large region of $B(0,2)$.

\begin{lemma}\label{E0只有一个点}
    There is at most one point in $E_0\cap B(0,1.99)$.
\end{lemma}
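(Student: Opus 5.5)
We argue by contradiction: suppose $x,y\in E_0\cap B(0,1.99)$ with $x\neq y$, and set $d=|x-y|>0$. The plan is to find a scale $r$, comparable to $d$, at which $E$ near $x$ is well approximated by a type-$0$ cone centered at $x$. Then Lemma \ref{type m集合若dxr距离小于delta，则最高级spine距离受控} forces every good approximating cone near $y$ to have its center close to $x$, not at $y$. That contradicts the fact that $y$ itself is a type-$0$ point at small scales.

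\textbf{Choice of scale.} Take $r=Kd$ with a large constant $K$ depending only on $n_0,\delta_0$, for instance $K=10^3 N_0$. When $r$ is not too large relative to the distance from $x$ to $\partial B(0,2)$, we have $B(x,5n_0^n r/2)\subset B(0,2)$. Corollary \ref{推论：Em中的点满足am(x,r)很小，即使r很大} for $m=0$ then gives $a_0(x,r)<14C_1\varepsilon$, so there is $W_x\in\A(0)$ centered at $x$ with $d_{x,r}(E,W_x)<28C_1\varepsilon$. Symmetrically, at the small scale $\rho=d/(10 n_0)$ (or any scale much smaller than $d$), we get $W_y\in\A(0)$ centered at $y$ with $d_{y,\rho}(E,W_y)<28C_1\varepsilon$.

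\textbf{Key step.} By Lemma \ref{3个集合的dx,r}, $d_{y,\rho/2}(W_x,W_y)\lesssim C_1\varepsilon r/\rho = O(C_1 K n_0\varepsilon)$, which is less than $\delta_0$ once $\varepsilon_0$ is small. Apply Lemma \ref{type m集合若dxr距离小于delta，则最高级spine距离受控} with $W=W_y$ (type $0$, spine through $y$) and $Z=W_x$. It gives $\dd(y,L^0(W_x))<n_0(1+\delta_0^{-1})\tau\rho$, where $\tau=O(C_1Kn_0\varepsilon)$. Since $L^0(W_x)=\{x\}$, this says $|x-y|\le C\varepsilon\rho<d$, a contradiction. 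Conceptually, $W_x$ near $y$ is, by Proposition \ref{用次一级的锥替代}, a cone of type $\ge1$, because $y$ lies at distance $d\gg n_0\rho$ from the center $x$. Proposition \ref{锥的dx,r设定} forbids a type-$0$ cone centered at $y$ from being $\delta_0$-close to such a cone.

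\textbf{Main obstacle.} The difficulty is the boundary restriction $B(x,5n_0^nr/2)\subset B(0,2)$ when $x,y$ are near $\partial B(0,1.99)$, or when $d$ is large. Then no scale $Kd$ fits inside $B(0,2)$, since the margin is only $0.01$. To handle this, we will instead use Lemma \ref{lem:小影响大} directly, together with the hypothesis at arbitrary scales. The hypothesis gives cones $Z(x,s)$ at every scale $s$, even when $s$ is large. Iterating Lemma \ref{lem:小影响大} up from small scales at $x$ keeps $a_0(x,s)$ small for all admissible $s$; this is the content of the corollary. For the large-distance case, we would pick the scale $s\approx 0.01/(20C_1)$ and connect $x$ to $y$ through a chain of intermediate points. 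Failing that, we would exploit that $Z(0,2)$ or $Z(x,s)$ has a single center: any two type-$0$ points must both be within $C\varepsilon s$ of the centers of all cones at overlapping scales, and chaining along the segment $[x,y]$ in steps of size about $s$ propagates this constraint. I expect this chaining/boundary bookkeeping, rather than the core contradiction, to be the delicate part.
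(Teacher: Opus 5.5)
Your small-distance argument is correct and is essentially the first case of the paper's proof. When $d=|x-y|$ is small compared with $n_0^{-n}$, take a type-$0$ cone centered at $x$ at a scale somewhat larger than $d$, and a type-$0$ cone centered at $y$ at a scale comparable to $d$. These are $\delta_0$-close near $y$, and Lemma~\ref{type m集合若dxr距离小于delta，则最高级spine距离受控} then forces $d$ to be a small multiple of itself. The factor $K=10^3N_0$ is unnecessary: $r=2d$ works and enlarges the range of $d$ covered. The real problem is that this argument only handles $d\lesssim n_0^{-n}/K$. At a point of $B(0,1.99)$, Corollary~\ref{推论：Em中的点满足am(x,r)很小，即使r很大} is only available for $r\le 4\cdot 10^{-3}n_0^{-n}$. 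The margin to $\partial B(0,2)$ is uniformly $0.01$, so the obstruction is large $d$, not proximity to $\partial B(0,1.99)$. For the remaining range (up to $d<3.98$) your proposal offers only a list of possible strategies, so the lemma is not proved.

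\textbf{The missing step.} It is short and needs no chaining: use the single global cone $Z=Z(0,2)$, for which $d_{0,2}(E,Z)<\varepsilon$.
\begin{enumerate}
\item With $s_0=10^{-3}n_0^{-n}$, the corollary gives $W_x\in\A(0)$ centered at $x$ with $d_{x,s_0}(E,W_x)<14C_1\varepsilon$.
\item Lemma~\ref{3个集合的dx,r} then gives $d_{x,s_0/(10n_0)}(Z,W_x)<10n_0(2\varepsilon+14C_1\varepsilon s_0)/s_0<\delta_0$. This holds because $1/s_0$ depends only on $n_0$ and $n$.
\item Lemma~\ref{type m集合若dxr距离小于delta，则最高级spine距离受控} shows $Z\in\A(0)$, and its center $z_0$ satisfies $|z_0-x|<n_0(1+\delta_0^{-1})(2\varepsilon+14C_1\varepsilon s_0)=O(\varepsilon)$. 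The paper settles for the cruder bound $(1+\delta_0)10^{-4}n_0^{-n}$.
\item The same holds for $y$, so $|x-y|$ falls below the threshold where your small-distance argument applies. This contradicts the assumption that $d$ is large.
\end{enumerate}

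\textbf{Why your suggested routes fail.} Chaining along $[x,y]$ does not work as stated. The cones $Z(p,s)$ are only provided at points $p\in E$, and the segment need not meet $E$. Cones at intermediate points away from $x$ and $y$ typically coincide locally with cones of higher type, so they carry no information about a center. Iterating Lemma~\ref{lem:小影响大} does not help either. It is also confined to balls inside $B(0,2)$, so it never reaches a scale comparable to a large $d$.
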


\begin{proof}

Suppose $x \neq y$ are two distinct points in $E_0 \cap B(0,1.99)$. Consequently, for any $0 < r < 10^{-3} n_0^{-n}$, we have $B(x,5 n_0^n r) \subset B(0,2)$ and $B(y, 5 n_0^n r) \subset B(0,2)$. By applying Corollary \ref{推论：Em中的点满足am(x,r)很小，即使r很大} with $m=0$, we find that $a_0(x,2 r) < 14 C_1\varepsilon$ and $a_0(y,2 r) < 14 C_1 \varepsilon$. Let $\rho = |x-y|$. If $\rho < 10^{-3} n_0^{-n}$, it follows that $a_0 (x,2\rho) < 14 C_1 \varepsilon$. Hence, there exists a set $W_1 \in \mathcal{A}(0)$ centered at $x$, such that $d_{x,2\rho}(W_1,E) < 14 C_1 \varepsilon$. The same conditions hold for $y$, yielding that there is $W_2\in \A(0)$ centered at $y$, such that $d_{y,2\rho}(W_2,E)<14 C_1\varepsilon$. 
Then we have $d_{x,\rho/(2n_0)}(W_1,W_2) < \delta_0$. By Lemma \ref{type m集合若dxr距离小于delta，则最高级spine距离受控}, we deduce that $|x-y|<n_0(1+\delta_0)\frac{\rho}{2n_0}<\rho$, which contradicts the assumption that $|x-y|=\rho$. Alternatively, if $\rho \geq 10^{-3} n_0^{-n}$, we have the fact that $a_0(x,10^{-3} n_0^{-n})<14 C_1 \varepsilon$, which implies the existence of $W_1 \in \mathcal{A}(0)$ centered at $x$ with $d_{x,10^{-3} n_0^{-n}}(W_1,E) < 14 C_1 \varepsilon$. Let $Z=Z(0,2)$. Since $Z$ passes through 0 and satisfies $d_{0,2}(E,Z) < \varepsilon$, we have $d_{x,10^{-4} n_0^{-n-1}}(Z,W_1) < \delta_0$. Consequently, $Z \in \mathcal{A}(0)$. Denote by $z_0$ the center of $Z$, then we obtain $|z_0-x|<n_0(1+\delta_0)\cdot{10^{-4}} {n_0^{-n-1}}<(1+\delta_0)10^{-4}n_0^{-n} $ based on Lemma \ref{type m集合若dxr距离小于delta，则最高级spine距离受控}. Similarly, $|z_0-y|< (1+\delta_0)10^{-4}n_0^{-n}$ holds. Thus, $|x-y|<4\cdot
10^{-4} n_0^{-n}$, contradicting the assumption $\rho\geq 10^{-3} n_0^{-n}$. Consequently, the initial assumption is incorrect, and Lemma \ref{E0只有一个点} follows.
\end{proof}

Assume that Lemma \ref{lem:小影响大}, Corollary \ref{推论：Em中的点满足am(x,r)很小，即使r很大} and Lemma \ref{Em的m-flatness} hold for $0,...,m-1$, where $ m\geq 1$, now we are ready to prove them for $m$.

\begin{proof}[\rm \textbf{Proof of Lemma \ref{lem:小影响大} for the inductive step of dimension \boldmath$m$.}]

    Recall that $C_1=n_0^n N_0$ and $C_2=\frac{1}{200 N_0}$. Since $a_m(x,r)\leq C_2$, there exists $Y\in \A(m)$ such that $x\in L^m(Y)$ and $d_{x,r}(Y,E)<2a_m(x,r)\leq 2C_2$. Set $\rho=[2a_m(x,r)r+(20C_1+1)\varepsilon r]/ \delta_0$. It follows that $\rho<r/(10 n_0)$. Let $Z=Z(x,20 C_1 r)$, then $d_{x,\rho}(Y,Z)<(2a_m(x,r)r+20 C_1\varepsilon r)/\rho<\delta_0$ by Lemma \ref{3个集合的dx,r}.
    Since $Y\in \A(m)$ and $x$ is contained in the $m\mbox{-}$spine of $Y$, by Lemma \ref{type m集合若dxr距离小于delta，则最高级spine距离受控}, we deduce that $Z\in\ccup_{k=0}^m \A(k)$ and 
    \begin{equation}\label{lem小影响大中x到L^m(Z)距离}
        \dd(x,L^{m}(Z))<n_0(1+\delta_0)\rho.
    \end{equation}
    Then we continue to prove that there is $Z_m\in \A(m)$ such that 
    \begin{equation}\label{lem小影响大中:Z局部是type m}
        Z_m\cap B(x,9N_0 r)=Z\cap B(x,9N_0 r).
    \end{equation}
     When $Z\in\A(m)$, it is clear that (\ref{lem小影响大中:Z局部是type m}) holds. Otherwise, suppose $Z\in\A(k)$ for some $0\leq k<m$, to establish
    (\ref{lem小影响大中:Z局部是type m}), we aim to demonstrate that $Z$ coincides with sets of increasing type within a sequence of progressively smaller balls.
    
    Set $\theta_m=9N_0$ and $\lambda=n_0^{-n}/2$. For $k \leq s<m$, set
    \begin{equation}
        \theta_s= n_0^{m-s}\theta_m+(\lambda+40 C_1\varepsilon)\sum_{l=0}^{m-s-1} n_0^l.
    \end{equation}
    Then $\{\theta_s\}_{s=k}^m$ is a decreasing sequence with $\theta_k\leq 9 C_1+1/10$. In addition, $\theta_s=n_0 \theta_{s+1}+\lambda+40 C_1 \varepsilon$. Then we want to show that the following statement is true,
    \begin{center}
        $Z\cap B(x,\theta_s r)=Z_s\cap B(x,\theta_s r)$ for some $Z_s\in\A(s)$ for $s$ from $k$ to $ m$.
    \end{center}
    Otherwise, assume $q$ is the first number for which the statement is not true. It is clear that $q>k$ because the first statement is always true. Then for all $Y\in\A(q)$, $Z\cap B(x,\theta_q r)\neq Y\cap B(x,\theta_q r)$, but there exists $ Z_{q-1}\in\A(q-1)$ such that $Z\cap B(x,\theta_{q-1} r)=Z_{q-1}\cap B(x,\theta_{q-1}r)$. By Proposition 
    \ref{用次一级的锥替代}, $n_0 B(x,\theta_q r)\cap L^{q-1}(Z)\neq\emptyset$.
     Thus, we can find $y\in L^{q-1}(Z)$ such that $|y-x|< n_0\theta_q r$ and $e\in E$ such that $|y-e|<20 C_1\varepsilon r$.
     Hence, $y\in L^{q-1}(Z_{q-1})$. Set $Z'=Z_{q-1}+(e-y)$, then $Z'\in\A(q-1)$ and $e$ passes through the $(q-1)$-spine of $Z'$. By Lemma \ref{3个集合的dx,r}, we have $d_{e,\lambda r}(Z',E)<40 C_1\varepsilon r/(\lambda r)<80 n_0^{2n} N_0\varepsilon <C_3\varepsilon$. Thus, $a_{q-1}(e,\lambda r)\leq d_{e,\lambda r}(Z',E)<C_3\varepsilon $.
     Recall we have assumed that Lemma \ref{Em的m-flatness} holds for $0,...,m-1$, 
     so $d_{e,0.99\lambda r}(E_{q-1} ,L^{q-1}(Z') )< 0.99^{-1} C_4\varepsilon$ when $q>1$ and $\dd( e, E_0)<C_4\varepsilon\lambda r $ when $q=1$,  where $C_4=10^3 C_3 N_0 n_0^{n^2}$. 
     Consequently, $\dd(x, E_{q-1})\leq |x-e|+ C_4 \varepsilon\lambda r<20 C_1 r$, which contradicts the fact that $ B(x,20C_1 r)\cap (\ccup_{t=0}^{m-1} E_t)=\emptyset$ because $q\leq m$. Thus, (\ref{lem小影响大中:Z局部是type m}) follows. 
     By (\ref{lem小影响大中:Z局部是type m}) and   $L^m=L^m(Z_m)$, we have $ L^m\cap B(x,9N_0 r)=L^m(Z)\cap B(x,9N_0 r)$. By (\ref{lem小影响大中x到L^m(Z)距离}) and $\delta_0<1$, we get that $\dd (x,\LL^{m})<2 n_0 \rho=4N_0 a_m(x,r)r+2N_0(20C_1+1)\varepsilon r$. 
 
     At last,  we consider $a_m(x,8N_0 r)$. Pick a point $w\in\LL^{m}$ such that $|w-x|<4N_0 a_m(x,r)r+2N_0(20 C_1+1)\varepsilon r$ and set $W=Z_m+(x-w)$. Then $W\in \A(m)$ and $x\in L^m(W)$. Thus, we have $a_m(x,8N_0 r)\leq d_{x,8N_0 r}(W,E)<\frac{1}{2}a_m(x,r)+7C_1\varepsilon$ and Lemma \ref{lem:小影响大} for $m$ follows.

\end{proof}

\begin{proof}[\rm \textbf{Proof of Corollary \ref{推论：Em中的点满足am(x,r)很小，即使r很大} for the inductive step of dimension \boldmath$m$.}]

    The proof is essentially the same as for $m=0$. Since $x\in E_m$, we know $a_m(x,(8N_0)^{-k}r)<C_0\varepsilon$ for all $k$ large enough. Then $a_m(x,r)<{2}^{-k} C_0\varepsilon +7C_1\varepsilon\cdot \sum_{l=0}^{k-1} {2}^{-l}$.  Taking the limit as $k$ tends to infinity, we have $a_m(x,r)\leq 14C_1 \varepsilon$. Corollary \ref{推论：Em中的点满足am(x,r)很小，即使r很大} for $m$ follows.

\end{proof}

\begin{proof}[\rm \textbf{Proof of Lemma \ref{Em的m-flatness} for the inductive step of dimension \boldmath$m$.}]
   
    Recall that $C_3=100(n+n_0)^{2n}$ and $C_4=10^3 C_3 N_0 n_0^{n^2} $. In order to show that (\ref{lem,flatness: Em is m-rei}) holds for $m>0$, we need to prove
    \begin{equation}\label{Em离m-spine近}
        \dd(y, L^m)<C_4\varepsilon r,  \text{ for all } y\in E_m\cap B(x,0.99r)
    \end{equation}
    and
    \begin{equation}\label{引理的式子：骨架上的点离Em近}
        \dd(y,E_m)<C_4\varepsilon r \text{ for all } y\in L^m\cap B(x,0.99r).
    \end{equation} 
    
    Let us prove (\ref{Em离m-spine近}). When $m=n$, $L^m=W_m$. Since $d_{x,r}(E,W_m)<C_3\varepsilon$ and the constant $C_3$ is much smaller than $C_4$, it is evident that (\ref{Em离m-spine近}) holds. Therefore, we focus on the situations when $m<n$.
    For each $z\in E\cap B(x,0.99r)$ such that $\dd(z,\LL^{m})\geq C_4\varepsilon r $, let us check that 
    \begin{equation}\label{式子：z附近没有更高级的点}
        B(z,10^{-3}n_0^{-tn }r)\cap (\ccup_{k=0}^{t-1} E_k)=\emptyset, \text{ for each } t\in\{1,...,m\}
    \end{equation}
    by induction from $t=1$ to $t=m$. For $t=1$, we need to prove that $B(z,10^{-3}n_0^{-n} r )\cap E_0=\emptyset$. Otherwise, there is $z'\in E_0\cap B(z, 10^{-3}n_0^{-n} r  )$ while the distance between $z'$ and the boundary of $B(x,r)$ is greater than $ 9.9\cdot 10^{-3}r$. Thus $ B(z',(5n_0^n/2) \cdot {10^{-4}}{n_0^{-n}}r)\subset B(x,r)\subset B(0,2)$. Using Corollary \ref{推论：Em中的点满足am(x,r)很小，即使r很大} for 0, we get $a_0(z',{10^{-4}}{n_0^{-n}}r)<14C_1 \varepsilon$, which indicates that there is $T\in \A(0)$ centered at $z'$, satisfying  $d_{z',{10^{-4}}{n_0^{-n}}r}(T,E)<14 C_1 \varepsilon$. By Lemma \ref{3个集合的dx,r}, $d_{z',{10^{-5}}{n_0^{-n}}r}(T,W_m)<(C_3+ 1.4\cdot 10^{-3}N_0)\varepsilon r /({10^{-5}}{n_0^{-n}}r ) <\delta_0$.
     It contradicts the fact that $T\in \A(0)$ is centered at $z'$ and $W_m\in \A(m)$ for $m>0$.
     Hence, we can conclude that $B(z,10^{-3} n_0^{-n}r)$ does not meet $E_0$.

    Assume that we have proved $B(z,{10^{-3}}{n_0^{-tn}}r)\cap (\ccup_{k=0}^{t-1} E_k)=\emptyset$, here $t$ is a number smaller than $m-1$. Now we want to prove that $B(z,{10^{-3}}{n_0^{-(t+1)n}}r)$ does not meet $E_t$. Otherwise, pick $z'\in B(z,{10^{-3}}{n_0^{-(t+1)n}}r)\cap E_t$, then we have $({5}n_0^n/2) B(z',{10^{-4}}{n_0^{-(t+1)n}}r)\subset B(z,{10^{-3}}{n_0^{-tn}}r)\subset B(0,2)$. By assumption, $(5n_0^n/2) B(z',{10^{-4}}{n_0^{-(t+1)n}}r)$ does not meet $\ccup_{k=0}^{t-1} E_k$. Since we have assumed that Corollary 
    \ref{推论：Em中的点满足am(x,r)很小，即使r很大} holds for $0,...,m-1$, it follows that $a_t(z',{10^{-4}}{n_0^{-(t+1)n}}r)<14C_1\varepsilon$. i.e. there exists $T\in \A(t)$ whose $t\mbox{-}$spine $\LL^{t}(T)$ passes through $z'$, such that $d_{z',{10^{-4}}{n_0^{-(t+1)n}}r}(T,E)<14C_1\varepsilon$. By a similar argument as for $t=0$, we can  deduce that $d_{z',{10^{-5}}{n_0^{-(t+1)n}}r}(T,W_m)<\delta_0$. 
    It contradicts the fact that $T\in\A(t)$ with $z'\in L^t(T)$ and $W_m\in\A(m)$, where $t\leq m-1$.
    Thus we show that $B(z,{10^{-3}}{n_0^{-(t+1)n}}r)$ does not meet $E_t$. Furthermore, $B(z,{10^{ -3}}{n_0^{-(t+1)n}}r) \cap (\cup_{k=0}^t E_k)=\emptyset$, and (\ref{式子：z附近没有更高级的点}) follows.
   
    According to the discussion above, we have established that $z\notin \ccup_{k=0}^{m-1} E_k$. Now we continue to show that $z\notin E_m$. Suppose that $z\in E_m$. Let $\rho=\min\{\frac{1}{2n_0}\dd(z,\LL^{m}),\frac{1}{200}r\}$, where $L^m$ is the $m$-spine of $W_m$. Then it follows that $n_0 B(z,2\rho)$ does not meet $L^m$. By Proposition \ref{用次一级的锥替代}, we can find $W\in \ccup_{k=m+1}^{n} \A(k)$ such that $W\cap B(z,2\rho)=W_m\cap B(z,2\rho)$.
    Since $\dd(z,\LL^{m})\geq C_4\varepsilon r$, we have $\varepsilon r\leq {2n_0}\rho/C_4$. 
    The fact $\rho\leq r/200$ indicates that $B(z,{ n_0^{-mn}}\rho/{5})$ is contained in $ B(z,{10^{-3}}{n_0^{-mn}}r)$, where $ B(z, 10^{-3}n_0^{-mn}r )$ does not meet $ \ccup_{k=0}^{m-1} E_k$ by (\ref{式子：z附近没有更高级的点}). Moreover, $B(z,{ n_0^{-mn}}\rho/{5})\subset B(0,2)$. Since we have already proved Corollary \ref{推论：Em中的点满足am(x,r)很小，即使r很大} for $m$, we know that $a_m(z,2n_0^{-(m+1)n}\rho / 25 )<14C_1\varepsilon$. Hence, there exists $T\in \A(m)$ with its $m\mbox{-}$spine $\LL^{m}(T)$ passing through $z$, such that $d_{z, 2n_0^{-(m+1)n}\rho/25 }(T,E)<14C_1\varepsilon$. By Lemma \ref{3个集合的dx,r}, $d_{z,n_0^{-(m+1)n}\rho/25}(T,W)<(28N_0 n_0^{-mn}\varepsilon\rho/25+2C_3 n_0\rho/C_4) / (n_0^{-(m+1)n}\rho/25)<\delta_0$. 
    Since the type of $W$ is greater than $m$, it is impossible. Therefore, our assumption that $z\in E_m$ must be false.
    As a conclusion, if $z\in E_m\cap B(x,0.99r)$, then $\dd(z,\LL^{m})<C_4\varepsilon r$. And (\ref{Em离m-spine近}) follows.

   For (\ref{引理的式子：骨架上的点离Em近}), let $y\in \LL^{m}\cap B(x,0.99 r)$ be given. Since $d_{x,r}(E,W_m)<C_3\varepsilon $, we can find $e\in E$ such that $|y-e|<C_3\varepsilon r$. Set $W'=W_m+(e-y)$ and $T=Z(e,r/200)$, then $W'$ is a set of type $m$ with $e\in L^m(W')$ and $d_{e,r/400}(T,W')<(800C_3+2)\varepsilon$. Then we can use  Lemma \ref{引理：两个锥dxr距离小，则另一个也是一样的type} and know that there exists $T_m\in \A(m)$  such that 
    \begin{equation}
        T\cap B(e, \frac{r}{4\cdot 10^{3} n_0^n})=T_m\cap B(e, \frac{r}{4\cdot 10^{3} n_0^n}).
    \end{equation}
    To estimate the distance between $e$ and $\LL^{m}(T_m)$, we consider in a much smaller ball. Set $\rho=5C_3\varepsilon r/\delta_0$, then $d_{e,\rho}(W',T_m)<\delta_0/2$. By Lemma \ref{type m集合若dxr距离小于delta，则最高级spine距离受控}, we get that $\dd(e,\LL^{m}(T_m))<M\varepsilon r$, where $M=(1+\delta_0)5C_3 N_0$. Thus, there is $l\in \LL^{m}(T_m)$  such that $|l-e|<M\varepsilon r$, thus $l\in T$. And there is $e_1\in E$ such that $|l-e_1|<\varepsilon r/200$ because $T=Z(e,r/200)$. It follows that $|e-e_1|\leq (M+1/200)\varepsilon r$, therefore, $B(e_1,10^{-3}n_0^{-n}r/8)$ is contained in $ B(e, 10^{-3} n_0^{-n}r/4)$. Let $T'=T_m+(e_1-l)$ and $r_1=10^{-4}n_0^{-n}r$, then $T'\in\A(m)$ and the $m$-spine of $T'$ passes through $e_1$. Furthermore, $d_{e_1,r_1}(T',E)<100n_0^n\varepsilon <C_3\varepsilon$, which indicates that $a_m(e_1,r_1)<C_3\varepsilon$. The pair $(e_1,r_1)$ also satisfies the condition of this lemma, so we can replace $x,y,e$ with $e_1$ and replace $r$ with $r_1$. By repeating this process, we obtain a sequence $\{e_k\}_{k=1}^{\infty}\subset E$ and $r_k=(10^{-4}n_0^{-n})^k r$, such that $|e_k-e_{k+1}|<(M+1/200)\varepsilon r_k$ and $a_m(e_k,r_k)<C_3\varepsilon$. Let $\xi=\lim_{k\to\infty}e_k$, then we have $\xi\in E$ because $E$ is closed. For any $0<t<10^{-4}n_0^{-n}r/2$, we can find $k\in \mathbb{N}_{+}$ such that $9 r_{k+1}/10 \leq t<9r_{k}/10$. Let $Y\in \A(m)$ be the set such that $d_{e_k,r_k}(Y,E)<C_3\varepsilon$. Then $|\xi-e_k|\leq 3M 10^{-4k} n_0^{-nk}\varepsilon r$ 
    and $a_m(\xi,t)\leq d_{\xi,t}(Y+(\xi-e_k),E)< 10^5 n_0^n (3M+C_3)\varepsilon<C_0\varepsilon$, which indicates that $\xi\in E_m$. 
     Therefore, $\dd(y,E_m)\leq |y-\xi|\leq |y-e_1|+|e_1-\xi|<(C_3 +3M+3)\varepsilon r<C_4\varepsilon r$, (\ref{引理的式子：骨架上的点离Em近}) follows. Thus we conclude that $d_{x,0.99r}(E_m,L^m)<(0.99)^{-1}C_4\varepsilon$ and (\ref{lem,flatness: Em is m-rei}) holds for $m$.

     At last we will prove $B(x,0.99r)\cap (\ccup_{k=0}^{m-1} E_k)=\emptyset$ by induction. For $k=0,...,m-1$, set a sequence of radius $\sigma_k=(1-\frac{k+1}{100n})r$. Then $\{\sigma_k\}_{k=0}^{m-1}$ is decreasing and $\sigma_{m-1}>0.99r$. First we prove that $B(x,\sigma_0 r)$ does not meet $E_0$. Otherwise, we can find $y\in B(x,\sigma_0)\cap E_0$ and thus $B(y,{r}/{(100n)})\subset  B(0,2)$. By Corollary \ref{推论：Em中的点满足am(x,r)很小，即使r很大}, $a_0(y,r/(250 n n_0^n))<14C_1\varepsilon$. So we can find $Y\in \A(0)$ centered at $y$ and while $d_{y,r/(250 n n_0^n)}(Y,E)<14 C_1 \varepsilon $. Therefore, $d_{y,r/(500 n n_0^n)}(Y,W_m)<\delta_0$. This leads to a contradiction.

     Assume that we have proved that $B(x,\sigma_k)\cap E_k=\emptyset$, for $k=0,...,t-1$, while $t\leq m-1$. Now we are ready to show that $B(x,\sigma_t)\cap E_t=\emptyset$. Otherwise, we can find $y\in B(x,\sigma_t)\cap E_t$. Recall that we have proved Corollary \ref{推论：Em中的点满足am(x,r)很小，即使r很大} for $0$ to $m$, as a result, $a_t(y,r/(250 n n_0^n))<14C_1 \varepsilon$. So we can find $Y\in \A(t)$, whose $t\mbox{-}$spine passes through $y$, satisfying $d_{y,r/(250 n n_0^n)}(Y,E)<14 C_1 \varepsilon$. So $d_{y,r/(500 n n_0^n)}(Y,W_m)<\delta_0$. This leads to a contradiction. Thus we have $E_t\cap B(x,\sigma_t)=\emptyset.$ Now we know that $B(x,0.99r)\cap (\ccup_{k=0}^{m-1} E_k)=\emptyset$. Lemma \ref{Em的m-flatness} for $m$ follows.

\end{proof}

{   This ends} the proofs of Lemma \ref{lem:小影响大}, Corollary \ref{推论：Em中的点满足am(x,r)很小，即使r很大} and Lemma \ref{Em的m-flatness} from $0$ to $n$. Then we want to show in Proposition \ref{lem4:用于投影的锥} that near a point of $E_m$, $E_k$ looks like the $k$-spine of a set of type $m$ for each $m\leq k\leq n$. We prove it for $0\leq m\leq n$ by induction from $m=n$ to $m=0$.

\begin{proposition}\label{lem4:用于投影的锥}
    Let $x\in E_m$, $r>0$ be such that $B(x,5n_0^n r/2)\subset B(0,2)$. When $m>0$, we also ask that $B(x,5n_0^n r/2)$ does not meet $\ccup_{k=0}^{m-1} E_k$. Then 
    
    1) we can find $T:=T(x,r)\in \A(m)$  such that $x\in L^{m}$ and
    \begin{equation}\label{lem4:E和T近}
        d_{x,r}(E,T)<14 C_1\varepsilon,
    \end{equation}
    where $C_1=n_0^n N_0$ and $L^m=L^m(T)$. 
    
    2) Moreover, if $m<n$, then for each $k\in\{m,..., n-1\}$, we also have 
    \begin{equation}\label{lem4:Ek和T的k维骨架近}
         d_{x,\lambda_k r}(E_k,\LL^{k})<C_5\varepsilon,
    \end{equation}
    where $L^{k}=L^k(T)$, $C_5=2(100 n_0^{2n} N_0^2 n+10^3 n_0^{2n+1} N_0^2 n^3)$ and $\lambda_k=1-(k+1)/(100n)$.  Thus $\{\lambda_k\}_{k=m}^{n-1}$ is a positive decreasing sequence with $\lambda_m\leq 1-1/(100n)$ and $\lambda_{n-1}=0.99$.
\end{proposition}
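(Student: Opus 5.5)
Part 1) is immediate from Corollary \ref{推论：Em中的点满足am(x,r)很小，即使r很大}. Under the hypotheses on $B(x,5n_0^nr/2)$ it gives $a_m(x,r)<14C_1\varepsilon$. By the definition of $a_m$ as an infimum, there is then $T\in\A(m)$ with $x\in L^m(T)$ and $d_{x,r}(E,T)<14C_1\varepsilon$. This $T$ is fixed for the rest of the proof, and part 2) is proved by comparing each $E_k$ with $L^k(T)$ for $k=m,\dots,n-1$. The statement is announced as an induction from $m=n$ down to $m=0$. I would use that induction only to control points of $E_k$ with $k>m$, through the already established statement at such points. The comparison itself I would organise as an induction on $k$ upward from $m$, using the radii $\lambda_k r$ and shrinking the ball by $r/(100n)$ at each step.

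\textbf{First inclusion: $L^k$ is close to $E_k$.} Take $y\in L^k(T)\cap B(x,\lambda_kr)$. I split into two cases according to $\dd(y,L^{k-1}(T))$.
\begin{itemize}
\item If $\dd(y,L^{k-1}(T))$ is small, say less than $c\,r$ with $c$ of order $(100nn_0^n)^{-1}$, choose $y'\in L^{k-1}(T)$ nearby. The inductive bound for $k-1$ (or, when $k=m$, the fact that $x\in E_m$ itself lies in $L^m$) gives a point of $E_{k-1}$ near $y'$. The goal here is to show that a point of $E_k$ also sits close to $y$.
\item The cleaner route is to use Proposition \ref{用次一级的锥替代} directly. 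If $\dd(y,L^{k-1}(T))\ge n_0\rho$, then on $B(y,\rho)$ the set $T$ coincides with some $Y\in\A(s)$ with $s\ge k$. Choosing $\rho$ so that $y\in L^k(Y)$, we get $Y\in\A(k)$ locally there. Then $a_k(y,\rho)$ is $\lesssim C_1\varepsilon r/\rho$ after translating $Y$ to a nearby point of $E$, and Lemma \ref{Em的m-flatness} (which requires $a_k<C_3\varepsilon$) produces a point of $E_k$ within $C_4\varepsilon\rho$ of $y$.
\end{itemize}
For points too close to $L^{k-1}$ the ratio $r/\rho$ blows up. In that regime I would instead move to a point $y''\in L^k$ at distance about $\rho_0$ from $L^{k-1}$. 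Lemma \ref{分支分离性} guarantees such a point exists along the branch, and moving there costs only a displacement of order $\rho_0$. Choosing $\rho_0$ comparable to $\varepsilon r$ times constants keeps the final error at $C_5\varepsilon r$.

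\textbf{Second inclusion: $E_k$ is close to $L^k$.} Take $z\in E_k\cap B(x,\lambda_kr)$ and suppose $\dd(z,L^k(T))\ge C_5\varepsilon r$. I would argue as in the proof of (\ref{Em离m-spine近}).
\begin{enumerate}
\item Check that a small ball around $z$ avoids $\cup_{j<k}E_j$. This should follow from the same contradiction scheme: points of $E_j$ near $z$ would force $T$ to look like a type-$j$ set with spine near $z$, contradicting Proposition \ref{锥的dx,r设定}.
\item Apply Corollary \ref{推论：Em中的点满足am(x,r)很小，即使r很大} at $z$ to get a type-$k$ set $Y$ with $z\in L^k(Y)$ and $d_{z,\rho}(E,Y)<14C_1\varepsilon$, where $\rho\sim\dd(z,L^k)/n_0$ up to powers of $n_0$.
\item By Proposition \ref{用次一级的锥替代}, $T$ coincides near $z$ with a set of type $>k$. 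Combining with $Y$ via Lemma \ref{3个集合的dx,r} gives $d<\delta_0$, which contradicts Proposition \ref{锥的dx,r设定}.
\end{enumerate}
The constant $C_5\gg C_1n_0^{2n}$ is what makes step 3 go through.

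\textbf{Main obstacle.} I expect the hardest step to be the first inclusion near lower spines. There one must uniformly produce $E_k$ points close to $L^k(T)$ even where $L^k$ meets $L^{k-1}$, without the constants degenerating. The plan is the two-scale argument above: Lemma \ref{Em的m-flatness} away from $L^{k-1}$, and for $y$ near $L^{k-1}$, a short path inside $L^k$ to a well-separated point. Lemma \ref{分支分离性} and the angle bound $\alpha$ keep that path short. The shrinking radii $\lambda_k$ absorb the resulting boundary losses.
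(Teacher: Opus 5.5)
Your part 1) and your second inclusion ($E_k$ close to $L^k$) match the paper. The paper argues by induction on $s=m,\dots,k$, with thresholds increasing in $s$ up to $B_1\varepsilon r$ (where $B_1=100n_0^{2n}N_0^2n$). It shows that a point of $E\cap B(x,\lambda_kr)$ at that distance from $L^k$ cannot lie in $E_s$, because near such a point $T$ is of type $>k$. Note that this contradiction uses distance from $L^k$, not a type mismatch with $T$ itself as in Lemma~\ref{Em的m-flatness}.

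The gap is in the first inclusion ($L^k$ close to $E_k$).

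\begin{itemize}
\item Your estimate $a_k(e,\rho)\lesssim C_1\varepsilon r/\rho$ uses only $T$, whose absolute error $14C_1\varepsilon r$ does not improve at small scales. It drops below the threshold $C_3\varepsilon$ of Lemma~\ref{Em的m-flatness} only when $\rho\gtrsim C_1r/C_3$.
\item Together with $\dd(y,L^{k-1})\ge n_0\rho$, this covers only points whose distance to $L^{k-1}$ is at least a fixed multiple of $r$. Since $x\in L^m(T)\subset L^{k-1}(T)$ for $k>m$, every $y\in B(x,\lambda_kr)$ has $\dd(y,L^{k-1})<r$, so this region may even be empty.
\item Walking to a point $y''$ at distance $\rho_0\sim\varepsilon r$ from $L^{k-1}$ does not fix this. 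At $y''$ the usable scale is $\rho\lesssim\rho_0/n_0\sim\varepsilon r$, where $T$ gives relative error $14C_1\varepsilon r/\rho$. That is a constant independent of $\varepsilon$, so Lemma~\ref{Em的m-flatness} still does not apply.
\item Your first bullet, passing through $E_{k-1}$, is not carried out.
\end{itemize}

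The missing idea is to use the theorem's hypothesis at the small scale, with $T$ serving only to identify types. Set $B_2=10^3n_0^{2n+1}N_0^2n^2$, and suppose $\dd(y,L^{k-1})\ge B_2\varepsilon r$. Then:

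\begin{itemize}
\item Take $e\in E$ with $|e-y|<14C_1\varepsilon r$ and the hypothesis cone $Z=Z(e,B_2\varepsilon r)$. Its error is $\varepsilon\cdot B_2\varepsilon r=O(\varepsilon^2r)$.
\item Near $y$, $T$ coincides with some $T_k\in\A(k)$ having $y$ on its $k$-spine. At scale $D\varepsilon r$ with $D\gg 14C_1$, the relative error between $Z$ and $T_k$ is about $14C_1/D$. This is a small constant, enough for type identification.
\item Lemma~\ref{引理：两个锥dxr距离小，则另一个也是一样的type} then shows $Z$ coincides near $y$ with some $Z_k\in\A(k)$. Lemma~\ref{type m集合若dxr距离小于delta，则最高级spine距离受控} places $L^k(Z_k)$ within $O(\varepsilon r)$ of $y$.
\item A point $e_1\in E$ within $B_2\varepsilon^2r$ of that spine satisfies $a_k(e_1,\rho)<C_3\varepsilon$ with $\rho\sim\varepsilon r$.
\item Lemma~\ref{Em的m-flatness} gives $\dd(e_1,E_k)=O(\varepsilon^2 r)$, hence $\dd(y,E_k)<B_1\varepsilon r$.
\end{itemize}

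For $y$ within $B_2\varepsilon r$ of $L^{k-1}$, the paper does not walk along the branch. It takes the smallest $j$ with $\dd(y,L^i)<(k-i)B_2\varepsilon r$ for all $j\le i<k$, and finds $e_j\in E_j$ near a point of $L^j$ far from $L^{j-1}$. For $j>m$ it applies the statement already proved for $j$ (the downward induction) at $e_j$ at scale $\sim\varepsilon r$. For $j=m$ it uses $e_m\in\overline{E_k}$.

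Once the far-from-$L^{k-1}$ case is repaired as above, your branch-walking could replace this step. It would, however, bring in a constant depending on the shape of the branches, so you would not recover the stated $C_5=2(B_1+nB_2)$.
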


\begin{proof}[\rm \textbf{Proof of Proposition \ref{lem4:用于投影的锥} for the base case \boldmath$m=n$.}]
By Corollary \ref{推论：Em中的点满足am(x,r)很小，即使r很大}, when $m=n$, Proposition \ref{lem4:用于投影的锥} follows directly. 
\end{proof}

Assume that Proposition \ref{lem4:用于投影的锥} holds for $m+1,...,n$, where $m\leq n-1$. Let us prove it for $m$. 

\begin{proof}[\rm \textbf{Proof of Proposition \ref{lem4:用于投影的锥} for the inductive step of dimension \boldmath$m$.}]

    Let $ x,r$ be as in the statement. 

    For 1), by Corollary \ref{推论：Em中的点满足am(x,r)很小，即使r很大}, we have $a_m(x,r)<14C_1 \varepsilon$. Therefore, there exists $T\in \A(m)$, whose $m\mbox{-}$spine $L^{m}$ passes through $x$,  such that $d_{x,r}(E,T)<14 C_1 \varepsilon$. Let $T$ be the corresponding $T(x,r)$, then we have (\ref{lem4:E和T近}) and 1) holds.

    For 2), we will prove the 2-sided inequalities (\ref{lem4证明中的式子：Ek离k-spine近}) and (\ref{lem4:骨架上的点离Ek近}) for $m\leq k\leq n-1$,
    \begin{equation}\label{lem4证明中的式子：Ek离k-spine近}
        \dd(y,L^{k})<B_1\varepsilon r \text{ for each }y\in E_k\cap B(x,\lambda_k r),
    \end{equation}
    \begin{equation}\label{lem4:骨架上的点离Ek近}
        \dd(y,E_k)<(B_1+n B_2)\varepsilon r \text{ for each }y\in L^k\cap B(x,\lambda_k r),
    \end{equation}
    where $B_1=100n_0^{2n}N_0^2 n$ and $ B_2=10^3 n_0^{2n+1}N_0^2 n^2$.
    If both (\ref{lem4证明中的式子：Ek离k-spine近}) and (\ref{lem4:骨架上的点离Ek近}) hold, then we have $d_{x,\lambda_k r}(E_k ,L^k)< (B_1+n B_2) \varepsilon/ \lambda_k< C_5\varepsilon$, and Proposition \ref{lem4:用于投影的锥} follows.
     
    For (\ref{lem4证明中的式子：Ek离k-spine近}), fix $k$ and set $\sigma_s=\frac{s-m+1}{k-m+1} B_1$ for $m\leq s\leq k$. Then $\{\sigma_s\}_{s=m}^k$ is an increasing sequence with $\sigma_k=B_1$. We will prove the following claim by induction on $s$ from $m$ to $k$.
    \begin{equation}\label{y离Lk远，y不属于Es}
        \text{ For any } y\in E\cap B(x,\lambda_k r) \text{ and any } m\leq s\leq k, \text{ if } \dd(y,\LL^{k})\geq \sigma_s \varepsilon r, \text{ then } y\notin E_s.
    \end{equation}
    We first prove (\ref{y离Lk远，y不属于Es}) for $s=m$.  Assume (\ref{y离Lk远，y不属于Es}) is not true when $s=m$, that is, we can find $y\in E_m\cap B(x,\lambda_k r)$ such that $\dd(y,L^{k})\geq \sigma_m \varepsilon r$. Then $B(y,\sigma_m {\varepsilon r}/{n_0})\subset B(x,r) $ and  $n_0 B(y,\sigma_m {\varepsilon r}/{n_0} )\cap L^{k}=\emptyset$. By Proposition \ref{用次一级的锥替代}, we can find $Y\in \A(k+1)$ such that $Y\cap B(y, \sigma_m {\varepsilon r}/{n_0})=T\cap B(y,\sigma_m {\varepsilon r}/{n_0})$. At the same time, we have $B(y,\sigma_m {\varepsilon r}) $ does not meet $\ccup_{l=0}^{m-1} E_l$ since $B(x,5n_0^n r/2)\cap (\ccup_{l=0}^{m-1} E_l)=\emptyset$. Thus we can use Corollary \ref{推论：Em中的点满足am(x,r)很小，即使r很大} and get that $a_m(y,2\sigma_m {\varepsilon r}/ (5n_0^n) )<14 C_1\varepsilon$. That is, there exists $W_m\in \A(m)$ such that $d_{y, 2\sigma_m {\varepsilon r}/ (5n_0^n) }(W_m,E)<14C_1\varepsilon$ while $y\in L^{m}(W_m)$. By Lemma \ref{3个集合的dx,r}, $d_{y,\sigma_m {\varepsilon r}/ (5n_0^n) }(W_m,Y)<(14 C_1\varepsilon r+ 14 C_1\varepsilon\cdot( 2\sigma_m {\varepsilon r}/ (5n_0^n)) ) / ( \sigma_m {\varepsilon r}/ (5n_0^n))< (14 C_1+1)5 n_0^n n/ B_1<\delta_0$. 
    Since $Y\in \A(k+1)$, $W_m\in\A(m)$, $y\in L^m(W_m)$ and $k\geq m$, it leads to a contradiction to Proposition \ref{锥的dx,r设定}. As a consequence, the assumption that there exists $y\in E_m$ such that $\dd(y, L^k)\geq \sigma_m \varepsilon r$ is not true. Thus (\ref{y离Lk远，y不属于Es}) holds for $s=m$.
    
   Now assume that we have proved (\ref{y离Lk远，y不属于Es}) from $s=m$ to $t-1$, where $t$ is a number in $\{m+1,...,k\}$.
   We want to prove that 
        if $\dd(y, L^{k})\geq \sigma_t\varepsilon r$, then $y\notin E_{t}$ for every $ y\in E\cap B(x,\lambda_k r)$.
    Suppose not, then there exists $y\in E_{t}$ such that $\dd(y,L^{k})\geq \sigma_t \varepsilon r$. Consider the ball $B(y,\sigma_m\varepsilon r)$, where $\sigma_m=B_1/(k-m+1)$. For each $z\in B(y,\sigma_m\varepsilon r)$, the distance between $z$ and $L^{k}$ is greater than $ \sigma_{t-1}\varepsilon r$. By our hypothesis of induction, (\ref{y离Lk远，y不属于Es}) holds for $m,...,t-1$, hence $z\notin \ccup_{l=m}^{t-1} E_l$. Therefore, $B(y,\sigma_m \varepsilon r)\subset B(x,r)$ and does not meet $\ccup_{l=0}^{t-1} E_l$. By Corollary \ref{推论：Em中的点满足am(x,r)很小，即使r很大}, $a_{t}(y,2\sigma_m \varepsilon r/(5n_0^n)) <14 C_1\varepsilon$. By the same argument for $k=m$, it is impossible. We have thus proved claim (\ref{y离Lk远，y不属于Es}). Consequently, we get that if $y\in E_k\cap B(x,\lambda_k r)$, then $\dd(y, L^k)<B_1 \varepsilon r$. Thus (\ref{lem4证明中的式子：Ek离k-spine近}) holds for each $m\leq k\leq n-1$.

   For (\ref{lem4:骨架上的点离Ek近}), we first prove for the case when 
    \begin{equation}\label{lem4第一种情况}
        y\in L^{k}\cap B(x,\lambda_k r) \text{ and }\dd(y, L^{k-1})\geq B_2\varepsilon r
    \end{equation}
   for each $m\leq k\leq n-1$. Let $k$ be fixed. Note that $n_0 B(y,{B_2}\varepsilon r/{n_0})$ does not meet $L^{k-1}$, by Proposition \ref{用次一级的锥替代}, there exists $T_k\in \A(k)$ such that
    \begin{equation}
        T_k\cap B(y,{B_2}\varepsilon r/{n_0})=T\cap B(y,{B_2}\varepsilon r/{n_0}).
    \end{equation}
    Obviously, $y\in L^k(T_k)$. Since $y\in T$ and $d_{x,r}(E, T)<14 C_1 \varepsilon$, we can find $e\in E$ such that $|e-y|<14 C_1\varepsilon r$. 
    Let $Z=Z(e,B_2\varepsilon r)$ and $D=600n_0^{2n}N_0^2$, then $D$ is much larger than $14 C_1$ and is smaller than $B_2/n_0$. 
    By Lemma \ref{3个集合的dx,r}, we have $d_{y,D\varepsilon r}(Z,T_k)<(14 C_1+B_2\varepsilon)/D<40^{-1}n_0^{-(n+1)}\delta_0$.  
    Since $T_k\in \A(k)$ and $y\in L^k(T_k)$, we can find $Z_k\in \A(k)$ such that 
    \begin{equation}
        Z_k\cap B(y,60n_0^{n}N_0^2\varepsilon r)=Z\cap B(y,60n_0^{n}N_0^2\varepsilon r)
    \end{equation}
    by Lemma \ref{引理：两个锥dxr距离小，则另一个也是一样的type}. At the same time, we have $\dd(y,\LL^{k}(Z))<30 n_0^n N_0^2\varepsilon r$ by Lemma \ref{type m集合若dxr距离小于delta，则最高级spine距离受控}. Thus $\dd(y, L^k(Z_k))<30 n_0^n N_0^2 \varepsilon r$.
    Then we can find $l\in\LL^{k}(Z_k)$ such that $|y-l|<30n_0^n N_0^2\varepsilon r$ and it is clear that $l\in Z$ and $|l-e|<B_2\varepsilon r$. Therefore, there exists $e_1\in E$ such that $|e_1-l|<B_2\varepsilon^2 r$.  Let $\rho=10n_0^n N_0^2 \varepsilon r$, then we have $B(e_1,\rho)\subset B(e, B_2\varepsilon r)$ since $ |e_1-e|\leq (30n_0^n N_0^2+14C_1+1)\varepsilon r$, and $B(e_1,\rho)\subset B(y,60 n_0^n N_0^2\varepsilon r) $ since $|e_1-y|<(30n_0^n N_0^2+1)\varepsilon r$. Set $Z'=Z_k+(e_1-l)$, then $a_k(e_1,\rho)\leq d_{e_1,\rho}(Z',E)
    <C_3\varepsilon$, where $C_3=100(n+n_0)^{2n}$. By Lemma \ref{Em的m-flatness}, $d_{e_1,0.99\rho}(E_k, \LL^{k}(Z'))<0.99^{-1} C_4\varepsilon$. Thus, $\dd(e_1,E_k)<C_4\varepsilon\rho$ and $\dd(y,E_k)\leq |y-e_1|+\dd(e_1,E_k)<(30n_0^n N_0^2+2)\varepsilon r<B_1\varepsilon r$. That is, (\ref{lem4:骨架上的点离Ek近}) holds for each $y$ in (\ref{lem4第一种情况}).  We have thus proved (\ref{lem4:骨架上的点离Ek近}) for each $m\leq k\leq n-1$ when (\ref{lem4第一种情况}) holds.

    Note that when $k=m$, $L^{m-1}=\emptyset$, thus $\dd(y, L^{m-1})\geq B_2 \varepsilon r$ is valid for every $y\in L^m\cap B(x,\lambda_m r)$. So we end the proof of (\ref{lem4:骨架上的点离Ek近}) and  get that $d_{x,\lambda_m r}(E_m,L^m)<B_1\varepsilon/\lambda_m<C_5\varepsilon$. If $m=n-1$, Proposition \ref{lem4:用于投影的锥} for $n-1$ follows. Thus we only have to suppose $m<n-1$ and $k>m$.

    We will prove for the general case when $y\in L^{k}\cap B(x,\lambda_k r)$ that
    \begin{equation}\label{y到Ek的距离小于B1+(k-m)B2}
        \dd(y,E_k)<(B_1+(k-m)B_2)\varepsilon r
    \end{equation}
    for each $m<k\leq n-1$.
    Let $k$ be fixed. We aim to prove that $\dd(y, E_k)<(B_1+({k-m})B_2)\varepsilon r$ for each $y\in L^k\cap B(x,\lambda_k r)$ and $\dd(y, L^{k-1})<B_2\varepsilon r$, since it already holds for $y\in L^k\cap B(x,\lambda_k r)$ such that $\dd(y,L^{k-1})\geq B_2\varepsilon r$, after the above discussion.  Fix $y$ and let 
    \begin{equation}
        j=\min\{ t:t\geq m\text{ and for each } t\leq i\leq k-1, \dd(y, L^i)<(k-i) B_2\varepsilon r\}.
    \end{equation}
    It is obvious that $j\leq k-1$. When $j=m$, we can find $l_m\in L^m$ such that $|y-l_m|<(k-m)B_2\varepsilon r$. Therefore, $l_m\in L^{m}\cap  B(x,\lambda_m r)$. Furthermore, we can find $e_m\in E_m$ such that $|e_m-l_m|<B_1\varepsilon r$. 
    
    {  We aim to show that $e_m$ is a limit point of $E_k$. Indeed, for any radius $r' \in (0, r/100n)$, the ball $B(e_m, 5 n_0^n r'/2)$ is contained in $B(0, 1.99)$ and is disjoint from $\cup_{i=0}^{m-1} E_i$. Thus, the pair $(e_m, r')$ satisfies the hypotheses of Proposition \ref{lem4:用于投影的锥}. Consequently, the results obtained in the preceding arguments also hold for $(e_m, r')$. In particular, there exists a cone $T':= T(e_m, r') \in \mathcal{A}(m)$ with $e_m \in L^m(T')$. Recall that in the proof of case \eqref{lem4第一种情况}, we showed that points on the $k$-spine bounded away from the $(k-1)$-spine must be close to $E_k$. Applying this to the current scale $r'$, we can choose a point $l \in L^k(T') \cap B(e_m, r')$ such that $\dd(l, L^{k-1}(T')) \geq B_2 \varepsilon r'$. It follows that $\dd(l, E_k) < B_1 \varepsilon r'$. Thus, $ \dd(e_m, E_k) \leq |e_m - l| + \dd(l, E_k) < (1 + B_1 \varepsilon) r'$. 
    Letting $r' \to 0$, we conclude that $e_m \in \overline{E_k}$. Finally, combining this with the previous estimate, we obtain that} $\dd(y, E_k)\leq |y-e_m|<(B_1+({k-m})B_2)\varepsilon r$.

    When $j>m$, we have $\dd (y,L^{j-1})\geq (k-j+1)B_2\varepsilon r$ and $\dd (y,L^{j})<(k-j)B_2\varepsilon r$. Thus, there exists $l\in L^{j}$ such that $|l-y|<(k-j)B_2\varepsilon r$. At the same time, $\dd(l,L^{j-1})>B_2\varepsilon r$. And $|l-x|\leq |l-y|+|y-x|<(k-j) B_2\varepsilon r+\lambda_k r<\lambda_{j}r$, since $j\leq k-1$. Then $l$ satisfies the condition (\ref{lem4第一种情况}), which implies that $\dd(l,E_j)<B_1\varepsilon r$.
    Thus we can find $e_j \in E_j$ such that $|e_{j}-l|<B_1\varepsilon r$. Moreover, we have $|e_{j}-y|<(B_1+(k-j)B_2)\varepsilon r$ and therefore, $e_{j}\in E_{j}\cap B(x,\lambda_{j}r)$. By (\ref{lem4证明中的式子：Ek离k-spine近}), $E_{j-1}$ is contained in the $B_1\varepsilon r$ neighborhood of $L^{j-1}$ in $B(x,\lambda_{j-1}r)$. Thus we get that 
    \begin{equation}
        \begin{aligned}
        \dd(e_{j}, E_{j-1})&\geq \dd(l,E_{j-1})-|l-e_{j}|
        \geq B_2\varepsilon r -2B_1\varepsilon r>0
        \end{aligned}
    \end{equation}
    Set $I=B_2/2-B_1$. Since the spine of lower dimension is contained in the spine of higher dimension, we conclude that $B(e_{j},I\varepsilon r)$ does not meet $\ccup_{i=m}^{j-1} E_i$. At the same time, $B(e_{j},I\varepsilon r)$ does not meet $\cup_{i=0}^{m-1} E_i$ because $B(x,r)\cap (\ccup_{i=0}^{m-1} E_i)=\emptyset$. Thus $B(e_{j},I\varepsilon r) \cap (\cup_{i=0}^{j-1} E_i)=\emptyset$.
    Recall that we have assumed that Proposition \ref{lem4:用于投影的锥} holds for $n,...,m+1$ and $j>m$. Thus we can find $T_{j}\in \A(j)$ such that $e_{j}\in L^{j}(T_{j})$ and $d_{e_{j}, \lambda_k 2I\varepsilon r/(5n_0^n)}(L^{k}(T_{j}), E_k)<C_5\varepsilon$. Since $e_{j}\in L^{k}(T_{j})$, we have $\dd(e_{j}, E_k)<2C_5\lambda_k I \varepsilon^2 r/(5 n_0^n)$. Therefore,
    \begin{equation}
    \begin{aligned}
    \dd(y,E_k)&\leq |y-e_{j}|+\dd(e_{j}, E_k)
    <(B_1+(k-m)B_2)\varepsilon r.
    \end{aligned}
    \end{equation}
   Together with (\ref{lem4第一种情况}), we have (\ref{y到Ek的距离小于B1+(k-m)B2}). This ends the proof of (\ref{lem4:骨架上的点离Ek近}). Thus Proposition \ref{lem4:用于投影的锥} follows.  
\end{proof}

\subsection{Relationship between $E$ and $E_m$}

Now, we will show that $E$ is a disjoint union of $\{E_m\}_{m=0}^n$.

\begin{proposition}\label{E是Ei的不交并}
    \begin{equation}\label{式子：E是Ei的不交并}
        E\cap B(0,1.99)=(\ccup_{m=0}^n E_m) \cap B(0,1.99)\text{ (a disjoint union)}.
    \end{equation}
\end{proposition}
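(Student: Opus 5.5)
Disjointness is Lemma \ref{Em不交}, and $E_m\subset E\cap B(0,2)$ by definition. So it remains to show that every $x\in E\cap B(0,1.99)$ lies in some $E_m$. I would argue by downward selection of the type. Fix such an $x$ and let $m$ be the smallest integer for which $x$ is approximated by type-$m$ cones at all small scales with the right normalization.

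Concretely, fix $x\in E\cap B(0,1.99)$ and let $r_0$ be small, say $r_0<10^{-3}n_0^{-n}C_1^{-1}$, so that $B(x,20C_1r_0)\subset B(0,2)$. For every $r\le r_0$ the hypothesis gives $Z(x,r)\in\A$ with $x\in Z(x,r)$ and $d_{x,r}(E,Z(x,r))<\varepsilon$.

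\textbf{Case 1: $x$ is near a lower stratum.} Suppose first that for some $k$ and arbitrarily small $r$ the ball $B(x,r)$ meets $\ccup_{k=0}^{n}E_k$. Let $k$ be the smallest index such that $x\in\overline{E_k}$, and first show $x\in E_k$. Take points $y_j\in E_k$ with $y_j\to x$. For $j$ large, a small ball around $y_j$ avoids $\ccup_{i<k}E_i$, since $x\notin\overline{E_i}$ for $i<k$. Corollary \ref{推论：Em中的点满足am(x,r)很小，即使r很大} then gives $a_k(y_j,\rho)<14C_1\varepsilon$ for all $\rho$ up to a fixed scale $\rho_0$ independent of $j$. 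Translating the cone from $y_j$ to $x$ costs $|x-y_j|/\rho$, which is negligible as $j\to\infty$. Hence $a_k(x,\rho)\le 14C_1\varepsilon+o(1)$ for each fixed $\rho<\rho_0$, so $a_k(x,\rho)\le 14C_1\varepsilon<C_0\varepsilon$. This means $x\in E_k$.

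\textbf{Case 2: $x$ is isolated from all strata.} Otherwise there is $r_1>0$ with $B(x,20C_1r_1)\cap\ccup_{k}E_k=\emptyset$; we will derive a contradiction. Take $Z=Z(x,r)$ for some $r<r_1$ and let $m$ be its type. Apply Proposition \ref{用次一级的锥替代} repeatedly at the point $x\in Z$. This yields $t\ge m$ and a scale $\rho\ge c(n_0)r$ such that $Z$ coincides in $B(x,\rho)$ with some $Z_t\in\A(t)$ and $\dd(x,L^t(Z_t))\lesssim n_0^n\varepsilon r$.

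I would choose $t$ as the largest index with $\dd(x,L^{t}(Z))\le n_0^{n}\cdot(\text{small})\,r$, by the standard pigeonhole over $d_s=\dd(x,L^s(Z))$ used in the proof of Proposition \ref{用次一级的锥替代}. Shifting $Z_t$ so that its $t$-spine passes through $x$ gives $a_t(x,\rho')<C_3\varepsilon$ at a comparable scale $\rho'$. Lemma \ref{Em的m-flatness} then yields $\dd(x,E_t)<C_4\varepsilon\rho'$ (or a Hausdorff closeness of $E_t$ to $L^t$ near $x$ when $t>0$). In particular $E_t$ meets $B(x,20C_1r_1)$ once $\varepsilon$ is small, which is the desired contradiction. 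Combining the two cases gives $x\in\ccup_mE_m$.

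The main obstacle is making Case 2 quantitatively consistent. The scale $\rho'$ at which a type-$t$ cone is centered at $x$ must satisfy $B(x,\rho')\subset B(0,2)$. The hypotheses of Lemma \ref{Em的m-flatness} (in particular $a_t<C_3\varepsilon$, not merely $<C_0\varepsilon$) must hold after the translation. The loss of the factor $n_0^n$ from the pigeonhole must be absorbed into $C_3$. I would first try to take $t$ to be the \emph{minimal} type for which $\dd(x,L^t(Z))\le K\varepsilon r$ with $K=2N_0$. By Lemma \ref{type m集合若dxr距离小于delta，则最高级spine距离受控} applied to $Z(x,r)$ versus a translate, the translated cone then satisfies $d_{x,r/2}(E,Z+(x-l))<(2K+2)\varepsilon<C_3\varepsilon$, which avoids iterating Proposition \ref{用次一级的锥替代} at all.
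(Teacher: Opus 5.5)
Disjointness and your Case~1 are fine. Case~1 is in fact a different and rather clean argument for points in the closure of the strata: when you translate the cone from $y_j$ to $x$ you must shrink the radius slightly, which turns $14C_1\varepsilon$ into $28C_1\varepsilon$, still far below $C_0\varepsilon$.

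The gap is in Case~2. Everything there rests on producing $W\in\A(t)$ with $x\in L^t(W)$ and $d_{x,\rho'}(E,W)=O(\varepsilon)$ at a scale $\rho'$ comparable to $r$. This cannot be read off the single cone $Z=Z(x,r)$.

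The pigeonhole behind Proposition~\ref{用次一级的锥替代} does give a ball of radius $\gtrsim r$ on which $Z$ agrees with some $Z_t\in\A(t)$. However, it gives no $\varepsilon$-control on $\dd(x,L^t(Z_t))$. Your fallback fails too, for two reasons:
\begin{itemize}
\item $Z+(x-l)$ has the type of $Z$, so it is not admissible in $a_t$ unless $t$ equals that type.
\item $Z$ agrees with a type-$t$ cone only on $B(x,\dd(x,L^{t-1}(Z))/n_0)$, where the relative error $\varepsilon r/\dd(x,L^{t-1}(Z))$ need not be small.
\end{itemize}
For instance, let $Z\in\A(n-1)$ with $\dd(x,L^{n-1}(Z))=\sqrt{\varepsilon}\,r$. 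Then the best you can extract from $Z$ at the point $x$ is a bound of order $\sqrt{\varepsilon}$: either for $a_{n-1}(x,r/2)$, or for $a_n$ at scale $\sqrt{\varepsilon}\,r/n_0$. You never reach $C_3\varepsilon$. This is exactly why the paper, in Lemma~\ref{对每个x，都有任意小的r使得a_m(x,r)小于beta}, only obtains $a_t(x,300N_0\sigma_t\varepsilon r)<C_2$, a constant-size bound at a scale $\sim\varepsilon r$. It then needs Lemma~\ref{任意小半径am(x,r)<C2且对小于m的充分小大于C2}, whose engine is the multiscale improvement $a_m(x,8N_0r)<\tfrac12 a_m(x,r)+7C_1\varepsilon$ of Lemma~\ref{lem:小影响大}.

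You can close Case~2 in either of two ways.

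\textbf{(a) Follow the paper.} Lemma~\ref{对每个x，都有任意小的r使得a_m(x,r)小于beta} gives a fixed $t$ and scales $\rho_k\to 0$ with $a_t(x,\rho_k)<C_2$. Iterate Lemma~\ref{lem:小影响大} upward as in the proof of Lemma~\ref{任意小半径am(x,r)<C2且对小于m的充分小大于C2}; its avoidance hypothesis holds because $B(x,20C_1r_1)$ misses every $E_k$. This yields $a_t(x,\rho)=O(\varepsilon)$ for all small $\rho$, i.e.\ $x\in E_t$, contradicting Case~2.

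\textbf{(b) Give up centering at $x$.} The contradiction only needs a point of some $E_t$ near $x$.
\begin{itemize}
\item Let $m$ be the type of $Z$ and set $c_s=\tfrac14(4n_0)^{-(s-m)}$ for $m\le s\le n$.
\item Take $t$ minimal with $L^t(Z)\cap B(x,c_tr)\neq\emptyset$. By Proposition~\ref{用次一级的锥替代}, as in the proof of Lemma~\ref{对每个x，都有任意小的r使得a_m(x,r)小于beta}, $Z$ agrees on $B(x,c_{t-1}r/n_0)$ with some $Z_t\in\A(t)$.
\item Pick $l\in L^t(Z)\cap B(x,c_tr)$ and $e\in E$ with $|e-l|<\varepsilon r$.
\item The cone $Z_t+(e-l)$ gives $a_t(e,c_tr)\le 2\varepsilon/c_t\le 8(4n_0)^n\varepsilon<C_3\varepsilon$.
\item Lemma~\ref{Em的m-flatness} then yields $\dd(e,E_t)<C_4\varepsilon c_tr$. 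So $E_t$ meets $B(x,r)$, contradicting the choice of $r_1$.
\end{itemize}
With (b), your Case~1 together with Case~2 is a complete proof that bypasses Lemmas~\ref{对每个x，都有任意小的r使得a_m(x,r)小于beta} and~\ref{任意小半径am(x,r)<C2且对小于m的充分小大于C2}.
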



Before proving Proposition \ref{E是Ei的不交并}, we will first show that for every $x\in E\cap B(0,2)$, there exists $0\leq m\leq n$ and $r_k\to 0$ such that $a_m(x,r_k)<C_2$ for each $k$ in Lemma \ref{对每个x，都有任意小的r使得a_m(x,r)小于beta}. We will also show that if in the meanwhile  $a_t(x,r)\geq C_2$ when $r>0$ is small enough and for each $ t\in \{0,...,m-1\}$ (when $m=0$, $\{0,...,m-1\}=\emptyset$), then $x\in E_m$ in Lemma \ref{任意小半径am(x,r)<C2且对小于m的充分小大于C2}.

\begin{lemma}\label{对每个x，都有任意小的r使得a_m(x,r)小于beta}
    For every $x\in E\cap B(0,2)$, we can find $0\leq m\leq n$ such that there exists $r_k\to 0$ when $k\to\infty$ such that $a_m(x,r_k)<C_2$ for each $k$.
\end{lemma}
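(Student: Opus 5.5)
The plan is to exploit the hypothesis of Theorem 1.1 at each small scale together with Proposition \ref{用次一级的锥替代}. For each small $r>0$ with $B(x,r)\subset B(0,2)$, set $Z_r=Z(x,\rho r)$ for a small fixed factor $\rho\in(0,1)$ chosen below. Then $x\in Z_r$ and $d_{x,\rho r}(E,Z_r)<\varepsilon$. The goal is to produce a cone $W\in\A(m)$ with $x\in L^m(W)$ and $d_{x,r}(E,W)<C_2$ for some $m$. Since $m$ ranges over the finite set $\{0,\dots,n\}$, the pigeonhole principle will then give one $m$ that works along a sequence $r_k\to0$.

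To find $W$ at a fixed scale, I locate $x$ inside the stratification of $Z_r$. Since $x\in Z_r=L^n(Z_r)$, let $t$ be the smallest index with $\dd(x,L^t(Z_r))$ small compared with the scale, say $\le \eta r$ for a small $\eta$. The choice of $t$ is made by the same descending-chain argument used in the proof of Proposition \ref{用次一级的锥替代}. Put $d_s=\dd(x,L^s(Z_r))$, with $d_n=0$ and $d_s=\infty$ if $L^s=\emptyset$. We find an index $s$ with $d_{s}\ge 10 n_0(d_{s+1}+r')$, where $r'$ is a small multiple of $r$, and $d_{s+1}\le \eta r$. The thresholds telescope as in that proof, at the cost of factors $(10n_0)^n$, and we absorb these into the choice of $\rho$.

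With such an $s$, pick $y\in L^{s+1}(Z_r)$ with $|x-y|=d_{s+1}$. Then $n_0B(y,2d_{s+1}+r')$ misses $L^s(Z_r)$. By Proposition \ref{用次一级的锥替代} applied with $t=s$, together with Proposition \ref{A(t)的包含性}, $Z_r$ coincides on $B(y,2d_{s+1}+r')$ with a cone $Y\in\A(s+1)$ whose $(s+1)$-spine contains $y$; by Proposition \ref{A(W)的完备性}, $Y$ lies in $\A(\B)$. Translating, $W=Y+(x-y)\in\A(s+1)$ with $x\in L^{s+1}(W)$. Lemma \ref{3个集合的dx,r} then gives
\begin{equation*}
d_{x,r'}(E,W)\lesssim (\varepsilon\rho r+d_{s+1})/r'.
\end{equation*}
Choosing the scales so that $d_{s+1}\le \eta r$ with $\eta\ll C_2 r'/r$, and $\varepsilon<\varepsilon_0$ small, makes this less than $C_2$. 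This yields $a_{s+1}(x,r')<C_2$ with $r'$ proportional to $r$.

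The main obstacle is arranging the constants coherently: the chain argument must produce an index $s$ whose separation ratio is large compared with the gap $d_{s+1}/r'$, uniformly in $r$. The cleanest way is to fix $r'=r$ and work inside $B(x,r)$ with $Z_r=Z(x,\tau r)$, where $\tau=\tau_W$-type constant $3(10n_0)^n$. The recursion $d_s<10n_0(d_{s+1}+\eta r)$ for all $s$ would force $d_m$ to be small for the type $m$ of $Z_r$; the case $s$ equal to the type of $Z_r$ itself (there $L^{m-1}=\emptyset$) is the base case, where $W=Z_r$ translated works directly. Letting $r=r_k=2^{-k}$ and applying pigeonhole over $s+1\in\{0,\dots,n\}$ completes the proof.
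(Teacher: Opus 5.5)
Your overall strategy matches the paper's: locate $x$ in the stratification of an approximating cone, replace that cone near $x$ by a lower-type cone via Proposition~\ref{用次一级的锥替代}, translate so the spine passes through $x$, estimate with Lemma~\ref{3个集合的dx,r}, then pigeonhole over $m$. The gap is in how you choose the scale.

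\textbf{What the argument needs.} To conclude, you need an index $s$ with $d_{s+1}\ll C_2 r'$ and $d_s\gtrsim n_0 r'$. That is a separation of order $n_0/C_2$ between consecutive spine distances, placed exactly at the comparison radius $r'$. The descending-chain argument of Proposition~\ref{用次一级的锥替代} does not give this. It only finds some $s$ with $d_s\ge 10n_0(d_{s+1}+r')$ and gives no control on $d_{s+1}$. That is harmless in that proposition, because the ball $B(y,2d_{s+1}+r)$ is allowed to be large.

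\textbf{Why it fails.} With $r'$ a fixed multiple of $r$ (in your last paragraph, $r'=r$), the required index can fail to exist. Take $E=Z_r$ to be a cone of type $n-1$ with $\dd(x,L^{n-1}(E))=r/50$. No $s$ works, and in fact $a_m(x,r)\ge C_2$ for every $m$:
\begin{itemize}
\item For $m=n-1$, Lemma~\ref{type m集合若dxr距离小于delta，则最高级spine距离受控} would force $\dd(x,L^{n-1}(E))<n_0(1+\delta_0^{-1})C_2r=(1+\delta_0)r/200<r/50$.
\item For $m=n$, apply Proposition~\ref{锥的dx,r设定} at the point of $L^{n-1}(E)$ nearest to $x$, with radius $r/2$, using $2C_2<\delta_0$.
\item For $m<n-1$, apply Proposition~\ref{锥的dx,r设定} directly at $x$.
\end{itemize}
So your argument would produce, at every small scale $r$, some $m$ with $a_m(x,r)<C_2$, and that is false at such scales. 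Absorbing factors $(10n_0)^n$ into $\rho$ or $\tau$ cannot repair it.

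\textbf{The missing idea.} The comparison radius must depend on the selected index. The paper proceeds as follows.
\begin{itemize}
\item It takes thresholds $\sigma_k\varepsilon r$ with $\sigma_k=400N_0n_0\,\sigma_{k+1}$.
\item It chooses the smallest $t$ with $L^t(Z(x,r))\cap B(x,\sigma_t\varepsilon r)\neq\emptyset$. This $t$ exists because $x\in L^n$, while $L^{s-1}=\emptyset$ for the type $s$ of $Z(x,r)$. Minimality gives $L^{t-1}\cap B(x,\sigma_{t-1}\varepsilon r)=\emptyset$.
\item It replaces $Z(x,r)$ on $B(x,\sigma_{t-1}\varepsilon r/n_0)$ by a cone of type $t$, and translates it by less than $\sigma_t\varepsilon r$.
\item This gives $a_t(x,300N_0\sigma_t\varepsilon r)<C_2$.
\end{itemize}
Since $t$ takes only finitely many values, these $t$-dependent radii still tend to $0$ with $r$, and your pigeonhole step then finishes the proof. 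Placing the geometric thresholds at scale $r$ instead of $\varepsilon r$ would also work. What matters is that the radius depends on $t$ and that consecutive thresholds are separated by a factor of order $n_0/C_2$.
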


\begin{proof}
    Assume that we can find $x\in E\cap B(0,2)$ and $\rho>0$ such that for all $r\in(0,\rho)$ and all $0\leq m\leq n$, $a_m(x,r)\geq C_2$. Pick any $r\in( 0, \rho) $. Then we have  $x\in Z(x,r)$ and $d_{x,r}(E,Z(x,r))<\varepsilon$. The problem is that the spine of $Z(x,r)$ might not contain $x$. Suppose $Z(x,r)\in\A(s)$ and consider $Z(x,r)$ in a scale small enough.

    Set $\sigma_k=600 N_0\cdot(400 N_0 n_0)^{n-k}$ for $k=-1,...,n$. Then $\{\sigma_k\}_{k=-1}^n$ is a decreasing sequence with $\sigma_k=400 N_0 n_0\sigma_{k+1}$. We define $L^{-1}(Z(x,r))=\emptyset$ in addition. Denote by $t$ the smallest number satisfying the following:
    \begin{equation}
            L^{t-1}(Z(x,r))\cap B(x,\sigma_{t-1}\varepsilon r)=\emptyset\text{ and }L^{t}(Z(x,r))\cap B(x,\sigma_t \varepsilon r)\neq\emptyset.
    \end{equation}
    Since $x\in Z(x,r)$, we have $L^n(Z(x,r))\cap B(x,\sigma_n \varepsilon r)\neq\emptyset$. And it is evident that $L^{s-1}(Z(x,r))$ is an empty set. Therefore, such $t\geq s$ exists. Since $L^{t-1 }(Z(x,r))\cap B(x,\sigma_{t-1} \varepsilon r)=\emptyset$, by Proposition \ref{用次一级的锥替代}, we can find $W_t\in \A(t)$ such that 
    \begin{equation}
        W_t\cap B(x,\frac{\sigma_{t-1}}{n_0}\varepsilon r )=Z(x,r)\cap B(x,\frac{\sigma_{t-1}}{n_0}\varepsilon r).
    \end{equation}
    At the same time, we can find $l\in L^{t}(Z(x,r))$ such that $|x-l|<\sigma_t\varepsilon r$. Therefore, $l\in L^{t}(W_t)$ because $\sigma_t$ is much smaller than $\sigma_{t-1}/n_0$. Let $W=W_t+(x-l)$, then $W$ is a set in $\A(t)$ with its $t$-spine passing through $x$. By Lemma \ref{3个集合的dx,r}, $ d_{x,300N_0 \sigma_t\varepsilon r}(W,E)<\frac{(\sigma_t+1)\varepsilon r}{300N_0\sigma_t \varepsilon r}<C_2.$  
    This implies that $a_t(x,300 N_0 \sigma_t \varepsilon r)<C_2$, which contradicts our assumption that $a_t(x,\tau )\geq C_2$ for all $\tau\in (0,\rho) $. Therefore, fix $x\in E\cap B(0,1.99)$, for each $\rho>0$ small enough, there exists $r\in (0,\rho)$ and a number $m(r)\in \{0,...,n\}$ such that $a_{m(r)}(x,r)<C_2$, where $m(r)$ is related to $x$ and $r$. Since the set $\{0,...,n\}$ is finite, we can find at least one $m$ satisfying that there are arbitrarily small $r>0$ such that $a_m(x,r)<\beta$, and Lemma \ref{对每个x，都有任意小的r使得a_m(x,r)小于beta} follows.
\end{proof}

\begin{lemma}\label{任意小半径am(x,r)<C2且对小于m的充分小大于C2}
    { 
    
    Assume that $0 \leq m \leq n$ and let $x \in E \cap B(0, 1.99)$. Assume additionally in the case that $m > 0$ that there exists $\rho_x > 0$ such that $a_t(x, r) \geq C_2$ for each $t \in \{0, \dots, m-1\}$ and $0 < r < \rho_x$. Then if there exists a sequence $r_k \to 0$ such that $a_m(x, r_k) < C_2$ for each $k$, then it follows that $x \in E_m$.
    
    }
\end{lemma}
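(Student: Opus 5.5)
The plan is to show that $a_m(x,r)$ eventually drops below $C_0\varepsilon$ at every small scale. I will do this by applying the improvement estimate of Lemma \ref{lem:小影响大} repeatedly, starting from the good scales $r_k$. That lemma, for $m>0$, requires the ball $B(x,20C_1 r)$ to avoid $\ccup_{t=0}^{m-1}E_t$. So the proof splits into two steps: first find a fixed radius $r_0>0$ with $B(x,r_0)\cap(\ccup_{t=0}^{m-1}E_t)=\emptyset$, then run the iteration inside $B(x,r_0)$.

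\emph{Step 1: an isolating ball (only needed when $m>0$).} Suppose no such $r_0$ exists. Let $t\le m-1$ be the smallest index such that $E_t$ accumulates at $x$. By minimality of $t$, there is $r_1>0$ with $B(x,r_1)\cap(\ccup_{s<t}E_s)=\emptyset$ (for $t=0$ this condition is void). Pick $y_j\in E_t$ with $d_j=|y_j-x|\to0$, and set $K=4/C_2$. For $j$ large, the ball $B(y_j,5n_0^n K d_j)$ lies in $B(x,r_1)\cap B(0,2)$, so it is free of lower strata. Corollary \ref{推论：Em中的点满足am(x,r)很小，即使r很大} applied to $y_j$ at scale $2Kd_j$ then yields $W\in\A(t)$ with $y_j\in L^t(W)$ and $d_{y_j,2Kd_j}(E,W)<14C_1\varepsilon$. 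Translate $W$ by $x-y_j$ and use Lemma \ref{3个集合的dx,r}. This gives a set of type $t$ whose $t$-spine passes through $x$, and whose distance to $E$ in $B(x,Kd_j)$ is at most $(28C_1\varepsilon K+2)/K<C_2$. Hence $a_t(x,Kd_j)<C_2$ along a sequence of scales tending to $0$, which contradicts the hypothesis $a_t(x,r)\ge C_2$ for $r<\rho_x$. So the isolating radius $r_0$ exists; we shrink it so that also $B(x,r_0)\subset B(0,2)$.

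\emph{Step 2: iteration.} Fix $k$ with $20C_1 r_k<r_0$. Let $i_k$ be the largest integer with $20C_1(8N_0)^{i_k}r_k<r_0$. For every $0\le i\le i_k$, the pair $(x,(8N_0)^i r_k)$ satisfies the hypotheses of Lemma \ref{lem:小影响大}, provided $a_m\le C_2$ there. This proviso propagates inductively, because the lemma's output $\tfrac12 C_2+7C_1\varepsilon$ is again at most $C_2$ when $\varepsilon$ is small. Iterating exactly as in the proof of Corollary \ref{推论：Em中的点满足am(x,r)很小，即使r很大} gives
\[
a_m(x,(8N_0)^i r_k)<2^{-i}C_2+14C_1\varepsilon .
\]
Now take an arbitrary $r$ with $r<r_0/(20C_1(8N_0)^{2})$. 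Since $r_k\to0$, I can choose $k$ and $i$ with $r\le s:=(8N_0)^i r_k<8N_0 r$ and $i$ as large as desired. Comparing the two scales through $d_{x,r}\le (s/r)\,d_{x,s}$ (the $m$-spine condition on $W$ is unaffected) gives $a_m(x,r)\le 8N_0(2^{-i}C_2+14C_1\varepsilon)$. Choosing $i$ large makes this smaller than $200N_0C_1\varepsilon$, which is below $C_0\varepsilon$ since $C_0=10^8(n_0+n)^{3n}N_0\gg C_1$. Hence $x\in E_m$.

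\emph{Expected difficulty.} The delicate point is Step 1. The hypothesis $a_t(x,\cdot)\ge C_2$ must be turned into the absence of lower strata near $x$, and the scale $K d_j$ has to be chosen large enough that translating the cone costs less than $C_2$, yet small enough that the corresponding ball around $y_j$ stays free of $\ccup_{s<t}E_s$. The minimality of $t$ is exactly what makes the latter possible. If the constants turn out not to fit, I would instead first prove by induction on $t$ that no $E_t$ with $t<m$ accumulates at $x$.
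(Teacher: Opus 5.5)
Your proof is correct. Step 2 is exactly the paper's iteration: the paper writes it out only for $m=0$, obtaining $a_0(x,\rho)\le 112n_0^nN_0^2\varepsilon$, and then says ``the same argument'' for general $m$. The genuine difference is in how you isolate $x$ from the lower strata.

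The paper never uses the hypothesis $a_t(x,r)\ge C_2$. It proceeds in two steps:
\begin{enumerate}
\item It rules out $x\in E_t$ for $t<m$ by playing $a_t(x,r_k)<C_0\varepsilon$ against $a_m(x,r_k)<C_2$ and invoking the type-separation Proposition \ref{锥的dx,r设定}.
\item For the least $h$ such that $E_h$ accumulates at $x$, it applies Corollary \ref{推论：Em中的点满足am(x,r)很小，即使r很大} at a point $y_j\in E_h$ with $|y_j-x|<r_k/100$, at the \emph{given} scale $r_k$. It then contradicts Proposition \ref{锥的dx,r设定} again, comparing the type-$h$ cone through $y_j$ with the type-$m$ cone approximating $E$ at $(x,r_k)$.
\end{enumerate}
So the paper in effect shows the stronger fact that $a_m(x,r_k)<C_2$ along a sequence alone forces $x\in E_m$, and it pays for this with the separation proposition.

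You instead use the hypothesis as stated. Working at the scale $Kd_j$ adapted to $|x-y_j|$ makes translating the type-$t$ cone from $y_j$ to $x$ cost only $O(1/K)$. That directly violates $a_t(x,\cdot)\ge C_2$, with no cone-separation input.

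Two small repairs are needed.
\begin{itemize}
\item Your minimal-$t$ selection presupposes $x\notin E_s$ for $s<m$; otherwise $B(x,r_1)$ need not avoid the lower strata, and $d_j$ could vanish. State first that $x\in E_s$ is impossible, because then $a_s(x,r)<C_0\varepsilon<C_2$ for all small $r$.
\item The final comparison $200N_0C_1<C_0$ needs $C_0\gg N_0C_1$, not merely $C_0\gg C_1$. This is the same requirement implicit in the paper's own bound $112n_0^nN_0^2\varepsilon$, and it can be arranged by enlarging $n_0$ (e.g.\ $n_0\ge 1/\delta_0$). So it is inherited constant bookkeeping rather than a defect of your approach.
\end{itemize}
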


\begin{proof}
    We will prove by induction from $m=0$ to $m=n$. Fix $x\in E\cap B(0,1.99)$, first suppose that there is a decreasing series of radius $\{r_k\}_{k=1}^{\infty}$ such that $\lim_{k\to\infty}r_k= 0$ and $B(x,20 C_1 r_k)\subset B(0,2)$, in addition, $a_0(x,r_k)<C_2$ for all $k$. Then we can check that $x\in E_0$. Actually, for all $\rho>0$ small enough, we can find $l_k\geq 2$ such that $(8N_0)^{l_k-1}r_k\leq \rho<(8N_0)^{l_k}r_k$ for all $k$ big enough. We have $l_k$ tends to $\infty$ when $k$ tends to $\infty$. Then by Lemma \ref{lem:小影响大}, we have $a_0(x,(8N_0)^{l_k}r_k)<2^{-l_k} C_2 + (\sum_{t=0}^{l_k-1} 2^{-t})\cdot 7 C_1 \varepsilon$. Therefore, $a_0(x,\rho)<[2^{-l_k}C_2 +(\sum_{t=0}^{l_k-1} 2^{-t})\cdot 7C_1\varepsilon]\cdot 8N_0$. Let $k$ tends to $\infty$, then we have $a_0(x,\rho)\leq 112n_0^n N_0^2\varepsilon$ for all $\rho$ small enough, thus $x\in E_0$.

    Assume we have proved that Lemma \ref{任意小半径am(x,r)<C2且对小于m的充分小大于C2} holds for $0,...,m-1$, where $m\geq 1$. Let us show it holds for $m$. By hypothesis, we can find a decreasing series $r_k\to 0$ such that $a_m(x,r_k)<C_2$. First we show that $x\notin \ccup_{t=0}^{m-1} E_t$. 
    Otherwise, suppose that $x\in E_t$ for some $0\leq t\leq m-1$. 
    By the definition of $E_t$, we know that when $r>0$ is small enough, $a_t(x,r)<C_0\varepsilon$, where $C_0=10^8(n+n_0)^{3n}N_0$. Pick a $k$ large enough such that $a_t(x,r_k)<C_0\varepsilon$. Thus we can find $W\in \A(t)$, whose $t$-spine $L^{t}(W)$ passes through $x$ and $d_{x,r_k}(E,W)<C_0\varepsilon$. At the same time, $a_m(x,r_k)<C_2$. So we can find $Y\in \A(m)$ such that $x\in L^{m}(Y)$ and $d_{x,r_k}(Y,E)<C_2$. 
    Then $d_{x,r_k/2}(W,Y)<2(C_2 +C_0\varepsilon)<\delta_0$. It leads to a contradiction because $t<m$ and $x\in L^t(W)$. Thus,  $x\notin \ccup_{t=0}^{m-1} E_t$.

    Next we continue to prove that $x$ is not in the closure of $\cup_{t=0}^{m-1} E_t$. Suppose not, set $h=\min\{t:0\leq t\leq m-1$, there are infinitely many points in $E_t$ that converge to $x\}$. It is obvious that $h>0$ because $x\in B(0,1.99)$ and there is at most one point in $E_0\cap B(0,1.99)$. Choose $\{y_j\}_{j=1}^{\infty}$ in $E_h$ such that $\lim_{j\to\infty}y_j=x$.  By the minimality of $h$, there is $r>0$  such that $B(x,5n_0^n r)\subset B(0,2)$ and $B(x,5n_0^n r)\cap (\ccup_{t=0}^{h-1} E_t)=\emptyset$. And there is $r_k<r$ such that $a_m(x,r_k)<C_2$. So we can find $X\in \A(m)$ such that $x\in L^{m}(X)$ and $d_{x,r_k}(E,X)<C_2$. Fix $r$ and $r_k$. 
    Then pick $y_j$  such that $|y_j-x|<r_k/100$. Therefore, $B(y_j,\frac{5}{2}n_0^n r_k)\cap (\cup_{t=0}^{h-1} E_t)=\emptyset$.  
    Since $y\in E_h$, we have $a_h(y_j,r_k)<14C_1\varepsilon$ by Corollary \ref{推论：Em中的点满足am(x,r)很小，即使r很大}. So we can find $Y\in \A(h)$ such that $y_j\in L^{h}(Y)$ and $d_{y_t,r_k}(Y,E)<14C_1\varepsilon$. Then $d_{y_j,r_k/2}(Y,X)<\delta_0$. Since $h<m$ and $y_j\in L^{h}(Y)$, it is impossible. So the assumption is not true. Hence, $x\notin\overline{\cup_{t=0}^{m-1} E_t}$. 

    By the discussion above, we can find $r'>0$ such that $B(x,r')\cap (\cup_{t=0}^{m-1} E_t)=\emptyset$. At the same time, there are arbitrarily small $r>0$ such that $a_m(x,r)<C_2$. Using the same argument as for the case for $0$, we can show that $x\in E_m$.
\end{proof}

\begin{proof}[\rm \textbf{Proof of Proposition \ref{E是Ei的不交并}.}] 

{  
This is a direct corollary of Lemma \ref{对每个x，都有任意小的r使得a_m(x,r)小于beta} and Lemma \ref{任意小半径am(x,r)<C2且对小于m的充分小大于C2}. By Lemma \ref{对每个x，都有任意小的r使得a_m(x,r)小于beta}, for any $x \in E \cap B(0, 1.99)$, there exists at least one dimension $m$ and a sequence of radius $r_k \to 0$ such that $a_m(x, r_k) < C_2$ for all $k$.  Let $m_0$ be the smallest such dimension.  If $m_0=0$, then $x\in E_0$ by  Lemma \ref{任意小半径am(x,r)<C2且对小于m的充分小大于C2}. If $m_0>0$, then for all  $t<m_0$, we must have $a_t(x, r)\geq C_2$ for all sufficiently small $r>0$ (otherwise $m_0$ would not be the smallest). Lemma \ref{任意小半径am(x,r)<C2且对小于m的充分小大于C2} then implies that $x \in E_{m_0}$.

Thus, every point in $E \cap B(0, 1.99)$ belongs to some $E_m$. Since $\{E_m\}_{m=0}^n$ are disjoint by Lemma \ref{Em不交}, Proposition \ref{E是Ei的不交并} follows.

}
\end{proof}

At last, we show that $E_m$ is contained in the closure of $E_{m+1}$ for each $0\leq m<n$.

\begin{proposition}\label{每一层Em的关系}
    $\ccup_{t=0}^m E_t$ is closed in $\overline{B(0,1.989)}$ for each $0\leq m\leq n$. 
    And in $\overline{B(0,1.988)}$,  $E_{m}\subset \overline{E_{m+1}}$ for all $0\leq m< n$.
\end{proposition}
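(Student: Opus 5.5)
For the closedness of $\cup_{t=0}^m E_t$ in $\overline{B(0,1.989)}$, I will take a sequence $y_j\in\cup_{t\le m}E_t$ converging to some $x\in\overline{B(0,1.989)}$. Passing to a subsequence, all $y_j$ lie in a single $E_h$ with $h\le m$; I will choose $h$ minimal among the indices whose stratum accumulates at $x$. Since $x\in E\cap B(0,1.99)$ ($E$ is closed), Proposition \ref{E是Ei的不交并} gives $x\in E_s$ for a unique $s$, and I must show $s\le m$.

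Suppose instead $s>h$. I will repeat the argument used in the proof of Lemma \ref{任意小半径am(x,r)<C2且对小于m的充分小大于C2}. If $h=0$, Lemma \ref{E0只有一个点} forces $y_j=x$ eventually, so $x\in E_0$. If $h>0$, minimality of $h$ gives $r>0$ with $B(x,5n_0^n r)\subset B(0,2)$ and $B(x,5n_0^nr)\cap(\cup_{t<h}E_t)=\emptyset$. Since $x\in E_s$, for small $\rho<r$ we have $a_s(x,\rho)<C_0\varepsilon$, so some $X\in\A(s)$ with $x\in L^s(X)$ satisfies $d_{x,\rho}(E,X)<C_0\varepsilon$. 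Take $y_j$ with $|y_j-x|<\rho/100$. Corollary \ref{推论：Em中的点满足am(x,r)很小，即使r很大} then gives $Y\in\A(h)$ with $y_j\in L^h(Y)$ and $d_{y_j,\rho/2}(Y,E)<14C_1\varepsilon$. Lemma \ref{3个集合的dx,r} yields $d_{y_j,\rho/4}(Y,X)<\delta_0$, which contradicts Proposition \ref{锥的dx,r设定} since $X$ has type $s>h$. Hence $s\le h\le m$.

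For the second statement I fix $x\in E_m\cap\overline{B(0,1.988)}$ with $m<n$. The case $m=0$ reduces to $x$ being the single point of $E_0$ (Lemma \ref{E0只有一个点}). By the closedness just proved for $m-1$ and the disjointness of the strata, $x\notin\cup_{t<m}E_t$, so some ball $B(x,5n_0^nr_0/2)\subset B(0,2)$ misses $\cup_{t<m}E_t$; the same holds for $m=0$ because of Lemma \ref{E0只有一个点}. For every $r'<r_0$, Proposition \ref{lem4:用于投影的锥} applies at $(x,r')$ and gives $T\in\A(m)$ with $x\in L^m(T)$ and $d_{x,\lambda_m r'}(E_{m+1},L^{m+1}(T))<C_5\varepsilon$. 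Because $x\in L^m(T)\subset L^{m+1}(T)$, there is a point of $E_{m+1}$ within $C_5\varepsilon\lambda_mr'$ of $x$. Letting $r'\to0$ gives $x\in\overline{E_{m+1}}$. This is the same device already used inside the proof of Proposition \ref{lem4:用于投影的锥} to show that $e_m\in\overline{E_k}$.

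The main obstacle is the bookkeeping of radii. For the second part, the ball $B(x,5n_0^nr'/2)$ must lie in $B(0,2)$; for $x\in\overline{B(0,1.988)}$ this holds once $r'<10^{-3}n_0^{-n}$. For the first part, the point is to pick $h$ minimal so that Corollary \ref{推论：Em中的点满足am(x,r)很小，即使r很大} applies at the $y_j$.
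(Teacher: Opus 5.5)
Your proof is correct and is essentially the paper's. For closedness you run the same contradiction (Corollary \ref{推论：Em中的点满足am(x,r)很小，即使r很大} at a nearby point of the lowest stratum accumulating at $x$, the definition of $E_s$ at $x$, Lemma \ref{3个集合的dx,r} and Proposition \ref{锥的dx,r设定}), replacing only the paper's induction on $m$ with a direct choice of the minimal accumulating index $h$, exactly as in the proof of Lemma \ref{任意小半径am(x,r)<C2且对小于m的充分小大于C2}; for $E_m\subset\overline{E_{m+1}}$ you apply Proposition \ref{lem4:用于投影的锥} at shrinking radii just as the paper does.

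One caveat, which the paper's proof shares: part 2) of Proposition \ref{lem4:用于投影的锥} is stated only for $k\le n-1$, and at radius $\lambda_{m+1}r'$ rather than $\lambda_m r'$ (the radius slip is harmless). So when $m=n-1$, the closeness of $E_n$ to $L^n(T)=T$ needs a short extra justification. One way is to combine part 1) with the claim \eqref{y离Lk远，y不属于Es} for $k=n-1$ and Proposition \ref{E是Ei的不交并}.
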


\begin{proof}
    We prove by induction on $m$. When $m=0$, $E_0\cap B(0,1.99)$ has at most one point. So it is closed. Now assume that $\ccup_{t=0}^{m-1} E_t$ is closed in $\overline{B(0,1.989)}$, let us show that $\ccup_{t=0}^{m} E_t$ is closed. Suppose not, there exists $x\in \overline{\ccup_{t=0}^{m} E_t}\backslash \ccup_{t=0}^m E_t$ and $x$ is contained in $\overline{B(0,1.989)}$. Then $x\in \ccup_{t=m+1}^{n}E_t$. Suppose that $x\in E_k$ for some $m+1\leq k\leq n$. According to the  induction hypothesis, there is $r>0$ small enough such that $B(x,r)\cap (\ccup_{t=0}^{m-1} E_t)=\emptyset.$ Since $x$ is contained in the closure of $\ccup_{t=0}^m E_t$, we can obtain that $x$ is a limit point of $E_m$. Fix $r>0$ such that  $B(x,5n_0^n r)\subset B(0,1.99)$ and $B(x,5n_0^n r)\cap (\ccup_{t=0}^{m-1} E_t)=\emptyset$, in addition, $r$ is sufficiently small so that $a_k(x,r)<C_0\varepsilon$. Since $x$ is a limit point of $E_m$, we can find $y\in E_m$  such that $|y-x|<r/100$. Thus $B(y,5n_0^n r/2)\subset B(x,5 n_0^n r)$. Then $B(y,5n_0^n r/2)\subset B(0,1.99)$ and does not meet $\cup_{t=0}^{m-1} E_t$. By Proposition \ref{lem4:用于投影的锥}, we can find $W_m\in \A(m)$ such that $y\in L^{m}(W_m)$ and $d_{y,r}(W_m,E)<14 C_1\varepsilon$. At the same time, we have $a_k(x,r)<C_0\varepsilon$. Thus there is $W_k\in \A(k)$ such that $d_{x,r}(E,W_k)<C\varepsilon$. Then $d_{y,r/10}(W_m,W_k)<140 C_1\varepsilon+10 C_0\varepsilon$. It contradicts the fact that $W_m$ is a set in $\A(m)$ whose $m$-spine passes through $y$ and $k>m$.  Therefore, the assumption is wrong and we have proved that $\ccup_{t=0}^m E_t$ is closed, and it is valid for all $m$.

    Next we prove that $E_{m}\subset \overline{E_{m+1}}$. By Proposition \ref{E是Ei的不交并}, $E_m\cap (\cup_{t=0}^{m-1} E_t )=\emptyset$. Since  $\cup_{t=0}^{m-1} E_t$ is closed, then for all $x\in E_m\cap \overline{B(0,1.988)}$, we can find $r>0$ such that $B(x,5 n_0^n r/2)\subset B(0,1.99)$ and does not meet $\cup_{t=0}^{m-1} E_t$. Here $r$ is related to $x$. Let $\{r_k\}_{k=1}^{\infty}$ be a series of radius smaller than $r$ and tend to 0. By Proposition \ref{lem4:用于投影的锥}, we can find $T_k\in \A(m)$ such that $x\in L^{m}(T_k)$ and $d_{x,0.99r_k}(E_{m+1},L^{m+1}(T_k))<2C_5\varepsilon$ for each $k$. Thus, there exists $y_k\in E_{m+1}$ such that $|y_k-x|<0.99\cdot 2C_5\varepsilon r_k$. Thus $y_k\in E_{m+1}$ goes to $x$ and $x\in \overline{E_{m+1}}$.
\end{proof}

\section{Covers, partitions of unity and some estimates}


{ 
In this section, we prepare the tools to build the parameterization. We first show that it is enough to prove Theorem \ref{主定理1} for a specific standard case. Then, in Section 4.1, we construct the covers and partitions of unity. In Section 4.2, we study the relationship between the approximating cones of nearby balls.

It suffices to prove Theorem \ref{主定理1} under the specific assumption that
\begin{equation}\label{要证的情况}
    E_0\cap B(0,1.99)=\{0\}, Z(0,2)\in \A(0) \text{ and is centered at 0.}
\end{equation}
In this case, for all $0\leq m\leq n$, using the same method as in Proposition \ref{lem4:用于投影的锥}, we have
\begin{equation}\label{1.98尺度}
    d_{0,1.98}(E_m, L^m(Z(0,2)))<C_6\varepsilon,
\end{equation}
where $C_6$ depends on $n_0,\delta_0,n$. Note that the case where $Z(0,2)$ is of type 0 is the most complex one, since it has the most singular geometry. Proving the theorem for this case gives us the foundation to handle all other cases.

It is necessary to remark that the hypothesis of Proposition \ref{lem4:用于投影的锥} fails for the case $x=0$ and $r=2$. Thus, Proposition \ref{lem4:用于投影的锥} cannot be used directly to derive  \eqref{1.98尺度}. However, condition \eqref{要证的情况} guarantees that $a_0(0, 2) \le d_{0,2}(E, Z(0, 2)) < \varepsilon$. By substituting this estimate directly into the proof of Proposition \ref{lem4:用于投影的锥}, we can verify that the entire argument remains valid in this context. Consequently, we obtain  \eqref{1.98尺度}.

Before discussing the general case, we comment on where the results in Section 3 apply. Although the sets $E_m$ and their properties were defined within $B(0, 2)$ for simplicity, the arguments depend only on the local approximation hypothesis. Therefore, these results extend naturally to any larger ball $B(0, T_0)$. In the discussion below, we apply the results of Section 3 within a sufficiently large ball $B(0, T_0)$ (e.g., $T_0 = 10 \cdot (10 n_0^n)^{n+1}$) to ensure we include all relevant geometric features. 

To motivate the classification below, consider the case where the approximating cone $Z(0, 2)$ is of type 0. If its center lies just outside $B(0, 2)$, it still dictates the internal geometry. Since we build the parameterization from the lowest-dimensional spine, we must anchor the construction from this center. This forces us to work on a larger domain to include such points.  Therefore, we need to classify the cases based on the sets $E_m$ within a larger ball.

Let $\{ R_m \}_{m=0}^n$ be a decreasing sequence of radii defined by $R_n = 2$ and the condition  $R_m = 10 n_0^n R_{m+1}$ for $m \in \{0, \dots, n-1\}$. That is, $R_m = 2 \cdot (10 n_0^n)^{n-m}$. Note that we can choose $T_0 > 10 n_0^n R_0$ to ensure enough space for the construction.

We proceed by identifying the effective starting dimension. Since $0 \in E$, the set $E \cap B(0, R_0)$ is non-empty. Let $m$ be the smallest integer in $\{0, \dots, n\}$ such that
\begin{equation}
    E_m \cap B(0, R_m) \neq \emptyset.
\end{equation}
If $m = n$, then  $E$ does not meet $\cup_{i=0}^{n-1} E_i$ in $B(0,R_{n-1})$, reducing the problem to the standard Reifenberg flat situation.

Now suppose $m < n$. Let $x_m$ be a point in $E_m \cap B(0, R_m)$. Let us verify that the hypotheses of Proposition \ref{lem4:用于投影的锥} are satisfied by the pair $(x_m, 2R_m)$. Observe that we have
\begin{equation}
\frac{5n_0^n}{2} B(x_m, 2R_m) \subset B(0, 5 n_0^n R_m + |x_m|) \subset B(0, (5n_0^n + 1) R_m) \subset B(0, R_{m-1}).
\end{equation}
(For $m=0$, the inclusion holds trivially within $B(0, T_0)$). Since $m$ is the smallest dimension, we know that $(\cup_{i=0}^{m-1} E_i) \cap B(0, R_{m-1}) = \emptyset$. Therefore, the conditions of Proposition \ref{lem4:用于投影的锥} are satisfied.

Consequently, Proposition \ref{lem4:用于投影的锥} yields a set $Z = T(x_m, 2R_m)$ of type $m$ satisfying the properties 1) and 2). Since the distance between the origin and $Z$ is small (bounded by $2 R_m C_5 \varepsilon$), we may assume that $0 \in Z$ (by adjusting $\varepsilon$ slightly). Note that in this context, we have an estimate similar to \eqref{1.98尺度} within $B(x_m, 1.98 R_m)$, which is sufficient for the construction.

We then carry out the construction of the parameterization within the ball $B(x_m, 1.98 R_m)$. In this process, we define the initial maps based on the dimension $m$.

If $m=0$, we define the first step as a global translation that maps the center $z_0$ of $Z$ to $x_m$.  By Proposition \ref{lem4:用于投影的锥} and the assumption $0\in Z$, this translation moves points no more than $4 C_5  R_m \varepsilon$. Since this translation is small and $E_0$ contains only one point in this region, it is easy to verify that this simple translation satisfies all the necessary geometric constraints (specifically, conditions (M1)-(M4) in Section 5). The rest of the construction then proceeds exactly as in the standard case \eqref{要证的情况}. 

If $m>0$, we set $f^0 = \dots = f^{m-1} = id$ (effectively treating $\cup_{i=0}^{m-1} E_i$ as the empty set) and note that the construction does not require the spine $L^m(Z)$ to pass through $x_m$. Finally, since $B(x_m, 1.98 R_m)$ completely contains  $B(0, 2)$, restricting the resulting map $f$ to $B(0, 2)$ completes the proof of the main theorem.

\ 

Throughout the rest of the paper, we say that a constant $C$ is a geometric constant if it depends only on $n, n_0, \delta_0$ and $\alpha$. 
}

\subsection{Covers and  partitions of unity}

Fix $k\geq 0$, we  will construct a cover of $E$. Let $E_{n+1}=\RR^N\backslash E$. 
First we cover $E_0\cap B(0,1.98)=\{0\}$ by $B(0,n_0^{-n}\cdot 2^{-k-10^2})$. Let $\{0\}=\{x_{i_0}\}_{i_0\in I_0(k)}$  and $r_{i_0}=n_0^{-n}\cdot 2^{-k-10^2}$. Let $B_{i_0}=B(x_{i_0},r_{i_0})$.

Suppose we have constructed the cover of $\cup_{t=0}^{m-1} E_t$, where $m$ is a number in $\{1,...,n+1\}$. 
{  Specifically, assume that we have defined balls $\{B_{i_t}\}_{{i_t} \in I_t(k)}$ for all $t\in \{0,\dots,m-1\}$, such that the union of these balls (suitably enlarged) covers $\cup_{t=0}^{m-1} E_t$ within the relevant domain.
}
Then we continue to construct a cover of $E_m$. Set
\begin{equation}
    E_m'=E_m\cap B(0,1.98-\frac{m}{1000n})\backslash \bigcup_{t=0}^{m-1} \bigcup_{i_t\in I_t(k)}(2-\frac{1}{2^{m-t+1}})B_{i_t}.
\end{equation}
Then pick a  maximal subset $\{x_{i_m}\}_{i_m\in I_m(k)}$ of $E_m'$ such that 
\begin{equation}\label{构造覆盖时点分离}
    |x_i-x_j|\geq n_0^{-n(m+1)}\cdot2^{-k-10^{m+2}}
\end{equation}
for every $i\neq j$ in $I_m(k)$. Let $r_{i_m}=n_0^{-n(m+1)}\cdot2^{-k-10^{m+2}}$ and $B_{i_m}=B(x_{i_m},r_{i_m})$. Then we have
\begin{equation}\label{2Bi构成Em覆盖}
    E_m\cap B(0,1.98-\frac{m}{1000n})\subset \left(\bigcup_{t=0}^{m-1}\ \bigcup_{i_t\in I_t(k)}(2-\frac{1}{2^{m-t+1}})B_{i_t} \right)\bigcup \left(\bigcup_{i_m\in I_m(k)}B_{i_m} \right).
\end{equation}
Let $I(k)=\ccup_{m=0}^{n+1} I_m(k)$ and $I_m=\ccup_{k=0}^{\infty}I_m(k)$. Then we have 
\begin{equation}
    E\cap B(0,1.97)\subset 
    \bigcup_{t=0}^n\bigcup_{i\in I_t(k)} 2B_i\text{ and }B(0,1.97)\subset
      \bigcup_{i\in I(k)} 2B_i.
\end{equation}
By the definition of the cover, we know the balls 
centered at points of greater types have a 
{  large} distance from points of smaller types. That is, given $i\in I_m(k)$ for some $0 < m\leq n+1$ and $j\in I_{t}(k)$, where $t<m$, then we have 
\begin{equation}
    \dd(10^3 n_0^n B_i, (2-\frac{1}{2^{m-t}})B_j)\gg  10^4 r_i.
\end{equation}
Since $\ccup_{t=0}^{m-1} E_t$ is contained in the union  $\ccup_{t=0}^{m-1}\cup_{i_t\in I_t(k)}(2-2^{t-m})B_{i_t}$, we have 
\begin{equation}\label{级别低的球离级别高的点远}
    \dd (10^3n_0^n B_i,\bigcup_{t=0}^{m-1} E_t)\gg 10^4 r_i.
\end{equation}
Now we continue to define  the partitions of unity. Fix $k\geq 0$, for any $i\in I(k)$, let $\tilde{\theta}_i$ be a bump  function such that $\tilde{\theta}_i=1$ on $2B_i$ and $\tilde{\theta}_i=0$ outside $3B_i$. In $3B_i\backslash 2B_i$, the value of $\tilde{\theta}_i$ is between $0$ and $1$. 
We also ask that {   $|\nabla ^p \tilde{\theta}_i|<\tilde{C}(p) 2^{pk}\text{ }\text{for each integer } p\geq 1$, where $\tilde{C}(p)$ is a geometric  constant.} 
{ Since, for each $m$, the points in $\{x_{i_m}\}_{i_m \in I_m(k)}$ are well-separated (as in \eqref{构造覆盖时点分离}) and the set $E\cap B(0,2)$ is bounded, the index set $I(k)$ is finite.}  For each $x\in B(0,1.97)$, by (\ref{2Bi构成Em覆盖}), $\sum_{j\in I(k)}\tilde{\theta}_j(x)\geq 1$. Moreover note that $ \sum_{j\in I(k)}\tilde{\theta}_j(x)\leq C<\infty$, where $C$ depends only on $N$, because the choice of balls ensures that they only overlap a bounded number of times.
Thus we set 
\begin{equation}
    \theta_i(x)=\frac{\tilde{\theta}_i(x)}{\sum_{j\in I(k)}\tilde{\theta}_j(x)}\text{ for }x\in B(0,1.97)
\end{equation}
and get that {  
\begin{equation}\label{单位分解的导数}
    |\nabla^p \theta_i|<C(p) 2^{p k},
\end{equation}
where  $C(p),p\geq 1$ are  geometric constants.} By (\ref{级别低的球离级别高的点远}),  if $x\in E$ is such that $\dd (x,E_m)<r_{i}/2$ for some $i\in I_{m}(k)$, then  $\theta_{j}(x)=0$ for all $j\in \ccup_{t=m+1}^{n+1} I_{t}(k)$.

\subsection{Similarity of cones corresponding to close balls}

{  Let $k \ge 0$ be the step index.} For each $i\in I(k)$, we have $500 n_0^n B_i\subset B(0,1.99)$. For each $m>0$ and each $i\in I_m(k)$, we can get that $500 n_0^n B_i$ does not meet $\ccup_{t=0}^{m-1} E_t$ by (\ref{级别低的球离级别高的点远}). By Proposition \ref{lem4:用于投影的锥}, there exists $W_i\in\A(m)$ such that 
\begin{equation}
    d_{x_i,200r_i}(E,W_i)<14 C_1\varepsilon.
\end{equation}
In the meanwhile, for each $m\leq t\leq n$, let $L^t_i$ be the $t\mbox{-}$spine of $W_i$, then $x_i\in L^m_i$ and
\begin{equation}\label{取出来的锥每一层都接近}
    d_{x_i,100r_i}(E_t, L^t_i)<C_5\varepsilon.
\end{equation}



{  For the spine $L^{t}_i$, we consider its decomposition into branches $L^{t}_i = \cup_{l} L^{t,l}_i$.

First, we define the projections associated with each branch. For each $l$, let $P^{t,l}_i$ denote the $t$-plane containing the branch $L^{t,l}_i$. We denote by $\overline{\pi}^{t,l}_i$ the orthogonal projection onto the plane $P^{t,l}_i$, and by $\pi^{t,l}_i$ the nearest point projection onto the branch $L^{t,l}_i$ itself (which is well-defined since each branch is convex).

Next, we consider the specific case where $L_{i}^{t}$ consists of a single branch. In this case, we simplify the notation by dropping the index $l$. We denote by $P^{t}_i$ the unique $t$-plane containing $L^{t}_i$, by $\overline{\pi}^{t}_i$ the orthogonal projection onto $P^{t}_i$, and by $\pi_{i}^{t}$ the nearest point projection onto $L_{i}^{t}$.

Note that we do not define a global orthogonal projection onto $L_{i}^{t}$ when it has multiple branches, as the set is not convex.
}

Denote by $b^t_i$ the number of branches of $L^t_i$. 
Let 
\begin{equation}
    \text{$Z_0=Z(0,2)$ and $L^m=L^m(Z_0)$}.
\end{equation}

First we show some similarities of the cones when they are close. 

\begin{lemma}\label{面的dx,r决定夹角和距离}
   Let $d>0$ be an integer and $C>0$. Let $x\in\RR^N$ and radius $r>0$. If two planes $P_1,P_2$ of dimension $d$ in $\RR^N$ are such that $\dd(x,P_1)\leq r/2$ and $d_{x,r}(P_1,P_2)<C\varepsilon$, {  then} we can find a geometric constant $C(d)$ depending only on $d$ such that 
   \begin{equation}
       |D\pi_1-D\pi_2|<C(d)C\varepsilon,\text{ } |\pi_1(y)-\pi_2(y)|<C(d)C\varepsilon(|y-x|+r) \text{ for }y\in \RR^N.
   \end{equation} 
   {  Here, $\pi_{1}$ and $\pi_{2}$ denote the orthogonal projections onto the  $d$-planes $P_{1}$ and $P_{2}$. The operators $D\pi_{1}$ and $D\pi_{2}$ denote the orthogonal projections onto the linear subspaces parallel to $P_{1}$ and $P_{2}$, respectively. Note that for $i=1, 2$, since $\pi_i$ is an affine map, its derivative $D\pi_i$ is a constant linear map, and is precisely the orthogonal projection onto the linear subspace parallel to $P_i$.}
\end{lemma}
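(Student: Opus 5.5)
We reduce to linear algebra after normalizing. By translating and scaling we may assume $x=0$ and $r=1$; the statements are invariant under this normalization. Note that $|D\pi_1-D\pi_2|$ is unchanged, and $|\pi_1(y)-\pi_2(y)|$ scales linearly with $r$. So it suffices to show: if $\dd(0,P_1)\le 1/2$ and $d_{0,1}(P_1,P_2)<C\varepsilon$, then $|D\pi_1-D\pi_2|\le C(d)C\varepsilon$ and $|\pi_1(y)-\pi_2(y)|\le C(d)C\varepsilon(|y|+1)$. We may also assume $C\varepsilon$ is smaller than a small dimensional constant $c(d)$. Otherwise the bounds are trivial: $|D\pi_1-D\pi_2|\le 2$, and for $p_1$ as below, $|\pi_1(y)-\pi_2(y)|\le |\pi_1(y)-p_1|+|\pi_2(y)-p_1|+|p_1-\pi_2(p_1)|\le 2|y-p_1|+1\le 2|y|+2$.

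\textbf{Step 1: a well-spread frame in $P_1$.} Let $p_1=\pi_1(0)$, so $|p_1|\le 1/2$, and let $e_1,\dots,e_d$ be an orthonormal basis of the direction space $V_1$ of $P_1$. The points $q_0=p_1$ and $q_j=p_1+\tfrac14 e_j$ lie in $P_1\cap B(0,3/4)$. By the hypothesis $d_{0,1}(P_1,P_2)<C\varepsilon$ there are $q_j'\in P_2$ with $|q_j-q_j'|<C\varepsilon$. The vectors $v_j=4(q_j'-q_0')$ then lie in the direction space $V_2$ of $P_2$ and satisfy $|v_j-e_j|<8C\varepsilon$. When $C\varepsilon<c(d)$ they are linearly independent, so they span $V_2$ because $\dim V_2=d$.

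\textbf{Step 2: comparing the projections.} This is the main step. For a unit vector $u\in V_1$, write $u=\sum a_je_j$ and set $w=\sum a_jv_j\in V_2$; then $|u-w|\le 8\sqrt d\,C\varepsilon$. Hence every unit vector of $V_1$ is within $8\sqrt d\,C\varepsilon$ of $V_2$, so $|(I-D\pi_2)D\pi_1|\le 8\sqrt dC\varepsilon$.

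For the reverse direction I would apply Gram–Schmidt to $v_1,\dots,v_d$ to get an orthonormal basis $f_j$ of $V_2$ with $|f_j-e_j|\le C'(d)C\varepsilon$. This gives $|(I-D\pi_1)D\pi_2|\le C''(d)C\varepsilon$. Using the symmetric fact that both projections have rank $d$, we conclude $|D\pi_1-D\pi_2|\le |(I-D\pi_2)D\pi_1|+|(I-D\pi_1)D\pi_2|$, since $D\pi_1-D\pi_2=(I-D\pi_2)D\pi_1-D\pi_2(I-D\pi_1)$ and the last term is the adjoint of $(I-D\pi_1)D\pi_2$. Alternatively, one can write $D\pi_i=\sum_j e_j e_j^T$ and $\sum_j f_jf_j^T$ respectively and compare directly. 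Either route gives the first estimate.

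\textbf{Step 3: the affine parts.} Write $\pi_i(y)=a_i+D\pi_i(y-a_i)$ with base points $a_1=p_1\in P_1$ and $a_2=q_0'\in P_2$, so that $|a_1-a_2|<C\varepsilon$. Then
\[
\pi_1(y)-\pi_2(y)=(I-D\pi_1)a_1-(I-D\pi_2)a_2+(D\pi_1-D\pi_2)y .
\]
The middle terms equal $(I-D\pi_1)(a_1-a_2)+(D\pi_2-D\pi_1)a_2$, and these are bounded by $C\varepsilon+C(d)C\varepsilon\cdot 1$ because $|a_2|\le 1$. The last term is at most $C(d)C\varepsilon|y|$. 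Altogether this gives $|\pi_1(y)-\pi_2(y)|\le C(d)C\varepsilon(|y|+1)$, and rescaling back yields the stated bound with $|y-x|+r$.
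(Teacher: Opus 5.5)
Your proposal is correct and follows essentially the paper's route: the paper also transfers an orthonormal frame of $P_1$ to an almost orthonormal frame of $P_2$ via nearby points and asserts the projection estimate from it. It then handles the affine part by integrating the constant difference $D\pi_1-D\pi_2$ along the segment from a base point $z\in P_1$, which amounts to your explicit affine decomposition. You also supply details the paper omits, namely the two one-sided operator bounds with the adjoint identity and the reduction to small $C\varepsilon$. One small slip in that trivial case: $|p_1-\pi_2(p_1)|$ should be bounded by $C\varepsilon$ (since $p_1\in P_1\cap B(0,1)$) rather than by $1$, and this does not change the conclusion.
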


\begin{proof}
     Pick $z\in P_1$ such that $|z-x|\leq r/2$. Then we can find $z'\in P_2$ such that $|z-z'|<C\varepsilon r$. 
     Let $\{e_i\}_{i=1}^d$ be an  orthonormal basis of $P_1-z$. For each $i$,  set $p_i=z+\frac{r}{2}e_i$. Then $p_i\in P_1\cap B(x,r)$. And we can find $q_i\in P_2$ such that $|p_i-q_i|<C\varepsilon r$. Denote by $f_i=2(q_i-z')/r$. Then $|f_i-e_i|<4C\varepsilon$ for each $i$ and $\{f_i\}_{i=1}^d\subset P_2$ is almost an  orthonormal basis. That is, for each $i$, $ 1-4C\varepsilon<|f_i|<1+4C\varepsilon$ and for each $i\neq j$, the inner product $|f_i\cdot f_j|<9C\varepsilon$. 
     Since $D\pi_1$ is the orthogonal projection to the subspace of $\RR^N$ that is parallel to $P_1$, we can show that $|D\pi_1-D\pi_2|<C(d) C\varepsilon$ as a result of that $f_i$ is close to $e_i$,  where $C(d)$ is a geometric constant depending only on $d$. 
     For every $y\in \RR^N$, let $\gamma(t)=(1-t)z+ty,t\in[0,1]$. Then we have
     \begin{equation}
         \begin{aligned}
             |\pi_1(y)-\pi_2(y)|&\leq |\pi_1(z)-\pi_2(z)|+\int_0^1 |D\pi_1-D\pi_2|\cdot |\gamma'|dt\\
             &\leq |z-z'|+C(d)C\varepsilon|y-z|
             <C(d)C\varepsilon (|y-x|+r).
         \end{aligned}
     \end{equation}
 \end{proof}

\begin{lemma}\label{lem:球近则对应的锥接近}
    Let $i, j \in I(k) \cup I(k+1)$. Suppose that $i \in I_s$, $j \in I_t$ and $0 \leq t \leq s \leq n$. If $10B_i \cap 10B_j \neq \emptyset$, {  then for each $d \in \{s, \dots, n\}$,  we have:
    \begin{equation}\label{球近则锥近的式子1}
         d_{x_i,21r_i}(L^d_i,L^d_j) < 10 C_5\varepsilon.
    \end{equation}
    Moreover, let $\B_i^d$ (resp. $\B_j^d$) denote the set of branches of $L_i^d$ (resp. $L_j^d$) intersecting $20B_i$. Then $\# \B_i^d = \# \B_j^d$, and there exists a bijection $\Phi_d: \B_i^d \to \B_j^d$ such that for every branch $L \in \B_i^d$, the corresponding branch $\Phi_d(L) \in \B_j^d$ satisfies:
    \begin{equation}\label{球近则锥近的式子2}
        d_{x_i,20r_i}(L,\Phi_d(L)) < C_7\varepsilon,
    \end{equation}
    where $C_7=2^n 60 n C_5$. }
\end{lemma}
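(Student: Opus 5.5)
The plan is to compare both spines with the common set $E_d$, using \eqref{取出来的锥每一层都接近} at each of the two centers, and then upgrade the resulting closeness of the unions $L^d_i$, $L^d_j$ to a branch-by-branch matching via the separation of branches in Lemma \ref{分支分离性}.

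\emph{Step 1: radii and inclusions.} Since $i\in I_s(k)\cup I_s(k+1)$ and $j\in I_t(k)\cup I_t(k+1)$ with $t\le s$, the radii satisfy $r_j\geq r_i/2$ (same level, consecutive generations) or $r_j\gg r_i$ (when $t<s$). From $10B_i\cap10B_j\neq\emptyset$ we get $|x_i-x_j|<10(r_i+r_j)$.

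I need $B(x_i,21r_i+10C_5\varepsilon\cdot 100 r_j)\subset B(x_j,100r_j)$. This holds when $r_j\gg r_i$. It also holds when $r_j\ge r_i/2$, provided the construction constants are arranged so that $|x_i-x_j|+21r_i<100r_j$. If $r_j=r_i/2$ this is tight: it requires roughly $10(1.5r_i)+21r_i=36r_i<50r_i$, which is fine.

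\emph{Step 2: the union estimate \eqref{球近则锥近的式子1}.} Apply \eqref{取出来的锥每一层都接近} for both $i$ and $j$ at level $d$. This is legitimate for $d\ge s\ge t$, since for each index the spines $L^d$ are defined for all $d$ at least its own type. Then use Lemma \ref{3个集合的dx,r} with $F=L^d_i$, $G=E_d$, $H=L^d_j$ and $z=x_i$, $\rho=21r_i$. This gives
\[
d_{x_i,21r_i}(L^d_i,L^d_j)<\frac{C_5\varepsilon\cdot100r_i+C_5\varepsilon\cdot100r_j}{21r_i}.
\]
This is bounded by $10C_5\varepsilon$ when $r_j\le r_i$-ish. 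When $r_j\gg r_i$ the quotient degenerates, so in that case I would replace the scale $100r_j$ by a smaller admissible one. I would first show that \eqref{取出来的锥每一层都接近} holds at every scale $\rho\le100r_j$ around $x_j$ with spine $L^d_j$: the spines are cones, and Proposition \ref{lem4:用于投影的锥} at $(x_j,\rho)$ yields a comparable cone whose spine agrees near $x_i$. Failing that, I would note that $L^d_j$ is conical only about $L^t_j$, and argue that near $x_i$ it coincides with a type-$\geq s$ cone by Proposition \ref{用次一级的锥替代}. This scale mismatch is the step I expect to be the main obstacle.

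\emph{Step 3: the branch bijection.} Take a branch $L\in\B^d_i$ and a point $p\in L\cap 20B_i$ lying far, relative to $r_i$, from the other branches, for instance in the relative interior at distance $\gtrsim r_i$ from $L^{d-1}_i$. Such a point exists because, by Proposition \ref{用次一级的锥替代}, $W_i$ coincides near $x_i$ with a cone, so the branches have size comparable to $r_i$ there. By Step 2, $p$ is $10C_5\varepsilon r_i$-close to some branch $L'$ of $L^d_j$. Lemma \ref{分支分离性} with angle $\alpha$ forces $L'$ to be unique among branches meeting $20B_i$, since two distinct branches diverge at rate $\sin\alpha$ away from their intersection. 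Set $\Phi_d(L)=L'$.

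Next check that $\Phi_d$ is well defined independently of $p$, injective, and, by symmetry, surjective. Then bound $d_{x_i,20r_i}(L,\Phi_d(L))$. Points of $L$ near lower-dimensional faces are handled by induction on the face dimension $d-1,\dots,s$: at each level the matched faces lose a factor of about $2/\sin\alpha$, and these factors multiply to the constant $C_7=2^n60nC_5$. The constant $\alpha$ is absorbed since all constants are geometric, possibly after enlarging $C_5$.
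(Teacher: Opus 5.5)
Your outline follows the paper's route: a triangle inequality through $E_d$, then matching each branch at a point far from the lower spine, induction on the face dimension, injectivity from the angle $\alpha$, and surjectivity by symmetry. However, Step 2 contains a false step, and Step 3 leaves out the part of the argument that carries the weight.

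\textbf{Step 2.} You are right that Lemma \ref{3个集合的dx,r} only yields $\frac{100}{21}C_5\varepsilon(1+r_j/r_i)$. This exceeds $10C_5\varepsilon$ once $r_j>r_i$, and the paper's one-line derivation does not address this either. But nothing degenerates. Since $|k_i-k_j|\le 1$ and $r_{i_m}=n_0^{-n(m+1)}2^{-k-10^{m+2}}$, one has $r_j/r_i\le 2n_0^{n^2}2^{10^{n+2}}$, which is a geometric constant. So you get $d_{x_i,21r_i}(L^d_i,L^d_j)<C\varepsilon$ with $C$ geometric, and the inclusion hypotheses of Lemma \ref{3个集合的dx,r} hold for small $\varepsilon$. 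That is what the later arguments need.

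Your proposed repair, that \eqref{取出来的锥每一层都接近} persists with the same spine $L^d_j$ at every scale $\rho\le 100r_j$ about $x_j$, is false in general. The approximating cones of $E$ may rotate slowly from scale to scale, so only the relative bound $100C_5\varepsilon r_j/\rho$ survives. It would not help anyway, since a ball centred at $x_j$ containing $21B_i$ may need radius about $10r_j$.

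\textbf{Step 3, type-$s$ localization.} When $t<s$ you never show that $W_j$ coincides with a set of type exactly $s$ in $22B_i$. Your second fallback in Step 2 points at the right tool, but you do not use it where it is needed. The paper obtains this from four facts:
\begin{itemize}
\item $E_{s-1}$ stays far from $10^3n_0B_i$, by \eqref{级别低的球离级别高的点远};
\item $L^{s-1}_j$ therefore misses $10^3n_0B_i$, by \eqref{取出来的锥每一层都接近} at level $s-1$ for $j$;
\item $\dd(x_i,L^s_j)\lesssim C_5\varepsilon r_j$;
\item Proposition \ref{用次一级的锥替代}.
\end{itemize}
Without this you have neither the base case $\#\B^s_j=1$ nor your surjectivity ``by symmetry'': a branch of $L^d_j$ meeting $20B_i$ need not contain a point of $20B_i$ far from $L^{d-1}_j$.

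\textbf{Step 3, face correspondence.} This is the more serious gap. The claims ``$\Phi_d$ is well defined independently of $p$'' and ``points near lower faces are handled by induction on the face dimension'' are exactly what must be proved. The inductive bound on $d(S,\Phi_{d-1}(S))$ says nothing about $d(L,\Phi_d(L))$ unless you know that $\Phi_{d-1}(S)$ is a face of $\Phi_d(L)$ for each face $S$ of $L$. In other words, you must show that points of $L$ do not jump to a different branch of $L^d_j$ as they approach $L^{d-1}_i$.

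The paper proves this by an open--closed argument on the connected set $U$ of points of $L$ at distance $>M\varepsilon r_i/\sin\alpha$ from $\partial L$. Let $A$ be the set of points of $U$ within $210C_5\varepsilon r_i$ of $L'$. A limit point of $A$ outside $A$ would be within $220C_5\varepsilon r_i$ of two distinct branches of $L^d_j$. By Proposition \ref{用次一级的锥替代} it would then lie within $220n_0C_5\varepsilon r_i$ of $L^{d-1}_j$, contradicting Lemma \ref{分支分离性} together with \eqref{球近则锥近的式子1} at level $d-1$. So $A$ is closed, hence $A=U$. Next, take a point $z\in S$ far from $L^{d-2}_i$. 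Near the matched point $z'\in\Phi_{d-1}(S)$, $W_j$ is locally of type $d-1$, and this forces $\Phi_{d-1}(S)$ to be a face of $L'$.

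Only after this does the face estimate pass to $L$, with a factor $2$ per level rather than your $2/\sin\alpha$. In any case $(2/\sin\alpha)^n$ could not be absorbed into $C_7=2^n60nC_5$, because $C_5$ is fixed in Proposition \ref{lem4:用于投影的锥} and does not depend on $\alpha$.
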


\begin{proof}
    Since $10B_i\cap 10B_j\neq\emptyset$ and $t\leq s$, we have $21B_i\subset 21B_i\cap 72B_j$. By (\ref{取出来的锥每一层都接近}), we have $d_{x_i,21r_i}(L^d_i,L^d_j)<10 C_5\varepsilon$. Hence, (\ref{球近则锥近的式子1}) follows. 
    
    For (\ref{球近则锥近的式子2}), we first aim to show that $W_j$ coincides with a set of type $s$ in $22B_i$. That is, there exists $Z\in \A(s)$ such that 
    \begin{equation}\label{W(j)在22B_i中和type s重合}
    W_j\cap 22B_i=Z\cap 22B_i
    \end{equation}
    If $t=s$, then $W_j\in \A(s)$, so this identity is immediate. 
    If $t<s$, condition (\ref{级别低的球离级别高的点远}) holds, which implies that $10^3 n_0 B_i$ is far away from $E_{s-1}$. At the same time, since $d_{x_j,100r_j}(E_{s-1},L^{s-1}_j)<C_5\varepsilon$ and $10^3 n_0 B_i\subset 100 B_j$, it follows that $10^3 n_0 B_i\cap L^{s-1}_j=\emptyset$. Since $x_i\in E_s\cap 100B_j$, we have $\dd(x_i,L^{s}_j)<100 C_5\varepsilon r_j$. Hence, {   by Lemma \ref{引理：两个锥dxr距离小，则另一个也是一样的type}}, there exists $Z\in \A(s)$ such that $Z\cap 22B_i=W_j\cap 22B_i$, while $\dd(x_i, L^s(Z))<100 C_5\varepsilon r_j$. {Thus,} the identity (\ref{W(j)在22B_i中和type s重合}) follows. 

     Now we prove the existence of the bijection $\Phi_d$ and the estimate (\ref{球近则锥近的式子2}) by induction on the dimension $d$ from $s$ to $n$. Specifically, we aim to show that for each $d \in \{s, \dots, n\}$, there exists a bijection $\Phi_d: \B_i^d \to \B_j^d$ such that for any branch $L \in \B_i^d$,
    \begin{equation}\label{用kd归纳假设的式子}
    d_{x_i, (21-\frac{d-s}{100n})r_i}(L, \Phi_d(L)) < K_d \varepsilon.
    \end{equation}
    Here, $\{K_d\}_{d=s}^n$ is a strictly increasing sequence of constants defined as follows:
    \begin{equation}
        K_s = 10 C_5, \quad K_{s+1} = (10+42(s+1))C_5,
    \end{equation}
    and for $d > s+1$, we set $K_d = 2 K_{d-1}$. With these definitions, we have $\frac{21}{19}K_d \le C_7$ for all $d \le n$. 

    When $d=s$, $L^s_i$ is an $s$-plane and $x_i\in L^s_i$. Since $Z\in \A(s)$,  $L^s (Z)$ is also an $s$-plane. By (\ref{W(j)在22B_i中和type s重合}), we have $L^s(Z)\cap 22B_i=L^s_j\cap 22B_i$. This implies that $L^s_j$ has only one branch intersecting with $22B_i$, denoted as $L^{s,l}_j$. Consequently, both $\B_i^s$ and $\B_j^s$ are singletons.  We define the bijection $\Phi_s$ by $\Phi_s(L^s_i)=L^{s,l}_j$. Furthermore, we have $d_{x_i,21r_i}(L^s_i,L^{s,l}_j)=d_{x_i,21r_i}( L^s_i,L^s_j )<10 C_5 \varepsilon$ based on (\ref{球近则锥近的式子1}). Thus, the inductive hypothesis holds for $d=s$.
    
    Assume the inductive hypothesis holds for dimension $d-1$. That is, there exists a bijection $\Phi_{d-1}$ satisfying the distance estimate (\ref{用kd归纳假设的式子}) with constant $K_{d-1}$. We now construct $\Phi_d$ and prove the estimate for dimension $d$. The proof proceeds in three steps: first, we define the map $\Phi_d: \B_i^d \to \B_j^d$; second, we prove the distance estimate (\ref{用kd归纳假设的式子}); finally, we verify that $\Phi_d$ is a bijection by showing it is both injective and surjective.

    \textbf{Step 1:} We define a map $\Phi_d$ from $\B_i^d$ to $\B_j^d$. Fix a branch $L \in \B_i^d$. Since the family of cones $\B$ (and thus $\A(\B)$) is finite modulo isometry, there exists a geometric constant $c(\B,d) \in (0,1)$, depending only on $\B$, such that for any branch $L$ of dimension $d$, we can find a point $y \in L$ with $|y-x_i|=10r_i$ and 
   $\dd(y, L_i^{d-1}) \ge c(\B,d) \cdot r_i$. Let $\rho = c(\B,d) \cdot r_i$. Then the ball $B(y,\rho)\cap L_i^{d-1}=\emptyset$. Consequently, within $B(y, \rho/n_0)$, the spine $L_i^d$ coincides with the single branch $L$.

    By the inductive hypothesis, the Hausdorff distance between $L_i^{d-1}$ and $L_j^{d-1}$ is small (less than $K_{d-1}\varepsilon$). Since $\varepsilon$ is sufficiently small, the ball $B(y, \rho/2n_0)$ is also disjoint from $L_j^{d-1}$.  Specifically, $L^d_j$ intersects $B(y,\rho/{(2n_0)})$ in exactly one branch, denoted as $L'$. Furthermore, we get that $d_{y,\rho/ (3n_0)}(L,L')<\frac{630 C_5 n_0 }{c(\B,d)}\varepsilon$ by (\ref{球近则锥近的式子1}). We define the mapping $\Phi_d(L) = L'$.

{ 
    \textbf{Step 2:} Next, we prove the distance estimate (\ref{用kd归纳假设的式子}) for $\Phi_d$. If $d=s+1$, we can immediately obtain $d_{x_i,(21-\frac{1}{100n})r_i}(L, \Phi_d(L))<(10+42d) C_5\varepsilon=K_{s+1}\varepsilon$ by Lemma \ref{面的dx,r决定夹角和距离}. 

    Now suppose that $d>s+1$. For each $(d-1)$-boundary $S\subset L$,  there exists a unique element $S'=\Phi_{d-1}(S)\in\B_j^{d-1}$ such that (\ref{用kd归纳假设的式子}) holds (by the inductive hypothesis). Let us show that $S'$ is also a boundary of $L'$. 
     
    We first show that a large part of $L$ is close to $L'$. Define a subset $U$ of $L$ by 
    \begin{equation}
        U=\{x\in L : \dd(x, \partial L)> M \varepsilon r_i /\sin\alpha\}\cap B(x_i, (21-\frac{d-s}{100n})r_i),
    \end{equation}
    where $M= 220(n_0+1) C_5$. Then $U$ is a connected set.  Recall that $\alpha = \alpha(\B)$ is the minimum angle between different branches of a cone in $\mathcal{A}$, see Definition \ref{角的定义} and \eqref{alpha定义}. By Lemma \ref{分支分离性}, for all $x\in U$, $\dd(x, L^{d-1}_i)>M\varepsilon r_i$.
    Let $A$ be the subset of $U$ defined by
    \begin{equation}
        A=\{x\in U: \dd(x,L')<210C_5\varepsilon r_i\}.
    \end{equation}

    We claim that $A=U$. Since $y \in A$ (from Step 1), $A$ is non-empty. Also, $A$ is open in $U$ by definition. We now show that $A$ is closed in $U$ by contradiction. Suppose $x\in U$ is a limit point of $A$ but  $x\not\in A$. By continuity, $\dd(x, L')=210 C_5\varepsilon r_i$. On the other hand, by \eqref{球近则锥近的式子1}, we can find another branch $L''$ of $L^d_j$ such that $\dd(x,L'')<210 C_5 \varepsilon r_i$.  This implies that the ball $B(x, 220 C_5\varepsilon r_i)$ meets both $L'$ and $L''$. Consequently, $L^d_j$ does not coincide with a single $d$-plane in $B(x, 220 C_5\varepsilon r_i)$, which implies that $n_0 B(x,220 C_5\varepsilon r_i)  \cap L^{d-1}_j\neq \emptyset$ by Lemma \ref{用次一级的锥替代}. Specifically, $\dd(x, L^{d-1}_j)<220 C_5\varepsilon r_i$. 
    
    However, since $x\in U$, we have $\dd(x, L^{d-1}_i)>M\varepsilon r_i$. By \eqref{球近则锥近的式子1} and the fact that $x\in U\subset B(x_i,(21-\frac{d-s}{100n})r_i)$, we have
    \begin{equation}
        \dd(x, L^{d-1}_j)=\dd(x,L^{d-1}_j\cap 21B_i )\geq \dd(x,L^{d-1}_i)-210 C_5\varepsilon r_i >(M-210 C_5)\varepsilon r_i> 220 n_0 C_5\varepsilon r_i.
    \end{equation}
     This leads to a contradiction. Thus, our assumption that $A$ is not closed in $U$ is false. Since $U$ is connected, and $A$ is a non-empty subset that is both open and closed, it follows that $A=U$.

    Finally, we address the boundary correspondence. Pick a point $z\in S$ such that $|z-x_i|=10 r_i$ and $ \dd(z, L_i^{d-2})>c(\B,d-1)r_i$, analogous to the choice in Step 1. Consider the ball $B(z, \rho')$, where $\rho' = \frac{c(\B,d-1)}{n_0}r_i$. Inside this ball, $L^{d-1}_i$ coincides with a $(d-1)$-plane, and  $L$ coincides with a $d$-dimensional half-plane. Thus, $\dd(z, U )=M\varepsilon r_i$, and it follows from $A=U$ that $\dd(z, L')<(M+210 C_5)\varepsilon r_i$. 
    
    Recall that $L^{d-2}_i$ and $L^{d-2}_j$ are contained in the $210 C_5\varepsilon r_i$-neighborhood of each other by \eqref{球近则锥近的式子1}. Since $S'=\Phi_{d-1}(S)$, there exists a point $z'\in S'$ such that $|z-z'|<(21-\frac{d-s}{100n})K_{d-1}\varepsilon r_i$. Furthermore, $n_0 B(z',\rho'/2)\cap L^{d-2}_j=\emptyset$, which implies that $W_j$ coincides with a set of type $d-1$ in $ B(z',\rho'/2)$. Therefore, $S'$ is a $(d-1)$-boundary of every branch of $L^d_j$ that meets $ B(z',\rho'/2)$. Given that $\dd(z, L')<(M+210 C_5)\varepsilon r_i$ and $ |z-z'|<(21-\frac{d-s}{100n})K_{d-1}\varepsilon r_i$, we conclude that $L'$ meets $ B(z',\rho'/2)$. Thus, $S'$ is a boundary of $L'$.
}
    
   Hence, there is a one-to-one correspondence between the $(d-1)$-boundaries of $L$ and $L'$, satisfying (\ref{球近则锥近的式子2}). Therefore, we have
   \begin{equation}
   \begin{aligned}
       d_{x_i,(21-\frac{d-s}{100n})r_i}(L,\Phi_d(L))\leq &2\max
       \{ d_{x_i,(21-\frac{d-1-s}{100n})r_i}(S,\Phi_{d-1}(S)): 
         S\in \B_i^{d-1}\text{ and }S\subset L\}.
    \end{aligned}
   \end{equation}
   Thus $d_{x_i,(21-{d-s})r_i\cdot({100n)^{-1}}}(L,\Phi_d(L))<2K_{d-1}\varepsilon$. 

   \textbf{Step 3:} Finally, we verify that $\Phi_d$ is a bijection by showing it is both injective and surjective. Suppose $L_1,L_2\in \B_i^d$ are distinct branches.  If $\Phi_d(L_1)=\Phi_d(L_2)$, then we can use (\ref{球近则锥近的式子2}) to obtain that $d_{x_i,19r_i}(L_1, L_2)<2 C_7 \varepsilon$. However, since the angle between $L_1$ and $L_2$ with respect to their common boundary is greater than $\alpha$, $d_{x_i,19r_i}(L_1, L_2)$ is significantly larger than $2 C_7 \varepsilon$, which leads to a contradiction. Thus, $\Phi_d$ is injective, which implies $\# \B_i^d \leq \# \B_j^d$.

   By a symmetric argument, we can also construct an injective map from $\B_j^d$ to $\B_i^d$. This is possible because $W_j$ coincides with $Z \in \mathcal{A}(s)$ in $22B_i$, allowing us to repeat the argument in the reverse direction. This implies $\# \B_j^d \leq \#  \B_i^d$. Therefore, we must have $\#  \B_i^d = \#  \B_j^d$, and $\Phi_d$ is a bijection.

   This completes the induction for all $d \in \{s, \dots, n\}$. Finally, the main estimate (\ref{球近则锥近的式子2}) involving $C_7$ follows directly from the inductive estimate (\ref{用kd归纳假设的式子}) since we defined the sequence $\{K_d\}$ such that $\frac{21}{19}K_d \le C_7$ for all $d$.
\end{proof}

\begin{remark}
    1. Lemma \ref{lem:球近则对应的锥接近} also holds for $Z(0,2)$ by the same argument. That is, given $i\in I(0)$, then we can replace $W_j$ by $Z(0,2)$ and get the same conclusions between $W_i$ and $Z(0,2)$.

    2. Note that since $i \in I_s$, we have $x_i \in L^s_i$. Thus, for each $d \in \{s, \dots, n\}$, every branch of $L^d_i$ contains $x_i$ and consequently intersects $20 B_i$. As a result, $\B_i^d$ coincides with the set of all branches of $L^d_i$. Therefore, if $t=s$, we have $b^d_i = b^d_j$ for all $d \in \{s, \dots, n\}$.
\end{remark}

\section{Construction of $f^m:L^m\to E_m$}

We say $C$ is a geometric constant if $C$ only depends on $n,n_0,\delta_0,\alpha$.

We aim to construct a parameterization $f^m$ of a big part of $E_m$ by $f^m : \Gamma^m\cap B(0,\rho^m_0-n_0^{-n}2^{-10})\to E_m\cap B(0,\rho^m_0-2n_0^{-n}2^{-10})$ for each $0\leq m\leq n$. We shall only care about $f^m(z)$ when $z$ lies in the set
\begin{equation}\label{Gamma^m定义}
    \Gamma^m= L^m\cap B(0,\rho^m_0)=\cup_l L^{m,l}\cap B(0,\rho^m_0),
\end{equation}
where $L^m$ is the $m$-spine of $Z(0,2)$, $L^{m,l}$ is a branch of $L^m$ and 
\begin{equation}\label{rho^m_k}
    \rho^m_k=1.95+2^{-10}+ n_0^{-n}2^{-10}\cdot(2-2m-\sum_{t=0}^k 2^{-t-1}) \text{ for }k\geq 0,
\end{equation}
where we say $k$ is the step. We want to construct $f^m$ by induction from $m=0$ to $m=n$, and for each $m$, get $f^m$ as the limit of mappings $f^m_k$, where $f^m_0=id$ and
\begin{equation}\label{fk,gk}
    f^m_{k+1}=g^m_k\circ f^m_k,\enspace g^m_k=\sum_{i\in I(k)} \theta_i\cdot \psi^m_i,
\end{equation}
where the $\psi_i^m$, $i\in I(k)$ are suitable deformations and will be defined soon. For each $m$ and each $k\geq 0$, let
\begin{equation}
        \Gamma^m_k=f^m_k(\Gamma^m),\enspace \Gamma^{m,l}_k=f^m_k(\Gamma^{m,l}).
\end{equation}

When $m=0$, since $E_0=\{0\}$ and $L^0=\{0\}$, let $f^0=f^0_k=id$ for $k\geq 0$. And we end the definition of $f^0$. Let $m>0$ be fixed. Assume that we have defined $f^t$ and $f^t_k$ for $t=0,...,m-1$. Furthermore, assume we have proved (M1)-(M4) for each dimension $0\leq t\leq m-1$ and each step $k\geq 0$ (when the dimension $t=0$, (M1)-(M4) hold trivially since $E_0=\{0\}$ and $\Gamma^0_k=\{0\}$ for all $k\geq 0$): 

(M1) $\dd(x,E_t)<C_{t,1}\varepsilon 2^{-k}$, for all $x\in \Gamma^t_k\cap B(0,\rho^t_k)$.

(M2) When $i\in I_t (k)$ and $x_i\in B(0,\rho^t_k)$, {  the ball  $5B_i$ is far from $\cup_{s=0}^{t-1} E_s$, due to \eqref{级别低的球离级别高的点远}. Consequently, $W_i$ is a set of type $t$ and $L_i^t$ coincides with a $t$-plane passing through $x_i$ in $5B_i$.} There is only one branch $\Gamma^{t,l}_k$ of $\Gamma^t_k$ meeting $5B_i$, and there exists a $C_{t,2}\varepsilon$-Lipschitz graph $G^t_i$ over $L^t_i$, such that
\begin{equation}\label{M2的式子}
    \Gamma^{t,l}_k\cap 5B_i=G^t_i\cap 5B_i.
\end{equation}
In addition, we have $\Gamma^{t,l}_k\cap B(x_i,C_{t,3}\varepsilon 2^{-k})\neq\emptyset$. {   To unify the notation with the branching case discussed later, we denote the plane $L_i^t$ itself as the unique branch $L_i^{t,l}$.} 

(M3) When $t\geq 1$, $i\in I_h(k)$ for some $0\leq h\leq t-1$ and $x_i\in B(0,\rho^t_k)$, {  $W_i$ is a set of type $t$}. There are $b^t_i$ branches of $\Gamma^{t}_k$ meeting $ 5B_i$. Denote by
$\{\Gamma^{t,l}_k\}$ these branches of $\Gamma^t_k$.  For each branch $\Gamma^{t,l}_k$ intersecting $5B_i$, there exists a unique branch of the spine $L^t_i$ that is sufficiently close to serve as its planar approximation. We label this specific branch as $L^{t,l}_i$. Specifically, there exists a $C_{t,4}\varepsilon$-Lipschitz graph $G^{t,l}_i$ defined over $D^{t,l}_i$, such that
\begin{equation}\label{M3的式子}
    \Gamma^{t,l}_k\cap 5B_i=G^{t,l}_i\cap 5B_i,
\end{equation}
where the closed domain $D^{t,l}_i\subset P^{t,l}_i$ is such that
\begin{equation}\label{Dtl和Ltl很近}
    d_{x_i,(5+1/500)r_i}(D^{t,l}_i,L^{t,l}_i)<C_{t,5}\varepsilon.
\end{equation}
This correspondence also holds for intersections. If two branches $\Gamma^{t,l}_k$ and $\Gamma^{t,l'}_k$ intersect at a lower-dimensional branch $\Gamma^{d,s}_k$, then the intersection of their corresponding spine branches, $L^{t,l}_i \cap L^{t,l'}_i$, is exactly the branch $L^{d,s}_i$, the unique branch of $L_i^d$ close to $\Gamma^{d,s}_k$. That is, $L^{t,l}_i \cap L^{t,l'}_i = L^{d,s}_i$.
Similarly, the boundaries of $L^{t,l}_i$ correspond to the boundaries of $\Gamma^{t,l}_k$. That is, if the boundaries of $\Gamma^{t,l}_k$ are $\{\Gamma^{t-1,l''}_k\}$, then the boundaries of $L^{t,l}_i$ are exactly the branches $\{L^{t-1,l''}_i\}$ that approximate them.
\begin{remark}
    Note that the branch indices of $L_i^t$ follow the global indices of $\Gamma^t_k$. Therefore, the indices $\{l\}$ for $\{L^{t,l}_i\}$ may not be consecutive. For example, suppose that the branches of $\Gamma_k^t$ meeting $5B_i$ are exactly $\Gamma^{t,3}_k$ and $\Gamma^{t,5}_k$. Then we label the branches of $L_i^t$ as $L^{t,3}_i$ and $L^{t,5}_i$ rather than $L^{t,1}_i$ and $L^{t,2}_i$.
\end{remark}

(M4) The restriction of $f^m_k$ to $\Gamma^m\cap B(0, \rho^{m+1}_0+n_0^{-n} 2^{10})$ is continuous. And the restriction of $f^m_k$ to $\Gamma^m\backslash \Gamma^{m-1} \cap B(0, \rho^{m+1}_0+n_0^{-n} 2^{10})$ is of class $C^1$, with a derivative that does not vanish. Moreover, for each $x\in \Gamma^{m-1}\cap B(0, \rho^{m+1}_0+n_0^{-n} 2^{10})$, we have $f^m_k(x)=f^{m-1}_k(x)$.

To ensure the constants are well-defined and to avoid circular dependencies, we clarify the hierarchy of constants. Let $\{C_{t,e}\}_{1\le e\le 5}$ denote the constants associated with dimension $t$. When $e \neq 3$, the constant $C_{t,e}$ depends on the global geometric parameters $n, n_0, \delta_0, \alpha$ and the full set of constants from lower dimensions, $\{C_{u,e}\}_{0\le u\le t-1,1\leq e\leq 5}$. When $e=3$, the constant $C_{t,3}$ depends on the constants $C_{t,1}$ and $C_{t,2}$ from the current dimension $t$.

Next we aim to show that (M1)-(M4) hold for dimension $m$. Before that, we should define the deformations $\psi_i^m$ for $i\in I(k)$, which are mentioned in (\ref{fk,gk}).

\subsection{Construction of $\psi^m_i$}

\subsubsection{Partitioning a neighborhood of $W_{i}$ into open sets $O(w)$}\label{构造O(w)}

We define $\psi^m_i$ for $i\in I(k)$. Before providing the detailed definitions, we briefly outline the strategy for constructing $\psi_i^m$. Suppose that $i\in I_t$. The map $\psi_i^m$ is defined as the composition of two mappings: $\psi_i^m = h_i \circ \eta_i^{m-1}$. Here, the map $\eta_i^{m-1}$ serves to align all lower-dimensional surfaces $\Gamma^{u}_k$ onto the local spines $L_i^u$ for every integer $u \in \{t, \dots, m-1\}$. Subsequently, the map $h_i$ acts as a piecewise orthogonal projection of the aligned $\Gamma^m_k$ onto $L_i^m$.

Motivated by the geometric scenario where the inductive hypotheses (M1)-(M4) hold for dimension $m$ at step $k$, the construction varies depending on the dimension $t$ relative to $m$.
When $t > m$, it follows from \eqref{级别低的球离级别高的点远} that the ball $B_i$ is far from $E_m$. Combined with the inductive hypothesis (M1), which implies $\Gamma^m_k$ is close to $E_m$, we deduce that $B_i$ is also far from $\Gamma^m_k$. Consequently, no modification is required in this neighborhood, and we simply set $\psi_i^m = h_i = \eta_i^{m-1} = id$. 
When $t = m$, similarly,  $\Gamma^{m-1}_k$ is far from $B_i$. In this region, $L_i^m$ coincides with a single $m$-plane. Therefore, we only need to project $\Gamma^m_k$ orthogonally onto $L_i^m$, which corresponds to setting $\eta_i^{m-1} = id$ and $h_i = \pi_i^m$. 
When $t < m$, the situation is more involved. We construct $\eta_i^{m-1}$ inductively to sequentially align $\Gamma^t_k$ to $L_i^t$, $\Gamma^{t+1}_k$ to $L_i^{t+1}$, and so on, up to aligning $\Gamma^{m-1}_k$ to $L_i^{m-1}$. After this alignment, we achieve the property that each $C^1$ branch of $\Gamma_k^m$ intersecting $B_i$ corresponds to a unique branch of the spine $L_i^m$. This allows us to define $h_i$ as the piecewise orthogonal projection of these branches onto their corresponding spine branches.

We now provide the detailed definitions of $\psi_i^m$, $\eta_i^{m-1}$, and $h_i$.

When $i\in \cup_{t=m+1}^{n+1} I_t (k)$, let $\eta_i^{m-1}=h_i=id$ and $\psi_i^m=h_i\circ \eta_i^{m-1}=id$.  

When $i\in I_m(k)$, $W_i\in \A(m)$ and $L^m_i$ is an $m$-plane passing through $x_i$. Let $\eta_i^{m-1}=id$ and $\psi^m_i=\pi^m_i\circ \eta_i^{m-1}=\pi_i^m$.

When $i\in \ccup_{t=0}^{m-1} I_t(k)$, we define $\eta^{m-1}_i$ inductively. Suppose that $i\in I_t(k)$ for some $0\leq t\leq m-1$, then $W_i\in \A(t)$ has spines of dimension from $t$ to $n$. First, we shall define $\eta^t_i,...,\eta^{m-1}_i$ in order to map $\Gamma^t_k\to L^t_i$, ..., $\Gamma^{m-1}_k\to L^{m-1}_i$ by induction from $u=t$ to $u=m-1$.

{For the base case $u=t$, $L_i^t$ coincides with a $t$-plane passing through $x_i$. By induction hypothesis  of (M2) for dimension $t$, $\Gamma^t_k$ is a $C_{t,2}\varepsilon$-Lipschitz graph over $L^t_i$ which is $C_{t,3}\varepsilon 2^{-k}$-close to $x_i$. Denote by $\varphi_{i}^{t}: P_i^t \to (P_i^t)^\perp$ this $C_{t,2}\varepsilon$-Lipschitz map. Here $P_i^t$ is the $t$-plane that contains $L_i^t$. Then $\Gamma^t_k\cap 5B_i\subset $graph$(\varphi_i^t):=\{x+\varphi_i^t(x):x\in P_i^t\}$.  For each $x\in \RR^N$, set
\begin{equation}
    \eta^t_i(x)=x-\varphi^t_i\circ \pi^t_i (x).
\end{equation}
Note that if $t=0$, $\Gamma^0_k=\{0\}$ and $L_i^{0}=\{0\}$, so $\eta^0_i\equiv id$. 
Consequently, for each $x \in \Gamma_k^t \cap 5B_i$, we have $x = \pi_i^t(x) + \varphi_i^t(\pi_i^t(x))$, which implies $\eta_i^t(x) = \pi_i^t(x) \in P_i^t\cap 5B_i= L_i^t\cap 5B_i$. Thus, $\eta_{i}^{t}$ maps points on $\Gamma_{k}^{t} \cap 5B_i$ into $L_{i}^{t}$. That is, 
\begin{equation}
    \eta^t_i( \Gamma^t_k\cap 5B_i)\subset L^t_i.
\end{equation}

Then we estimate the derivative of $\eta_i^t$. Since $\varphi_i^t$ is Lipschitz, $\eta_i^t$ is continuous. Furthermore, by the induction hypothesis (M4), $\Gamma_{k}^{t}$ is a $C^1$ manifold (away from $\Gamma_k^{t-1}$), which implies that $\varphi_{i}^{t}$ is actually a $C^1$ map on $\pi_i^t(\Gamma_k^t)$.  We can estimate the derivative of $\eta_i^t$ by $|D\eta_{i}^{t}(x)-I|\leq |D\varphi_i^t|\cdot 1 \le C_{t,2}\varepsilon$ for each $x\in 5B_i$ satisfying $\pi_i^t(x)\in \pi_i^t(\Gamma^t_k\cap 5B_i)$. The condition (M2) in dimension $t$ ensures that $\Gamma_k^t$ is close to $x_i$ and that the Lipschitz constant of $\varphi_i^t$ is sufficiently small. This implies that for every $x\in 4.9 B_i$, the inclusion $\pi_i^t(x)\in \pi_i^t(\Gamma^t_k\cap 5B_i)$ holds. Therefore, we have the following estimate in $4.9 B_i$:
\begin{equation}
   |D\eta^t_i -I|<C_{t,2}\varepsilon.
\end{equation}

Finally, we estimate the displacement of $\eta_i^t$. Let $p\in \Gamma_k^t\cap B(x_i,C_{t,3}\varepsilon 2^{-k})$, then for each $x\in 4.9B_i$, we have $|\eta_i^t(x)-x|=|\varphi_i^t(\pi_i^t(x))|\leq |\varphi_i^t(\pi_i^t(x))-\varphi_i^t(\pi_i^t(p))|+|\varphi_i^t(\pi_i^t(p))|\leq 5C_{t,2}\varepsilon r_i+C_{t,3}\varepsilon 2^{-k}<(5 C_{t,2}+ C_{t,3})\varepsilon 2^{-k}$. The last inequality holds because $r_i$ is smaller than $2^{-k}$, see below \eqref{构造覆盖时点分离}.
}

Now fix  $u\in \{ t+1,...,m-1\}$ and assume that we have defined $\eta^{u-1}_i$ on $\RR^N$, which maps $\Gamma^t_k \to  L^t_i$, ..., $\Gamma^{u-1}_k \to L^{u-1}_i$ in $4.9B_i$. In addition, $\eta^{u-1}_i$ is continuous on $\RR^N$ and $C^1$ in $4.9 B_i\backslash L^{u-2}_i$, with $|D\eta^{u-1}_i-I|<C\varepsilon$, here $C$ depends only on $C_{t,4},...,C_{u-1,4},\alpha$. Then we are ready to define $\eta^u_i$ in $\RR^N$.
We will divide $\RR^N$ into finite parts with respect to $W_i$ and define $\eta^u_i$ on each part separately. 

{  

\medskip

\noindent\textbf{Convention on Notation.}
    Throughout the construction in this subsection (up to the end of Section \ref{对齐映射构造}), we fix the indices $m, k, t$ and the ball index $i \in I_t(k)$. Specifically, $m$ is the dimension of the set $\Gamma^m_k$, $k$ denotes the current inductive step, and $i \in I_t(k)$ is the index of the ball $B_i$ where the associated cone $W_i$ is of type $t$. To avoid cumbersome notation, we will suppress the index $k$ in the definition of the map $\eta_i^u$ in the sequel. However, the dependence on $k$ is implicit, as these maps are constructed relative to the ball $B_i$ and $i\in I_t(k)$. Consequently, from this point until the end of Section \ref{构造O(w)}, we free the symbol $k$ from its role as the inductive step in defining $f^m$. Instead, we shall use $k$ to denote the index of the digits of the words defined in \eqref{Lambda定义}.
    
    From now on, we omit $i$ by replacing $L^h_i$ with $L^h$ for convenience. 

}

\medskip

Before giving the precise definitions of $O(w)$ in \eqref{O(w)的归纳定义}, we briefly explain the idea behind the sets $O(w)$. These sets form a hierarchical decomposition of the neighborhood of $L^u$. Our goal is to define the map $\eta^u_i$ near the spine $L^u$. The spine $L^u$ has a complicated structure, consisting of various branches that intersect at different angles along lower-dimensional spines. This geometric complexity makes it difficult to construct a globally unified map.
Therefore, we partition the neighborhood of $L^u$ into smaller regions based on the lower-dimensional spines $L^t, \dots, L^{u-1}$, such that within each region, the structure of $L^u$ is simple (effectively resembling a flat $u$-plane).

The reason for this split is geometric. Close to $L^u$ but far from $L^{u-1}$, $W_i$ looks like a set of type $u$, and $L^u$ itself looks like a flat $u$-plane. Here, we simply map $\eta_i^{u-1}(\Gamma^u_k)$ onto this single plane. However, near $L^{u-1}$ but away from $L^{u-2}$, $W_i$ looks like a set of type $u-1$. So $L^u$ splits into multiple $u$-dimensional half-planes meeting at a $(u-1)$-plane. These branches meet at angles larger than $\alpha$. This allows us to separate them. For each branch, we define a cone-shaped neighborhood starting from $L^{u-1}$. Inside such a cone, we see only one branch. So, we map $\eta_i^{u-1}(\Gamma^u_k)$ onto that branch. We use this same rule for lower dimensions.

To organize this, we use binary words to label the regions. A word acts like a address. It records our position relative to the spines. For dimension $j$, the digit $\delta_j$ tells us whether we are inside a conical neighborhood of $L^j$. If $\delta_j = 1$, we are inside a conical neighborhood of $L^j$. Here, we focus on a specific branch of $L^j$ where this branch looks like a flat $j$-plane. If $\delta_j = 0$, we are outside these conical neighborhoods. Here, we stay away from  $L^j$. We remain in a zone with simpler structure.

We now give the formal definitions.

For every two integers $b_1,b_2$ such that  $t\leq b_1< b_2 \leq n$ and for every $a>0$, let
\begin{equation}
\begin{split}
     &U^{b_1}_{b_2}(a)=\{x\in\RR^N:\dd(x,L^{b_2}) <a\cdot \dd(x,L^{b_1})\},\\
     &U^{b_1}_{b_2,s}(a)=\{x\in\RR^N:\dd(x,L^{b_2,s}) <a\cdot \dd(x,L^{b_1})\},\\
     &U^{b_1,l}_{b_2,s}(a)=\{x\in\RR^N:\dd(x,L^{b_2,s})<a\cdot \dd(x,L^{b_1,l})\},\\
     &U^{b_1}_F(a)=\{x\in\RR^N:\dd(x,F)<a\cdot \dd(x,L^{b_1})\}\text{, where $F$ is a set in $\RR^N$},\\
     &U^{b_1,l}_F(a)=\{x\in\RR^N:\dd(x,F)<a\cdot \dd(x,L^{b_1,l})\}\text{, where $F$ is a set in $\RR^N$}.\\
\end{split}
\end{equation} 
Given $a>0$, since $L^{b_2}=\cup_s L^{b_2,s}$, for each $b_1<b_2$ we have
\begin{equation}\label{Ub1b2拆解}
    U^{b_1}_{b_2}(a)=\cup_s U^{b_1}_{b_2,s}(a).
\end{equation}
We define the set $\Lambda$ of binary words with indices in ascending order as follows:
\begin{equation}\label{Lambda定义} 
    \Lambda = \{ \delta_{t}\dots\delta_{s} : t \le s \le u, \ \delta_{t}=1, \ \delta_{d} \in \{0,1\} \text{ for } t < d \le s, \text{ and } \delta_{s}=1 \text{ if } s=u \}.
\end{equation}
For any $s\in \{t,\dots,u\}$, let $\Lambda_s \subset \Lambda$ denote the set of words ending at index $s$ (with $\Lambda_t=\{1\}$). For a word $w=\delta_t \dots \delta_s \in \Lambda$, its length is defined as $s-t+1$. If a word $v=\delta_t' \dots \delta_{s'}' \in \Lambda$ satisfies $s' \geq s$ and $\delta_j = \delta_j'$ for $t \leq j \leq s$, we write $v=w \delta_{s+1}' \dots \delta_{s'}'$. 

For $w=\delta_t\dots\delta_s$, let $\mathcal{N}(w) = \max\{j : \delta_j=1, t\le j \le s\}$ be the index of the last non-zero digit.

Let us define three sequences of angles which are all small enough.
For $s\geq 1$, let $ \theta_{1,1}=\frac{\alpha}{100}$ 
and
\begin{equation}\label{theta,beta}
    \beta_s=\frac{\theta_{s,1}}{10},\text{ }\theta_{s,2}=\frac{\theta_{s,1}}{10^3},\text{ }\theta_{s+1,1}=\frac{\alpha\theta_{s,2}}{10}\text{ for }s\geq 1
\end{equation}
Then the constants $\{\theta_{s,1}\}_s$, $\{\theta_{s,1}\}_s$ and $\{\beta_s\}_s$ are such that $\theta_{s,2}\leq \beta_s/10$. 
In addition, we have $\theta_{s,1}>\beta_s>\theta_{s,2}>\theta_{s+1,1}$ for all $s\geq 1$. 

{We will define $O(w)$ depending on the word $w=\delta_t\dots\delta_k\in \Lambda$ so that for each $j\in \{t,\dots,k\}$, $O(w)$  is a neighborhood of $L^j$ (or a subset of $L^j$) if $\delta_j=1$ and is not a neighborhood of $L^j$ if $\delta_j=0$. Let us do this now.} 

{ We define the sets $O(w)$ recursively.}
Set $O(1)=\RR^N$. Let
\begin{equation}\label{O(11)O(10)}
    \begin{split}
    &O(\delta_t\delta_{t+1})=O(11)=O(1)\cap U^t_{t+1}(\sin\theta_{1,1})
    \text{ when }\delta_{t+1}=1,\\
    &O(\delta_t\delta_{t+1})=O(10)=O(1)\cap U^t_{F(10)}(\sin\theta_{1,2})\text{ when }\delta_{t+1}=0,
    \end{split}
\end{equation}
where
\begin{equation}\label{F(10)}
    \begin{aligned}
    F(\delta_t\delta_{t+1})&=F(10)=\{x\in L^u:\dd(x,L^{t+1})\geq \sin\beta_1\cdot \dd(x,L^t)\}. 
    \end{aligned}
\end{equation}
Assume we have defined $O(\delta_t...\delta_k)$ for all words $\{\delta_t...\delta_k: \delta_t=1, \delta_j=0,1\text{ for }t<j\leq k\}$, where $k$ is a number in $\{t+1,...,u-1\}$. We continue to define $O(\delta_t...\delta_{k+1})$. Let
\begin{equation}\label{O(w)的归纳定义}
\begin{split}
&O(\delta_t...\delta_{k+1})=O(\delta_t...\delta_{k}1)=O(\delta_t...\delta_k)\cap U^{\mathcal{N}(\delta_t...\delta_k)}_{k+1}(\sin\theta_{k+1-t,1})\text{ when }\delta_{k+1}=1,\\
&O(\delta_t...\delta_{k+1})=O(\delta_t...\delta_k 0)=O(\delta_t...\delta_k)\cap U^{\mathcal{N}(\delta_t...\delta_k)}_{F(\delta_t...\delta_k 0)}(\sin\theta_{k+1-t,2})\text{ when }\delta_{k+1}=0,
\end{split}
\end{equation}
where
\begin{equation}\label{F(w0)}
    F(\delta_0...\delta_k 0)
    =\{x\in L^u:\dd(x,L^{k+1})\geq \sin\beta_{k+1-t}\cdot\dd(x,L^{\mathcal{N}(\delta_t...\delta_k)})\}.
\end{equation}
By repeating the construction, we get $O(w)$ for all words $w\in \Lambda$. {   For a given word $w=\delta_t\dots\delta_k\in \Lambda$ and $\lambda>0$,  we define the scaled sets $O^{(\lambda)}(w)$ by replacing every occurrence of $\sin(\cdot)$ arguments in definitions (\ref{O(w)的归纳定义}) with $\sin(\lambda \cdot)$. That is, $ O^{(\lambda)}(w1)=O(w)\cap U^{\mathcal{N}(w)}_{k+1}(\sin(\lambda \theta_{k+1-t,1}))$ and $O^{(\lambda)}(w0)=O(w)\cap U^{\mathcal{N}(w)}_{F(w0)}(\sin(\lambda \theta_{k+1-t,2}))$.}

{  
We refine the open sets according to branches. Consider a word  $w=\delta_t\dots\delta_k\in \Lambda$. 

If $\delta_k=1$, let $\{L^{k,s}\}_s$ be the set of branches of $L^k$. If $k>t$, we define
\begin{equation}\label{O(w1,s)定义式}
O(w, s) = O(w) \cap U_{k, s}^{\mathcal{N}(w')}(\sin \theta_{k-t, 1}),
\end{equation}
where $w'=\delta_t\dots\delta_{k-1}$.
Observe that $O(w,s)\subset O(w)$ and $O(w) = \cup_s O(w, s)$. 
If $k=t$ (where $L^t$ is a $t$-plane), we define $O(1, l) = \mathbb{R}^N$, where $l$ denotes the unique branch of $\Gamma^{t}_k$ intersecting $5B_i$, see \eqref{M2的式子}. Similarly, we define the scaled refined subset $O^{(\lambda)}(w,s)$ by replacing the sine argument in \eqref{O(w1,s)定义式} with $\sin(\lambda \theta_{k-t,1})$.

If $\delta_k=0$, for a specific branch $L^{\mathcal{N}(w), l}$, let $F(w,l)$ be the region on $u$-branches containing $L^{\mathcal{N}(w),l}$ that is far from their $k$-boundaries containing $L^{\mathcal{N}(w),l}$:
\begin{equation}\label{F(w,l)}
\begin{aligned}
    F(w,l) = \bigcup_{\substack{L \supset L^{\mathcal{N}(w), l} \\ L \text{ is a } u\text{-branch}}} 
    \bigg\{ 
        x \in L : \operatorname{dist}(x, L^{k,s}) \ge (\sin \beta_{k-t}) \operatorname{dist}(x, L^{\mathcal{N}(w), l}), \\
        \text{ for all }L^{k,s}\text{ such that } L^{\mathcal{N}(w),l}\subset L^{k,s} \subset L \bigg\}.
\end{aligned}
\end{equation}
Note that the global set $F(w)$ is not simply the union of these local sets $F(w, l)$. However, in the localized region $O(w) \cap O(\delta_t\dots\delta_{\mathcal{N}(w)}, l)$, the set $F(w)$ coincides exactly with $F(w, l)$. Specifically, on every $u$-branch containing $L^{\mathcal{N}(w), l}$, we effectively remove the conical neighborhoods of its $k$-boundaries that contain $L^{\mathcal{N}(w), l}$. We will prove this equivalence formally in Lemma \ref{O(w)的性质}.

Since the different branches of the spines of $W_i$ are separated by angles larger than $\alpha$, and the angles defined in \eqref{theta,beta} are sufficiently small, the sets $\{O(w)\}_{w\in \Lambda}$ satisfy several crucial properties. We establish these properties in Lemma \ref{O(w)的性质}, which will be essential for understanding the geometric structure of these open sets.
}

\begin{lemma} \label{O(w)的性质}
Let $w = \delta_t \dots \delta_k$ be a word with $t < k \le u$. Let $w' = \delta_t \dots \delta_{k-1}$. To simplify notation, let $\nu = \mathcal{N}(w')$. We denote the branch decompositions of the relevant spines by $L^k = \bigcup_s L^{k,s}$ and $L^{\nu} = \bigcup_l L^{\nu,l}$. Then the set $O(w)$ satisfies the following properties:

\noindent \textbf{Case A. If $\delta_k = 1$:}
\begin{enumerate}[label={\rm(\roman*)}, leftmargin=*, nosep]
    \item\label{O(w1)等价定义} We have the decomposition
    \begin{equation}\label{O(w)等价定义1}
        O(w) = \bigcup_l \left( O(w') \cap O(\delta_t\dots\delta_{\nu}, l) \cap \bigcup_{\substack{L^{k,s} \supset L^{\nu,l}}} U_{k,s}^{\nu,l}(\sin\theta_{k-t,1}) \right).
    \end{equation}

    \item\label{O(w1)=不交并} The collection of sets $\{O(w,s)\}_s$ is mutually disjoint.

    \item\label{O(w1)内退化为type k} For each $s$, within the region $O^{(9)}(w,s)$, the set $W_i$ coincides with a set of type $k$. This implies that for any branch $L$ of $W_i$ of any dimension that does not contain $L^{k,s}$, and for any $x \in O(w,s)$, we have
    \begin{equation}\label{A(iii)距离不等式}
        \operatorname{dist}(x, L) > \operatorname{dist}(x, L^{k,s}).
    \end{equation}
    In particular, $\operatorname{dist}(x, L^k) = \operatorname{dist}(x, L^{k,s})$.
\end{enumerate}

\noindent \textbf{Case B. If $\delta_k = 0$:}
\newline 
Since $\delta_k = 0$, the reference spine does not update, i.e., $\mathcal{N}(w) = \nu$.
\begin{enumerate}[label={\rm(\roman*)}, leftmargin=*, nosep]
    \item\label{B(i)等价定义} We have the decomposition
    \begin{equation}\label{B(i)等价定义式子}
        O(w) = \bigcup_l \left( O(w') \cap O(\delta_t\dots\delta_{\nu}, l) \cap U_{F(w,l)}^{\nu, l}(\sin\theta_{k-t,2}) \right).
    \end{equation}

    \item\label{B(ii)估算点到L^k的距离} For any $x \in O(w)$, the following distance estimate holds:
    \begin{equation}\label{case B(ii)式子}
        \operatorname{dist}(x, L^k) \ge \sin(\beta_{k-t} - \theta_{k-t,2}) \cdot \operatorname{dist}(x, L^{\nu}).
    \end{equation}
\end{enumerate}
\end{lemma}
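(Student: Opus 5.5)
We argue by induction on the length of the word, i.e. on $k$ from $t+1$ up to $u$, and prove Case A and Case B together at each level. The main tool is the angular separation of branches, Lemma \ref{分支分离性}, together with the ordering $\theta_{s,1}>\beta_s>\theta_{s,2}>\theta_{s+1,1}$ and the fact that all these angles are tiny compared with $\alpha$. The inductive hypothesis for $w'$ gives two things. Inside $O(w')$, the reference spine $L^{\nu}$ is, locally, a single branch $L^{\nu,l}$. Moreover, $O(w')=\bigcup_l\big(O(w')\cap O(\delta_t\dots\delta_\nu,l)\big)$, and at points of $O(\delta_t\dots\delta_\nu,l)$ the distance to $L^\nu$ equals the distance to $L^{\nu,l}$; this is (iii) of Case A applied to the word $\delta_t\dots\delta_\nu$. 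The base case $\nu=t$ is trivial, since $L^t$ is a plane and $O(1,l)=\RR^N$.

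\textbf{Case A.} For (i), fix $x\in O(w')\cap O(\delta_t\dots\delta_\nu,l)$. Then $\dd(x,L^\nu)=\dd(x,L^{\nu,l})$. Since $x\in U^\nu_k(\sin\theta_{k-t,1})$, there is a branch $L^{k,s}$ with $\dd(x,L^{k,s})<\sin\theta_{k-t,1}\,\dd(x,L^{\nu,l})$. We must show that $L^{k,s}\supset L^{\nu,l}$.
\begin{itemize}
\item Suppose not. Then $L^{k,s}\cap L^{\nu,l}$ is a lower-dimensional common face.
\item Lemma \ref{分支分离性} gives $\dd(y,L^{k,s})\ge \sin\alpha\,\dd(y,L^{k,s}\cap L^{\nu,l})$ for $y\in L^{\nu,l}$.
\item Because $x$ is far from the branches of $L^{\nu}$ other than $L^{\nu,l}$ (relative to its distance to $L^{\nu,l}$), the nearest point of $L^{k,s}$ is pushed back toward $L^\nu\setminus L^{\nu,l}$. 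This contradicts the smallness of $\theta_{k-t,1}\ll\alpha$.
\end{itemize}
The reverse inclusion in \eqref{O(w)等价定义1} is immediate from the definitions.

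For (ii), take $x\in O(w,s)\cap O(w,s')$ with $s\ne s'$. Then $x$ is within $\sin\theta_{k-t,1}\,\dd(x,L^\nu)$ of both $L^{k,s}$ and $L^{k,s'}$, so both nearest points lie in a small cone around $x$. Lemma \ref{分支分离性} applied to $L^{k,s}$, $L^{k,s'}$ and their intersection forces $x$ to be close to $L^{k,s}\cap L^{k,s'}\subset L^{k-1}$. Walking down through the inductive structure of $O(w')$, the intersection is contained in $L^{\nu}$ or in some $L^j$ with $\nu<j<k$ and $\delta_j=0$. The first option contradicts $\dd(x,L^{k,s})\ll\dd(x,L^\nu)$. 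The second contradicts (B)(ii) at level $j$, which gives $\dd(x,L^j)\gtrsim\sin(\beta-\theta)\,\dd(x,L^\nu)$, since $\sin\theta_{k-t,1}\ll \sin(\beta_{j-t}-\theta_{j-t,2})\sin\alpha$.

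For (iii), the same estimates on $O^{(9)}(w,s)$ show two things: $x$ stays a definite multiple of $\dd(x,L^{k,s})$ away from $L^{k-1}\setminus L^{k,s}$ near its foot point, and from every branch not containing $L^{k,s}$. Proposition \ref{用次一级的锥替代} (in its local form via the constant $\lambda_W$) then shows that $W_i$ coincides with a set of type $k$ in a ball around $x$ of radius comparable to $\dd(x,L^{k-1})$. Inequality \eqref{A(iii)距离不等式} follows from this coincidence together with Lemma \ref{分支分离性}.

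\textbf{Case B.} For (i), fix $x\in O(w')\cap O(\delta_t\dots\delta_\nu,l)$. The task is to identify $F(w)$ with $F(w,l)$ within distance $\lesssim\sin\theta_{k-t,2}\,\dd(x,L^{\nu,l})$ of $x$; the paper announced this identification. Points of $F(w)$ that lie on $u$-branches not containing $L^{\nu,l}$, or that are measured against branches $L^{k,s}\not\supset L^{\nu,l}$, are excluded by the same separation argument as in A(i). The remaining points satisfy exactly the defining inequality of $F(w,l)$, because $\dd(\cdot,L^\nu)=\dd(\cdot,L^{\nu,l})$ there.

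For (ii), let $y\in F(w)$ satisfy $|x-y|<\sin\theta_{k-t,2}\,\dd(x,L^\nu)$. Then:
\begin{itemize}
\item $\dd(y,L^\nu)\approx\dd(x,L^\nu)$;
\item $\dd(x,L^k)\ge \dd(y,L^k)-|x-y|\ge \sin\beta_{k-t}\,\dd(y,L^\nu)-|x-y|$.
\end{itemize}
Elementary trigonometry in the plane spanned by $x$, $y$ and the foot point turns this into the form $\sin(\beta_{k-t}-\theta_{k-t,2})\,\dd(x,L^\nu)$, in the spirit of the proof of Lemma \ref{分支分离性}.

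I expect the main obstacle to be A(i)–(ii). There one must rule out that a point of $O(w')$ is conically close to a branch $L^{k,s}$ not containing the local branch $L^{\nu,l}$, and this requires carefully chaining the distance comparisons through the intermediate zero digits $\delta_j=0$. The plan is to handle these by the B(ii) estimate from earlier levels combined with the separation constant $\sin\alpha$ from Lemma \ref{分支分离性}, using the rapid decay $\theta_{s+1,1}=\alpha\theta_{s,2}/10$ to absorb all the accumulated losses.
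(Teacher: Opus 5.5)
Your overall scheme is the paper's: induction on $k$, with Lemma~\ref{分支分离性} and the B(ii) bound inherited from $w'$ as the engine, and \eqref{theta,beta} absorbing the losses. Your A(ii) is essentially the paper's argument. The genuine gap is in Case B(i). After discarding the $u$-branches and $k$-branches that do not contain $L^{\nu,l}$, you assert that the remaining points ``satisfy exactly the defining inequality of $F(w,l)$''. They do not. For $y$ on a $u$-branch $L\supset L^{\nu,l}$, definition \eqref{F(w,l)} tests only the $k$-branches $L^{k,s}$ with $L^{\nu,l}\subset L^{k,s}\subset L$. Membership $y\in F(w)$, however, requires $\dd(y,L')\ge\sin\beta_{k-t}\,\dd(y,L^{\nu})$ for every $k$-branch $L'$, including those with $L'\supset L^{\nu,l}$ but $L'\not\subset L$. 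So your reduction only gives the easy inclusion of $O(w)$ in the right-hand side of \eqref{B(i)等价定义式子}. The substantive inclusion is that a point of $O(w')\cap O(\delta_t\dots\delta_\nu,l)$ conically close to $F(w,l)$ lies in $O(w)$, and this requires controlling exactly those branches. The paper takes $y\in F(w,l)\cap L$ nearest to $x$ and applies Lemma~\ref{分支分离性} to $L$ and $L'$. If $\nu=k-1$, then $L\cap L'=L^{\nu,l}$ and $\sin\alpha>\sin\beta_{k-t}$ suffices. If $\nu<k-1$, then $\delta_{k-1}=0$ and $L\cap L'\subset L^{k-1}$. One then needs the B(ii) bound for $w'$, namely $\dd(y,L^{k-1})\ge\sin(\beta_{k-1-t}-\theta_{k-1-t,2})\,\dd(y,L^{\nu,l})$ up to the small displacement $|x-y|$, together with $\sin\alpha\,\sin(\beta_{k-1-t}-\theta_{k-1-t,2})>\sin\beta_{k-t}$. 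This is the real content of B(i), and it is missing from your sketch.

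Two further steps do not close as written. In A(i), the ``pushing back'' step is not an argument. The comparison you quote (that $x$ is far from the other branches of $L^\nu$) is unquantified, and the inductive hypothesis as stated gives only a strict inequality with no margin, whereas Lemma~\ref{分支分离性} loses a factor $\sin\alpha$. The smallness of $\theta_{k-t,1}$ is also not what is at stake. The fix is one line. A(iii) for the word $\delta_t\dots\delta_\nu$ applies to branches of every dimension, hence to $L^{k,s}\not\supset L^{\nu,l}$ itself, giving $\dd(x,L^{k,s})>\dd(x,L^{\nu,l})=\dd(x,L^\nu)$ on $O(\delta_t\dots\delta_\nu,l)$, so $x$ cannot be in $U^{\nu}_{k,s}(\sin\theta_{k-t,1})$. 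If $\nu=t$ there is nothing to prove, since every branch contains $L^t$. In A(iii), going through Proposition~\ref{用次一级的锥替代} or $\lambda_W$ brings in constants ($n_0$, $\lambda_W$) that the angles in \eqref{theta,beta}, which depend only on $\alpha$, need not dominate. You would need $\lambda_W|x-p|<\dd(p,L^{k-1})$ for the foot point $p\in L^{k,s}$ of $x$, and $|x-p|<\sin(9\theta_{k-t,1})\,\dd(x,L^\nu)$ does not guarantee this. Argue directly instead. For a branch $L\not\supset L^{k,s}$, either $L\subset L^{k-1}$, or $L\cap L^{k,s}\subset L^{k-1}$ and Lemma~\ref{分支分离性} gives $\dd(x,L)\ge\sin\alpha\,\dd(x,L^{k-1})-(1+\sin\alpha)|x-p|$. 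Bound $\dd(x,L^{k-1})$ below by $\dd(x,L^\nu)$ when $\nu=k-1$, and by B(ii) for $w'$ when $\nu<k-1$. The choices \eqref{theta,beta} then give $\dd(x,L)>|x-p|$ on $O^{(9)}(w,s)$, which yields both the type-$k$ coincidence and \eqref{A(iii)距离不等式}.
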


\begin{remark}
    Let us briefly explain the geometric meaning of Lemma \ref{O(w)的性质}. Recalling the definition (\ref{O(w)的归纳定义}), we see that the construction of $O(w)$ is essentially a dynamic process. As we extend the word, the digit $\delta_k$ at step $k$ determines whether we select the conical neighborhood of $L^k$, or remove a conical neighborhood of $L^k$ and take a neighborhood of the complement.

    Lemma \ref{O(w)的性质} ensures that this process is purely local. Specifically, Case A\ref{O(w1)等价定义} and Case B\ref{B(i)等价定义} show that to define $O(w)$, we do not need to consider the entire set $W_i$. Instead, we only need to focus on the region $O(w')$ and the ancestor region $O(\delta_t \dots \delta_{\nu})$, where the structure of $W_i$ simplifies to a cone of type $\nu$ (by Case A\ref{O(w1)内退化为type k}). Furthermore, Case A\ref{O(w1)=不交并} allows us to work independently within each connected component of $O(w)$, and Case A\ref{O(w1)内退化为type k} and Case B\ref{B(ii)估算点到L^k的距离} provide quantitative distance estimates in each region.

    Moreover, Lemma \ref{O(w)的性质} lays the foundation for defining the maps $\eta^{u-1}_i$ and $\psi_i^m$ later. It ensures that in every final open set $O(w)$ (for $w \in \Lambda_u$), there is exactly one unique $u$-branch that intersects it and behaves like a flat $u$-plane. This allows us to define orthogonal projections onto this $u$-plane in $O(w)$.
\end{remark}

\begin{proof}
We proceed by induction on the index $k$.

\noindent \textbf{Step 1. Base Case ($k=t+1$).} 

We verify the lemma for words of length 2.

\noindent \textbf{Case A ($\delta_{t+1} = 1$).} 
Here $w=11$, so $w'=1$ and $\nu=t$. And $L^t$ is a $t$-plane.

(i) Since $O(w') = \mathbb{R}^N$ and every branch $L^{t+1,s}$ contains $L^t$, the formula in \eqref{O(w)等价定义1} trivially reduces to $\cup_s U^{t}_{t+1,s}(\sin\theta_{1,1})$, which matches the definition of $O(11)$ in \eqref{O(w)的归纳定义}. 

\medskip

(ii) The disjointness follows from the angular separation of the branches. Let $x \in U_{t+1,s}^t(\sin\theta_{1,1})$ and let $z$ be the orthogonal projection of $x$ onto $L^t$. The condition implies that the angle between the vector $x-z$ and the branch $L^{t+1,s}$ is less than $\theta_{1,1}$. Since the angle between any distinct branches $L^{t+1,s}$ and $L^{t+1,s'}$ is at least $\alpha$, the triangle inequality implies that the angle between $x-z$ and the branch $L^{t+1,s'}$ is greater than $\alpha - \theta_{1,1} $. Since $\alpha > \theta_{1,1}=\alpha/10^2$, we have $\sin(\alpha - \theta_{1,1}) > \sin\theta_{1,1}$, which implies $x \notin U_{t+1,s'}^t(\sin\theta_{1,1})$.

\medskip

(iii) For each $s$, any branch $L$ of $W_i$ that does not contain $L^{t+1,s}$ forms an angle of at least $\alpha$ with $L^{t+1,s}$ relative to the spine $L^t$. However, the region $O^{(9)}(11,s)$ is defined by an opening angle of $9\theta_{1,1}$. Since $9\theta_{1,1} < \alpha$, the branch $L$ is strictly separated from this region. Thus, within this region, every branch of $W_i$ must contain $L^{t+1,s}$, which implies $W_i$ coincides with a set of type $t+1$. 

For any $x \in O(11,s)$, the point $x$ deviates from the axis $L^{t+1,s}$ by an angle of at most $\theta_{1,1}$. Since $L$ lies outside the region $O^{(9)}(11,s)$, its angular separation from the axis is at least $9\theta_{1,1}$. By the triangle inequality for angles, the angle between $x-\pi^t(x)$ and $L$ is at least $8\theta_{1,1}$. Consequently, $\operatorname{dist}(x,L) \geq \sin(8\theta_{1,1})\operatorname{dist}(x,L^t) > \sin\theta_{1,1}\operatorname{dist}(x,L^t) \ge \operatorname{dist}(x,L^{t+1,s}).$
Thus, \eqref{A(iii)距离不等式} holds.

\

\noindent \textbf{Case B ($\delta_{t+1} = 0$).} Here $w=10$, so $w'=1$ and $\nu=\mathcal{N}(w)=t$. And $L^t$ is a $t$-plane.

(i) Since the initial spine $L^t$ consists of a single branch, the union in \eqref{B(i)等价定义式子} reduces to a single term involving $F(10,l)$. Explicitly,
\begin{equation}
    F(10,l)=\bigcup \{x \in L : \dd(x, L^{t+1,s}) \ge (\sin \beta_{1}) \dd(x, L^{t}) \text{ for all } L^{t+1,s} \subset L,\ L\text{ is a } u\text{-branch} \}
\end{equation}
We now verify $O(10) = U_{F(10,l)}^t(\sin\theta_{1,2})$. The inclusion $O(10) \subset U_{F(10,l)}^t(\sin\theta_{1,2})$ is trivial since $F(10) \subset F(10,l)$. Conversely, for any $y \in F(10,l)$ on a $u$-branch $L$, the distance condition holds for all $(t+1)$-branches contained in $L$ by definition. For any $(t+1)$-branch $L'$ not contained in $L$, since $L\cap L'=L^t$, the angular separation $\alpha$ ensures that $\dd(y, L') \ge \sin\alpha \cdot \dd(y, L^t) > \sin\beta_1 \cdot \dd(y, L^t)$. Thus, $y$ satisfies the condition for all $(t+1)$-branches, implying $y \in F(10)$. The decomposition holds.

\medskip

(ii) For any $x \in O(10)$, there exists a reference point $y \in F(10)$ such that $x$ lies within an angular distance of $\theta_{1,2}$ from $y$ relative to the spine $L^t$. Since the definition of $F(10)$ ensures $y$ has an angular separation of at least $\beta_1$ from $L^{t+1}$, the triangle inequality implies that $x$ is separated from $L^{t+1}$ by an angle of at least $\beta_1 - \theta_{1,2}$. The distance bound \eqref{case B(ii)式子} follows immediately.

\

\noindent \textbf{Step 2. Inductive Step ($k>t+1$).}

Assume that Lemma \ref{O(w)的性质} holds for all words of length $k-t$, that is, words ending at index $k-1$. We now consider an arbitrary word $w = \delta_t \dots \delta_k$ of length $k-t+1$. Since $w'$ has length $k-t$, by the inductive hypothesis, the properties hold for $O(w')$. By Case A\ref{O(w1)等价定义} of the inductive hypothesis, $O(w')$ is the disjoint union of components $\{O(w') \cap O(\delta_t\dots \delta_{\nu}, l)\}_l$. So the decomposition of $O(w)$ and its geometric properties can be verified separately on each such component. 

Fix an index $l$. We restrict our attention to the specific  component $ O(w') \cap O(\delta_t \dots \delta_\nu, l)$. To simplify the notation for the rest of the proof, we define: 
\begin{equation} 
\Omega' := O(w') \cap O(\delta_t \dots \delta_\nu, l), \ Z := L^{\nu, l}.
\end{equation}
We now prove that these properties extend to $O(w)$.

\noindent \textbf{Case A: $\delta_k = 1$.} In this case, $w = w'1$. 

(i) Recall that $O(w) = O(w') \cap U^{\nu}_{k}(\sin\theta_{k-t,1})$. For each $x\in  \Omega'$, Case A\ref{O(w1)内退化为type k} ensures that $\operatorname{dist}(x, L^{\nu}) = \operatorname{dist}(x, Z)$ and that $\operatorname{dist}(x, L) > \operatorname{dist}(x, Z)$ for any $k$-branch $L \not\supset Z$. Thus, we have 
\begin{equation}  
\Omega' \cap U^{\nu}_k(\sin\theta_{k-t,1}) = \Omega' \cap \bigcup_{L^{k,s}\supset Z} U^{\nu,l}_{k,s}(\sin\theta_{k-t,1}). 
\end{equation} 
Taking the union over all $l$, we get \eqref{O(w)等价定义1}.

\medskip

(ii) We proceed to show that for distinct indices $s$, the sets $\{O(w, s)\}_s$ are mutually disjoint. By Case A\ref{O(w1)等价定义}, it suffices to prove disjointness in $\Omega'$. Specifically, we must show that the collection of sets
\begin{equation}\label{A(ii)同一个l下的不交邻域}
     \left\{ \Omega' \cap U^{\nu,l}_{k,s}(\sin\theta_{k-t,1}) \right\}_{s: L^{k,s} \supset Z}
\end{equation}
are mutually disjoint. Let $L^{k,s}$ and $L^{k,s'}$ be two branches that contain $Z$. We consider the two cases $\nu=k-1$ and $\nu < k-1$. 

If $\nu=k-1$, then the branches $L^{k,s}$ and $L^{k,s'}$ share $Z$ as a common boundary of codimension 1. The angle between them is at least $\alpha$, and since $2\theta_{k-t,1} < \alpha$, the sets in \eqref{A(ii)同一个l下的不交邻域} are disjoint.

If $\nu<k-1$, then the last letter $\delta_{k-1}$ of $w'$ is 0. We aim to show that for each $x\in\Omega' \cap U^{\nu,l}_{k,s}(\sin\theta_{k-t,1}) $, we have
\begin{equation}\label{A(ii)x离s_2的k分支远}
    \dd(x,L^{k,s'})>\sin\theta_{k-t,1} \cdot \dd(x,Z).
\end{equation}
If \eqref{A(ii)x离s_2的k分支远} holds, then $x$ cannot belong to $O(w, s')$, implying disjointness.

Pick $y \in L^{k,s}$ such that $|x-y| = \operatorname{dist}(x, L^{k,s})$. By Lemma \ref{分支分离性}, we have $ \dd(y,L^{k,s'})\geq \sin \alpha \cdot \dd(y,L^{k,s}\cap L^{k,s'})$. Since the intersection $L^{k,s} \cap L^{k,s'}$ is contained in $L^{k-1}$, we have $\dd(y,L^{k,s}\cap L^{k,s'})\geq \dd(y, L^{k-1})\geq \dd(x,L^{k-1})-|x-y|$. Since $x\in U^{\nu,l}_{k,s}(\sin\theta_{k-t,1})$, we have $|x-y|<\sin\theta_{k-t,1}\cdot \dd(x,Z)$.  Also, because $w'$ ends in 0, we can use Case B\ref{B(ii)估算点到L^k的距离} of the inductive hypothesis. This implies $\dd(x,L^{k-1})\geq \sin(\beta_{k-1-t}-\theta_{k-1-t,2})\cdot\dd(x,Z)$. Combining these estimates, we have
\begin{equation}
    \begin{aligned}
        \dd(x,L^{k,s'}) &\geq \dd(y,L^{k,s'}) - |x-y| \\
        &\geq \sin\alpha \cdot (\dd(x,L^{k-1}) - |x-y|) - |x-y| \\
        &= \sin\alpha \cdot \dd(x,L^{k-1}) - (1+\sin\alpha)|x-y| \\
        &\ge \left( \sin\alpha \sin(\beta_{k-1-t}-\theta_{k-1-t,2}) - (1+\sin\alpha)\sin\theta_{k-t,1} \right) \cdot \dd(x,Z) \\
        &> \sin\theta_{k-t,1} \cdot \dd(x,Z).
    \end{aligned}
\end{equation}
The last inequality holds due to the parameter choices in \eqref{theta,beta}. First, we have the lower bound $\sin \alpha \sin( \beta_{k-1-t}-\theta_{k-1-t,2} )>({2}/{\pi})^2 \cdot {99}\cdot{10^{-3}} \alpha \theta_{k-1-t,1}$. Second, we have the upper bound $ (2+\sin\alpha)\sin\theta_{k-t,1}<3\sin\theta_{k-t,1}<{3}\cdot{10^{-4}}\alpha \theta_{k-1-t,1}$. And the former lower bound is strictly larger than the latter upper bound. 
Thus,  we get \eqref{A(ii)x离s_2的k分支远}. This completes the proof of Case A\ref{O(w1)=不交并}.

\medskip

(iii) By definition, $O^{(9)}(w,s) = O(w') \cap U^{\nu}_{k,s}(\sin9\theta_{k-t,1})$. It suffices to prove that $W_i$ coincides with a set of type $k$ in $\Omega' \cap U^{\nu,l}_{k,s}(\sin9\theta_{k-t,1}) $. We need only show that any branch $L$ intersecting this region contains the spine $L^{k,s}$. By Case A\ref{O(w1)内退化为type k} of the inductive hypothesis, any branch intersecting $\Omega'$ must contain $Z$. Thus, both $L^{k,s}$ and $L$ contain $Z$. We consider the two cases $\nu=k-1$ and $\nu < k-1$.

If $\nu=k-1$, then $Z= L^{k-1,l}$ and $\delta_t\dots\delta_{\nu}=w'$. Suppose for contradiction that $L \not\supset L^{k,s}$. Then $L \cap L^{k,s} = Z$. Pick $x\in L \cap O(w',l) \cap U^{k-1,l}_{k,s}(\sin9\theta_{k-t,1})$. 
Let $y \in L^{k,s}$ and $z \in Z$ be the points closest to $x$ on $L^{k,s}$ and $Z$, respectively. Since $x \in U^{k-1,l}_{k,s}(\sin9\theta_{k-t,1})$, we have $|x-y| < \sin9\theta_{k-t,1}\cdot |x-z|$. This strict inequality implies that $y$ lies in the relative interior of $L^{k,s}$. Therefore, $x-y$ is orthogonal to the plane $P^{k,s}$ and $z$ is also the closest point on $Z$ to $y$, where $P^{k,s}$ is the affine span of $L^{k,s}$. Thus, $\angle xzy < 9\theta_{k-t,1}$. However, by Definition \ref{角的定义} and the separation condition \eqref{alpha定义}, we get $\angle xzy \ge \alpha$. This leads to a contradiction because $9\theta_{k-t,1} < \alpha$. Thus, $L \supset L^{k,s}$.

If $\nu<k-1$, then the last letter $\delta_{k-1}$ of $w'$ is 0. Suppose for contradiction that $L \not\supset L^{k,s}$. Then $L\cap L^{k,s}\subset L^{k-1}$. Pick $x\in L\cap \Omega' \cap U^{\nu,l}_{k,s}(\sin9\theta_{k-t,1})$. The neighborhood condition implies $\dd(x,L^{k,s})<\sin 9\theta_{k-t,1}\cdot \dd(x,Z)$. Since $x\in O(w')$, by Case B\ref{B(ii)估算点到L^k的距离} of the inductive hypothesis, $\dd(x,L^{k-1})\geq \sin (\beta_{k-1-t}-\theta_{k-1-t,2})\dd(x,Z)$. Then, by Lemma \ref{分支分离性}, we have
\begin{equation}
\begin{aligned}
    \dd(x,L^{k,s})&\geq \sin\alpha\cdot \dd(x,L\cap L^{k,s})\geq \sin\alpha\cdot \dd(x,L^{k-1})\\
    &\geq \sin\alpha \cdot \sin (\beta_{k-1-t}-\theta_{k-1-t,2} ) \dd(x,Z)\\
    &>\sin 9\theta_{k-t,1}\cdot \dd(x,Z).
\end{aligned}
\end{equation}
The last inequality holds because $\sin \alpha \sin( \beta_{k-1-t}-\theta_{k-1-t,2} )>({2}/{\pi})^2 \cdot 99\cdot{10^{-3}} \alpha \theta_{k-1-t,1}$ and $ \sin9\theta_{k-t,1}<9\sin\theta_{k-t,1}<{9}\cdot{10^{-4}}\alpha \theta_{k-1-t,1}$. And the former lower bound is strictly larger than the latter upper bound.
This leads to a contradiction. Thus, $L \supset L^{k,s}$.

Let us prove \eqref{A(iii)距离不等式}.  Fix $x \in \Omega' \cap U^{\nu,l}_{k,s}(\sin\theta_{k-t,1})$. Consider any branch $L \not\supset L^{k,s}$. If $L \not\supset Z$, then by Case A\ref{O(w1)内退化为type k} of the inductive hypothesis, $\dd(x,L) > \dd(x,Z) \ge \dd(x,L^{k,s})$. If $L \supset Z$, then $L$ cannot intersect the region $\Omega' \cap U^{\nu,l}_{k,s}(\sin9\theta_{k-t,1})$. By Case A\ref{O(w1)内退化为type k} of the inductive hypothesis, $W_i$ is a set of type $\nu$ with common boundary $Z$ in this region. Applying the triangle inequality for angular radii yields $\dd(x, L) \ge \sin(8\theta_{k-t,1})\cdot \dd(x, Z) > \sin\theta_{k-t,1}\cdot \dd(x, Z) > \dd(x, L^{k,s})$. In particular, $\dd(x, L^k) = \dd(x, L^{k,s})$.

\ 

\noindent \textbf{Case B ($\delta_k = 0$).} In this case, $w = w'1$. 

(i) By Case A\ref{O(w1)=不交并} and \ref{O(w1)内退化为type k}, we have the decomposition $O(w)=\bigcup_l O(w')\cap O(\delta_t\dots\delta_{\nu},l)\cap U^{\nu,l}_{F(w)}(\sin\theta_{k-t,2})$. 
By the definition of $F(w)$ and $F(w,l)$, $O(w)$ is contained in the right-hand side of \eqref{B(i)等价定义式子}. Fix the index $l$. Let $L$ be a branch of $W_i$, by Case A\ref{O(w1)内退化为type k} of the inductive hypothesis, the intersection $\Omega' \cap U_L^{\nu,l}(\sin\theta_{k-t, 2})$ is non-empty only if $L \supset Z$. Thus, it suffices to fix a $u$-branch $L \supset Z$, and show that for each point $x\in \Omega' \cap U_{F(w,l)\cap L}^{\nu,l}(\sin\theta_{k-t,2})$, we have $x\in O(w)$.

Pick a point $y\in L\cap F(w,l)$ such that $\dd(x, F(w,l)\cap L)=|x-y|$. Then $y\in O(\delta_t\dots\delta_{\nu},l)$ and $|x-y|<\sin \theta_{k-t,2}\cdot \dd(x,Z)$. We aim to prove that $y\in F(w)$. To do this, we must verify that for any $k$-branch $L'$, the inequality
\begin{equation}\label{B(i)验证y离其他k分支也远}
    \dd(y,L') \geq \sin\beta_{k-t} \cdot \dd(y,Z)
\end{equation}
holds. If $L'\supset Z$ and $L'\subset L$, \eqref{B(i)验证y离其他k分支也远} holds by the definition of $F(w,l)$. If $L'\not\supset Z$, then since $y\in O(\delta_t\dots \delta_{\nu},l)$, Case A\ref{O(w1)内退化为type k} implies $\dd(y,L')>\dd(y, Z)$, so \eqref{B(i)验证y离其他k分支也远} holds automatically. It remains to consider the case where $L'\supset Z$ but $L'\not\subset L$. We consider the two cases $\nu=k-1$ and $\nu < k-1$.

If $ \nu=k-1$, then $Z=L^{k-1,l}$ and $L\cap L'=Z$. By Lemma \ref{分支分离性}, we have
\begin{equation}
    \text{dist}(y,L')\geq\sin\alpha\cdot\text{dist}(y,L\cap L')=\sin\alpha\cdot\text{dist}(y,Z).
\end{equation}
Since $\sin\alpha > \sin\beta_{k-t}$, the inequality \eqref{B(i)验证y离其他k分支也远} holds.

If $ \nu<k-1$, then the last letter $\delta_{k-1}$ of $w'$ is 0 and $L\cap L'\subset L^{k-1}$. Since $x\in O(w')$,  $y$ is also contained in $O(w')$. By Case B\ref{B(ii)估算点到L^k的距离} of the inductive hypothesis, $\dd(y,L^{k-1})\geq \sin(\beta_{k-1-t}-\theta_{k-1-t,2})\cdot\dd(y,Z)$. Therefore, by Lemma \ref{分支分离性}, we have
\begin{equation}
\begin{aligned}
    \dd(y, L')&\geq \sin\alpha \cdot \dd(y,L\cap L')\geq \sin\alpha \cdot \dd(y,L^{k-1})\\
    &\geq \sin\alpha\cdot \sin(\beta_{k-1-t}-\theta_{k-1-t,2})\dd(y,Z)\\
    &>\sin\beta_{k-t}\cdot\dd(y,Z).
\end{aligned}
\end{equation}
The last inequality holds because $\sin \alpha \sin( \beta_{k-1-t}-\theta_{k-1-t,2} )>({2}/{\pi})^2 \cdot {99}\cdot {10^{-3}} \alpha \theta_{k-1-t,1}$ and $ \sin\beta_{k-t}<{10^{-5}}\alpha \theta_{k-1-t,1}$. And the former lower bound is strictly larger than the latter upper bound. Thus, we have \eqref{B(i)验证y离其他k分支也远}.

Consequently, $y\in F(w)$, and therefore $x\in O(w)$. This completes the proof of \eqref{B(i)等价定义式子}.

\medskip

(ii) It suffices to prove \eqref{case B(ii)式子} for $x\in \Omega' \cap U^{\nu,l}_{F(w,l)}(\sin\theta_{k-t,2})$.  Let $y \in F(w,l)$ be such that $|x-y| = \dd(x, F(w,l))$. Since $y \in F(w,l) \subset W_i$, there exists a $u$-branch $H$  such that $Z \subset H$ and $k < \dim H \le u$, with $y$ lying in the relative interior of $H$. Thus, the vector $x-y$ is orthogonal to the plane spanned by $H$.

Let $z \in Z$ be the point closest to $x$ on $Z$, i.e., $\dd(x,Z) = |x-z|$.
Since $Z \subset H$ and $x-y \perp H$, $z$ is also the closest point to $y$ on $Z$. Thus, we also have $\dd(y,Z) = |y-z|$.

Since $y \in F(w)$, the open cone with vertex $z$, axis $y-z$, and half-angle $\beta_{k-t}$ does not intersect $L^k$ within the region $O^{(9)}(\delta_t\dots\delta_{\nu},l)$. On the other hand, since $x \in U_{F(w,l)}^{\nu,l}(\sin\theta_{k-t,2})$, the vector $x-z$ forms an angle of at most $\theta_{k-t,2}$ with the vector $y-z$. Thus, for $k$-branches containing $Z$, the angular separation from $x-z$ is at least $\beta_{k-t} - \theta_{k-t,2}$. Since Case A\ref{O(w1)内退化为type k} ensures that the $j$-branches not containing $Z$ are sufficiently far away and thus do not minimize the distance to $x$, we conclude that
\begin{equation}
    \dd(x, L^k) = \dd(x, L^k \cap O(\delta_t\dots\delta_{\nu},l)) > \sin(\beta_{k-t}-\theta_{k-t,2}) \cdot \dd(x, Z). 
\end{equation}
This completes the proof.
\end{proof}

\begin{figure}[htbp]
    \centering
    \makebox[\textwidth][c]{ 
    \begin{tikzpicture}
        \node[anchor=south west, inner sep=0] (image) at (0,0) {
            \includegraphics[width=1.2\textwidth]{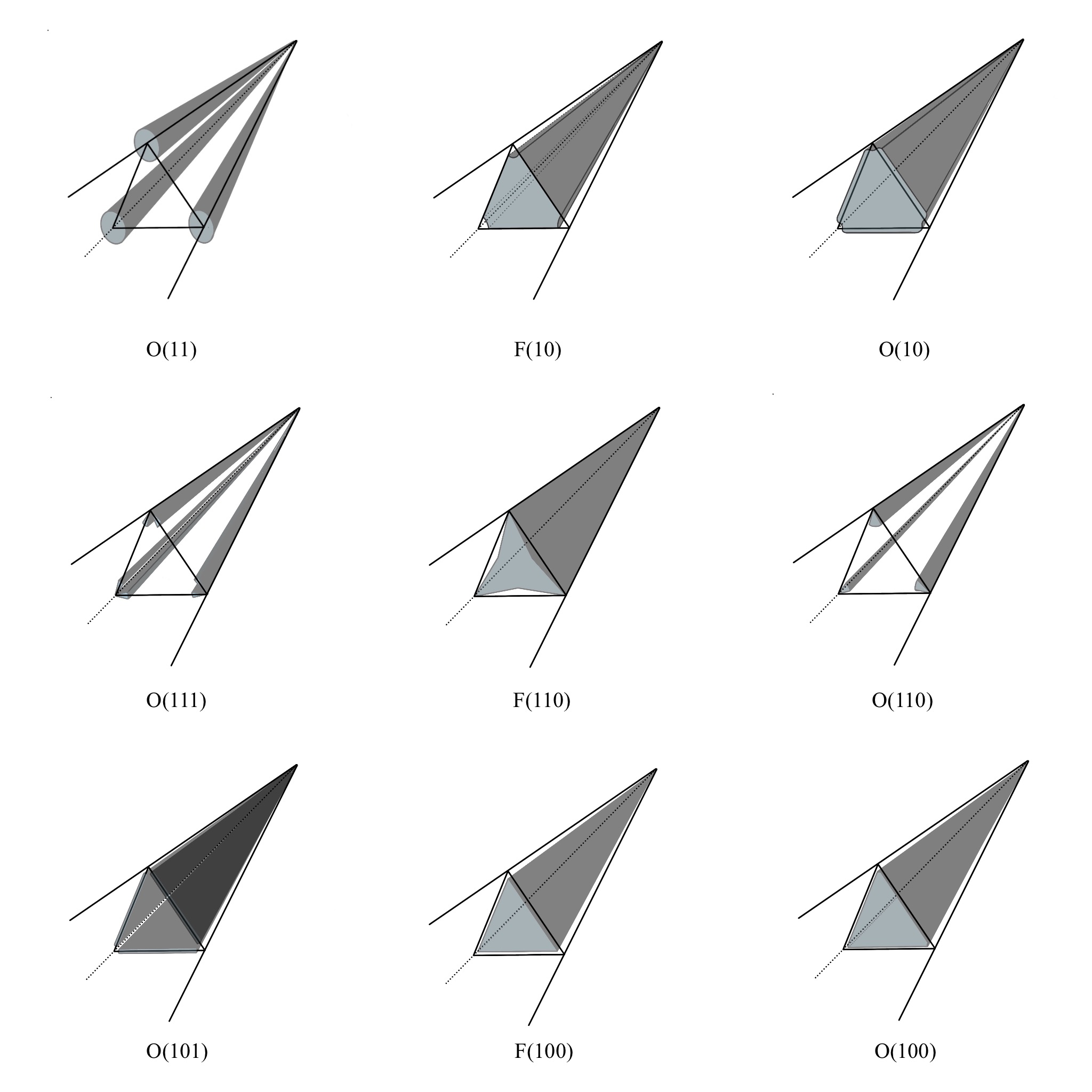}
        };

        \begin{scope}[x={(image.south east)},y={(image.north west)}]

            
            \filldraw (0.271, 0.963) circle (1.5pt);
            \node at (0.27, 0.98) {\scriptsize $x_i$};

            \node at (0.07, 0.84) {\scriptsize $L^{1,1}$};
            \node at (0.08, 0.75) {\scriptsize $L^{1,2}$};
            \node at (0.176, 0.73) {\scriptsize $L^{1,3}$};


             \draw[->, >=stealth, bend right, blue!70!black] (0.07, 0.9) node[above] {\tiny $L^{2,1}$} to (0.133, 0.844);
             \draw[->, >=stealth, bend right, blue!70!black] (0.1, 0.72) node[below] {\tiny $L^{2,2}$} to (0.15, 0.8);
             \draw[->, >=stealth, bend left, blue!70!black] (0.26, 0.9) node[right] {\tiny $L^{2,3}$} to (0.169, 0.83);

             \draw[->, >=latex,  red!80!black, thick, shorten >=3pt] (0.27, 0.815) node[right] {\tiny $L^3$} to (0.147, 0.815);

        \end{scope}
    \end{tikzpicture}
    }
    \caption[An example with $t=0$ and $u=3$]{\textbf{An illustrative example with $t=0$ and $u=3$.} 
    Here, $W_i$ is a 3-dimensional simple cone centered at $L^0=\{x_i\}$. The 1-spine $L^1=\cup_{l=1}^3 L^{1,l}$ consists of three rays from $x_i$, marked in black. The 2-spine $L^2=\cup_{l=1}^3 L^{2,l}$ consists of three branches; for instance, $L^{2,1}$ is the planar sector bounded by the rays $L^{1,1}$ and $L^{1,2}$, marked in blue. The 3-spine $L^3$ is the solid cone bounded by the planar sectors $L^{2,1}, L^{2,2},$ and $L^{2,3}$, marked in red. The gray regions represent the open sets $O(w)$ and $F(w)$ based on $W_i$ for $w \in \Lambda_2\cup \Lambda_3$. Note that these sets are unbounded cones, shown here truncated for visualization.}
\end{figure}

We will show in Proposition \ref{命题O(w)是覆盖} that the family $\{O^{(1/2)}(w)\}_{w\in \Lambda_u}$ indeed forms an open cover of the relevant conical neighborhood of $L^u$, where $\Lambda_u$ is the set of words ending with index $u$. 

\begin{proposition}\label{命题O(w)是覆盖}
    \begin{equation}\label{O(w)是覆盖}
    U^{u-1}_{u}(\sin\frac{\theta_{u-t,2}}{2})\subset \ccup_{w\in \Lambda_u} O^{(1/2)}(w).
    \end{equation}
\end{proposition}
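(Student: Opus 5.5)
Fix $x\in U^{u-1}_{u}(\sin\frac{\theta_{u-t,2}}{2})$; we must exhibit a word $w=\delta_t\dots\delta_u\in\Lambda_u$ with $x\in O^{(1/2)}(w)$. The plan is to build the digits of $w$ one at a time by a greedy rule. Start with $\delta_t=1$, which is forced since $O(1)=\RR^N$. Suppose $\delta_t\dots\delta_k$ has been chosen with $x\in O^{(1/2)}(\delta_t\dots\delta_k)$, and put $\nu=\mathcal N(\delta_t\dots\delta_k)$. Choose the next digit by
\[
\delta_{k+1}=1 \quad\text{if}\quad \dd(x,L^{k+1})<\sin\big(\tfrac12\theta_{k+1-t,1}\big)\,\dd(x,L^{\nu}),\qquad \delta_{k+1}=0\ \text{ otherwise}.
\]
If $\delta_{k+1}=1$, the membership $x\in O^{(1/2)}(\dots 1)$ is immediate from the definition. (For consistency one has to read $O^{(1/2)}$ as the halved opening angles at every level of the recursion, not just the last; I would check that the paper's convention allows this.) At the final index $u$ the word must end in $1$. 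This is where the hypothesis enters: $\dd(x,L^u)<\sin(\theta_{u-t,2}/2)\,\dd(x,L^{u-1})$. Since $L^{\nu}\subset L^{u-1}$ and $\theta_{u-t,2}<\theta_{u-t,1}$, this gives the required inequality with $L^{\nu}$ in place of $L^{u-1}$. So the last digit is always $1$.

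The main obstacle is the case $\delta_{k+1}=0$. Here we must show $\dd(x,F(w0))<\sin(\tfrac12\theta_{k+1-t,2})\,\dd(x,L^{\nu})$, knowing only that $x$ makes an angle at least $\tfrac12\theta_{k+1-t,1}$ with $L^{k+1}$ as seen from $L^{\nu}$. To get it, I would use Lemma \ref{O(w)的性质} Case A(iii) and Case B to localize. Inside $O(\delta_t\dots\delta_\nu,l)$, $W_i$ coincides with a set of type $\nu$ with spine $Z=L^{\nu,l}$. Let $y$ be the nearest point of $L^u$ to $x$, taken from the final hypothesis. Then $\angle(x-z,\,y-z)<\theta_{u-t,2}/2$, where $z$ is the projection of $x$ onto $Z$; this uses $\dd(x,L^{\nu})\ge \dd(x,L^{u-1})$ and Lemma \ref{分支分离性}.

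Since $x$ is at angle at least $\tfrac12\theta_{k+1-t,1}$ from $L^{k+1}$, the triangle inequality for angles gives $y$ at angle at least $\tfrac12\theta_{k+1-t,1}-\tfrac12\theta_{u-t,2}$ from $L^{k+1}$. By \eqref{theta,beta} this exceeds $\beta_{k+1-t}$, because $\beta=\theta_{\cdot,1}/10$ and the later angles are far smaller. Hence $y\in F(w,l)$, and by Case B(i) this means $y\in F(w0)$ locally. Then $\dd(x,F(w0))\le|x-y|<\sin(\theta_{u-t,2}/2)\,\dd(x,L^{\nu})\le\sin(\tfrac12\theta_{k+1-t,2})\,\dd(x,L^{\nu})$, using $k+1\le u$ and the monotonicity of the angles.

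Two details need care. First, $\nu$ must be used consistently: the distances to $Z$ versus $L^{\nu}$ agree on $O(\cdot,l)$ by Case A(iii). Second, the nearest point $y$ lies in the relative interior of some $u$-branch containing $Z$, which is what makes the angle comparisons legitimate. The induction on $k$ then terminates at $u$ with a word in $\Lambda_u$, which proves \eqref{O(w)是覆盖}.
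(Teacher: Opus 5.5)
Your overall scheme is the paper's proof, just unrolled for a single point. The paper first proves the one-step inclusion $O(w)\cap U^{\mathcal N(w)}_u(\sin\frac{\theta_{k+1-t,2}}{2})\subset O^{(1/2)}(w1)\cup O^{(1/2)}(w0)$. It splits on exactly your test $\dd(x,L^{k+1})<\sin(\frac{\theta_{k+1-t,1}}{2})\,\dd(x,L^{\mathcal N(w)})$, iterates the inclusion over $\Lambda_{k+1}$ using $L^{\mathcal N(w)}\subset L^k$ and the monotonicity of $\theta_{\cdot,2}$, and forces the last digit to be $1$ as you do. Your worry about the scaling convention is harmless. The paper uses $O^{(1/2)}(w1)=O(w)\cap U^{\mathcal N(w)}_{k+1}(\sin\frac{\theta_{k+1-t,1}}{2})$, scaling only the last level. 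Since you carry $x\in O^{(1/2)}(w)\subset O(w)$ at every stage, your induction works under either reading.

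You diverge from the paper in proving that the nearest point $y$ of $L^u$ to $x$ lies in $F(w0)$. There your argument is heavier than needed and is not justified by what you cite. The paper argues globally with distances. You have $\dd(x,L^{k+1})\ge\sin(\frac{\theta_{k+1-t,1}}{2})\dd(x,L^\nu)$ and $|x-y|<\sin(\frac{\theta_{k+1-t,2}}{2})\dd(x,L^\nu)$. Combined with $\dd(y,L^{k+1})\ge\dd(x,L^{k+1})-|x-y|$ and $\dd(y,L^\nu)\le\dd(x,L^\nu)+|x-y|$, these give
\[
\dd(y,L^{k+1})\ \ge\ \frac{\sin(\theta_{k+1-t,1}/2)-\sin(\theta_{k+1-t,2}/2)}{1+\sin(\theta_{k+1-t,2}/2)}\,\dd(y,L^{\nu})\ >\ \sin\beta_{k+1-t}\,\dd(y,L^{\nu}),
\]
which is exactly the defining condition of $F(w0)$, with no structure lemma involved.

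Your angular triangle inequality ``as seen from $z$'' needs $W_i$ to be locally a product over $Z=L^{\nu,l}$ near both $x$ and $y$. Lemma \ref{O(w)的性质} Case A(iii) gives this only on $O^{(9)}(\delta_t\dots\delta_\nu,l)$, which is cut out by the unscaled open sets $O(\delta_t\dots\delta_{\nu-1})$. Knowing that $x$ lies in these sets says nothing about $y$, since $x$ may be arbitrarily close to their boundary.

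Likewise, ``by Case B(i) this means $y\in F(w0)$ locally'' is not the statement you need. Case B(i) is an identity between the open sets $O(w)$, whereas membership in $F(w0)$ is a global condition involving all of $L^{k+1}$ and $L^\nu$. Also, $\angle xzy<\theta_{u-t,2}/2$ follows from $|x-y|<\sin(\theta_{u-t,2}/2)\,|x-z|$ alone; Lemma \ref{分支分离性} plays no role there.

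Replace the angular localization by the distance estimate above and your proof is complete.
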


\begin{proof}
    We first show that for each $k\in \{t,...,u-1\}$ and each $w=\delta_t...\delta_k\in\Lambda$,
    \begin{equation}\label{O(w)是覆盖step 1：局部覆盖结论}
        O(w) \cap U^{\mathcal{N}(w)}_u \left(\sin\frac{\theta_{k+1-t,2}}{2}\right) \subset O^{(1/2)}(w1) \cup O^{(1/2)}(w0).
    \end{equation}
    Take any $x$ in the left-hand side of \eqref{O(w)是覆盖step 1：局部覆盖结论}. If $x \in U^{\mathcal{N}(w)}_{k+1}(\sin({\theta_{k+1-t,1}}/{2}))$, then $x\in O^{(1/2)}(w1)$ by the definition. If $x \notin U^{\mathcal{N}(w)}_{k+1}(\sin({\theta_{k+1-t,1}}/{2}))$, then $\text{dist}(x, L^{k+1}) \geq (\sin{({\theta_{k+1-t,1}}/{2}})) \cdot \text{dist}(x, L^{\mathcal{N}(w)})$. At the same time, since $x\in U^{\mathcal{N}(w)}_u (\sin({\theta_{k+1-t,2}}/{2}))$, we have $\dd(x, L^u)<(\sin ({\theta_{k+1-t,2}}/{2}))\cdot \dd(x,L^{\mathcal{N}(w)})$.
    Let $z \in L^u$ be such that $\dd(x,L^u)=|x-z|$. Then $\dd(z,L^{\mathcal{N}(w)})\leq \dd(x,L^{\mathcal{N}(w)})+|x-z|<(1+\sin({\theta_{k+1-t,2}}/{2}))\cdot\dd(x,L^{\mathcal{N}(w)})$. Using the triangle inequality, we have $\dd(z, L^{k+1})\geq  \dd(x,L^{k+1})-|x-z|\ge ( \sin({\theta_{k+1-t,1}}/{2})-\sin({\theta_{k+1-t,2}}/{2}))\cdot ( 1+\sin({\theta_{k+1-t,2}}/{2}))^{-1}\cdot \dd(z,L^{\mathcal{N}(w)})>(\sin\beta_{k+1-t})\cdot\dd(z,L^{\mathcal{N}(w)})$.
   The last inequality holds due to the parameter choices in \eqref{theta,beta}. Thus $z \in F(w0)$, which implies $x \in O^{(1/2)}(w0)$. This concludes the proof of \eqref{O(w)是覆盖step 1：局部覆盖结论}.

    Using the local covering property \eqref{O(w)是覆盖step 1：局部覆盖结论}, we prove by induction that for each $k \in \{t, \dots, u-1\}$,
    \begin{equation}\label{O(w)是覆盖step 2:归纳假设}
    U^k_u\left(\sin \frac{\theta_{k-t+1,2}}{2}\right) \subset \bigcup_{w \in \Lambda_{k+1}} O^{(1/2)}(w).
    \end{equation}
    The base case $k=t$ follows directly from the local covering property \eqref{O(w)是覆盖step 1：局部覆盖结论} applied to $w=1$. Assume inductively that \eqref{O(w)是覆盖step 2:归纳假设} holds for the index $k-1$. That is,  $U^{k-1}_u\left(\sin ({\theta_{k-t,2}}/{2})\right) \subset \bigcup_{w \in \Lambda_{k}} O^{(1/2)}(w)$. 
    Intersecting both sides with $U^{k}_{u}(\sin ({\theta_{k+1-t,2}}/{2}))$, we get
    \begin{equation}\label{覆盖归纳假设的下一步}
         U^{k-1}_u(\sin \frac{\theta_{k-t,2}}{2})\cap U^{k}_{u}(\sin \frac{\theta_{k+1-t,2}}{2}) \subset \bigcup_{w\in \Lambda_k} \left( O^{(1/2)}(w)\cap U^{k}_{u}(\sin \frac{\theta_{k+1-t,2}}{2})\right),
    \end{equation}
    Since $L^{k-1} \subset L^k$ and $\theta_{k+1-t,2} < \theta_{k-t,2}$, we have $U^{k}_{u}(\sin ({\theta_{k+1-t,2}}/{2})) \subset U^{k-1}_u(\sin ({\theta_{k-t,2}}/{2}))$. Then the left-hand side in \eqref{覆盖归纳假设的下一步} is just $U^k_u(\sin({\theta_{k+1-t,2}}/{2}) )$. For the right-hand side, fix a word $w\in \Lambda_k$. Since $\mathcal{N}(w)\leq k$, we have $O^{(1/2)}(w) \cap U^{k}_{u}(\sin ({\theta_{k+1-t,2}}/{2})) \subset O(w) \cap U^{\mathcal{N}(w)}_u (\sin ({\theta_{k+1-t,2}}/{2})) $. And the local covering property \eqref{O(w)是覆盖step 1：局部覆盖结论} implies that $O(w) \cap U^{\mathcal{N}(w)}_u \left(\sin ({\theta_{k+1-t,2}}/{2}) \right) \subset O^{(1/2)}(w1) \cup O^{(1/2)}(w0)$. Taking the union over all $w \in \Lambda_k$, we obtain $U^{k}_{u}(\sin ({\theta_{k+1-t,2}}/{2})) \subset \bigcup_{w \in \Lambda_{k+1}} O^{(1/2)}(w)$. This confirms that the inductive claim holds for all integers $k$ from $t$ up to $u-2$. In particular, we have
    \begin{equation}
    U^{u-2}_u\left(\sin\frac{\theta_{u-1-t,2}}{2}\right) \subset \bigcup_{w\in \Lambda_{u-1}} O^{(1/2)}(w).
    \end{equation}
    Finally, we intersect both sides of this inclusion with $U^{u-1}_u(\sin ({\theta_{u-t,2}}/{2}))$. The left-hand side reduces to $U^{u-1}_u(\sin ({\theta_{u-t,2}}/{2}))$. For the right-hand side, for each $w$,  we have
    \begin{equation}
    O^{(1/2)}(w) \cap U^{u-1}_u(\sin \frac{\theta_{u-t,2}}{2}) \subset O(w) \cap U^{\mathcal{N}(w)}_u (\sin\frac{\theta_{u-t,2}}{2}) \subset O^{(1/2)}(w1).
    \end{equation}
    Combining these, we obtain the desired global covering \eqref{O(w)是覆盖}.
\end{proof}

\subsubsection{Define maps in $O(w)$ to get $\psi^m_i$}\label{对齐映射构造}

Now we begin to define a series of maps in $O(w)$ for $w\in \Lambda_u$ in a descending  binary order to construct $\eta^u_i$ and extend them in a proper way.  We denote by $1^s$ the word $1...1$ that has $s$ letters and composed by 1, and the same for 0. We shall use $\zeta^{u,\delta_t...\delta_s}_1$ to represent a map defined in $O(\delta_t...\delta_s)$. And we shall use $\zeta^{u,\delta_t...\delta_s}$ to represent a map which is an extension of $\zeta^{u,\delta_t...\delta_s}_1$ to  $O(\delta_t...\delta_{s-1})$.

Before proceeding to the specific construction, we briefly outline the method used to define these maps. As seen from the definition of $\{O(w)\}_{w\in \Lambda}$, the relationship between these sets presents a tree structure (refer to Figure \ref{fig tree}). The root node $1\in\Lambda_t$ corresponds to the set $O(1)=\mathbb{R}^N$. From $\Lambda_t$ to $\Lambda_{u-2}$, each element $w$ in the word set has two leaves: a left leaf $1$ and a right leaf $0$, corresponding to the words $w1, w0$ and open sets $O(w1), O(w0)$ respectively. Words in $\Lambda_{u-1}$ have only one leaf $1$, and words in $\Lambda_u$ have no leaf. We define the maps in the open sets corresponding to this tree from bottom to top and from left to right. This definition process is a linear flow, meaning there is a strict sequential order between any two steps.

We first describe the rules of definition without precise formulations.
For $w\in \Lambda_u$, we define a map on $O(w)$, denoted by $\tau_1^{u,w}$. The construction of this map relies on the family of maps $\{\tau_1^{u,\tilde{w}}: \tilde{w}\in \Lambda_u, \tilde{w}>w \text{ in binary}\}$. The explicit definition of $\tau_1^{u,w}$ will be provided subsequently.
When $w\notin \Lambda_{u}$, we denote the map to be defined in $O(w)$ as $\zeta_1^{u,w}$. When we need to define $\zeta_1^{u,w}$, we assume that the maps in the open sets corresponding to the leaves of $w$ have already been defined. When $w\in \Lambda_{u-1}$, by assumption, $\tau_1^{u,w1}$ has been defined. We define the map $\zeta_{1}^{u,w}$ in $O(w)$ as the extension of $\tau_1^{u,w1}$ to $O(w)$.  
When $w\in \Lambda_s$ with $t\leq s<u-1$, we define the map $\zeta_1^{u,w}$ in $O(w)$ as the composite map $\zeta^{u,w0}\circ \zeta^{u,w1}$, where $\zeta^{u,w\delta}$ ($\delta \in \{0,1\}$) is the extension of the map $\zeta_1^{u,w\delta}$ from $O(w\delta)$ to $O(w)$.

We initiate the entire construction by defining $\tau_1^{u,1^{u-t+1}}$ on the open set $O(1^{u-t+1})$, which corresponds to the leftmost and bottommost node $1^{u-t+1}$. By iterating this procedure, we ultimately obtain the global map $\zeta_1^{u,1}$. We then define $\eta_i^u := \zeta_1^{u,1}\circ \eta_i^{u-1}$. Given the finiteness of $\Lambda$, this process terminates in finitely many steps, progressively aligning $\eta_i^{u-1}(\Gamma^u_k)$ onto $L^u$.

\textbf{For $\bm{w=1^{u-t}}$.}

Pick the first word $\delta_t...\delta_{u-1}=1^{u-t}$ which is such that $\delta_t=...=\delta_{u-1}=1$. We construct $\zeta^{u,1^{u-t}}$ by first defining a map $\zeta^{u,1^{u-t}}_1$ on $O(1^{u-t})$ and then extending it to $O(1^{u-t-1})$.

\noindent\textbf{Definition of $\bm{\zeta^{u,1^{u-t}}_1}$ in $\bm{O(1^{u-t})}$.} As a preparation, we define a map 
$\tau^{u,1^{u-t+1}}_1$ in $O(1^{u-t+1})$ to map points on $\eta^{u-1}(\Gamma^u_k)$ to $L^u$ in $O(1^{u-t+1})$. Recall we have assumed that $\eta^{u-1}$ maps $\Gamma^{t}_k\to L^t$,...,$ \Gamma^{u-1}_k \to L^{u-1}$ in $4.9 B_i$ with $|D\eta^{u-1}-I|<C\varepsilon$ in $4.9B_i\backslash \Gamma^{u-1}_k$ , where $C$ depends on $\alpha,C_{t,2}$, if $u>t+1$, also on $C_{t+1,4},..., C_{u-1,4}$. Since $ u<m$, the induction hypothesis of (M3) for $u$ says that each $\Gamma^{u,l}_k$ is a $C_{u,4}\varepsilon$-Lipschitz graph over $D^{u,l}$ in $5B_i$, therefore,  $\eta^{u-1} (\Gamma^{u,l}_k)$ is also a $C\varepsilon$-Lipschitz graph over $ L^{u,l}$ in $4.99B_i$, where $C$ depends on $C_{t,2},C_{t+1,4},...,C_{u,4},\alpha$. Denote by $ \tilde{\varphi}^{u,l,1^{u-t+1}} :L^u\to (L^u)^\perp $ this $C\varepsilon$-Lipschitz map.  By Case A\ref{O(w1)内退化为type k} in Lemma \ref{O(w)的性质}, every connected component of $O(1^{u-t+1})$ only meets one branch of $L^u$. Precisely speaking, $O(1^{u-t+1})\cap U^{u-1}_{u,l}(\sin\theta_{u-t,1})\cap L^{u,l}\neq\emptyset$ and $O(1^{u-t+1})\cap U^{u-1}_{u,l}(\sin\theta_{u-t,1})\cap L^{u,l'}=\emptyset $ for every $l'\neq l$.
Let $\tilde{\varphi}^{u,1^{u-t+1}} = \tilde{\varphi}^{u,l,1^{u-t+1}} $ and $\pi^u=\pi^{u,l}$ in each $ O(1^{u-t+1})\cap U^{u-1}_{u,l}(\sin\theta_{u-t,1})$. 
Set
\begin{equation}
    \tau^{u,1^{u-t+1}}_1 (x)=x-\tilde{\varphi}^{u,1^{u-t+1}} (\pi^u(x)) \text{ in } O(1^{u-t+1}).
\end{equation}
Then $\tau^{u,1^{u-t+1}}_1$ maps $\eta^{u-1} (\Gamma^u_k)$ to $L^u$ in $O(1^{u-t+1})$. 

Now we are ready to define $\zeta^{u,1^{u-t}}_1$ by extending $\tau^{u,1^{u-t+1}}_1$ to $O(1^{u-t})$.
Let $V(1^{u-t+1})=O(1^{u-t})\backslash \overline{U}^{u-1}_u(\sin\frac{\theta_{u-t,1}}{2})$
and set ${\tau}^{u,1^{u-t+1}}_2=id$ on $V(1^{u-t+1})$, then $\{O(1^{u-t+1}),V(1^{u-t+1})\}$ is an open cover of $O(1^{u-t})\backslash L^{u-1}$ and there is a partition of unity subordinated to $\{O(1^{u-t+1}),$ $V(1^{u-t+1})\}$. We call them $\{\mu^{1^{u-t+1}}_1,\mu^{1^{u-t+1}}_2\}$. Precisely speaking, $ \sum_{j=1}^2 \mu^{1^{u-t+1}}_j=1$ on $O(1^{u-t})\backslash L^{u-1}$. Moreover, we have supp$(\mu^{1^{u-t+1}}_1)\subset O(1^{u-t+1})$ and supp$(\mu^{1^{u-t+1}}_2)\subset V(1^{u-t+1})$. We can ask that $\mu^{1^{u-t+1}}_j$ is $C^1$ in $O(1^{u-t})\backslash L^{u-1}$ for $j=1,2$. By Case A\ref{O(w1)内退化为type k} in Lemma \ref{O(w)的性质}, for each $x\in O(1^{u-t})\backslash L^{u-1}$ and $j=1,2$, we can ask that $|\nabla\mu^{1^{u-t+1}}_j(x)|<C\cdot\dd(x, L^{u-1})^{-1}$, here $C$ depends only on $\alpha,n$, thus, it is a geometric constant. Let 
$   \zeta^{u,1^{u-t}}_1=\sum_{j=1}^2 \mu^{1^{u-t+1}}_j\cdot \tau^{u,1^{u-t+1}}_j
$ in $ O(1^{u-t})\backslash L^{u-1}$ 
and let $\zeta^{u,1^{u-t}}_1=id$ on $O(1^{u-t})\cap L^{u-1}$. Then we have a continuous map $\zeta^{u,1^{u-t}}_1$ in $O(1^{u-t})$ which is such that 
\begin{gather}\label{导数接近1}
    |D\zeta^{u,1^{u-t}}_1-I|<C\varepsilon \text{ in } 4.9 B_i\cap O(1^{u-t})\backslash L^{u-1},\\
    \label{移动距离小}
    |\zeta^{u,1^{u-t}}_1(x)-x|< C\varepsilon \dd(x,L^{u-1})\enspace\text{for all }x\in O(1^{u-t}),
\end{gather}
where the first $C$ depends on $C_{t,2},C_{t+1,4},...,C_{u,4},\alpha$ and the second $C$ depends on $C_{t,3},$ $C_{t+1,5},...,C_{u,5}$.
Furthermore, $\zeta^{u,1^{u-t}}_1$ maps $\eta^{u-1} (\Gamma^u_k)$ to $L^{u}$ in $O(1^u)\cap 4.9 B_i$.

\noindent\textbf{An extension of $\bm{\zeta^{u,1^{u-t}}_1}$ to $\bm{O(1^{u-t-1})}$.} Then we extend $\zeta^{u,1^{u-t}}_1$ to $O(1^{u-t-1})$ to get $\zeta^{u,1^{u-t}}$ by the same method as for extending $\tau^{u,1^{u-t+1}}_1$ to get $\zeta^{u,1^{u-t}}_1$.
At the same time, similar properties as (\ref{导数接近1}) and (\ref{移动距离小}) still hold, that is, $|D\zeta^{u,1^{u-t}}-I|<C\varepsilon$ in $4.9 B_i\cap O(1^{u-t-1})\backslash L^{u-2}$ and $ |\zeta^{u,1^{u-t}}(x)-x|<C\varepsilon \dd(x,L^{u-2})$, where the first $C$ depends on $C_{t,2},C_{t+1,4},...,C_{u,4},\alpha$ and the second $C$ depends on $C_{t,3},C_{t+1,5},...,C_{u,5}$. We end the process for $1^{u-t}$.

\textbf{For $\bm{w=1^{u-t-1}}0$.}

Next we consider $1^{u-t}-1=1^{u-t-1}0$. What we need to do is to define $\zeta^{u,1^{u-t-1}0}_1$ in $O(1^{u-t-1}0)$ and extend it to $O(1^{u-t-1})$. Next, we composite $\zeta^{u,1^{u-t-1}0}$ and $\zeta^{u,1^{u-t-1}1}$ in $O(1^{u-t-1}) $ to get a new map and proceed to extend it.
As a preparation, we define $\tau^{u,1^{u-t-1}01}_1$ in $O(1^{u-t-1}01)$ and extend it to $O(1^{u-t-1}0)$. 
 By Case A\ref{O(w1)等价定义} and \ref{O(w1)内退化为type k} in Lemma \ref{O(w)的性质}, every component $O(1^{u-t-1}01)\cap U^{\mathcal{N}(1^{u-t-1}0)}_{u,l}(\sin\theta_{u-t,1})$ only meets $L^{u,l}$. 
Since $|D\zeta^{u,1^{u-t}}-I|<C\varepsilon$, $\zeta^{u,1^{u-t}}\circ \eta^{u-1} (\Gamma^{u,l}_k)$ is still a $C\varepsilon$-Lipschitz graph over $L^{u,l}$ in $O(1^{u-t-1}01)\cap U^{\mathcal{N}(1^{u-t-1}0)}_{u,l}(\sin\theta_{u-t,1})$, where $C$ depends on  the constants $C_{t,2},C_{t+1,4},\dots,C_{u,4},\alpha$. 
Denote by  $\tilde{\varphi}^{u,1^{u-t-1}01}$ this Lipschitz map
and let $\pi^u=\pi^{u,l}$ in every $ O(1^{u-t-1}01)\cap U^{\mathcal{N}(1^{u-t-1}0)}_{u,l}(\sin\theta_{u-t,1})$ and let
\begin{equation}
    \tau^{u,1^{u-t-1}01}_1(x)=x-\tilde{\varphi}^{u,1^{u-t-1}01}(\pi^u (x)).
\end{equation} 
Then let 
$
    V(1^{u-t-1}01)=O(1^{u-t-1}0)\backslash \overline{U}^{u-2}_{F(1^{u-t-1}0)}(\sin\frac{\theta_{u,1}}{2})
$
and let $\eta^{u,1^{u-t-1}01}_2=id$ on $V(1^{u-t-1}01)$. Therefore, $\{O(1^{u-t-1}01),V(1^{u-t-1}01)\}$ is an open cover of $O(1^{u-1}0)\backslash L^{u-2}$. By using the same method as for defining $\zeta^{u,1^{u-t}}_1$, we get $\zeta^{u,1^{u-t-1}0}_1$ defined in $O(1^{u-t-1}0)$ with 
\begin{gather}
        |D\zeta^{u,1^{u-t-1}0}_1-I|<C\varepsilon \text{ in } 4.9 B_i\cap O(1^{u-t-1}0)\backslash L^{u-2}\\
        | \zeta^{u,1^{u-t-1}0}_1(x)-x|<C\varepsilon\dd(x,L^{u-2})\text{ for all }x\in O(1^{u-1}0) ,
\end{gather}
where the first $C$ depends on $C_{t,2},C_{t+1,4},...,C_{u,4},\alpha$ and the second $C$ depends on $C_{t,3}$, $C_{t+1,5},...,C_{u,5}$. Furthermore, $\zeta^{u,1^{u-t-1}01}$ maps $\zeta^{u,1^{u-t}}\circ\eta^{u-1}(\Gamma^{u}_k)$ to $L^u$ in $O(1^{u-t-1}0)\cap 4.9B_i$. In the same way, we can extend $\zeta^{u,1^{u-t-1}0}_1$ to $O(1^{u-t-1})$ and then get $\zeta^{u,1^{u-t-1}0}$ defined in $O(1^{u-t-1})$. Now we have defined $\zeta^{u,1^{u-t}}$ and $\zeta^{u,1^{u-t-1}0}$ in $O(1^{u-t-1})$. Let 
\begin{equation}
    \zeta^{u,1^{u-t-1}}_1=\zeta^{u,1^{u-t-1}0}\circ \zeta^{u,1^{u-t}}.
\end{equation}
It has similar properties as in (\ref{导数接近1}) and (\ref{移动距离小}). Then we extend it to $O(1^{u-t-2})$ to get $\zeta^{u,1^{u-t-1}}$. And we turn to next case for $1^{u-t-1}0-1=1^{u-t-2}01$.

\textbf{For $\bm{w=\delta_t...\delta_{u-1}}$.}

Set $w=\delta_t...\delta_{u-1}$ and assume that we need to define $\zeta^{u,w}_1$ in $O(w)$ now. That is, for all $w'>w$ in binary, suppose that $w'=\delta_t'...\delta_{u-1}'$ and set $s'=\mathcal{N}(w')$, we have end the construction of $\zeta^{u,w'}$ in $O(\delta_t'...\delta_{u-2}')$, $\zeta^{u,\delta_t'...\delta_{u-2}'}$ in $O(\delta_t'...\delta_{u-3}')$,... and $\zeta^{u,\delta_t'...\delta_{s'}'}$ in $O(\delta_t'...\delta_{s'-1}')$. Then we are ready to  define $\zeta^{u,w}_1$ in $O(w)$. 
Let $e_1=\max\{j:\delta_j=0,w=\delta_t...\delta_{u-1}\}$ and $w_1=\delta_t...\delta_{e_1-1}1$. Let $e_2=\max\{j:\delta_j=0,w_1=\delta_t...\delta_{e_1-1}1\}$ and $w_2=\delta_t...\delta_{e_2-1}1$.
Since $w$ has finitely many letters, we can end the process with $e_v$ and $w_v$. And $\zeta^{u,w_1}\circ ...\circ \zeta^{u,w_v} (\Gamma^u_k)$ is still a $C\varepsilon$-Lipschitz graph of $L^u$ in $O(\delta_t\dots \delta_{e_1-1})$, where $C$ depends on $C_{t,2},C_{t+1,4},...,C_{u,4},\alpha$. Denote by $\tilde{\varphi}^{u,w1}$ this $C\varepsilon$-Lipschitz map. Then  for each $x\in O(w1)$, let
\begin{equation}
    \tau^{u,w1}_1(x)=x-\tilde{\varphi}^{u,w1}(\pi^u (x)).
\end{equation}
Then we extend $\tau^{u,w1}_1$ to $O(w)$ and get $\zeta^{u,w}_1$. We proceed to extend $\zeta^{u,w}_1$ to $O(\delta_t...\delta_{u-2})$ (where $w=\delta_t...\delta_{u-1}$) and get $\zeta^{u,w}$.

If $\delta_{u-1}=1$, then we end the process for $w=\delta_t...\delta_{u-1}$ and begin to consider $\zeta^{u,w-1}$. If $\delta_{u-1}=0$, let $\zeta^{u,\delta_t...\delta_{u-2}}_1$ equals to $\zeta^{u,w}\circ \zeta^{u,\delta_t...\delta_{u-2}1}$. Then extend $\zeta^{u,\delta_t...\delta_{u-2}}_1$ to $O(\delta_t...\delta_{u-3})$ to get $ \zeta^{u,\delta_t...\delta_{u-2}}$. And consider $\delta_{u-2}=1$ or 0 and repeat the operation as what we do when consider $\delta_{u-1}=1$ or 0. Continue this process and it will terminates within a finitely many steps. Then we continue to consider $\zeta^{u,w-1}$, where the subtraction in $w-1$ is in binary.

\begin{figure}[H]
    \centering
    \begin{overpic}[scale=0.5]{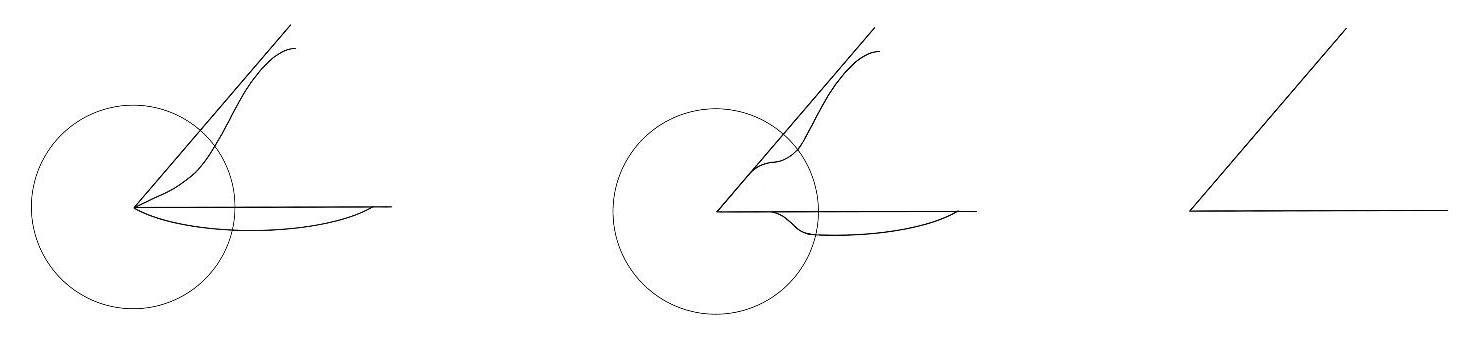}
    \put(6,9){\tiny $L^1_i$}
    \put(20,22){\tiny $L^{2,1}_i$}
    \put(22,19){\tiny $\eta^1_i(\Gamma^{2,1}_k)$}
    \put(24,10){\tiny $L^{2,2}_i$}
    \put(23,6){\tiny $\eta^1_i(\Gamma^{2,2}_k)$}
    \put(7,0){\tiny $O(11)$}
    \put(32,11){ $\xrightarrow{\zeta^{2,11}}$ }
    \put(68,11){ $\xrightarrow{\zeta^{2,10}}$ }
    \end{overpic}
    \caption{Examples of $\zeta^{u,w}$ when $t=0$, $u=2$ and $w=11,10$ (assume that we are looking in the direction orthogonal to $L^1_i$)}
    \label{fig2}
\end{figure}

\begin{figure}[H]
\centering
\begin{forest}
for tree={
  circle, 
  draw, 
  edge={<-,>=latex}, 
  s sep=5em,  
  l sep=5em,  
  edge label/.wrap pgfmath arg={%
    node [midway, right, font=\scriptsize, xshift=1pt] {\text{#1}}
  }{content()},
}
[1, label={left:$\zeta^{u,11}$}, label={[xshift=5em]right:$t$}
    [1,label={left:$\zeta^{u,111}$},label={right:$\zeta^{u,110}$}, edge label={node[midway, left, font=\footnotesize]{step 5: compose and extend}} 
        [1, label={left:$\zeta^{u,111}_1$}, edge label={node[midway, left, font=\footnotesize]{step 2: extend}}
            [1, label={left:$\tau_1^{u,1111}$}, edge label={node[midway, left, font=\footnotesize]{step 1: extend}}]
        ]
        [0, label={right:$\zeta_1^{u,110}$}, edge label={node[midway, right, font=\footnotesize]{step 4: extend}}
            [1, label={right:$\tau_1^{u,1101}$}, edge label={node[midway, right, font=\footnotesize]{step 3: extend}}]
        ]
    ]
    [0, label={[xshift=5em]right:$t+1$}
        [1
            [1]
        ]
        [0, label={[xshift=5em]right:$t+2$}
            [1, label={[xshift=5em]right:$u=t+3$} ]
        ]
    ]
]
\end{forest}
\caption{An example of the construction of $\zeta^{u,w}$ in binary when $u=t+3$}
\label{fig tree}
\end{figure}

At last, we get $\zeta^{u,1}_1$ in $\RR^N$ and then we set $\eta^u_i=\zeta^{u,1}_1\circ \eta^{u-1}_i$. According to the discussion above, we have the properties that $\eta^u_i$ maps $\Gamma^t_k \to L^t_i$,..., $\Gamma^u_k \to L^u_i$ in $4.9 B_i$. Furthermore, $\eta^u_i$ is continuous in $\RR^N$ and is $C^1$ in $4.9 B_i \backslash L^{u-1}_i$ with $|D\eta^u_i-I|<C\varepsilon$. When  {$u=m-1$}, 
we get the map  $\eta^{m-1}_i$ for $i\in I_t(k)$, which is such that
\begin{equation}\label{eta的4个性质}
    \begin{split}
        &|\eta^{m-1}_i(x)-x|<C\varepsilon r_i \text{ for }x\in \RR^N,\\
        &\eta^{m-1}_i \text{ is continuous in }\RR^N,  \\
        &\eta^{m-1}_i\text{ is }C^1 \text{ in }4.9 B_i\backslash L^{m-1}_i\text{ with }|D\eta^{m-1}_i - I|<C\varepsilon,\\
        &\eta^{m-1}_i(\Gamma^{u}_k)=L^u_i\text{ in }4.9 B_i\text{ for }t\leq u\leq {m-1},
    \end{split}
\end{equation}
where $C>0$ depends on $C_{t,2}, C_{t,3}, \{ C_{ w,e}\}_{t\leq w\leq u,e=4,5},\alpha$. 

And then {   we define the map $h_i$ analogously to the construction of $\zeta_1^{u,1}$. Specifically, let $\Lambda=\cup_{k=t}^m \Lambda_k$ be defined as in \eqref{Lambda定义} with $u=m$, and let $\{O(w)\}_{w\in \Lambda}$ be the corresponding open sets defined as in \eqref{O(w)的归纳定义} and \eqref{F(w0)}. For each $w\in \Lambda_m$, Lemma \ref{O(w)的性质} Case A\ref{O(w1)=不交并} implies that $O(w)$ is the disjoint union of the refined sets $\{O(w,s)\}_s$.  Furthermore, by Case A\ref{O(w1)内退化为type k}, $L^m\cap O(w,s)=L^{m,s}\cap O(w,s) = P^{m,s}\cap O(w,s)$ for each index $s$. Accordingly, for each $w\in \Lambda_m$, we define the local map $h_1^w : O(w)\to L^m$ by setting  $h_1^w(x)=\pi^{m,s}(x)=\ppi^{m,s}(x)$ for each $x\in O(w,s)$ and each index $s$. Having established the maps on the terminal sets $\{O(w)\}_{w\in \Lambda_m}$, we extend and compose them to obtain the global map $h_i$ on $O(1)=\RR^N$ finally.  By construction, $h_i$ is continuous in $\RR^N$ and $C^1$ in $\RR^N\backslash L^{m-1}$. Moreover, if a point $x$ is not contained in  $O(w)$ for any $w\in \Lambda_m$, then $h_i(x)=x$. In particular, $h_i=id$ on $L^{m-1}$. 
By Proposition \ref{O(w)是覆盖}, it follows that for each $w\in \Lambda_m$ and for each index $s$,}
\begin{equation}\label{h的定义}
    h_i(x)=\pi^{m,s}_i(x)\enspace\text{when } \dd(x,L^{m,s}_i)<(\sin\frac{\theta_{m-t,2}}{2})\cdot \dd(x,L^{m-1}_i).
\end{equation}
Finally, set $\psi_i^m: 4.9 B_i\to \RR^N$ by
\begin{equation}
    \psi^m_i=h_i\circ \eta^{m-1}_i.
\end{equation}
It is clear that 
\begin{equation}\label{psi^m-1和psi^m在m-1维graph上相等}
    \psi^{m-1}_i(x)=\psi^m_i(x)\text{ for each }x\in \Gamma^{m-1}_k\cap 3 B_i.
\end{equation}
Also, by (\ref{eta的4个性质}), (\ref{h的定义}), the induction hypothesis of (M2),(M3),(M4), we have
\begin{equation}\label{psi的正则性}
    \begin{split}
    &\psi^m_i\text{ is continuous on }\Gamma^m_k\cap 4.9 B_i,\\
    &\psi^m_i\text{ is }C^1 \text{ on }(\Gamma^{m}_k\backslash \Gamma^{m-1}_k)\cap 4.9 B_i.
    \end{split}
\end{equation}

\subsection{Proof of (M1)-(M4) for the base case $k=0$ in dimension $m$}

{  For each $x\in \RR^N$, let $I_x=\{i\in I(k):\theta_i(x)\neq 0\}$.}

By (\ref{fk,gk}), we can estimate that
\begin{equation}\label{g^m_k粗略估计}
    |g^m_k(x)-x|<n_0^{-n}\cdot 2^{-n-90}\text{ for }x\in \RR^N.
\end{equation}
This is because $|g^m_k(x)-x|\leq \max_{i\in I_x} |\psi^m_i(x)-x|\leq |\psi^m_i(x)-\eta^{m-1}_i(x)|+|\eta^{m-1}_i(x)-x|<
8r_i$ and $r_i\leq n_0^{-n}\cdot 2^{-k-100}$. Now we are ready to check that (M1)-(M4) hold.

First we prove that (M1)-(M4) hold for $k=0$. Since $\Gamma_0^m=\Gamma^m= L^m\cap B(0,\rho^m_0)$ by (\ref{Gamma^m定义}) and $d_{0,1.98}(L^m,E_m)<C_6\varepsilon$ by (\ref{1.98尺度}), for each $x\in \Gamma^m_0\cap B(0,\rho^m_0)$, there is $y\in E_m$ such that $|x-y|<1.98C_6\varepsilon$. Set $C_{m,1}\geq 1.98C_6$, then (M1) holds for $k=0$.

When $i\in I_m(0)$, (\ref{取出来的锥每一层都接近}) implies that $W_i\in \A(m)$ and $L^m_i$ is an $m$-plane passing through $x_i$ such that $d_{x_i,100r_i}(L^m_i,E_m)<C_5\varepsilon$. Also, (\ref{级别低的球离级别高的点远}) implies that $\dd(10^3n_0 B_i,E_{m-1})\gg 10^4 r_i$. Thus $10^3 n_0 B_i$ does not meet $L^{m-1}$ and we can find $Z\in \cup_{s=m}^{n} \A(s)$ such that $Z\cap 10^3 B_i=Z_0\cap 10^3 B_i$ by Proposition~\ref{用次一级的锥替代}. On the other hand, since $x_i\in E_m\cap B(0,1.98)$, we have $\dd(x_i,L^m)<1.98 C_6\varepsilon$ by (\ref{1.98尺度}), which means that $L^m \cap B_i\neq \emptyset$. As a consequence, $Z\in \A(m)$ and $L^m(Z)\cap 10^3 B_i=L^m\cap 10^3 B_i$. Since $L^m(Z)$ is an $m$-plane, there is only one branch of $L^{m}$ intersecting with $10^3 B_i$. Denote by $L^{m,l}$ this branch, we get that $\Gamma^m_0\cap 10^3 B_i=\Gamma^{m,l}_0\cap 10^3 B_i=L^{m,l}\cap 10^3 B_i$ and $\dd(x_i,\Gamma^{m,l}_0)<1.98C_6\varepsilon$. Set $C_{m,3}>1.98 C_6 $, then we get that $\Gamma^{m,l}_0$ is the unique branch meeting $5B_i$ and $\Gamma^{m,l}_0\cap B(x_i,C_{m,3}\varepsilon)\neq\emptyset$. In addition, we have $d_{x_i,20r_i}(L^m_i,L^{m,l})<(\frac{C_6}{10 r_i}+5C_5)\varepsilon$. Let $C_{m,2}>\frac{C_6}{10 r_i}+5C_5$ and let $G^m_i=L^{m,l}$, then (M2) holds for $k=0$.

When $i\in I_t(0)$, where $ 0\leq t\leq m-1$, we have $x_i\in E_t$. When $t=0$, $x_i=0$. When $t\geq 1$, (\ref{级别低的球离级别高的点远}) implies that  $\dd(10^3n_0 B_i, E_{t-1})\gg 10^4 r_i$. By (\ref{取出来的锥每一层都接近}), when $t\geq 1$, $10^3 n_0 B_i$ does not meet $L^{t-1}_i$ and there is $Z\in \ccup_{s=t}^{n} \A(s)$ such that $Z\cap 10^3 B_i=Z_0\cap 10^3 B_i$ by Proposition \ref{用次一级的锥替代}. Since $x_i\in E_t$, $L^t$ meets $B_i$ and therefore $Z\in \A(t)$. By remark in Lemma \ref{lem:球近则对应的锥接近}, the number of branches of $L^m$ that meets $20B_i$ is the same with the number of branches of $L^m_i$. And there is a constant $C>0$ depending on $C_5$ and $C_6$ such that, for each $L^{m,l}$ that meets $20B_i$, there is a unique branch  $L^{m,l}_i \subset  L^m_i$ such that $d_{x_i,20r_i}(L^{m,l},L^{m,l}_i)<C\varepsilon$. 
{  In addition, there exists a one-to-one correspondence between the $(m-1)$-boundaries of $L^{m,l}$ and the $(m-1)$-boundaries of $L_i^{m,l}$, as in (\ref{球近则锥近的式子2}).} 
Let $G^{m,l}_i=L^{m,l}$ and $D^{m,l}_i=\ppi^{m,l}_i(L^{m,l})$, where $\ppi^{m,l}_i$ is the orthogonal projection onto the $m$-plane that contains $L^{m,l}_i$, then we have
\begin{equation}\label{k=0时的M3}
\begin{split}
d_{x_i,(5+1/500)r_i}(D^{m,l}_i,L^{m,l}_i)<4C\varepsilon
\end{split}
\end{equation}
for some geometric constant $C$. By Lemma \ref{面的dx,r决定夹角和距离}, (\ref{Gamma^m定义}) and (\ref{k=0时的M3}), (M3) for $k=0$ holds.

 Since $f^m_0=id$, the case when $k=0$ for (M4) is clear.

\subsection{Proof of (M1)-(M4) for the inductive step $k+1$ in dimension $m$}

Now suppose that (M1)-(M4) hold from step $0$ to $k$. We continue to prove that they hold for $k+1$. For each $x\in\Gamma^m_k\cap B(0,\rho^m_k)$, let $I_x=\{i\in I(k):\theta_i(x)\neq 0\}$. By induction hypothesis of (M1) for dimension $m$ and step $k$, $\dd(x,E_m)<C_{m,1}\varepsilon 2^{-k}\ll r_j/2$ for $j\in I_m(k)$. 
{Combining} with (\ref{级别低的球离级别高的点远}), 
we have $I_x\subset \ccup_{t=0}^m I_t(k)$.

We first show that different maps for a fixed point are 
{  sufficiently close} in Lemma \ref{lem for m:对同一个点映射之后距离相差很小}.

\begin{lemma}\label{lem for m:对同一个点映射之后距离相差很小}
    Fix $x\in\Gamma^m_k$ and suppose that $x_i\in B(0,\rho^m_k)$ for all $i\in I_x$. Then we have
    \begin{equation}\label{for m:对同一个点映射之后距离相差很小}
        |\psi^m_i(x)-\psi^m_j(x)|<C_8\varepsilon 2^{-k},\text{ for all }i,j\in I_x
    \end{equation}
    where $C_8>0$ is a constant only depending on $n_0,\delta_0,n,\alpha$ and $\{C_{u,e}\}_{0\leq u\leq m-1,1\leq e\leq 5}$.
\end{lemma}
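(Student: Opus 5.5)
Fix $x\in\Gamma^m_k$ and $i,j\in I_x$, say $i\in I_s(k)$, $j\in I_{t'}(k)$ with $t'\le s\le m$ (recall $I_x\subset\cup_{t=0}^m I_t(k)$). Since $\theta_i(x)\ne0\ne\theta_j(x)$, we have $x\in 3B_i\cap 3B_j$, so $10B_i\cap10B_j\neq\emptyset$ and Lemma \ref{lem:球近则对应的锥接近} applies. It gives $d_{x_i,21r_i}(L^d_i,L^d_j)<10C_5\varepsilon$ for all $d\ge s$, together with the branch bijection $\Phi_d$ satisfying $d_{x_i,20r_i}(L,\Phi_d(L))<C_7\varepsilon$. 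The strategy is to compare both images of $x$ with a single well-chosen point of $L^m_i$. The key observation is that $\psi^m_i(x)$ is essentially the projection of $x$ onto the branch of $L^m_i$ approximating the branch $\Gamma^{m,l}_k$ of $\Gamma^m_k$ containing $x$, up to an error $C\varepsilon r_i$. The same holds for $j$ with $\Phi_m$ of that branch.

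\textbf{Step 1: $\psi^m_i(x)$ sits near $\pi^{m,l}_i(x)$.} By (\ref{eta的4个性质}) we have $|\eta^{m-1}_i(x)-x|<C\varepsilon r_i$, and $\eta^{m-1}_i$ carries the lower graphs $\Gamma^u_k$ onto $L^u_i$. By (M2)/(M3) at dimension $m$ and step $k$ (induction in $k$), $x\in\Gamma^{m,l}_k\cap5B_i$ lies on a $C\varepsilon$-Lipschitz graph over $D^{m,l}_i$, and $D^{m,l}_i$ is $C_{m,5}\varepsilon$-close to $L^{m,l}_i$. Hence $\dd(x,L^{m,l}_i)\le C\varepsilon 2^{-k}$. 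The construction of $\eta^{m-1}_i$ via $\zeta^{u,w}$ keeps $\eta^{m-1}_i(x)$ in the same cell $O(w,s)$ up to controlled errors. So either $h_i$ acts as $\pi^{m,s}_i$ there by (\ref{h的定义}), with $L^{m,s}_i=L^{m,l}_i$ by Lemma \ref{O(w)的性质} Case A(iii), or $x$ lies in a partition-of-unity transition zone, where $h_i$ moves points by at most $C\varepsilon\,\dd(\cdot,L^{m-1}_i)$. In both cases we get $|\psi^m_i(x)-\pi^{m,l}_i(x)|\le C\varepsilon 2^{-k}$, and likewise $|\psi^m_j(x)-\pi^{m,l'}_j(x)|\le C\varepsilon2^{-k}$ with $L^{m,l'}_j=\Phi_m(L^{m,l}_i)$. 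The identification $L^{m,l'}_j=\Phi_m(L^{m,l}_i)$ comes from the uniqueness in (M3) of the branch of $L^m_j$ close to $\Gamma^{m,l}_k$, combined with the separation angle $\alpha$.

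\textbf{Step 2: compare the two projections.} The two branches are $C_7\varepsilon$-close in $B(x_i,20r_i)$, and $x$ lies within $C\varepsilon r_i$ of both. Lemma \ref{面的dx,r决定夹角和距离}, applied to the planes $P^{m,l}_i$ and $P^{m,l'}_j$, gives $|\ppi^{m,l}_i(x)-\ppi^{m,l'}_j(x)|\le C C_7\varepsilon r_i$. Near the relative interior the nearest-point projections agree with the orthogonal ones. Near the relative boundary, both projections of the nearby point $x$ move it by at most $C\varepsilon r_i$, so the estimate holds there too. Since $r_i,r_j\le 2^{-k}$, the triangle inequality yields (\ref{for m:对同一个点映射之后距离相差很小}), with $C_8$ depending only on the listed constants.

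\textbf{Main obstacle.} The hardest part is Step 1 for $i\in I_t(k)$ with $t<m$, where $x$ lies near the lower spines. There the composite $\zeta$-maps have no explicit formula, and one must check that the cumulative displacement is $O(\varepsilon r_i)$. I would handle this inductively along the word tree, using the estimates $|\zeta^{u,w}(y)-y|<C\varepsilon\,\dd(y,L^{\mathcal N(w)-1})$ and that each $\tau$ moves points by the graph height $\le C\varepsilon r_i$. The goal would be the uniform bound $|\psi^m_i(x)-x|\le C\varepsilon2^{-k}$ for $x\in\Gamma^m_k$. This bound alone, applied to both $i$ and $j$, already gives the lemma, and it makes the branch-matching in Step 2 unnecessary.
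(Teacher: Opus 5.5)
\textbf{Step 1 has a genuine gap.} The problem is in Step 1 for $i\in I_t(k)$ with $t<m$, which is where the whole content of the lemma lies: you never determine what $h_i$ does at the point $\eta^{m-1}_i(x)$.

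\begin{itemize}
\item Formula \eqref{h的定义} applies only under the \emph{relative} condition $\dd(y,L^{m,s}_i)<\sin(\theta_{m-t,2}/2)\,\dd(y,L^{m-1}_i)$. Your absolute bounds $\dd(x,L^{m,l}_i)\le C\varepsilon2^{-k}$ and $|\eta^{m-1}_i(x)-x|<C\varepsilon r_i$ say nothing about this condition once $x$ is within $O(\varepsilon 2^{-k})$ of $L^{m-1}_i$.
\item Case A(iii) of Lemma \ref{O(w)的性质} only tells you that $L^{m,s}_i$ is the nearest branch. It does not tell you that $L^{m,s}_i$ is the branch approximating $\Gamma^{m,l}_k$.
\item Your fallback claim about transition zones is false, because $h_i$ is not a small perturbation of the identity. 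At the innermost extension step it equals $\mu_1\pi^{m,s}_i+\mu_2\,\mathrm{id}$, which moves $y$ by $\mu_1\dd(y,L^{m,s}_i)$. On the overlap zone, $\dd(y,L^{m,s}_i)$ is at least a fixed geometric multiple (of order $\sin(\theta/2)$) of $\dd(y,L^{m-1}_i)$, not $C\varepsilon$ times it.
\item Your ``main obstacle'' paragraph aims at the wrong map. The cumulative displacement of the $\zeta$'s is already recorded in \eqref{eta的4个性质}.
\end{itemize}

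The missing idea is the paper's. Because $\eta^{m-1}_i$ carries $\Gamma^{m-1}_k$ exactly onto $L^{m-1}_i$ and $|D\eta^{m-1}_i-I|<C\varepsilon$, the image $\eta^{m-1}_i(\Gamma^{m,l}_k)$ is a $C\varepsilon$-Lipschitz graph over $L^{m,l}_i$ whose boundary lies on $L^{m-1}_i$. It therefore lies in the thin cone \eqref{eta映射后Gamma的像在骨架里面}. Then \eqref{h的定义} gives $\psi^m_i(x)=\pi^{m,l}_i\circ\eta^{m-1}_i(x)$ exactly, and no transition zone is ever met. The branch label is fixed by testing one point of $\Gamma^{m,l}_k$ far from $L^{m-1}_i$ and matching via Lemma \ref{lem:球近则对应的锥接近}.

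\textbf{Step 2 is not dispensable.} Your closing claim that a uniform bound $|\psi^m_i(x)-x|\le C\varepsilon2^{-k}$ suffices, making Step 2 unnecessary, is wrong in a way that matters.

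\begin{itemize}
\item Step 1 can be patched by the bound $|h_i(y)-y|\le C\,\dd(y,L^m_i)$. This holds because $h_i$ is built from nearest-branch projections (Case A(iii)), blended with the identity and composed finitely often.
\item But that only gives $|\psi^m_i(x)-x|\le C(\dd(x,L^m_i)+\varepsilon r_i)$. So $C_8$ would depend on dimension-$m$ constants: $C_{m,4},C_{m,5}$ as used in your Step 1, or $C_{m,1}$ if you go through (M1).
\item The statement forbids this for a reason. $C_{m,1}$ and $C_{m,4}$ are defined afterwards in terms of $C_8$. With such a $C_8$, (M1) would not propagate from step $k$ to step $k+1$ with the same constant.
\end{itemize}

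What gives an error independent of $\dd(x,L^m_i)$ is comparing projections of one point onto two $C_7\varepsilon$-close planes via Lemma \ref{面的dx,r决定夹角和距离}, which costs only $CC_7\varepsilon r_i$. This is how the paper handles the case where all balls in $I_x$ have type $m$ ($J_x=m$).

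So keep Step 2, but apply it to $\ppi^{m,l}_i(\eta^{m-1}_i(x))$ and $\ppi^{m,l'}_j(\eta^{m-1}_j(x))$; inside the thin cone, nearest-point and orthogonal projections agree. Combine it with $|\eta^{m-1}_i(x)-\eta^{m-1}_j(x)|\le C\varepsilon2^{-k}$, whose constant involves only dimensions $<m$. The dimension-$m$ constants then enter only through the smallness of $\varepsilon$.

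(The paper's own final triangle inequality for $J_x<m$ bounds $\dd(\eta^{m-1}_i(x),L^{m,l}_i)$ via \eqref{eta映射后Gamma的像在骨架里面}, whose constant involves $u=m$. So it is loose on this point too.)
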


\begin{proof}
    Let $ J_x=\min\{s:I_x\cap I_s(k)\neq \emptyset\}$ and suppose that $x\in\Gamma^{m,l}_k$ for some $l$.

    When $J_x=m$, $ I_x\subset I_m(k)$. Fix an index $j_x\in I_m(k)\cap I_x$, then for all $i\in I_x$, we have $3B_i\cap 3B_{j_x}\neq\emptyset$ and therefore, $d_{x_i,20r_i}(L^m_i,L^m_{j_x})<C_7 \varepsilon$ by Lemma \ref{lem:球近则对应的锥接近}. Since $x_i\in L^m_i$, we get that $|\pi^m_i(x_i)-\pi^m_{j_x}(x_i)|<20 C_7\varepsilon r_i$ and $|\pi^m_i (x)-\pi^m_{j_x}(x)|<100 n C_7\varepsilon r_i$ by Lemma \ref{面的dx,r决定夹角和距离}. In $3B_i$,  $\psi^m_i=\pi^m_i$,  so $|\psi^m_i(x)-\psi^m_j(x)|\leq |\psi^m_i(x)-\psi^m_{j_x}(x) |+|\psi^m_j(x)-\psi^m_{j_x}(x)|<200n C_7\varepsilon r_i$. Let  $C_8>0$ be such that  $200n C_7\varepsilon r_i<C_8\varepsilon 2^{-k}$, then $C_8$ only depends on $n,n_0,\delta_0$ and (\ref{for m:对同一个点映射之后距离相差很小}) follows.

    When $J_x<m$, pick $j_x\in I_x\cap I_{J_x}(k)$ and consider $|\psi^m_i(x)-\psi^m_{j_x}(x)|$ for each $i\in I_x$ such that $i\neq j_x$.
    There is a unique branch of $L^m_{j_x}$, denoted as $L^{m,l}_{j_x}$, such that $\Gamma^{m,l}_k$ is a $C_{m,4}\varepsilon$-Lipschitz graph of $D^{m,l}_{j_x}$ by induction hypothesis of (M3) for step $k$. In addition, since $|D\eta^{m-1}_{j_x}-I|<C\varepsilon$, $\eta^{m-1}_{j_x}(\Gamma^{m,l}_k)$ is also a $C\varepsilon$-Lipschitz graph of $L^{m,l}_{j_x}$ with $\eta^{m-1}_{j_x}(\Gamma^{m-1}_k)\subset L^{m-1}_{j_x}$ in $4.9B_{j_x}$, where $C$ is a geometric constant.
    Thus we have $\dd(\eta^{m-1}_{j_x}(x),L^{m,l}_{j_x})\leq C\varepsilon \dd(\eta^{m-1}_{j_x}(x),L^{m-1}_{j_x})$. 
    {  This estimate implies that $\eta_{j_x}^{m-1}(x) \in O^{(1/2)}(w,l)$ for some $w \in \Lambda_m$. Recall that the family of conical neighborhoods $\{O(w)\}_w$ here is associated with the $m$-spine of $W_{j_x}$. Consequently, by \eqref{h的定义}, we have} 
    \begin{equation}\label{psi_jx}
        \psi^{m}_{j_x}(x)=\pi^{m,l}_{j_x}\circ \eta^{m-1}_{j_x}(x).
    \end{equation} 
    On the other hand, $L^{m,l}_{j_x}$ meets $20B_i$. Actually, $\dd(x,L^{m,l}_{j_x})<C\varepsilon 2^{-k} $ for some geometric constant $C>0$ by the fact that $\Gamma^{J_x}_k\cap B(x_{j_x},C_{J_x,3}\varepsilon 2^{-k})\neq \emptyset$ in (M2), $\Gamma^{m,l}_k$ is a $C_{m,4}\varepsilon$-Lipschitz graph over $D^{m,l}_{j_x}$ in (M3) and $x\in 3 B_{j_x}$.
    By Lemma \ref{lem:球近则对应的锥接近}, there is a unique branch of $L^m_i$, denoted as $L^{m,l}_i$, such that
    $
        d_{x_i,20r_i}(L^{m,l}_i,L^{m,l}_{j_x})<C_7\varepsilon.
    $
    We continue to prove that 
    \begin{equation}\label{式子：psi_i定义}
        \psi^m_i(x)=\pi^{m,l}_i \circ \eta^{m-1}_i(x).
    \end{equation}
    Recall that, as defined at the beginning of Section 5.1.1, we have $\eta_i^{m-1}=id$ for $i\in I_m(k)$. Consequently, in this case, since $L^{m}_i$ is an $m$-plane, (\ref{式子：psi_i定义}) follows immediately. 
    When $i\notin I_m(k)$, for each branch $L^{m,l'}_i\subset L^{m}_i$, $ \Gamma^{m,l'}_k$ is a $C\varepsilon$-Lipschitz graph of a unique $D^{m,l'}_i$ by induction hypothesis for (M3) of step $k$. And $\eta^{m-1}_i$ maps $\Gamma^{m-1}_k$ to $L^{m-1}_i$ with $|D\eta^{m-1}_i-I|<C\varepsilon$ by \eqref{eta的4个性质}. Therefore, there is $C>0$ depending on $\{C_{u,e}\}_{0\leq u\leq m,1\leq e\leq 5}$ such that
    \begin{equation}\label{eta映射后Gamma的像在骨架里面}
        \eta^{m-1}_i(\Gamma^{m,l'}_k)\subset \{z: \dd(z,L^{m,l'}_i)\leq C\varepsilon \dd(z,L^{m-1}_i)\}\text{ in } 4.9 B_i,
    \end{equation}
     and points on the same $\eta^{m-1}_i (\Gamma^{m,l'}_k)$ should be 
     {   mapped by the orthogonal projection} to the $m$-plane that contains the same branch of $L^{m}_i$. 
     Furthermore, the projection of $\eta^{m-1}_i(\Gamma^{m,l'}_k)$ is contained in $L^{m,l'}_i$. Thus, once we know the image of any point in $\Gamma^{m,l}$, we know which branch of $L^m_i$ should $\eta^{m-1}_i(x)$ be projected to. 
     For this, we just pick a point $y\in \Gamma^{m,l}_k\cap 3 B_i$ such that the distance between $y$ and $L^{m-1}_i$ is greater than $C(\alpha,n) r_i$, where $C(\alpha,n)$ is a constant depending only on $\alpha,n$. By (\ref{alpha定义}), such $y$ exists. And we can get that $y$ is $C\varepsilon 2^{-k}$ close to $L^{m,l}_i$ by (\ref{eta映射后Gamma的像在骨架里面}). As a result, there is a geometric constant $C>0$ such that $\dd(\eta^{m-1}_i(y),L^{m,l}_i)<C\varepsilon \dd(\eta^{m-1}_i(y) , L^{m-1}_i)$. So $\psi^m_i(y)=\pi^{m,l}_i \circ \eta^{m-1}_i(y)$ and (\ref{式子：psi_i定义}) follows.
    
    By (\ref{psi_jx}) and (\ref{式子：psi_i定义}), we have $|\psi^m_i(x)-\psi^m_{j_x}(x)|\leq |\pi^{m,l}_i\circ \eta^{m-1}_i (x)-\eta^{m-1}_i(x)|+ |\pi^{m,l}_{j_x}\circ \eta^{m-1}_{j_x}(x)-\eta^{m-1}_{j_x}(x)|+|\eta^{m-1}_i(x) - \eta^{m-1}_{j_x}(x)|$. By \eqref{eta的4个性质} and (\ref{eta映射后Gamma的像在骨架里面}), there is $C_8>0$ depending only on $n_0,\delta_0,n,\alpha$ and $\{C_{u,e}\}_{0\leq u\leq m-1,1\leq e\leq 5}$ such that $|\psi^m_i(x)-\psi^m_{j_x}(x)|\leq C_8\varepsilon 2^{-k}$ and Lemma \ref{lem for m:对同一个点映射之后距离相差很小} follows.
   
\end{proof}

\subsubsection{Proof of (M1) for the inductive step $k+1$} 

Let  $x\in\Gamma^m_k\cap B(0,\rho^m_k)$ be such that $x_i\in B(0,\rho^m_k )$ for all $i\in I_x$. Then, by Lemma \ref{lem for m:对同一个点映射之后距离相差很小}, $|g^m_k(x)-\psi^m_i(x)|<C_8\varepsilon 2^{-k}$. Since $\psi^m_i(x)\in L^m_i\cap 4B_i$, we can get that $\dd(\psi^m_i(x),E_m)<C\varepsilon 2^{-k}$ by (\ref{取出来的锥每一层都接近}) and $\dd(g^m_k(x),E_m)<C\varepsilon 2^{-k}$ by (\ref{fk,gk}), where $C>0$ depends on $C_5,C_8$ and some other geometric constants. 
Now fix $x\in\Gamma^{m}_{k+1}\cap B(0,\rho^m_{k+1})$, there is $y\in \Gamma^m_k$ such that $x=g^m_k(y)$ and $|x-y|<n_0^{-n}\cdot 2^{-k-90}$ by (\ref{g^m_k粗略估计}). By (\ref{rho^m_k}), we have $\rho^m_{k+1}=\rho^m_k-n_0^{-n}\cdot 2^{-k-12}$. Therefore, $x_s\in B(0,\rho^m_k) $ for all $s\in I_y$.  
And there exists $C_{m,1}>0$ depending only on $C_5,C_8$ and $n_0,\delta_0,n,\alpha$ such that $ \dd(x, E_m)=\dd(g^m_k(y), E_m)<C_{m,1}\varepsilon 2^{-k-1}$. Thus (M1) holds for $k+1$.  

Let us check that there is $C>0$ depending on $C_5,C_8,\{C_{u,e}\}_{0\leq u\leq m-1,1\leq e\leq 5}$ and $C_{m,1}$ such that
\begin{equation}\label{g^m_k移动点y很近}
    |g^m_k(y)-y|<C\varepsilon 2^{-k} \text{ when }y\in \Gamma^{m}_k\cap B(0,\rho^m_k).
\end{equation}
Since $g^m_k$ is an average of $\psi^m_s$ for all $s\in I_y$, we only consider $|y-\psi^m_s(y)|$ for one $s\in I_y$. It is clear that $x_s\in B(0,\rho^m_k)$. And $|\psi^m_s(y)-y|\leq |\eta^{m-1}_s(y)-y|+\dd(\eta^{m-1}_s(y), L^m_s)\leq  2|\eta^{m-1}_s(y)-y|+\dd(y, L^m_s)$. By (M1) and (\ref{取出来的锥每一层都接近}),  $\dd(y, L^m_s)<C\varepsilon 2^{-k}$. Then by \eqref{eta的4个性质} and Lemma \ref{lem for m:对同一个点映射之后距离相差很小}, (\ref{g^m_k移动点y很近}) follows.

\subsubsection{Proof of (M2) for the inductive step $k+1$} 
We begin to prove (M2) for $k+1$. Let $i\in I_m(k+1)$ such that $x_i\in B(0,\rho^m_{k+1})$. Then $x_i\in E_m$ and there is $j\in \ccup_{t=0}^m I_t(k)$ such that $x_i\in 2B_j$ by (\ref{2Bi构成Em覆盖}). So let us choose $j$ such that
\begin{equation}\label{M2取j}
    j\in \ccup_{t=0}^m I_t(k)\text{ and }x_i\in 2B_j.
\end{equation}
First we check that
\begin{equation}\label{式子for m:5.5Bi中的点来自5Bj}
    \Gamma^{m,l}_{k+1}\cap 5.5 B_i=g^m_k(\Gamma^{m,l}_k\cap 5B_j)\cap 5.5 B_i
\end{equation}
for each $l$.
It is obvious that the right-hand-side is contained in the left-hand-side. Then we consider the converse.
Fix $l$, if $\Gamma^{m,l}_{k+1}\cap 5.5 B_i\neq \emptyset$, then for each $x\in \Gamma^{m,l}_{k+1}\cap 5.5B_i$, there exists $y\in\Gamma^{m,l}_k$ such that $x=g^m_k(y)$. Then $y\in B(0,\rho^m_k)$ and by (\ref{g^m_k移动点y很近}), $|x-y|<C\varepsilon 2^{-k}$. Thus $ y\in 5B_j$.  If $ \Gamma^{m,l}_{k+1}\cap 5.5 B_i=\emptyset$, then $g^m_k(\Gamma^{m,l}_k)\cap 5.5B_i=\emptyset$, and the right-hand-side is also empty. Then (\ref{式子for m:5.5Bi中的点来自5Bj}) follows.

We continue to prove that 
\begin{equation}\label{M2只有一个分支和5Bi交}
    \text{$L^m_j$ coincides with an $m$-plane in $5.5 B_i$ and only one branch of  $\Gamma^m_{k+1}$ meets $5.5B_i$. }
\end{equation}
And there exists $x\in\Gamma^{m,l}_k$ such that 
\begin{equation}\label{M2中x的存在性}
    x\in\frac{1}{200}B_i \text{ and } g^m_k(x)\in \frac{1}{100}B_i.
\end{equation}
If $j\in I_m(k)$, there is only one branch of $\Gamma^{m}_k$ meeting $5B_j$ by induction hypothesis of (M2) for step $k$, and  (\ref{式子for m:5.5Bi中的点来自5Bj}) implies that there is only one branch of $\Gamma^{m}_{k+1}$ meeting $5.5 B_i$. Furthermore, since  $x_i\in E_m\cap 2B_j $, we have $\dd(x_i,L^m_j)<C\varepsilon 2^{-k}$ by (\ref{取出来的锥每一层都接近}), where $C$ depends on $C_5$. Since $\Gamma^{m,l}_k $ is a $C_{m,2}\varepsilon$-Lipschitz graph of $L^{m}_j$ in $5B_j$ by (M2) of step $k$, we get that $\dd(x_i,\Gamma^{m,l}_k)<C\varepsilon 2^{-k}$ and $\dd(x_i,\Gamma^{m,l}_{k+1})<C\varepsilon 2^{-k}$, where $C$ depends on $C_{m,2}, C_{m,3}, C_5$. Then we get $x$ in (\ref{M2中x的存在性}).
If $j\notin I_m(k)$, then $10^3 n_0 B_i\subset 3B_j$ and $10^3 n_0 B_i\cap L^{m-1}_j=\emptyset$. Also, we have $\dd(x_i,L^m_j)<C\varepsilon 2^{-k}$ by (\ref{取出来的锥每一层都接近}). By Proposition \ref{用次一级的锥替代}, there is $Z\in \A(m)$ such that $Z\cap 100B_i=W_j\cap 100B_i$. Since $L^m(Z)$ is an $m$-plane, only one branch of $L^m_j$ meets $100B_i$, denoted as $L^{m,l}_j$.
Then we aim to show that the distance between $x_i$ and {   $\Gamma^{m,l}_j$} 
is no more than $C\varepsilon 2^{-k}$. For this, suppose $j\in I_t(k)$, then we have $\Gamma^t_k\cap B(x_j,C_{t,3}\varepsilon 2^{-k})$ by induction hypothesis of (M2) for dimension $t$. Furthermore, by (M3), $\Gamma^{m,l}_k$ is a $C_{m,4}\varepsilon$-Lipschitz graph of $D^{m,l}_j$ with (\ref{Dtl和Ltl很近}) and $\Gamma^t_k$ is a boundary of $\Gamma^{m,l}_k$, then we have
$\dd(y,\Gamma^{m,l}_k)<C\varepsilon 2^{-k}$ for all $y\in L^{m,l}_j\cap 4B_j$, where $C$ depends on $C_{t,3}, C_{t+1,4},...,C_{m,4}$. Since $\dd(x_i,L^m_j)=\dd(x_i,L^{m,l}_j)$, we have $\dd(x_i,\Gamma^{m,l}_k)<C\varepsilon 2^{-k}$ and $\dd(x_i,\Gamma^{m,l}_{k+1})<C\varepsilon 2^{-k}$, where $C$ depends on $C_{t,3},C_{t+1,4},..., C_{m,4}, C_5$.   
For other $l'\neq l$, $L^{m,l'}_j$ does not meet $ 100B_i$. So   $\Gamma^{m,l'}_k$ does not meet $99B_i$ by induction hypothesis for (M2) and (M3) of step $k$ and $\Gamma^{m,l'}_{k+1}$ does not meet $98B_i$ by (\ref{g^m_k移动点y很近}). Then (\ref{M2只有一个分支和5Bi交}) and (\ref{M2中x的存在性}) follow.

Fix $x$ in (\ref{M2中x的存在性}) and let $B_x:=B(x,(5+1/50)r_i).$
Recall (\ref{M2取j}), then $5B_i\subset B_x\subset 5.5B_i \subset 5B_j$. By the same argument in  (\ref{式子for m:5.5Bi中的点来自5Bj}), we also have
\begin{equation}\label{M2:5Bi中的点来自Bx}
    \Gamma^{m,l}_{k+1}\cap 5B_i=g^m_k(\Gamma^{m,l}_k\cap B_x)\cap 5B_i.
\end{equation}
For (\ref{M2的式子}), let us show that there is $C>0$ depending only on $C_7,C_8$,$\{C_{t,4}\}_{0\leq t\leq m-1,1\leq e\leq 5}$ and some other geometric constants such that, for each $z\in \Gamma^{m,l}_k\cap 5.5B_i$, we have
\begin{equation}\label{M2Dg-Dpi的式子}
    |Dg^m_k(z)-D\pi^m_i|\leq \sum_{s\in I_z} |\theta_s(z)|\cdot |D\psi^m_s(z)-D\pi^m_i|+\sum_{s\in I_z}|D\theta_s(z)|\cdot|\psi^m_s(z)-g^m_k(z)|<C\varepsilon.
\end{equation}
For (\ref{M2Dg-Dpi的式子}), we first show that there is $C>0$ depending on $C_7,\{C_{u,e}\}_{0\leq u\leq m-1,1\leq e\leq 5}$ such that
\begin{equation}\label{M2Dg-Dpi的第一个式子} 
    |D\psi^m_s(z)-D\pi^m_i|<C\varepsilon{\text{ for } s\in I_z}.
\end{equation}
For each $s\in I_z$, we have $z\in 3B_s$, therefore $ 10B_i\cap 10B_s\neq\emptyset$. By Lemma \ref{lem:球近则对应的锥接近}, there is a unique branch of $L^m_s$ interacting with $20B_i$, denoted as $L^{m,l}_s$, such that $d_{x_i,20r_i}(L^{m,l}_s,L^m_i)<C_7\varepsilon$. If $s\in I_m(k)$, $L^m_s$ is an $m$-plane and $\psi^m_s(z)=\pi^m_s (z)$, implying that $|D\psi^m_s(z)-D\pi^m_i|<C\varepsilon$, where $C$ depends on $C_7$. So (\ref{M2Dg-Dpi的第一个式子}) for $s\in I_m(k)$ follows. 
If $s\notin I_m(k)$, by (\ref{取出来的锥每一层都接近}) and (\ref{级别低的球离级别高的点远}), $10^3 n_0 B_i\cap L^{m-1}_s=\emptyset$. Therefore, $L^m_s$ coincides with an $m$-plane in $100B_i$ by Proposition \ref{用次一级的锥替代}. That is, $L^{m}_s\cap 100 B_i=L^{m,l}_s\cap 100 B_i$. Because $z\in 5.5B_i$ and $|\eta^{m-1}_s(z)-z|<C\varepsilon 2^{-k}$ in \eqref{eta的4个性质}, we have $\dd(\eta^{m-1}_s(z), L^{m-1}_s)\gg 100r_i$. On the other hand, by induction hypothesis of (M3) of step $k$,  since $z\in \Gamma^{m,l}_k$, $\dd(z,L^m_s)=\dd(z,L^{m,l}_s)<C\varepsilon 2^{-k}$, so $\eta^{m-1}_s(z)$ is also $C\varepsilon 2^{-k}$-close to $L^{m,l}_s$. As a conclusion, we have $\dd(\eta^{m-1}_s(z),L^{m,l}_s)<C\varepsilon \dd(\eta^{m-1}_s(z),L^{m-1}_s)$, then   $\psi^m_s(z)=\pi^{m,l}_s\circ\eta^{m-1}_s(z)$ and 
\begin{equation}
    |D\psi^m_s(z)-D\pi^m_i|\leq |D\psi^m_s(z)-D\ppi^{m,l}_s|+|D\ppi^{m,l}_s-D\pi^m_i|,
\end{equation}
where $|D\psi^m_s(z)-D\ppi^{m,l}_s|\leq |D\ppi^{m,l}_s|\cdot |D\eta^{m-1}_s(z)-I|<C\varepsilon$ by \eqref{eta的4个性质} and $|D\ppi^{m,l}_s-D\pi^m_i|<C\varepsilon $ by Lemma \ref{面的dx,r决定夹角和距离}. Then ({\ref{M2Dg-Dpi的第一个式子}) follows.
As for the second formula in (\ref{M2Dg-Dpi的式子}), by (\ref{单位分解的导数}) Lemma \ref{lem for m:对同一个点映射之后距离相差很小}, we have $|D\theta_s(z)|\cdot|g^m_k(z)-\psi^m_s(z)|<C\varepsilon 2^{-k}$. 
Combining with (\ref{M2Dg-Dpi的第一个式子}), (\ref{M2Dg-Dpi的式子}) follows.

According to the induction hypothesis for (\ref{M2的式子}) and (\ref{M3的式子}) for step $k$, for any two points  $w_1,w_2\in\Gamma^{m,l}_k\cap B_x\subset \Gamma^{m,l}_k\cap 5B_j$, there is a curve $\gamma:[0,1]\to \Gamma^{m,l}_k\cap 5.5 B_i$ such that $\gamma(0)=w_1,\gamma(1)=w_2$ while $|\gamma|\leq (1+C\varepsilon)|w_1-w_2|$, where $C$ depends on $C_{m,2}$ if $j\in I_m(k)$ and $C$ depends on $C_{m,4}$ if $j\in \cup_{t=0}^{m-1}I_t(k)$. Denote by ${(L^m_i)}^{\perp}$ the orthogonal subspace of $L^m_i$ and $(\pi^m_i)^{\perp}$ the orthogonal projection onto $(L^m_i)^{\perp}$. Then we can estimate that
\begin{equation}\label{M2包含于Lip graph}
    \begin{split}
     &|(\pi^m_i)^{\perp} \circ g^m_k(w_1) - (\pi^m_i)^{\perp} \circ g^m_k(w_2)|
     <C\varepsilon|w_1-w_2|,\\
    &|\pi^m_i \circ g^m_k(w_1)-\pi^m_i\circ g^m_k(w_1)|\geq (1-C\varepsilon)|w_1-w_2|>0.99|w_1-w_2|
    \end{split}
\end{equation}
by (\ref{M2Dg-Dpi的式子}), and $C$ depends only on $C_7,C_8$,$\{C_{u,e}\}_{0\leq u\leq m-1,1\leq e\leq 5}$ and some other geometric constants. (Although we seem to use $C_{m,2}$ and $C_{m,4}$ in the definition of $\gamma$, the main point is that $ 1-C\varepsilon>0.99$). 

Already (\ref{M2包含于Lip graph}) indicates that $g^m_k(\Gamma^{m,l}_k\cap B_x)$ is contained in a $C\varepsilon$-Lipschitz graph $G$ over $L^m_i$. Let us check that there is no hole. That is,
\begin{equation}\label{式子for m:满射1}
    g^m_k( \Gamma^{m,l}_k \cap B_x) \cap 5B_i = G\cap 5B_i.
\end{equation} 
{  
Recall from (\ref{M2只有一个分支和5Bi交}) that exactly one branch of $L^{m}_j$ meets $5.5 B_i$. For convenience, we denote this branch by $L^{m,l}_j$, regardless of whether $j\in I_m(k)$ or not. 
By the induction hypotheses (M2) and (M3) at step $k$, $\Gamma^{m,l}_k$ is a $C_{m,2}\varepsilon$-Lipschitz graph of $L^{m,l}_j$ (if $j\in I_m(k)$) or is a $C_{m,4}\varepsilon$-Lipschitz graph of $D^{m,l}_j$ (if $j\notin I_m(k)$). Let $\varphi^{m,l}_j: P^{m,l}_j\to (P^{m,l}_j)^{\perp}$ denote  the corresponding $C\varepsilon$-Lipschitz map in either case, and define the parametrization $\phi^{m,l}_j=\varphi^{m,l}_j+id$. From (\ref{M2只有一个分支和5Bi交}), we deduce that $\ppi^{m,l}_j(x) \in L^{m,l}_j$, and thus  $\ppi^{m,l}_j(x)=\pi^{m,l}_j(x)$. 

Let us define the domain $D=B( \pi^{m,l}_j(x), (5+1/60)r_i )\cap P^{m,l}_j $. Note that $D$ is contained in $L^{m,l}_j$. Moreover, for all $y\in D$, we have the estimate $|\phi^{m,l}_j(y)-y|<C\varepsilon 2^{-k}$, where $C$ depends only on $C_{m,2}$ or $C_{m,4}$. Provided $\varepsilon$ is sufficiently small, we have $C\varepsilon 2^{-k}\ll r_i$. Thus, the following inclusion holds:
\begin{equation}
    \phi^{m,l}_j (D)
    \subset \Gamma^{m,l}_k\cap B_x.
\end{equation}
Now consider $h=\pi^m_i \circ g^m_k\circ \phi^{m,l}_j$. Since it is straightforward to verify that for any $y \in D$, we have $y\in D^{m,l}_j\cap 4.9B_j$, $\phi^{m,l}_j(y)\in 3B_j$, and $h(y)\in 10B_i$, it follows from (\ref{取出来的锥每一层都接近}), (\ref{g^m_k移动点y很近}) and (M1) that $|h(y)-y|<C\varepsilon 2^{-k} $.
Degree theory yields the following inclusion: 
\begin{equation}
   B(h \circ \pi^{m,l}_j(x), (5+\frac{1}{70})r_i)\cap L^m_i\subset h(D).
\end{equation}
To be precise, consider the linear homotopy $H_t(y) = (1-t)y + t h(y)$. Note that $|h(y)-y|$ is bounded by $C\varepsilon 2^{-k}$, which is much smaller than the gap between the boundary $\partial D$ and the target ball $B(h \circ \pi^{m,l}_j(x), (5+\frac{1}{70})r_i)$. This geometric gap ensures that for any $z$ in the target ball, the homotopy path of the boundary, $H_t(\partial D)$, never passes through $z$. Consequently, the topological degree is well-defined and invariant, so that $ \deg(h, D, z) = \deg(id, D, z) = 1$. This non-zero degree implies that $z \in h(D)$, which yields the desired inclusion.

Recall that $x\in B(x_i,r_i/200)$ in (\ref{M2中x的存在性}), thus $|h\circ \pi^{m,l}_j(x) -x_i|<r_i/100$ and we have
\begin{equation}
    L^m_i\cap 5B_i\subset  L^m_i\cap B( h\circ \pi^{m,l}_j(x),(5+\frac{1}{70})r_i)
\end{equation}
As a consequence, $\pi^m_i\circ g^m_k (\Gamma^{m,l}_k\cap B_x)$ contains $L^m_i\cap 5B_i$, then (\ref{式子for m:满射1}) follows. Combining with (\ref{M2:5Bi中的点来自Bx}), we have
\begin{equation}\label{M2Gammak+1是graph}
    \Gamma^{m,l}_{k+1}\cap 5B_i=G\cap 5B_i,
\end{equation}
which indicates that $\Gamma^{m,l}_{k+1}$ is a $C_{m,2}\varepsilon$-Lipschitz graph over $L^m_i$ in $5B_i$ and we set $G^m_i=G$.
} 

At last let we check that $\Gamma^{m,l}_{k+1}\cap B(x_i,C_{m,3}\varepsilon 2^{-k-1})\neq\emptyset$. By (\ref{g^m_k移动点y很近}) and (\ref{M2中x的存在性}), $|g^m_k(x)-x_i|<r_i/100 $. By (M1), $g^m_k(x)$ is $C_{m,1} \varepsilon 2^{-k-1}$ close to $E_m$. So  $\pi^m_i\circ g^m_k(x)$ is contained in $ L^m_i\cap B(x_i,r_i/100)$. Furthermore, by (M1) and (\ref{取出来的锥每一层都接近}), $\dd(g^m_k(x), L^m_i)=|g^m_k(x)-\pi^m_i\circ g^m_k(x)|<C\varepsilon 2^{-(k+1)}$ for some $C$ that depends only on $C_{m,1}$ and $C_5$. Then (\ref{M2Gammak+1是graph}) implies that there 
is $C_{m,3}$ depending on $C_5,C_{m,1}, C_{m,2}$ such that $\dd(x_i,\Gamma^{m,l}_{k+1})<C_{m,2}\varepsilon  r_i/100+ \dd( g^m_k(x),L^m_i)<C_{m,3}\varepsilon 2^{-k-1}$. So we end the proof of (M2) for $k+1$.

\subsubsection{Proof of (M3) for the inductive step $k+1$}

Now we begin to prove (M3) for $k+1$. Suppose $i\in I_t(k+1)$ for some $0\leq t\leq m-1$ and $L^m_i=\cup_{l} L^{m,l}_i$. Recall that $b_i^m$ is the number of branches of $L_i^m$. First, we aim to show that
\begin{equation}\label{和5Bi相交的分支数}
\begin{aligned}
&\text{there are exactly $b^m_i$ branches of $\Gamma^{m}_{k+1}$ intersecting with $5B_i$,}\\
&\text{while the remaining branches do not intersect with $5.5B_i$.}
\end{aligned}
\end{equation}
Since $x_i\in E_t$, (\ref{2Bi构成Em覆盖}) says that there is $j\in \ccup_{s=0}^t I_s(k)$ such that 
$x_i\in 2B_j$. By Lemma \ref{lem:球近则对应的锥接近}, for each branch of $L^{m,l}_i\subset L^m_i$, we can find a unique branch of $L^{m,l}_j\subset L^m_j$ such that 
\begin{equation}\label{M3Li和Lj近的分支}
    d_{x_i,20r_i}(L^{m,l}_i,L^{m,l}_j)<C_7\varepsilon
\end{equation}
And the number of branches of $L^m_j$ that intersects with $20B_i$ is equal to $b^m_i$.
Since $x_i\in L^{t}_i\subset L^{m,l}_i$, we have
\begin{equation}\label{M3xi离L^m_j近}
    \dd(x_i, L^{m,l}_j)<C\varepsilon 2^{-k},
\end{equation}
where $C$ depends on $C_7$. Thus, if a branch of $L^m_j$ meets $20B_i$, it also meets $5B_i$. So we have proved that there are exactly $b^m_i$ branches of $L^m_j$ intersecting with $5B_i$ and $20 B_i$. 
By the induction hypothesis for (M3) of dimension $m$ and step $k$, each $\Gamma^{m,l}_k$ and $L^{m,l}_j$ is contained in the $C\varepsilon 2^{-k}$ neighborhood of each other in $5B_j$, where the main point is that $C\varepsilon 2^{-k}$ is much smaller than $r_j$. 
And the remaining branches does not meet $19B_i$ so does not meet $5.5B_i$. By (\ref{g^m_k移动点y很近}), we have (\ref{和5Bi相交的分支数}).
 
By the same proof as for (\ref{式子for m:5.5Bi中的点来自5Bj}), we still have 
\begin{equation}\label{for m:点来源于5Bj}
    \Gamma^{m,l}_{k+1}\cap 5.5 B_i=g^m_k(\Gamma^{m,l}_k\cap 5B_j)\cap 5.5 B_i.
\end{equation}
for each $l$. Furthermore, since $x_i\in 2B_j$, we can get that $\Gamma^{m,l}_k\cap B(x_i,C\varepsilon 2^{-k})\neq \emptyset$ for $1\leq l\leq b^m_i$ by (\ref{M3xi离L^m_j近}), (M2) (if $j\in I_t(k)$) or (M3) (if $j\notin I_t(k)$) for dimension $t$, and induction hypothesis for dimension $m$ and step $k$. Thus, for each $1\leq l\leq b^m_i$ with $\Gamma^{m,l}_k\cap 5 B_i\neq \emptyset$, we can find $x\in\Gamma^{m,l}_k$ such that 
\begin{equation}\label{M3用于定位的x}
    x\in\frac{1}{200}B_i\text{ and } g^m_k(x)\in\frac{1}{100}B_i.
\end{equation}

We want to study $g^m_k$ on each $\Gamma^{m,l}_k$ in $5.5 B_i$. Let $1\leq l\leq b^m_i$ and $x$ as in (\ref{M3用于定位的x}) be fixed. Let 
$
    B_x=B(x,(5+{1}/{50})r_i).
$
Then $ 5.01 B_i\subset B_x\subset 5.5 B_i\subset 5B_j$. And we have
\begin{equation}\label{M3:5Bi的来自Bx}
    \Gamma^{m,l}_{k+1}\cap 5 B_i=g^m_k(\Gamma^{m,l}_k\cap B_x)\cap 5 B_i
\end{equation}
by the same proof as for (\ref{for m:点来源于5Bj}). 
We aim to show that there is a constant $C>0$ depending on $C_7,C_8,\{C_{u,e}\}_{0\leq u\leq m-1,1\leq e\leq 5}$ and some other geometric constants such that, for each $z\in \Gamma^{m,l}_k\cap 5.5B_i$
\begin{equation}\label{M3估算|Dg-Dpi|}
    |Dg^m_k(z)-D\ppi^{m,l}_i|\leq \sum_{s\in I_z} |\theta_s(z)|\cdot |D\psi^m_s(z)-D\ppi^{m,l}_i|+\sum_{s\in I_z}|D\theta_s(z)|\cdot|\psi^m_s(z)-g^m_k(z)|<C\varepsilon.
\end{equation} 
For (\ref{M3估算|Dg-Dpi|}), we first show that there is $C>0$ depending only on $C_7,\{C_{u,e}\}_{0\leq u\leq m-1,1\leq e\leq 5}$ and some other geometric constants such that
\begin{equation}\label{M3Dpsi-Dpi}
    | D\psi^m_s(z)-D\ppi^{m,l}_i|<C\varepsilon.
\end{equation}
For (\ref{M3Dpsi-Dpi}), we need to know $\psi^m_s(z)$ in different $s\in I_z$ and we will write it precisely in (\ref{M3:Bs里的映射}). For each $s\in I_z$, since $z\in 3B_s$, we have $10B_i\cap 10B_s\neq\emptyset$. 
And we have $s\in I_w(k)$ for some $0\leq w\leq m$ by (M1) for dimension $m$ and (\ref{2Bi构成Em覆盖}). Let us show that 
\begin{equation}\label{M3L^m_s离L^m,l_i近的分支}
    d_{x_i,20r_i}(L^{m,l_s}_s,L^{m,l}_i)<C_7\varepsilon\text{ for a unique branch }L^{m,l_s}_s\subset L^{m}_s.
\end{equation}
If $w\leq t$, 
(\ref{M3L^m_s离L^m,l_i近的分支}) is clear by Lemma \ref{lem:球近则对应的锥接近}. 
If $w>t$, in order to use Lemma \ref{lem:球近则对应的锥接近}, we need to show that $L^{m,l}_i\cap 20B_s\neq\emptyset.$ Since $z\in 5.5 B_i$ and $x_i\in 2B_j$, we have $z\in 4.75 B_j$. By induction hypothesis for (M2) of dimension smaller than $m$ and for (M3) of dimension $m$ and step $k$, $\Gamma^{m,l}_k$ is a $C\varepsilon$-Lipschitz graph passing through the $C\varepsilon 2^{-k}$ neighborhood of $x_j$, so we have $\dd(z, L^{m,l}_j)<C\varepsilon 2^{-k}$. Furthermore, by (\ref{取出来的锥每一层都接近}), $ \dd(z, L^{m,l}_i)<C\varepsilon 2^{-k}$, where the main point is that $C\varepsilon 2^{-k}\ll r_s$. Thus we have $L^{m,l}_i\cap 4B_s\neq \emptyset$ because $z\in 3B_s$. And (\ref{M3L^m_s离L^m,l_i近的分支}) follows by Lemma \ref{lem:球近则对应的锥接近}.

Thanks to (\ref{M3L^m_s离L^m,l_i近的分支}), we are ready to prove that 
\begin{equation}\label{M3:Bs里的映射}
    \psi^m_s(z)=\ppi^{m,l_s}_s \circ \eta^{m-1}_s(z).
\end{equation}
If $s\in I_m(k)$, we have $\eta^{m-1}_s=id$ and $\ppi^{m,l_s}_s=\pi^m_s$ so (\ref{M3:Bs里的映射}) holds. If $s\notin I_m(k)$, suppose $s\in I_w(k)$ for some $0\leq w<m$, we consider (\ref{M3:Bs里的映射}) for two cases that $t<w<m $ and $0<w\leq t$ respectively.

If $t<w<m$, recall the proof for (\ref{psi_jx}) in Lemma \ref{lem for m:对同一个点映射之后距离相差很小}, to determine which branch of $L^m_s$ that $\eta^{m-1}_s(z)$ to project onto, we only need to know the projection of one point on $\Gamma^{m,l}_k $ onto $L^m_s$. For this, pick a point $ p\in\Gamma^{m,l}_k\cap 3B_s$ such that $\dd(p, L^{m-1}_s)>C(\alpha,n)r_s$, where $C(\alpha,n)$ is a geometric constant depending only on $\alpha,n$. Since $\eta^{m-1}_s$ moves $p$ no more than $C\varepsilon 2^{-k}$ by \eqref{eta的4个性质}, we have $\dd(\eta^{m-1}_s(p),L^{m-1}_s)> C(\alpha,n)r_s/2$. 
In the meanwhile, $ |p-x_j|<4.8r_j$, thus we can use the induction hypothesis for (M2) of dimensions smaller than $m$, (M3) of dimension $m$ and step $k$ to get that $\dd(p, L^{m,l}_j)<C\varepsilon 2^{-k}$. Therefore, we have $\dd(p,L^{m,l_s}_s)<C\varepsilon 2^{-k}$ by (\ref{M3Li和Lj近的分支}), (\ref{M3L^m_s离L^m,l_i近的分支})  and $p\in 3B_s$. Furthermore, we can get that
\begin{equation}
    \dd(\eta^{m-1}_s(p),L^{m,l_s}_s)<C\varepsilon 2^{-k}\ll \sin(\frac{\alpha}{10})\cdot \dd(\eta^{m-1}_s(p),L^{m-1}_s).
\end{equation}
It indicates that $\psi^m_s(p)=\ppi^{m,l_s}_s\circ \eta^{m-1}_s(p) $ and (\ref{M3:Bs里的映射}) for $t<w<m$ follows.

If $0<w\leq t$, we consider the position of another point on $\Gamma^{m,l}_k$. Pick a point $q\in L^{m,l}_i\cap 6B_i\cap 4B_s $ such that $\dd(q,L^{m-1}_i)\geq C(\alpha,n)r_i$, where $C(\alpha,n)$ is a geometric constant depending only on $\alpha,n$. 
Then there is $q'\in L^{m,l}_j$ such that $|q-q'|<C\varepsilon 2^{-k}$ with $|q'-x_j|\leq |q'-q|+|q-x_i|+|x_i-x_j|<4.8r_j$ by (\ref{M3Li和Lj近的分支}).
By induction assumption for (M2) of dimensions smaller than $m$ and (M3) of dimension $m$ and step $k$, there exists $p\in\Gamma^{m,l}_k$ such that $ |p-q'|<C\varepsilon 2^{-k}$. By (\ref{M3Li和Lj近的分支}) and (\ref{M3L^m_s离L^m,l_i近的分支}), we have $\dd(p,L^{m,l_s}_s)<C\varepsilon 2^{-k}$ while $ \dd(p,L^{m-1}_s)> C(\alpha,n)r_i/3$. So $p\in \Gamma^{m,l}_k$ is such that $|p-x_s|\leq |p-q'|+|q'-q|+|q-x_s|<4.1 r_s$ and $\dd(\eta^{m-1}_s(p), L^{m,l_s}_s)<C\varepsilon \dd(\eta^{m-1}_s(p),L^{m-1}_s)$, where the main point is that $C\varepsilon\ll \sin(\alpha/10)$. And we can get that $\dd(\eta^{m-1}_s(z), L^{m,l_s}_s)<C\varepsilon \dd(\eta^{m-1}_s(z),L^{m-1}_s) $ because $p,z$ are both on $\Gamma^{m,l}_k\cap 5B_j$. Then (\ref{M3:Bs里的映射}) follows.


 For simplicity, we replace $l_s$ with $l$. Now we can use (\ref{M3:Bs里的映射}) to show (\ref{M3Dpsi-Dpi}). By (\ref{M3:Bs里的映射}), \eqref{eta的4个性质} and (\ref{M3L^m_s离L^m,l_i近的分支}), there is $C>0$ depending on $C_7,\{C_{u,e}\}_{0\leq u\leq m-1,1\leq e\leq 5}$ and some other geometric constants such that
 \begin{equation}
     |D\psi^m_s(z)-D\ppi^{m,l}_i|\leq |D\ppi^{m,l}_s|\cdot |D\eta^{m-1}_s(z)-I|+|D\ppi^{m,l}_s-D\ppi^{m,l}_i|<C\varepsilon.
 \end{equation}
Then (\ref{M3Dpsi-Dpi}) follows.
Since $g^m_k(z)$ is an average of $\psi^m_s(z)$ for all $s\in I_z$, by Lemma \ref{lem for m:对同一个点映射之后距离相差很小}, $|g^m_k(z)-\psi^m_s(z)|<C_8\varepsilon 2^{-k}$. So (\ref{M3估算|Dg-Dpi|}) follows. Then by the same argument in (\ref{M2包含于Lip graph}), we can find $C_{m,4}>0$ depending only on  $C_7,C_8,\{C_{u,e}\}_{0\leq u\leq m-1,1\leq e\leq 5}$ and some other geometric constants and a $C_{m,4}\varepsilon$-Lipschitz graph $G$ over $P^{m,l}_i$ such that
\begin{equation}\label{M3:被包含在graph中}
    g^m_k(\Gamma^{m,l}_k\cap B_x) \subset G.
\end{equation}
Next we need to show that there is no hole, that is
\begin{equation}\label{M3满射}
    g^m_k(\Gamma^{m,l}_k\cap B_x) \cap 5B_i = G\cap 5B_i.
\end{equation}
Let us consider (\ref{M3满射}) for two cases when $m=1$ and $m>1$.

\noindent\textbf{Proof of (\ref{M3满射}) when $\bm{m}$=1.} In this case, $i\in I_0(k+1)$, $j\in I_0(k)$. The two balls $B_i$ and $B_j$ are both centered at 0 and $r_j=2r_i$. In $3B_i$, we have $\theta_j=1$, so points on $\Gamma^{1,l}_k$ are just projected onto $L^{1,l}_j$ by $g^1_k$.
By (\ref{g^m_k移动点y很近}), (\ref{取出来的锥每一层都接近}), and $g^1_k(0)=0$, we have
\begin{equation}\label{1维(3)的满射0}
    L^{1,l}_i \cap 2.9B_i\subset \pi^{1,l}_i \circ g^1_k(\Gamma^{1,l}_k\cap 3 B_i).
\end{equation}
It is clear that 0 is an endpoint of the curve $\pi^{1,l}_i\circ g^1_k(\Gamma^{1,l}_k)$.
Next we consider $\Gamma^{1,l}_{k+1}$ in $B_x\backslash 2B_i$. For every $z\in \Gamma^{1,l}_k\cap (B_x\backslash 2B_i)$, (\ref{g^m_k移动点y很近}) says that $g^1_k$ moves $z$ no more than $C\varepsilon 2^{-k}$.
By the induction assumption for (M3) of dimension $m=1$ and step $k$, $\Gamma^{1,l}_k$ is a $C_{1,4}$-Lipschitz graph of $L^{1,l}_j$ in $5B_j$.
Let $\varphi^{1,l}_j:L^{1,l}_j \to (L^{1,l}_j)^{\perp}$ be this  $C_{1,4}\varepsilon$-Lipschitz map and let $\phi^{1,l}_j=\varphi^{1,l}_j+id$, $h=\ppi^{1,l}_i \circ g^1_k\circ \phi^{1,l}_j$. Then by (M1), (\ref{取出来的锥每一层都接近}), (\ref{g^m_k移动点y很近}) and induction hypothesis for (M3) of dimension $m=1$ and step $k$, we have 
\begin{equation}\label{h移动点近}
    |h(y)-y|<C\varepsilon 2^{-k}
\end{equation}
for all $y\in L^{1,l}_j\cap 4.9B_j$, where the main point is that $C\varepsilon 2^{-k}$ is much smaller than $r_i$.
Now pick a point $p\in L^{1,l}_j$ such that $|p|=3.9r_i$, then we can estimate that
\begin{equation}\label{M3h的前一步}
    \phi^{1,l}_j (L^{1,l}_j\cap B(p, 1.11 r_i))\subset \Gamma^{1,l}_k\cap ( B_x\backslash  2B_i)
\end{equation}
since $\phi^{1,l}_j$ moves points no more than $C\varepsilon 2^{-k}$.
Let the map $\pi^{1,l}_i\circ g^1_k$ act on both sides of (\ref{M3h的前一步}), we have
\begin{equation}\label{1维(3)的满射1}
    h( L^{1,l}_j \cap B(p,1.11r_i) )\subset  \pi^{1,l}_i\circ g^1_k(\Gamma^{1,l}_k\cap ( B_x\backslash 2 B_i) ).
\end{equation}
By using degree theory, we also have
\begin{equation}\label{1维(3)的满射2}
    L^{1,l}_i\cap B(h(p), 1.109 r_i )\subset h(L^{1,l}_j\cap B(p,1.11r_i)).
\end{equation}
And now we can get from  (\ref{1维(3)的满射1}) and (\ref{1维(3)的满射2}) that
\begin{equation}\label{1维(3)的满射4}
     L^{1,l}_i\cap ( 5 B_i \backslash 2.8 B_i) \subset \pi^{1,l}_i\circ g^1_k (\Gamma^{1,l}_k\cap ( B_x\backslash  2B_i))
\end{equation}
because $|p|=3.9r_i$ and $h$ moves a point no more than $C\varepsilon 2^{-k}$.
As a result of (\ref{1维(3)的满射0}) and (\ref{1维(3)的满射4}), we have
\begin{equation}
    L^{1,l}_i \cap 5B_i \subset \pi^{1,l}_i\circ g^1_k( \Gamma^{1,l}_k \cap B_x)).
\end{equation}
So (\ref{M3满射}) follows, which means that $\Gamma^{1,l}_{k+1}$ is a graph over $L^{1,l}_i$ by (\ref{M3:5Bi的来自Bx}), that is,
\begin{equation}
     \Gamma^{1,l}_{k+1}\cap 5B_i=G\cap 5B_i.
\end{equation}
Then we can set $G^{1,l}_i=G$ and know that $\Gamma^{1,l}_{k+1}$ is a $C_{1,4}\varepsilon$-Lipschitz graph of $L^{1,l}_i$ in $5B_i$. Moreover, 0 is an endpoint of $\Gamma^{1,l}_{k+1}$. Since different branches of $L^1_j$ make angles greater than $\alpha$, other branches of $\Gamma^{1}_{k+1}$ will not be $C_{1,4}\varepsilon$-Lipschitz graphs of $L^{1,l}_i$. Thus, we end the proof of (M3) when $m=1$.

\noindent\textbf{Proof of (\ref{M3满射}) when $\bm{m}$>1.} When $m>1$, let $\varphi^{m,l}_j:P^{m,l}_j\to (P^{m,l}_j)^{\perp}$ be the $C_{m,4}\varepsilon$-Lipschitz map in induction assumption for (M3) of dimension $m$ and step $k$, and let $\phi^{m,l}_j=\varphi^{m,l}_j+id$. Then in $5B_j$, $\phi^{m,l}_j$ maps points on $D^{m,l}_j$ to $\Gamma^{m,l}_k$ and $|x-\ppi^{m,l}_j(x)|<C\varepsilon 2^{-k}$, where $x$ is as in (\ref{M3用于定位的x}) and $C$ depends on $C_{w,3}$ for some $w\leq t$ and $ C_{m,4}$. Let 
\begin{equation}\label{M3B'和D的定义}
    B'=B(\ppi^{m,l}_j(x),(5+1/60)r_i)\text{ and }D=D^{m,l}_j\cap B',
\end{equation}
since $\phi^{m,l}_j$ moves points no more than $C\varepsilon 2^{-k}$, we have 
$
    \phi^{m,l}_j (D)\subset \Gamma^{m,l}_k\cap B_x.
$
By induction hypothesis for (M3) of dimension $m$ and step $k$, $\Gamma^{m,l}_k$ has boundaries that can be represented as $\{\Gamma^{m-1,l'}_k\}_{l'}$ in $5B_j$, then we have
\begin{equation}
    \partial D=S_1\cup S_0= (\cup_{l'} S^{l'}) \cup S_0,
\end{equation}
where $ S^{l'}=\ppi^{m,l}_j (\Gamma^{m-1,l'}_k) \cap B'$ and $S_0$ is a connected region of $\partial B'$.
By induction hypothesis for dimension $m-1$, $ \Gamma^{m-1,l'}_k$ is a $C_{m-1,4}\varepsilon$-Lipschitz graph of $D^{m-1,l'}_j\subset P^{m-1,l'}_j$ and $L^{m-1,l'}_j$ is an $(m-1)$-boundary of $L^{m,l}_j$. 
We aim to use Lemma \ref{lem:球近则对应的锥接近} to show that for each $l'$,
\begin{equation}\label{M3L^m_i有边界和L^m-1_j近}
    \text{there is a unique $ L^{m-1,l'}_i\subset L^{m,l}_i$ such that $d_{x_i,20r_i}( L^{m-1,l'}_i ,  L^{m-1,l'}_i )<C_7\varepsilon$.}
\end{equation}
For this, we only need to show that $L^{m-1,l'}_j$ meets $20B_i$. Since $ i\in I_t(k+1)$ and $t<m$, there is a branch $\Gamma^{t,h}_k\subset \Gamma^{m-1,l'}_k$ and $\Gamma^{t,h}_k$ passes $C\varepsilon 2^{-k}$ close to $x_i$ by (M2) for dimension $t$ and (\ref{g^m_k移动点y很近}). So $\Gamma^{m-1,l'}_k$ also passes through $x_i$ closely. By induction hypothesis for (M3) of dimension $m-1$, $L^{m-1,l'}_j$ and $ \Gamma^{m-1,l'}_k$ is contained in the $C\varepsilon 2^{-k}$ neighborhood of each other.
Thus, $L^{m-1,l'}_j$ meets $ B_i$ for each $l'$. Thus, (\ref{M3L^m_i有边界和L^m-1_j近}) follows. 

Now let us show that in $5B_i$,
\begin{equation}\label{M3L^{m-1,l'}_i恰好是对应的原像}
    \begin{split}
    &\text{$\Gamma^{m-1,l'}_{k+1}$ is exactly the $C_{m-1,2}\varepsilon$-Lipschitz graph of $L^{m-1,l'}_i$ (when $i\in I_{m-1}(k+1)$) }\\
   &\text{or the  $C_{m-1,4}\varepsilon$-Lipschitz graph of $ D^{m-1,l'}_i$ (when $i\notin I_{m-1}(k+1)$).}
    \end{split}
\end{equation}
By (\ref{psi^m-1和psi^m在m-1维graph上相等}), $g^m_k$ coincides with $g^{m-1}_k$ on $ \Gamma^{m-1}_k $. Thus, by the same argument for (\ref{M3:被包含在graph中}), we get that $g^{m-1}_k(\Gamma^{m-1,l'}_k\cap B_x) $ is contained in a $C\varepsilon$-Lipschitz graph of $ L^{m-1,l'}_i$. In the meanwhile, according to our induction hypothesis for (M2) and (M3) of dimension $m-1$, there is a unique branch of $ L^{m-1}_i$ such that $\Gamma^{m-1,l'}_{k+1}$ is a $C\varepsilon$-Lipschitz graph of this branch. Since $ (m-1)$-branches of $L^{m-1}_i$ make angles larger than $\alpha$ along their intersections, (\ref{M3L^{m-1,l'}_i恰好是对应的原像}) follows directly.

Moreover, we can even claim that 
\begin{equation}\label{M3的claim}
\begin{split}
    &\text{$ \Gamma^{m-1,l'}_{k+1}$ is a $C'\varepsilon$-Lipschitz graph of $L^{m-1,l'}$ (when $i\in I_{m-1}(k+1)$)}\\
    &\text{or a $C'\varepsilon$-Lipschitz graph of $ D^{m-1,l'}_i$ (when $i\notin I_{m-1}(k+1)$)}
    \end{split}
\end{equation}
in a ball a little bit larger than $5B_i$, for example, $6B_i$. 
Here $C'$ may be greater than $C_{m-1,2}$ and $C_{m-1,4}$, but it does not matter. The main point is that $ \Gamma^{m-1,l'}_{k+1}$ is a Lipschitz graph of $L^{m-1,l'}_i$ with small enough constant.
Claim (\ref{M3的claim}) holds because $\Gamma^{m-1,l'}_{k+1}\cap (6B_i \backslash 5B_i)$ can be covered by finitely many balls $\{B_u:u\in \ccup_{t=0}^{m-1}I_t(k+1)\}$. And our induction hypothesis for (M2) and (M3) of dimension $m-1$ implies that  $\Gamma^{m-1,l'}_{k+1}$ is a $C\varepsilon$-Lipschitz graph over $D^{m-1,l'}_u$ for each $u$. In addition, Lemma \ref{lem:球近则对应的锥接近} indicates that $L^{m-1,l'}_u$ and $ L^{m-1,l'}_i$ is close to each other. And the condition about boundary is also the same. So we can extend the condition (\ref{M3L^{m-1,l'}_i恰好是对应的原像}) in $5B_i$ to $6B_i$. 



Let $h=\ppi^{m,l}_i \circ g^m_k\circ \phi^{m,l}_j$. If $y\in D^{m,l}_j\cap 4.9 B_j$ is such that $\phi^{m,l}_j(y)\in 3B_j$ and $h(y)\in 10B_i$, then we have $|h(y)-y|<C\varepsilon 2^{-k}$ by the same argument as for (\ref{h移动点近}). Therefore, $|h(y)-y|<C\varepsilon 2^{-k}$ for each $y\in D$.
By induction hypothesis for (M3) of dimension $m$ and step $k$, every $\Gamma^{m-1,l'}_k$ is a boundary of $\Gamma^{m,l}_k$. Combining with (\ref{M3的claim}), $h(S_1)$ separates $P^{m,l}_i\cap B(x_i,(5+1/200)r_i)$ into 2 parts.
Call $i(\xi)$ the index of $ h(S_1\cup S_0)$ with respect to a point $\xi\in P^{m,l}_i\cap B(x_i,(5+1/200)r_i)$. By the discussion in last paragraph, $i(\xi)\neq 0$ on one part and $i(\xi)= 0$ on the other. Denote by $F_1$ the first and $F_2$ the second.
 We first prove that
 \begin{equation}\label{M3F_1被含于}
     F_1\subset h( \overline{D}).
 \end{equation}
 We can deform $S_1\cup S_0 $ into a point $ p\in \overline{D}$. Therefore, $h(S_1\cup S_0)$ can be deformed into $h(p)$. By the Lipschitz condition of $\Gamma^{m-1,l'}_{k+1}$, if $\xi \in  B(x_i,(5+1/200)r_i)\cap P^{m,l}_i \backslash  h( \overline{D})$, then the deformation of $h(S_1\cup S_0)$ will not meet $\xi$ so $i(\xi)$ will not change. As a result, $i(\xi)=0$. Thus $ F_1$ does not meet $B(x_i,(5+1/200)r_i)\cap P^{m,l}_i \backslash  h( \overline{D}) $ and (\ref{M3F_1被含于}) follows.

We then prove that
\begin{equation}\label{F_2和h(D)不交}
    F_2\cap h((D^{m,l}_j)^{\circ })=\emptyset,
\end{equation}
where $ (D^{m,l}_j)^{\circ }$ is the interior of $D^{m,l}_j$. For this, let us show $F_2\cap h(D^{\circ })=\emptyset$, where $D$ is as in (\ref{M3B'和D的定义}).
If $F_2$ meets $h(D^{\circ})$, we can find $q\in F_2$ such that $q=h(p)$ for some $ p\in D^{\circ}$. In the meanwhile, we can find a point $u\in D^{\circ}$ such that $h(u)\in F_1$. Let $\gamma$ be a path connecting $ p$ and $u$ in $D^{\circ}$, then we have $ h(\gamma)\subset \ppi^{m,l}_i (G)\cap 5.4 B_i$ and $h(\gamma)$ intersects with $ h(S_1)$, where $G$ is introduced in (\ref{M3:被包含在graph中}) is a $C_{m,4}\varepsilon$-Lipschitz over $P^{m,l}_i$. Let $w$ be the intersection of $h(\gamma)$ and $h(S_1)$, then $w\in\ppi^{m,l}_i(\Gamma^{m-1}_{k+1})$ and $w$ is the image of a point in $D^{\circ}$ by $h$ because $\gamma\subset D^{\circ} $. At the same time, $w$ is the image of a point in $ S_1$ by $h$ because $ w\in h(S_1)$, which contradicts the injectivity of $\ppi^{m,l}_i\circ g^m_k$. On the other hand, for each $y\in (D^{m,l}_j)^{\circ}\backslash B'$, we have $ |h(y)-x_i|\geq |y-x_i|-|y-h(y)|\geq  |y-x_i|-C\varepsilon 2^{-k}\geq (5+1/100)r_i$ if $\phi^{m,l}_j(y)\in 3B_j$. And when $\phi^{m,l}_j(y)\notin 3 B_j$, $g^m_k$ will not move $\phi^{m,l}_j(y)$ into $6B_i\supset B'$.
As a result, $h(y)\notin F_2$ and  (\ref{F_2和h(D)不交}) follows.

By (\ref{M3的claim}), $\Gamma^{m-1,l'}_{k+1}$, $\cup_{l'}\Gamma^{m-1,l'}_{k+1}$ separate $ G\cap B(x_i,(5+1/300)r_i) $ into two parts. Denote by $G_1$ the one whose image is contained in $F_1$ by $\ppi^{m,l}_i$ and the other $G_2$. By (\ref{M3F_1被含于}) and the fact $ \phi^{m,l}_j(D)\subset \Gamma^{m,l}_k\cap B_x$ (mentioned below (\ref{M3B'和D的定义})), we have $G_1\subset g^m_k(\Gamma^{m,l}_k\cap B_x)\cap B(x_i, (5+1/300)r_i)$. Combining with (\ref{F_2和h(D)不交}), we have 
\begin{equation}\label{G1}
    (G_1\cup (\ccup_{l'}\Gamma^{m-1,l'}_{k+1}))\cap  B(x_i, (5+\frac{1}{300})r_i)= g^m_k(\Gamma^{m,l}_k\cap B_x)\cap  B(x_i, (5+\frac{1}{300})r_i).
\end{equation}
Set $G^{m,l}_i=G_1\cup (\ccup_{l'}\Gamma^{m-1,l'}_{k+1}))$, then $G^{m,l}_i\subset G$ and $G^{m,l}_i$ has boundaries $U_{l'}\Gamma^{m-1,l'}_{k+1}$. As a result of (\ref{M3:5Bi的来自Bx}) and (\ref{G1}),  we have 
\begin{equation}
    \Gamma^{m,l}_{k+1}\cap 5B_i=G^{m,l}_i\cap 5B_i.
\end{equation}
Then let $D^{m,l}_i=\ppi^{m,l}_i (G^{m,l}_i)\cap B(x_i,(5+1/300)r_i)=(F_1\cup \ppi^{m,l}_i (\ccup_{l'}\Gamma^{m-1,l'}_{k+1})) \cap  B(x_i,(5+1/300)r_i)$.   According to the argument above, there is a constant $C$ depending on $\{C_{u,e}\}_{0\leq u\leq m-1,1\leq e\leq 5}$ such that
\begin{equation}
    d_{x_i,(5+\frac{1}{500})r_i}(D^{m,l}_i,L^{m,l}_i)\leq \max_{l'} d_{x_i,(5+\frac{1}{500})r_i}( D^{m-1,l'}_i, L^{m-1,l'}_i)+C_{t,3}\varepsilon2^{-k}/(5r_i)+2C_{m-1,e}\varepsilon<C\varepsilon,
\end{equation}
where $e=2$ if $i\in I_{m-1}$ and $e=4$ if $i\not\in I_{m-1}$. 
And we have already proved that $\Gamma^{m,l}_{k+1}$ has boundaries like $\Gamma^{m-1,l'}_{k+1}$ and the corresponding branch $L^{m-1,l'}_i$ is an $(m-1)$-boundary of $L^{m,l}_i$ in (\ref{M3L^m_i有边界和L^m-1_j近}) and (\ref{M3L^{m-1,l'}_i恰好是对应的原像}). At last, we need to prove that if $ L^{m,l}_i\cap L^{m,l'}_i=L^{d,s}_i$, then
\begin{equation}
    \Gamma^{m,l}_{k+1}\cap \Gamma^{m,l'}_{k+1}=\Gamma^{d,s}_{k+1}.
\end{equation}
Since $L^{m,l}_i$ and $ L^{m,l'}_i$ make angle larger than $\alpha$ along $L^{d,s}_i$, we know that $\Gamma^{m,l}_{k+1}$ and $ \Gamma^{m,l'}_{k+1}$ leave from $\Gamma^{d,s}_{k+1}$ in direction that make an angle larger $\alpha$ so they only meet at $\Gamma^{d,s}_{k+1}$. Thus we end the proof of (M3) for $k+1$.

\subsubsection{Proof of (M4) for the inductive step $k+1$}

For each $x\in \Gamma^m\cap B(0,\rho^{m+1}_0+b_0^{-n}2^{10})$, we have $| f^m_k(x)-x|<C\varepsilon$ by (\ref{g^m_k移动点y很近}), where $C$ depends only on $C_5, C_8, C_{m,1}$ and $\{C_{u,e}\}_{0\leq u\leq m-1,1\leq e\leq 5}$. Let $z=f^m_k(x)$, then for each $i\in I_z\cap I(k+1)$, $x_i\in B(0,\rho^m_{k+1})$. Thus, we can get from (\ref{fk,gk}), (\ref{psi的正则性}) and induction hypothesis of (M4) for step $k$ that $ f^m_{k+1}$ is continuous at $x$.
Furthermore, if $ x\notin \Gamma^{m-1}$, then $f^m_{k+1}$ is of class $C^1$ at $x$. If $x\in \Gamma^{m-1}$, by (\ref{psi^m-1和psi^m在m-1维graph上相等}) and (\ref{fk,gk}), $f^m_{k+1}(x)=f^{m-1}_{k+1}(x)$.

Then we only need to check that the derivative of the restriction of $f^m_{k+1}$ to $\Gamma^m\cap B(0,\rho^{m+1}_0+b_0^{-n}2^{10})$ does not vanish. Without loss of generality, suppose that $x\in \Gamma^{m,l}\backslash \Gamma^{m-1}$. Let $v$ be a tangent vector to $\Gamma^m$ at $x$ and let $\tau= Df^m_k(x)(v)$. By induction hypothesis of (M4) for step $k$, $\tau\neq 0$. So $\tau$ is a tangent vector to $\Gamma^m_k$ at $z$. By (\ref{M2Dg-Dpi的式子}) and (\ref{M3估算|Dg-Dpi|}), we have either $|D g^m_k(z)(\tau) -D\pi^m_i (\tau) |<C\varepsilon|\tau|$ or $ |D g^m_k(z)(\tau)-D\ppi^{m,l}_i(\tau)|<C\varepsilon|\tau|$. And by Lemma \ref{面的dx,r决定夹角和距离}, Lemma \ref{lem:球近则对应的锥接近} and the induction hypothesis of (M2), (M3) for step $k$, we have either $ |\tau-D\pi^m_i(\tau)|<C\cdot (C_7+C_{t,2})\varepsilon|\tau|$ or  $|\tau-D\ppi^{m,l}_i(\tau)|<C\cdot (C_7+C_{t,4})\varepsilon|\tau|$, where $C$ is a geometric constant depending only on $n$. As a conclusion, $ |Dg^m_k(z)(\tau)|\geq (1-C\varepsilon)|\tau|>0.99 |\tau|>0$. Thus, $D f^m_{k+1}(x)(v)\neq 0$, as needed. And we end the proof of (M4) for $k+1$.

\subsubsection{$f^m$ is bi-H$\ddot{\text{o}}$lder}

Now consider $f^m$, the limit of the $f^m_k$. Let 
\begin{equation}
    B^m= B(0,1.95+[1-2m n_0^{-n}+n_0^{-n}]\cdot 2^{-10}).
\end{equation}
Then $B^m\subset B(0,\rho^m_k)$ for all $k\geq 0$. By (\ref{g^m_k移动点y很近}), we have $|f^m_{k+1}(x)-f^m_k(x)|<C\varepsilon 2^{-k}$ for every $x\in L^m\cap B^m$ and $f^m$ moves $x$ no more than $C\varepsilon$.
Since $\lim\limits_{k\to\infty} \dd(f^m_k(x),E_m)=0$ by (M1) and $E_{t-1}$ is contained in the closure of $E_t$ for each $t\geq 1$ and $\ccup_{t=0}^m E_t$ is closed in $\RR^N$ by Proposition \ref{每一层Em的关系}. We can immediately show that 
\begin{equation}
    f^m(L^m\cap B^m)\subset (\ccup_{t=0}^m E_t) \cap B(0,1.95+[1-2m n_0^{-n}+2 n_0^{-n}]\cdot 2^{-10}).
\end{equation}
Then we continue to prove that 
\begin{equation}\label{f^m包含}
    (\ccup_{t=0}^m E_t ) \cap B(0,1.95+[1-2m n_0^{-n}]\cdot 2^{-10})   \subset f^m(L^m \cap B^m).
\end{equation}
Fix $k\geq 0$, for each $z$ contained in the left-side of (\ref{f^m包含}), there exists $i\in \ccup_{t=0}^m I_t(k)$ such that $z\in 2B_i$ by (\ref{2Bi构成Em覆盖}). Recall $d_{x_i,100r_i}(E_m,L^m_i)<C_5\varepsilon$ in (\ref{取出来的锥每一层都接近}), we can find $y\in L^m_i$ such that $|z-y|<C\varepsilon 2^{-k}$. Suppose $i\in I_t(k)$ and $y\in L^{m,l}_i$, then we have  $\dd(y,\Gamma^{m,l}_k)<C\varepsilon 2^{-k}$ because $\Gamma^t_k\cap B(x_i,C\varepsilon 2^{-k})\neq \emptyset$ and $\Gamma^{m,l}_k\cap 5B_i$ is a $C\varepsilon$-Lipschitz graph of $D^{m,l}_i$ by (M2) and (M3). So  $\dd(z,f^m_k(L^m\cap B^m))<C\varepsilon 2^{-k} $. Let $k$ tends to $\infty$, then $z\in f^m(L^m\cap B^m)$, and (\ref{f^m包含}) follows.

Next we proceed to show $f^m$ is bi-H$\ddot{\text{o}}$lder.  We first check that for $y,z\in f^m_k(L^m\cap B^m)$ such that $|y-z|\leq n_0^{-(m+1)n}\cdot  2^{-k-10^{m+2}}$,  there exists a constant $C$ such that
\begin{equation}\label{for m, holder}
    (1-C\varepsilon)|y-z|<|g^m_k(y)-g^m_k(z)|<(1+C\varepsilon)|y-z|.
\end{equation}
By (M1), $\dd(y,E_m)<C\varepsilon 2^{-k}$, so there is $i\in \cup_{t=0}^m I_t(k+1)$ such that $y\in 2B_i$, thus $z\in 4B_i$. Suppose $i\in I_s(k+1)$ for some $0\leq s\leq m$, then there exists $j\in \ccup_{t=0}^s I_t(k)$ such that $x_i\in 2B_j$. Therefore, 
$ y\in 3B_j$ and $z\in 4B_j$. 

If  $s=m$, recall (M2), $g^m_k(y),g^m_k(z)$ are on the same branch of $\Gamma^m_{k+1}$, so  $y,z$ are also on the same branch of $\Gamma^m_{k}$. We denote it by $\Gamma^{m,l}_{k}$. { The estimates} 
(\ref{M2Dg-Dpi的式子}) and (\ref{M3估算|Dg-Dpi|}) imply that  $|Dg^m_k(x)-D\ppi^m_i|<C\varepsilon$ on $\Gamma^{m,l}_k\cap 5.5 B_i$, so we can estimate that
\begin{equation}\label{gk和pi}
    | |g^m_k(y)-g^m_k(z)| - |\pi^{m,l}_j (y)-\pi^{m,l}_j(z)| |<C\varepsilon
    |\pi^{m,l}_j (y)-\pi^{m,l}_j(z)| 
\end{equation}
because $d_{x_i,20r_i}(L^m_i,L^{m,l}_j)<C\varepsilon$ by Lemma \ref{lem:球近则对应的锥接近}. Here we use $\pi^{m,l}_j$ rather that $\ppi^{m,l}_j$ because $t\leq s=m$ and $ W_j$ coincides with a set of type $m$ in $100B_i$, which means that $L^{m,l}_j$ coincides with an $m$-plane in $100 B_i$.
In the meanwhile, we can use the Lipschitz property of $\Gamma^{m,l}_k$ in (M2) and (M3) to get that
\begin{equation}\label{id和pi}
    ||y-z|-|\pi^{m,l}_j (y)-\pi^{m,l}_j (z)| |<C\varepsilon |\pi^{m,l}_j (y)-\pi^{m,l}_j (z)|.
\end{equation}
Then (\ref{for m, holder}) follows by (\ref{gk和pi}) and (\ref{id和pi}).

If $s<m$ and $y,z$ are on the same branch of $ \Gamma^m_k$, proof { of }
(\ref{for m, holder}) is the same as for the case $s=m$. Now we are left with the case when $y,z$ are on the different branches of $\Gamma^{m}_k$. suppose that  $y\in\Gamma^{m,l}_k$ and $z\in\Gamma^{m,l'}_k$ with $l\neq l'$. Since $ W_j$ coincides with a set of type $s$ in $100B_i$, the dimension of the intersection of $L^{m,l}_j$ and $L^{m,l'}_j$ is at least $s$.
Suppose $L^{m,l}_j\cap L^{m,l'}_j=L^{d,h}_j$, then $d\geq s$ and $\dd(x_i, L^{d,h}_j)<C\varepsilon 2^{-k}$ by Lemma \ref{lem:球近则对应的锥接近}.
Let us choose $x\in \Gamma^{d,l}_k$ such that $|x-y|+|x-z|$ is minimal. Since $L^{d,h}_j$ is close to $x_i$ and $y,z\in B_i$, then we have $x\in 4.1 B_i\cap 4.1 B_j$. 
Since every two different branches make an angle greater than $\alpha$ along their intersection, and $\Gamma^{m,l}_k$ is a $C\varepsilon$-Lipschitz of $L^{m,l}_j$, $\Gamma^{m,l'}_k$ is a $C\varepsilon$-Lipschitz of $L^{m,l'}_j$, we have $\angle yxz>\alpha/2$. Thus, $|y-z|\geq 
C(|y-x|+|x-z|)$ for some geometric constant $C$ depends only on  $\alpha$. Since $x,y$ is on the same branch, (\ref{for m, holder}) holds for $x,y$. Also, (\ref{for m, holder}) holds for $x,z$. So we have $| |g^m_k(y)-g^m_k(z)|- |y-z| |<C\varepsilon |y-z|$ and (\ref{for m, holder}) follows.

For every $y,z\in L^m\cap B^m$, let $y_k=f^m_k(y)$ and $z_k=f^m_k(z) $. {  Assume that} $|y-z|<n_0^{-(m+1)n}2^{-10^{m+2}}$ (we shall address the other case later). Let $k_0$ be an integer such that $ |y_t-z_t|<n_0^{-(m+1)n} 2^{-t-10^{m+2}}$ for all $t\in\{ 0,..., k_0-1\}$ and $|y_{k_0}-z_{k_0}|\geq n_0^{-(m+1)n} 2^{-k_0-10^{m+2}} $. Then we can apply (\ref{for m, holder}) to get
\begin{equation}
    (1-C\varepsilon)^{k_0}|y-z|<|y_{k_0}-z_{k_0}|<(1+C\varepsilon)^{k_0} |y-z|.
\end{equation}
Since $|y_{k_0-1}-z_{k_0-1}|<n_0^{-(m+1)n}\cdot 2^{-k_0+1-10^{m+2}}$, we can estimate that 
\begin{equation}
     (1-C\varepsilon)^{k_0-1}|y-z|<|y_{k_0-1}-z_{k_0-1}|< 2^{-k_0+1}\cdot n_0^{-(m+1)n}2^{-10^{m+2}},
\end{equation}
then we have
\begin{equation} 
    \log_2\frac{1}{|y-z|}>k_0+k_0\log_2(1-C\varepsilon) >k_0(1-C\varepsilon).
\end{equation}
When $k\geq k_0$,  we have
\begin{equation}
    ||y_k-z_k|-|y_{k_0}-z_{k_0}||\leq 2\cdot C\varepsilon \sum_{k=k_0}^{\infty} 2^{-k}=C\varepsilon 2^{-k_0} .
\end{equation}
Let $k$ tends to infinity, then we get 
\begin{equation}
    ||f^m(y)-f^m(z)|-|y_{k_0}-z_{k_0} ||<C\varepsilon 2^{-k_0}. 
\end{equation}
Since $|y_{k_0}-z_{k_0}|\geq n_0^{-(m+1)n}\cdot 2^{-k_0-10^{m+2}}$, we get that $2^{-k_0}<n_0^{-(m+1)n}\cdot 2^{10^{m+2}}|y_{k_0}-z_{k_0}|$. So we can show that there is a geometric constant $C>0$ such that
\begin{equation}\label{m维holder}
    (1-C\varepsilon)|y-z|^{1+C\varepsilon}<|f^m(y)-f^m(z)|<(1+C\varepsilon)|y-z|^{1-C\varepsilon}.
\end{equation}
If $|y-z|\geq n_0^{-(m+1)n}\cdot 2^{-10^(m+2)}$, recall that $|y-z|<4$ and $n_0^{(m+1)n}\cdot 2^{10^{m+2}}|y-z|\geq 1$, we can also get (\ref{m维holder}) by (\ref{g^m_k移动点y很近}). 
{  
Note that since the spines are nested ($L^0 \subset L^1 \subset \dots \subset L^n$ by Definition \ref{spine}), any point $y$ belonging to a lower-dimensional spine $L^t$ ($t < m$) is automatically contained in $L^m$. Therefore, the estimate derived for $L^m$ applies directly to the pair $(y, z)$ as elements of $L^m$, covering all cross-spine cases.
} 

Now we can define the extension of $f^n$ in $B(0,1.95+(1-2n n_0^{-n})2^{-10})$ to get $f$. Set
\begin{equation}
    \begin{split}
        f_0=id, \enspace f_{k+1}=g_k\circ f_k,\enspace
        g_k(x)=\sum_{i\in I(k)}\theta_i(x)\cdot \psi_i(x),\enspace f=\lim_{k\to\infty}f_k.
    \end{split}
\end{equation}
For $k\geq 0$, when $i\in I_{n+1}(k)$, set $\psi_i=id$. 
When $i\in \ccup_{m=0}^{n} I_m(k)$, recall the construction of $\eta^{m-1}_i$ and define $\eta^n_i$ in the same way as in subsection \ref{对齐映射构造}. Let $\psi_i=\eta^n_i$, then $\psi_i$ also satisfies the properties in \eqref{eta的4个性质}.
Then we have $|g_k(x)-x|<C\varepsilon 2^{-k}$ and $|Dg_k-I|<C\varepsilon$ on $B(0,1.95)\backslash \Gamma^{n}_k$ since $g_k$ is an average of $\psi_i$ and $\theta_i$ is supported on $3 B_i$.
By repeating the discussion as for $m$, we get the map $ f$ defined on $B(0,1.95+(1-2n n_0^{-n})2^{-10})$. Then we have $|f(x)-x|<C\varepsilon$ on $B(0,1.95)$ and (\ref{f移动一点距离}) follows. So we can use degree theory to get (\ref{f的包含与被包含}). 
And $f$ coincides with $f^n$ on $Z(0,2)\cap B(0,1.95)$, so we have (\ref{f作用于Z(0,2)}).
By using the same argument as for (\ref{m维holder}), we have (\ref{f Holder}).

We finally completed our verification of Theorem \ref{主定理1}.

\bibliographystyle{alpha} 
\bibliography{ref}

\end{document}